\documentclass[10pt]{amsart}
\usepackage{amsmath,amsfonts,amsthm,amssymb}
\usepackage{graphics}
\usepackage[matrix,arrow]{xy}
\usepackage[colorlinks]{hyperref}
\usepackage{mathrsfs}  
\usepackage{eucal}
\usepackage{slashed}
\usepackage{mathtools}
\usepackage{hyperref}
\usepackage[T1]{fontenc}

\usepackage{marginnote}\usepackage[dvipsnames]{xcolor}
\usepackage[colorinlistoftodos]{todonotes}

\usepackage{tikz-cd}

\usepackage{bbm}

\usepackage[
style=alphabetic,minnames=50, maxnames=50
]{biblatex}

\newtheoremstyle{my}{1.5em}{0.5em}{\em}{}{\sc}{:}{0.5em}{}

\usepackage[
margin=9mm,
marginparwidth=17mm, 
marginparsep=3mm,
]{geometry}

\numberwithin{equation}{section}
\theoremstyle{plain}
\newtheorem{theorem}{Theorem}[section]
\newtheorem{idealtheorem}[theorem]{Idealized Theorem}

\newtheorem{lemma}[theorem]{Lemma}
\newtheorem{proposition}[theorem]{Proposition}

\newtheorem{corollary}[theorem]{Corollary}

\theoremstyle{definition}
\newtheorem{definition}[theorem]{Definition}
\newtheorem{notation}[theorem]{Notation}
\newtheorem{assumption}[theorem]{Assumption}

\newtheorem{remark}[theorem]{Remark}

\newtheorem{remarks}[theorem]{Remarks}
\newtheorem{example}[theorem]{Example}
\theoremstyle{remark}

\numberwithin{equation}{section}

\newcommand{\R}{\mathbb{R}}
\newcommand{\Z}{\mathbb{Z}}

\newcommand{\C}{\mathbb{C}}

\newcommand{\D}{\mathbb{D}}
\newcommand{\K}{\mathbb{K}}
\newcommand{\Map}{\textup{Map}}

\newcommand{\Hom}{\textup{Hom}}

\newcommand{\rank}{\textup{rank}}
\newcommand{\id}{\textup{id}}

\newcommand{\ev}{\textup{ev}}

\newcommand{\wh}[1]{\widehat{#1}}
\newcommand{\wt}[1]{\widetilde{#1}}
\newcommand{\ol}[1]{\overline{#1}}

\newcommand{\mc}[1]{\mathcal{#1}}
\newcommand{\mf}[1]{\mathfrak{#1}}
\newcommand{\ms}[1]{\mathscr{#1}}

\newcommand{\dR}{\textup{dR}}
\newcommand{\pt}{\textup{pt}}

\newcommand{\CO}{\mathcal{CO}}

\newcommand{\CC}{\textbf{\textup{CH}}}

\newcommand{\fiberprod}[2]{\,_{#1}\!\!\times_{#2} }
\newcommand{\pr}{\textup{pr}}

\newcommand{\conc}{\textup{conc}^\mathscr{L}}
\newcommand{\concOmega}{\textup{conc}^{\Omega_\star}}

\newcommand{\norm}[1]{\left\lVert{#1}\right\rVert}

\newcommand{\CFr}{C_*^\mathscr{L}}
\newcommand{\CCFr}{\ol{C}_*^\mathscr{L}}
\newcommand{\CBa}{C_*^{\Omega_\star}}

\newcommand{\HBa}{H_*^{\Omega_\star}}

\newcommand{\Cfr}[1]{C_{#1}^\mathscr{L}}
\newcommand{\Cba}[1]{C_{#1}^{\Omega_\star}}

\newcommand{\Hba}[1]{H_{#1}^{\Omega_\star}}

\newcommand{\MM}{\textup{\textsf{M}}}
\newcommand{\mm}{\textup{\textsf{m}}}
\newcommand{\NN}{\textup{\textsf{N}}}
\newcommand{\NNN}{\textup{\textsf{N}}_\star}

\newcommand{\Aut}{\textup{Aut}}

\newcommand{\LL}{\textsf{L}}

\newcommand{\vdim}{\textup{v}\dim}

\newcommand{\Acknowledgements}{{\em Acknowledgements.} }

\title{The topology of Lagrangian submanifolds via open-closed string topology}
\author{Shuhao Li}

\begin{document}

\begin{abstract}
	We study the topology of Lagrangian submanifolds in standard symplectic vector spaces $\C^n$ using ideas from open-closed string topology. Specifically, for a closed, oriented, spin Lagrangian $L$, we construct a (possibly curved) deformation of the dg associative algebra of chains on the based loop space of $L$. This is done via pushing forward moduli spaces of pseudo-holomorphic discs with boundaries on $L$ viewed as chains in the free loop space along a string topology closed-open map. As an application, we prove that if $\pi_2(L)=0$, then $L$ has non-vanishing Maslov class, generalizing previous results due to Viterbo \cite{ViterboTori}, Cieliebak-Mohnke \cite{CieliebakMohnke}, Fukaya \cite{FukayaLagrangian} and Irie \cite{Irie2}.
\end{abstract}

	\maketitle

{\hypersetup{linkcolor=black}
\tableofcontents}

\section{Introduction}

\subsection{Context}\label{Subsc: Context}
This paper is concerned with the topology of closed Lagrangian submanifolds of the standard symplectic vector spaces $(\C^n,\omega)$ with vanishing Maslov classes.

The \emph{Maslov class} $\mu_L\colon \pi_1(L)\to \Z$ is an important invariant of a Lagrangian submanifold $L\subset \C^n$, defined by Arnol'd in \cite{ArnoldMaslov}.
We briefly recall the construction.
Denote by $\mathcal{L}Gr(n)$ the \emph{Lagrangian Grassmannian} in dimension $n$, i.e. the space consisting of all linear Lagrangian subspaces in $\C^n$.
Arnol'd in \emph{op. cit.} showed that $\pi_1(\mathcal{L}Gr(n))\cong \Z$.
A Lagrangian embedding $i_L\colon L\hookrightarrow \C^n$ induces the Lagrangian Gauss map $Ti_L\colon L\to \mathcal{L}Gr(n)$, taking each $p\in L$ to the Lagrangian subspace $T_pL\subset T_p\C^n\cong \C^n$.
This induces a homomorphism $\mu_L\colon \pi_1L\to \pi_1\mc{L}Gr(n)\cong \Z$, which is called the Maslov class.
Maslov classes can be defined for Lagrangians in general symplectic manifolds $(M,\omega)$ as a homomorphism $\pi_2(M,L)\to \Z$, but for our purpose we shall restrict to the case $M=\C^n$.

The Maslov class of a Lagrangian plays the role of the relative first Chern class (see e.g. section 2.1.1 of \cite{FOOO1}).
Much as symplectic manifolds with vanishing first Chern classes (``symplectic Calabi-Yau's'') play a special role in the symplectic topology,
Lagrangians with vanishing Maslov classes (which we sometimes abbreviate as \emph{Maslov-zero Lagrangians}) also play a special role.
Their Floer theories are often graded  (see e.g. \cite{SeidelGraded}) and thus better behaved.
Also, just as the vanishing of the first Chern class is the symplectic topological version of the \emph{Calabi-Yau} condition, the vanishing of the Maslov class is the symplectic topological version of the \emph{special Lagrangian} condition (see e.g. \cite{HarveyLawson,JoyceSlag} for definitions; also see e.g. the remark after Lemma 3.1 in \cite{AurouxTDuality}).

Given a closed manifold $L$ of dimension $n$, the existence or non-existence of a Maslov-zero Lagrangian embedding $L\hookrightarrow \C^n$ is a well-studied question in symplectic topology (see e.g. \cite{AudinConj} and \cite{AudinLalondePolterovich} for the discussion of the \emph{Maslov class rigidity} phenomenon, or section 6.1.2 in \cite{FOOO1} for the discussion on the \emph{Maslov class conjecture}). 
This falls under the more general question of \emph{which homotopy classes of maps in $[L,\mc{L}Gr(n)]$ are realized by Lagrangian embeddings}. In contrast,  all classes in $[L,\mc{L}Gr(n)]$ are realized by Lagrangian \emph{immersions} (see \cite{LeesImmer,GromovPDR}). 
The proof of a generalized version of Audin's conjecture \cite{AudinConj}, a strong Maslov class rigidity phenomenon,  is also the key ingredient in the works of \cite{FukayaLagrangian,Irie2} on the classification of prime 3-manifolds admitting a Lagrangian embedding in $\C^3$.

Under various topological assumptions on $L$, non-existence of Maslov-zero embeddings into $\C^n$ is established:
\begin{enumerate}
	\item Viterbo \cite{ViterboTori} showed that any closed manifold admitting a metric of non-positive sectional curvature does not admit a Maslov-zero Lagrangian embedding in $\C^n$; 
	\item Polterovich \cite{PolterovichMaslov} proved non-vanishing of Maslov classes for certain Lagrangian surfaces (including certain non-compact ones) in $\C^2$;
	\item Fukaya-Oh-Ohta-Ono (see Theorem K in \cite{FOOO1}) showed that any closed, spin manifold with vanishing second Betti number does not admit a Maslov-zero Lagrangian embedding in $\C^n$; this builds on Oh's construction of spectral sequence in Floer theory in \cite{OhSpecSeq};
	\item Fukaya \cite{FukayaLagrangian} proposed a proof that any closed, aspherical, spin manifold does not admit a Maslov-zero Lagrangian embedding into $\C^n$, which was later realized by Irie \cite{Irie1,Irie2}. In fact, they show that in this situation, there always exists a curve with Maslov index 2; this is a version of  Audin's conjecture \cite{AudinConj} (the case of tori, which is the original statement of  Audin's conjecture, is first proved by \cite{CieliebakMohnke} using neck-stretching analysis for holomorphic curves);
\end{enumerate}
See also e.g. section 6.1.2 in \cite{FOOO1} for more history.

On the other hand, \cite{EEMS} proved that $S^2\times S^1$ admits a Maslov-zero Lagrangian embedding into $\C^3$. 
This is in contrast with the situation of closed \emph{special Lagrangian} submanifolds, which does not exist in $\C^n$ because special Lagrangians are calibrated submanifolds which are automatically minimal submanifolds (see e.g. \cite{HarveyLawson}), whereas one can always change the volume of closed Lagrangians in $\C^n$ by scaling.

\subsection{Result}
The main result of this paper is a new Maslov class rigidity phenomenon:
\begin{theorem}[Theorem \ref{Thm: MainRestate}]\label{Thm: Main}
	If $L$ is a closed, oriented, spin manifold and $\pi_2(L)=0$, then $L$ does not admit a Lagrangian embedding into $\C^n$ with vanishing Maslov class.
\end{theorem}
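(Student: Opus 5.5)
We argue by contradiction: suppose $i_L\colon L\hookrightarrow\C^n$ is a Lagrangian embedding with $\mu_L=0$, where $L$ is closed, oriented, spin and $\pi_2(L)=0$. The plan follows the Fukaya--Irie strategy, but replaces the free loop space chain model by the dg associative algebra $\CBa$ of chains on the based loop space $\Omega_\star L$. This suffices because $\pi_2(L)=0$ forces $\pi_1(\Omega_\star L)=0$ on each component; we never need $\pi_k(L)=0$ for $k\ge 3$, so $L$ need not be aspherical.

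\textbf{Step 1: chain-level disc moduli.} For each class $\beta\in\pi_2(\C^n,L)\cong\pi_1(L)$, consider the moduli spaces $\mc M_{k+1}(\beta)$ of $J$-holomorphic discs with boundary on $L$ and boundary marked points. The spin structure orients them, and evaluating along the boundary circle gives chains $x_\beta$ in the free loop space chain complex $\CFr$. Since $\mu_L=0$, every class has virtual dimension $n-3+0$. The Gromov compactness relation then reads $\partial x=\frac12\{x,x\}$, the Chas--Sullivan string bracket Maurer--Cartan equation. There is one inhomogeneous correction, coming from the constant/“zero energy” term or the displaceability homotopy, as in Irie's setup.

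\textbf{Step 2: closed-open map.} Define a string topology closed-open map $\CFr\to$ Hochschild cochains of $\CBa$. It takes a free loop to the operation that inserts it at the basepoint, using a path to the basepoint, into based loops. We show it intertwines the string bracket with the Gerstenhaber bracket. Pushing $x$ forward gives a Maurer--Cartan element, i.e. a curved $A_\infty$ deformation $\{\mm_k\}$ of $\CBa$, filtered by energy.

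\textbf{Step 3: displaceability.} Since $L$ is displaceable in $\C^n$, the deformed algebra is acyclic: a Hamiltonian isotopy displacing $L$ gives a homotopy showing that the unit becomes exact. Equivalently, the energy spectral sequence from $\HBa(L)$ must kill the unit $[\star]\in \Hba{0}$.

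\textbf{Step 4: degree count.} With $\mu_L=0$, each deformation term $\mm_{k,\beta}$ has a fixed degree shift of $2-n$ up to the $k$-dependent shift. The unit sits in degree $0$, and the based loop homology lives in degrees $\ge 0$. We show no differential can hit $[\star]$. A map into degree $0$ would have to come from a term of total degree $1$ landing on the class of a point in $\Omega_\star L$. We use $\pi_1(\Omega_\star L)=0$ to show that the relevant degree-one chains, namely the boundary images of discs, are null-homologous within each component, so they contribute nothing in $\Hba{0}$. This contradicts Step 3.

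\textbf{Main obstacle.} I expect the degree argument in Step 4 to be the hardest part. Naive degree counting only works when $n$ is large relative to the loop degrees, as in the aspherical case via $H_*(\Omega L)=H_0$. In our case the higher based-loop homology is nonzero, so the bookkeeping needs care. First I would pass to the truncated quotient $\CBa\to H_0$ modulo chains of degree $\ge 2$, and check that the curved deformation descends there and stays nontrivial. This uses $H_1(\Omega_\star L)=0$, which follows from Hurewicz and $\pi_2(L)=0$. The foundational transversality for the chain-level pushforward would be handled with Kuranishi structures and de Rham chains, as in Irie.
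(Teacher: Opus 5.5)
Your overall architecture matches the paper's: a Maurer--Cartan element $\MM$ in free-loop chains built from discs, a closed-open map to Hochschild cochains of $\CBa$, displacement making the unit exact, and $H_1(\Omega_\star L)=0$. However, the steps that actually carry the argument are either misdescribed or missing.

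\emph{The closed-open map and the degrees.} The map that works is not ``insert the free loop at the basepoint using a path to the basepoint.'' Such a map is not canonical, since paths cannot be chosen continuously over a chain of loops, and it cuts no dimension. In the paper, $\CO_0$ is the anomaly map $\mathfrak{o}$, which restricts a chain of free loops to $(\ev_0\circ\varphi)^{-1}(\star)$. The unary part is $\CO_1(x)(\alpha)=\pm\,\alpha\circ x$, which splices $x$ into $\alpha$ along the fibre product of $\ev_i$ of the based loop with $\ev_0$ of the free loop. Both operations have codimension $n$, which cancels the $n$ in $\dim\ms{M}_{k+1}(\beta)=n+\mu(\beta)+k-2$. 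Hence for $\mu_L=0$ one gets $\deg\mm_0=-2$ and $\deg\mm_1=-1$ in the normalized grading of $\CBa$, independently of $n$. This is exactly why $\pi_2(L)=0$ suffices: anything killing $[\underline{\star}]$ must come from $\Hba{1}$. If, as you state, the shift depended on $n$, the killing class would live in degree about $n-2$, where $\pi_2(L)=0$ gives no control. So your Step 4 is internally inconsistent: it uses a shift of $2-n$ but then asserts that the unit is hit from degree $1$. Once $\CO$ is defined by fibre products, the ``main obstacle'' you anticipate does not exist.

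\emph{Curvature and the final step.} Even when $\mu_L=0$, $\mm_0=-\CO_0(\MM)$ is nonzero at chain level, coming from discs with many boundary marked points through $\star$. So ``the deformed algebra is acyclic'' and the spectral sequence of Step 3 are undefined until $\mm_0$ is removed. The missing ingredient is a bounding-chain argument, run inductively in energy:
\begin{itemize}
\item Each obstruction in $\Cba{-2}(\beta)$ is a $\partial$-cycle, and $\Hba{-2}(\beta)\cong H_{-2}(\Omega_\star(\beta))=0$. This is a second use of $\mu_L=0$, and it yields $b$ with $\mm_0+\mm_1 b+b\bullet b=0$.
\item You must then transport the displacement relation to $\mm_1^b$. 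With $\Psi=\CO(\NN)$ this reads $\mm_1^b\bigl(\Psi^{\geq 0}_0+\Psi^{\geq 0}_1(b)\bigr)=\Psi^0_0+\Psi^0_1(b)$, and you must check that the energy-zero part is still $\pm[\underline{\star}]$.
\end{itemize}

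Your proposed truncation to $H_0$ is unnecessary, and it is not clear it descends, since $\mm_1-\partial$ need not preserve $\partial\Cba{2}$. What is needed is an energy induction on the degree-one primitive $\NNN^{\geq 0}$. This is the perturbed-disc chain, whose energy can go down to $-\norm{H}$; it is not built from unperturbed disc boundaries.
\begin{itemize}
\item The lowest-energy component of $\NNN^{\geq 0}$ is a $\partial$-cycle, because $\mm_1^b-\partial$ raises energy by at least $\hbar$ and $\NNN^0$ is supported in $\mf{G}$.
\item Since $\Hba{1}(\eta)\cong H_1(\Omega_\star(\eta))=0$ by $\pi_2(L)=0$, that component equals $-\partial\chi_\eta$ for some $\chi_\eta$.
\item Adding $\mm_1^b\chi_\eta$ removes this component without changing the $\mm_1^b$-image.
\item After finitely many steps the primitive has energy $\geq 0$. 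The energy-zero part of its image is then $\partial$-exact, contradicting $[\underline{\star}]\neq 0$ in $\Hba{0}(0)$.
\end{itemize}
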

The asphericity condition in the results of \cite{ViterboTori,FukayaLagrangian,Irie2} requires the vanishing of all higher homotopy groups, whereas we only require vanishing of $\pi_2$. There are many examples of manifolds with vanishing $\pi_2$ but are not covered by the prior results listed above. For example:
\begin{itemize}
	\item All compact connected Lie groups have vanishing $\pi_2$ (\cite{BottLieGroup}); e.g. $T^2\times SU(2)$ is an example not covered by prior results;
	\item Arbitary connected sums between aspherical or spherical manifolds of dimension $\geq$ 4, e.g. $T^4\# T^4$, or more generally connected sums of manifolds admitting metrics with non-positive sectional curvature; in fact, the condition $\pi_2=0$ is preserved under taking connected sums for manifolds of dimension at least 4;
	\item In Remark 1.23 (a) \cite{CieliebakMohnke}, they pointed out the dichotomy that the technique in their paper works well for manifolds admitting metrics with non-positive curvature whereas traditional Floer theoretic techniques work well for simply-connected manifolds, and that a test case for combining these techniques is the product of a manifold of positive with a manifold of negative curvature.
	Our condition of $\pi_2=0$ is preserved under taking products, and thus applies to manifolds like the product of a sphere (of dimension $\geq 3$) with a hyperbolic manifold.
\end{itemize}
See section \ref{Subsc: Examples}.

\begin{remarks}
	\begin{enumerate}
		\item 	Unlike the case with aspherical manifolds, one cannot expect a version of Audin's conjecture to hold for manifolds with $\pi_2=0$. For example, for each $k>1$, $S^1\times S^{2k-1}$ has vanishing $\pi_2$ but admits a Lagrangian embedding with minimum Maslov number $2k$ by performing Polterovich surgery on the double point on Whitney's immersed sphere (Theorem 5 in \cite{PolterovichSurgery}).
		\item 	In fact, the actual topological condition we use in the proof is the vanishing of  the first Betti number of the based loop space of $L$.
		We state the theorem in terms of $\pi_2 L$ to compare with the previous Maslov class rigidity results.
		\item One of the initial motivations of this work is to understand the topology of Lagrangian 3-manifolds in $\C^3$. The existence or non-existence of Lagrangian embeddings of $T^3\# T^3$, or connected sums of hyperbolic 3-manifolds, into $\C^3$ is wide open (see e.g. Problem 11.1 in \cite{FukayaLagrangian}, Question 2.1 in \cite{SmithSurvey}, Question 5 in \cite{EvansKedra}). However, these connected sums of 3-manifolds have an essential 2-sphere in the connected sum region and thus do not fall under our theorem. We hope to return to these examples in future works.
	\end{enumerate}
\end{remarks}

\subsection{A sketch of the proof}

The proof of this theorem goes through the construction of a new Floer-theoretic invariant of Lagrangians in $\C^n$, which was conjectured and sketched by Abouzaid in \cite{AbouzaidLect}, for applications e.g. in the \emph{family Floer theory} construction of SYZ mirrors.
Here we state (imprecisely) an idealized version of this invariant to provide geometric pictures. 
For the precise statement, which eventually gives us the same geometric consequences as this idealized version by some additional homological algebra, see Theorem \ref{Cor: BasedAinfty}.

Let $\Omega_\star L$ be the based loop space of $L$ with basepoint $\star\in L$. 
Under a good choice of the chain model, chains on $\Omega_\star L$ has the structure of a dg associative algebra $(C_*\Omega_\star L, \partial, \bullet)$ given by the Pontryagin product. 
Moreover there is a unit $\underline{\star}\in C_0\Omega_\star L$ given by the constant based loop at $\star \in L$.

Let $\theta:= \sum_i x_i\,dy_i$, a primitive to the standard symplectic form $\omega =\sum_{i=1}^n dx_i\wedge dy_i$ on $\C^n$.
Define a homomorphism $E\colon \pi_1 L\to \R$ given by the \emph{symplectic energy} $E(\gamma):= \int_\gamma \theta$.
Under the decomposition of $\Omega_\star L$ into connected components $\Omega_\star(a)$ labeled by $a\in \pi_1 L$, there is a splitting $C_*\Omega_\star L=\bigoplus_{a\in \pi_1 L} C_*\Omega_\star(a)$ compatible with the dg associative algebra structure.
We define the \emph{energy filtration} $\{\ms{F}^\lambda C_*\Omega_\star L\}_{\lambda \in \R}$ on $C_*\Omega_\star L$ by \begin{align*}
	\ms{F}^\lambda C_*\Omega_\star L := \bigoplus_{E(a)>\lambda} C_*\Omega_\star(a).
\end{align*}
Denote by $\wh{C_*\Omega_\star L}$ the completion with respect to the energy filtration, and $\ms{F}^\lambda \wh{C_*\Omega_\star L}$ the corresponding filtration levels.

\begin{idealtheorem}\label{IdealThm}
	Let $L$ be a closed, oriented, spin Lagrangian submanifold of $\C^n$. Then there exists a constant $\hbar>0$ and a (gapped) curved dg associative algebra structure on $\wh{C_*\Omega_\star L}$ which is a deformation of the Pontryagin algebra $(C_*\Omega_\star L,\partial, \bullet)$, i.e. the data of 
	\begin{enumerate}
  		\setcounter{enumi}{-1}
  		\item A constant $\hbar >0$;
		\item $\mm_0\in \wh{C_*\Omega_\star L}$ which lives in $\ms{F}^{\hbar} \wh{C_*\Omega_\star L}$;
		\item $\mm_1\colon \wh{C_*\Omega_\star L}\to \wh{C_*\Omega_\star L}$ given by $\mm_1=\partial +\mm_{1,+}$ where $\partial$ is induced by the classical differential on $C_*\Omega_\star L$ and $\mm_{1,+}$ raises energy by at least $\hbar$;
		\item $\mm_2\colon \wh{C_*\Omega_\star L}^{\otimes 2} \to \wh{C_*\Omega_\star L}$, induced by the Pontryagin product $\bullet$ on $C_*\Omega_\star L$,
	\end{enumerate}
	satisfying 
	\begin{enumerate}
		\item $\mm_1(\mm_0)=0$;
		\item $\mm_1^2(\alpha) = [\mm_0, \alpha]$ for all $\alpha \in \wh{C_*\Omega_\star L}$, where the right-hand side is the graded commutator;
		\item Leibniz rule.
	\end{enumerate}
	Moreover, if $\mu_L\equiv 0$, then 
	\begin{enumerate}
		\item $\mm_0 = 0$;
		\item $\deg \mm_1 = -1$;
		\item There exists an element $\NN^\star\in \wh{C_1\Omega_\star L}$ such that $\mm_1 \NN^\star = \underline{\star}$.
	\end{enumerate}
\end{idealtheorem}

We briefly explain the geometric picture of the terms (see section \ref{Sec: SketchPf} for more details): 
\begin{itemize}
	\item The curvature term $\mm_0 = \sum_{\beta\in H_1(L;\Z)} \mm_{0,\beta} \in \wh{C_*\Omega_\star L}$ is given by the boundaries loops of the pseudo-holomorphic discs which pass through the chosen basepoint $\star \in L$. 
	See Figure \ref{fig:m0}.

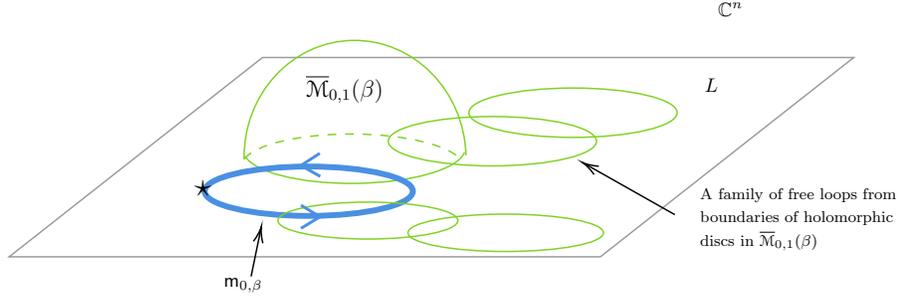
\begin{figure}
\centering
\resizebox{12cm}{!}{%
\tikzset{every picture/.style={line width=0.75pt}} 

\begin{tikzpicture}[x=0.75pt,y=0.75pt,yscale=-1,xscale=1]

\draw  [color={rgb, 255:red, 155; green, 155; blue, 155 }  ,draw opacity=1 ] (195.8,56.73) -- (599,56.73) -- (426.2,192.77) -- (23,192.77) -- cycle ;
\draw  [draw opacity=0][line width=0.75]  (183.21,126.28) .. controls (182.48,82.02) and (215.68,45.67) .. (257.43,45.06) .. controls (299.01,44.45) and (333.4,79.53) .. (334.54,123.55) -- (258.88,125.33) -- cycle ; \draw  [color={rgb, 255:red, 126; green, 211; blue, 33 }  ,draw opacity=1 ][line width=0.75]  (183.21,126.28) .. controls (182.48,82.02) and (215.68,45.67) .. (257.43,45.06) .. controls (299.01,44.45) and (333.4,79.53) .. (334.54,123.55) ;  
\draw  [draw opacity=0] (333.56,121.38) .. controls (329.31,132.98) and (297.47,142.27) .. (258.7,142.65) .. controls (221.27,143.03) and (190.08,134.97) .. (183.9,124.05) -- (258.41,119.4) -- cycle ; \draw  [color={rgb, 255:red, 126; green, 211; blue, 33 }  ,draw opacity=1 ] (333.56,121.38) .. controls (329.31,132.98) and (297.47,142.27) .. (258.7,142.65) .. controls (221.27,143.03) and (190.08,134.97) .. (183.9,124.05) ;  
\draw  [draw opacity=0][dash pattern={on 4.5pt off 4.5pt}] (183.56,125.82) .. controls (191.22,115.97) and (221.43,108.75) .. (257.47,108.96) .. controls (294.7,109.18) and (325.59,117.24) .. (331.8,127.66) -- (257.31,131.14) -- cycle ; \draw  [color={rgb, 255:red, 126; green, 211; blue, 33 }  ,draw opacity=1 ][dash pattern={on 4.5pt off 4.5pt}] (183.56,125.82) .. controls (191.22,115.97) and (221.43,108.75) .. (257.47,108.96) .. controls (294.7,109.18) and (325.59,117.24) .. (331.8,127.66) ;  
\draw  [color={rgb, 255:red, 74; green, 144; blue, 226 }  ,draw opacity=1 ][line width=3]  (156.31,147.99) .. controls (156.31,138.68) and (188.1,131.14) .. (227.31,131.14) .. controls (266.52,131.14) and (298.31,138.68) .. (298.31,147.99) .. controls (298.31,157.29) and (266.52,164.84) .. (227.31,164.84) .. controls (188.1,164.84) and (156.31,157.29) .. (156.31,147.99) -- cycle ;
\draw  [color={rgb, 255:red, 126; green, 211; blue, 33 }  ,draw opacity=1 ] (295,176.38) .. controls (295,169.33) and (324.84,163.62) .. (361.66,163.62) .. controls (398.47,163.62) and (428.31,169.33) .. (428.31,176.38) .. controls (428.31,183.42) and (398.47,189.13) .. (361.66,189.13) .. controls (324.84,189.13) and (295,183.42) .. (295,176.38) -- cycle ;
\draw  [color={rgb, 255:red, 126; green, 211; blue, 33 }  ,draw opacity=1 ] (336.53,94.41) .. controls (336.53,85.11) and (368.32,77.56) .. (407.53,77.56) .. controls (446.75,77.56) and (478.53,85.11) .. (478.53,94.41) .. controls (478.53,103.72) and (446.75,111.26) .. (407.53,111.26) .. controls (368.32,111.26) and (336.53,103.72) .. (336.53,94.41) -- cycle ;
\draw  [color={rgb, 255:red, 126; green, 211; blue, 33 }  ,draw opacity=1 ] (281.53,113.85) .. controls (281.53,104.54) and (313.32,97) .. (352.53,97) .. controls (391.75,97) and (423.53,104.54) .. (423.53,113.85) .. controls (423.53,123.15) and (391.75,130.7) .. (352.53,130.7) .. controls (313.32,130.7) and (281.53,123.15) .. (281.53,113.85) -- cycle ;
\draw  [color={rgb, 255:red, 126; green, 211; blue, 33 }  ,draw opacity=1 ] (206.31,168.03) .. controls (206.31,161.07) and (233.78,155.43) .. (267.66,155.43) .. controls (301.54,155.43) and (329,161.07) .. (329,168.03) .. controls (329,174.99) and (301.54,180.63) .. (267.66,180.63) .. controls (233.78,180.63) and (206.31,174.99) .. (206.31,168.03) -- cycle ;
\draw    (477,164) -- (415.74,129.38) ;
\draw [shift={(414,128.4)}, rotate = 29.47] [color={rgb, 255:red, 0; green, 0; blue, 0 }  ][line width=0.75]    (10.93,-3.29) .. controls (6.95,-1.4) and (3.31,-0.3) .. (0,0) .. controls (3.31,0.3) and (6.95,1.4) .. (10.93,3.29)   ;
\draw    (188,206.14) -- (194.6,174.08) ;
\draw [shift={(195,172.12)}, rotate = 101.63] [color={rgb, 255:red, 0; green, 0; blue, 0 }  ][line width=0.75]    (10.93,-3.29) .. controls (6.95,-1.4) and (3.31,-0.3) .. (0,0) .. controls (3.31,0.3) and (6.95,1.4) .. (10.93,3.29)   ;
\draw  [color={rgb, 255:red, 74; green, 144; blue, 226 }  ,draw opacity=1 ][line width=1.5]  (235.16,137.91) -- (222,130.42) -- (234.83,122.12) ;
\draw  [color={rgb, 255:red, 74; green, 144; blue, 226 }  ,draw opacity=1 ][line width=1.5]  (222,157.55) -- (235,165.44) -- (222,173.34) ;

\draw (496,69.59) node [anchor=north west][inner sep=0.75pt]   [align=left] {$\displaystyle L$};
\draw (505,17.58) node [anchor=north west][inner sep=0.75pt]   [align=left] {$\displaystyle \mathbb{C}^{n}$};
\draw (148,139.26) node [anchor=north west][inner sep=0.75pt]  [font=\huge] [align=left] {$\displaystyle \star $};
\draw (223.92,67.49) node [anchor=north west][inner sep=0.75pt]  [font=\Large] [align=left] {$\displaystyle \overline{\mathcal{M}}_{0,1}( \beta )$};
\draw (492.87,144.31) node [anchor=north west][inner sep=0.75pt]   [align=left] {{\footnotesize A family of free loops from }\\{\footnotesize boundaries of holomorphic }\\{\footnotesize discs in $\displaystyle \overline{\mathcal{M}}_{0,1}( \beta )$}};
\draw (168,205.64) node [anchor=north west][inner sep=0.75pt]   [align=left] {$\displaystyle \mathsf{m}_{0,\beta }$};

\end{tikzpicture}
}
\caption{Definition of $\mm_{0,\beta}$} \label{fig:m0}
\end{figure}

	\item The term $\mm_{1,+} = \sum_{\beta \in H_1(L;\Z)} \mm_{1,\beta}\colon \wh{C_*\Omega_\star L}\to \wh{C_*\Omega_\star L}$ is a string topology operation,  given by taking the intersection between the sweepout of a family of based loops and the geometric image of the boundary loops of pseudo-holomorphic curves, and concatenating them where they intersect. See Figure \ref{fig:m1}.

\begin{figure}
	\centering
\resizebox{12cm}{!}{%

\tikzset{every picture/.style={line width=0.75pt}} 

\begin{tikzpicture}[x=0.75pt,y=0.75pt,yscale=-1,xscale=1]

\draw  [color={rgb, 255:red, 155; green, 155; blue, 155 }  ,draw opacity=1 ] (221.8,45) -- (583,45) -- (428.2,157) -- (67,157) -- cycle ;
\draw  [color={rgb, 255:red, 74; green, 144; blue, 226 }  ,draw opacity=1 ] (203.06,97.99) .. controls (223.27,84.74) and (258.66,73.99) .. (282.1,73.99) .. controls (305.55,73.99) and (308.17,84.74) .. (287.96,97.99) .. controls (267.76,111.25) and (232.37,121.99) .. (208.92,121.99) .. controls (185.48,121.99) and (182.86,111.25) .. (203.06,97.99) -- cycle ;
\draw  [color={rgb, 255:red, 74; green, 144; blue, 226 }  ,draw opacity=1 ] (206.86,108.64) .. controls (231,107.46) and (266.95,116.13) .. (287.16,128.01) .. controls (307.37,139.89) and (304.19,150.48) .. (280.05,151.67) .. controls (255.91,152.85) and (219.96,144.18) .. (199.75,132.3) .. controls (179.53,120.42) and (182.72,109.83) .. (206.86,108.64) -- cycle ;
\draw  [color={rgb, 255:red, 74; green, 144; blue, 226 }  ,draw opacity=1 ] (205.32,105.42) .. controls (228.33,98.02) and (265.3,97.08) .. (287.9,103.31) .. controls (310.5,109.55) and (310.17,120.61) .. (287.16,128.01) .. controls (264.15,135.41) and (227.18,136.35) .. (204.58,130.11) .. controls (181.98,123.88) and (182.31,112.82) .. (205.32,105.42) -- cycle ;
\draw  [color={rgb, 255:red, 74; green, 144; blue, 226 }  ,draw opacity=1 ] (195.17,92.24) .. controls (212.96,75.88) and (246.14,59.55) .. (269.28,55.75) .. controls (292.41,51.96) and (296.74,62.13) .. (278.94,78.49) .. controls (261.15,94.84) and (227.97,111.17) .. (204.83,114.97) .. controls (181.7,118.77) and (177.37,108.59) .. (195.17,92.24) -- cycle ;
\draw  [draw opacity=0] (301.76,81.04) .. controls (301.22,44.59) and (325.79,14.66) .. (356.67,14.15) .. controls (387.45,13.66) and (412.91,42.57) .. (413.72,78.84) -- (357.75,80.24) -- cycle ; \draw  [color={rgb, 255:red, 126; green, 211; blue, 33 }  ,draw opacity=1 ] (301.76,81.04) .. controls (301.22,44.59) and (325.79,14.66) .. (356.67,14.15) .. controls (387.45,13.66) and (412.91,42.57) .. (413.72,78.84) ;  
\draw  [draw opacity=0] (413.72,78.84) .. controls (411.21,89.89) and (387.37,98.8) .. (358.23,99.12) .. controls (329.87,99.44) and (306.35,91.52) .. (302.56,80.96) -- (357.99,77.44) -- cycle ; \draw  [color={rgb, 255:red, 126; green, 211; blue, 33 }  ,draw opacity=1 ] (413.72,78.84) .. controls (411.21,89.89) and (387.37,98.8) .. (358.23,99.12) .. controls (329.87,99.44) and (306.35,91.52) .. (302.56,80.96) ;  
\draw  [draw opacity=0][dash pattern={on 4.5pt off 4.5pt}] (302.77,82.27) .. controls (307.99,73.96) and (330.63,67.82) .. (357.72,68) .. controls (385.6,68.18) and (408.68,74.98) .. (412.87,83.72) -- (357.6,86.26) -- cycle ; \draw  [color={rgb, 255:red, 126; green, 211; blue, 33 }  ,draw opacity=1 ][dash pattern={on 4.5pt off 4.5pt}] (302.77,82.27) .. controls (307.99,73.96) and (330.63,67.82) .. (357.72,68) .. controls (385.6,68.18) and (408.68,74.98) .. (412.87,83.72) ;  
\draw  [color={rgb, 255:red, 155; green, 155; blue, 155 }  ,draw opacity=1 ] (230.8,174) -- (592,174) -- (437.2,286) -- (76,286) -- cycle ;
\draw  [fill={rgb, 255:red, 0; green, 0; blue, 0 }  ,fill opacity=1 ] (329,174.8) -- (336.56,174.8) -- (336.56,140) -- (338.44,140) -- (338.44,174.8) -- (346,174.8) -- (337.5,198) -- cycle ;
\draw  [color={rgb, 255:red, 74; green, 144; blue, 226 }  ,draw opacity=1 ] (203.06,230.99) .. controls (223.27,217.74) and (258.66,206.99) .. (282.1,206.99) .. controls (305.55,206.99) and (308.17,217.74) .. (287.96,230.99) .. controls (267.76,244.25) and (232.37,254.99) .. (208.92,254.99) .. controls (185.48,254.99) and (182.86,244.25) .. (203.06,230.99) -- cycle ;
\draw  [draw opacity=0] (412.56,216.39) .. controls (406.61,225.89) and (384.67,233.14) .. (358.44,233.43) .. controls (330.08,233.75) and (306.56,225.83) .. (302.77,215.27) -- (358.2,211.74) -- cycle ; \draw  [color={rgb, 255:red, 126; green, 211; blue, 33 }  ,draw opacity=1 ] (412.56,216.39) .. controls (406.61,225.89) and (384.67,233.14) .. (358.44,233.43) .. controls (330.08,233.75) and (306.56,225.83) .. (302.77,215.27) ;  
\draw  [draw opacity=0] (302.77,215.27) .. controls (307.99,206.96) and (330.63,200.82) .. (357.72,201) .. controls (385.6,201.18) and (408.68,207.98) .. (412.87,216.72) -- (357.6,219.26) -- cycle ; \draw  [color={rgb, 255:red, 126; green, 211; blue, 33 }  ,draw opacity=1 ] (302.77,215.27) .. controls (307.99,206.96) and (330.63,200.82) .. (357.72,201) .. controls (385.6,201.18) and (408.68,207.98) .. (412.87,216.72) ;  
\draw  [color={rgb, 255:red, 74; green, 144; blue, 226 }  ,draw opacity=1 ][line width=3]  (203.06,230.99) .. controls (223.27,217.74) and (258.66,206.99) .. (282.1,206.99) .. controls (305.55,206.99) and (308.17,217.74) .. (287.96,230.99) .. controls (267.76,244.25) and (232.37,254.99) .. (208.92,254.99) .. controls (185.48,254.99) and (182.86,244.25) .. (203.06,230.99) -- cycle ;
\draw  [draw opacity=0][line width=3]  (412.56,216.39) .. controls (406.61,225.89) and (384.67,233.14) .. (358.44,233.43) .. controls (330.08,233.75) and (306.56,225.83) .. (302.77,215.27) -- (358.2,211.74) -- cycle ; \draw  [color={rgb, 255:red, 74; green, 144; blue, 226 }  ,draw opacity=1 ][line width=3]  (412.56,216.39) .. controls (406.61,225.89) and (384.67,233.14) .. (358.44,233.43) .. controls (330.08,233.75) and (306.56,225.83) .. (302.77,215.27) ;  
\draw  [draw opacity=0][line width=3]  (302.77,215.27) .. controls (307.99,206.96) and (330.63,200.82) .. (357.72,201) .. controls (385.6,201.18) and (408.68,207.98) .. (412.87,216.72) -- (357.6,219.26) -- cycle ; \draw  [color={rgb, 255:red, 74; green, 144; blue, 226 }  ,draw opacity=1 ][line width=3]  (302.77,215.27) .. controls (307.99,206.96) and (330.63,200.82) .. (357.72,201) .. controls (385.6,201.18) and (408.68,207.98) .. (412.87,216.72) ;  
\draw  [color={rgb, 255:red, 74; green, 144; blue, 226 }  ,draw opacity=1 ][line width=1.5]  (237.79,243.64) -- (250.59,247.21) -- (244.47,259) ;
\draw  [color={rgb, 255:red, 74; green, 144; blue, 226 }  ,draw opacity=1 ][line width=1.5]  (321.39,219.06) -- (329.93,229.25) -- (318.2,235.51) ;
\draw  [color={rgb, 255:red, 74; green, 144; blue, 226 }  ,draw opacity=1 ][line width=1.5]  (356.28,210.49) -- (347.8,200.26) -- (359.56,194.07) ;
\draw  [color={rgb, 255:red, 74; green, 144; blue, 226 }  ,draw opacity=1 ][line width=1.5]  (245.39,219.16) -- (232.33,216.73) -- (237.4,204.45) ;
\draw  [color={rgb, 255:red, 126; green, 211; blue, 33 }  ,draw opacity=1 ][line width=0.75]  (349.08,94.78) -- (356.25,99.11) -- (348.73,102.82) ;
\draw  [color={rgb, 255:red, 74; green, 144; blue, 226 }  ,draw opacity=1 ][line width=0.75]  (294.25,119.81) -- (302.57,118.82) -- (298.92,126.36) ;
\draw  [color={rgb, 255:red, 74; green, 144; blue, 226 }  ,draw opacity=1 ][line width=0.75]  (279.59,147.24) -- (287.24,150.66) -- (280.24,155.26) ;
\draw  [color={rgb, 255:red, 74; green, 144; blue, 226 }  ,draw opacity=1 ][line width=0.75]  (282.13,69.13) -- (289.97,66.17) -- (288.24,74.36) ;
\draw  [color={rgb, 255:red, 126; green, 211; blue, 33 }  ,draw opacity=1 ][line width=0.75]  (355.19,72.03) -- (347.9,67.9) -- (355.31,63.99) ;
\draw    (166,54.5) -- (211.1,69.37) ;
\draw [shift={(213,70)}, rotate = 198.25] [color={rgb, 255:red, 0; green, 0; blue, 0 }  ][line width=0.75]    (10.93,-3.29) .. controls (6.95,-1.4) and (3.31,-0.3) .. (0,0) .. controls (3.31,0.3) and (6.95,1.4) .. (10.93,3.29)   ;
\draw    (331,259.13) -- (307.44,236.39) ;
\draw [shift={(306,235)}, rotate = 43.99] [color={rgb, 255:red, 0; green, 0; blue, 0 }  ][line width=0.75]    (10.93,-3.29) .. controls (6.95,-1.4) and (3.31,-0.3) .. (0,0) .. controls (3.31,0.3) and (6.95,1.4) .. (10.93,3.29)   ;
\draw    (413,111.13) -- (380.92,101.57) ;
\draw [shift={(379,101)}, rotate = 16.6] [color={rgb, 255:red, 0; green, 0; blue, 0 }  ][line width=0.75]    (10.93,-3.29) .. controls (6.95,-1.4) and (3.31,-0.3) .. (0,0) .. controls (3.31,0.3) and (6.95,1.4) .. (10.93,3.29)   ;

\draw (516,54) node [anchor=north west][inner sep=0.75pt]   [align=left] {$\displaystyle L$};
\draw (525,11) node [anchor=north west][inner sep=0.75pt]   [align=left] {$\displaystyle \mathbb{C}^{n}$};
\draw (180,106) node [anchor=north west][inner sep=0.75pt]  [font=\huge] [align=left] {$\displaystyle \star $};
\draw (326,32) node [anchor=north west][inner sep=0.75pt]  [font=\Large] [align=left] {$\displaystyle \overline{\mathcal{M}}_{0,1}( \beta )$};
\draw (517,186) node [anchor=north west][inner sep=0.75pt]   [align=left] {$\displaystyle L$};
\draw (343,162) node [anchor=north west][inner sep=0.75pt]   [align=left] {$\displaystyle \mathsf{m_{1,\beta }}$};
\draw (183,241) node [anchor=north west][inner sep=0.75pt]  [font=\huge] [align=left] {$\displaystyle \star $};
\draw (90,22) node [anchor=north west][inner sep=0.75pt]  [font=\footnotesize] [align=left] {A family of based loops,\\viewed as a chain $\displaystyle c\in C_{*}^{\Omega _{\star }}$};
\draw (332,243) node [anchor=north west][inner sep=0.75pt]   [align=left] {{\footnotesize Resulting chain $\displaystyle \mm_{1,\beta}(c)$}\\{\footnotesize in $\displaystyle C_{*}^{\Omega _{\star}}$ by concatenation}};
\draw (414.87,92.72) node [anchor=north west][inner sep=0.75pt]   [align=left] {{\footnotesize Boundary of }\\{\footnotesize holomorphic disc}};

\end{tikzpicture}
}
\caption{Definition of $\mm_{1,\beta}$} \label{fig:m1}
\end{figure}
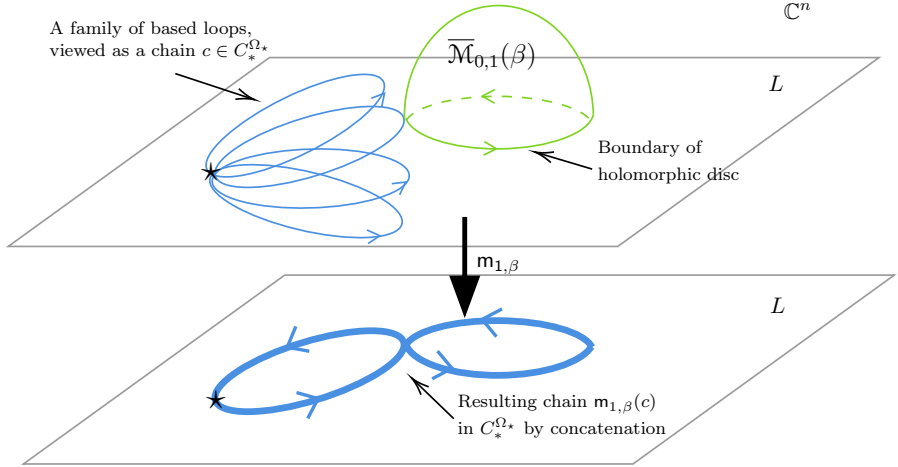
\end{itemize}

\begin{remark}
	In the context of homological mirror symmetry, especially in the case $L$ is a Lagrangian torus (e.g. a smooth fibre of an SYZ fibration), $\mm_0$ can be thought of as encoding the information of the \emph{superpotential} in the mirror local chart  (see e.g. \cite{AbouzaidLect}, section 3 of \cite{AurouxTDuality}, as well as e.g. \cite{TonkonogEnum,YuanHangThesis}).
	
	In the main text, we sometimes refer to $\mm_0$ as the \emph{anomaly}  in order to distinguish it with the \emph{curvature} in ordinary Lagrangian Floer theory of \cite{FOOO1}. The terminology of anomaly is also used in various similar contexts in string topology \cite{SullivanStringSurvey} and Lagrangian Floer theory \cite{FOOO1,FOOO2}.
\end{remark}

We expect that such a construction works for closed, (relatively) spin Lagrangian submanifold of any  symplectic manifold that is either closed or convex at infinity. We only constructed the structure for Lagrangians in $\C^n$ because of  technical simplifications.
If one works with suitable chain-level intersection theory and virtual techniques, one could expect that Theorem \ref{Thm: Main} to be upgraded to apply to Hamiltonian displaceable Lagrangians in geometrically bounded symplectic manifolds.

\begin{proof}
	[A heuristic proof of how the idealized Theorem \ref{IdealThm} implies Theorem \ref{Thm: Main}]
	Assume $L$ is a Lagrangian in $\C^n$ with vanishing Maslov class. Then $\mm_0=0$, and thus $\mm_1^2=0$.
	Using the energy filtration $\{\ms{F}^\lambda \wh{C_*\Omega_\star L}\}_{\lambda\in \R}$ and gappedness of the energy spectrum, we construct a $\Z$-filtration $\{\mf{F}^q \wh{C_*\Omega_\star L}\}_{q\in \Z}$ on $\wh{C_*\Omega_\star L}$ by choosing a sufficiently fine subdivision of $\R$, e.g. by choosing $\lambda_0>0$ satisfying a condition analogous to Condition 6.3.16 in \cite{FOOO1}, and set
	\begin{align*}
		\mf{F}^q\wh{C_*\Omega_\star L}:= \ms{F}^{q\lambda_0} \wh{C_*\Omega_\star L},\quad q\in\Z.
	\end{align*} 
	Then take the associated spectral sequence. The $E_1$-page computes the (completion of) the ordinary homology $\wh{H_*\Omega_\star L}$ of the based loop space, whereas the $E_\infty$-page computes the $\mm_1$-homology, which is 0 by the existence of a primitive $\NN^\star$ of the identity class $\underline{\star}$ under the differential $\mm_1$, according to Theorem \ref{IdealThm}.
	Now $0\neq \underline{\star}\in \wh{H_0\Omega_\star L}$ in $E_1$-page, and since $\deg  \mm_1 = -1$ there needs to be some non-zero elements in $\wh{H_1\Omega_\star L}$ which kills $\underline{\star}$ in the spectral sequence. 
	However by the assumption that $\pi_2 L=0$, it follows that $\wh{H_1\Omega_\star L}=0$, which gives a contradiction.

	One can also prove Theorem \ref{Thm: Main} using a filtered version of the homological perturbation lemma, similar to section 2 of \cite{Irie2} or section 5.4 of \cite{FOOO1}.
\end{proof}

\bigskip
The paper is organized as follows.
Section \ref{Sec: SketchPf} contains a sketch of the construction in Theorem \ref{IdealThm}, emphasizing  geometric motivations and related works.
Section \ref{Sec: CO map} is devoted to the construction of various structures in chain-level string topology  (in particular a closed-open map) needed for the proof, with detailed verifications of properties (especially signs) relegated to Appendix \ref{Sec: OCchainSigns}.
Section \ref{Sec: HolomorphicCurves} is the construction of the curved dg algebra by incorporating contributions from the virtual fundamental chains on moduli spaces of holomorphic curves, with constructions of the relevant virtual fundamental chains relegated to Appendix \ref{Sec: Kuranishi}. 
Section \ref{Sec: MainProof} contains the proof of the main theorem.

\bigskip

\Acknowledgements{I would like to thank my advisor Mark McLean for constant support and encouragement. I would also like to thank Mohammed Abouzaid, Jiaji Cai, Spencer Cattalani, Yash Deshmukh, Ceyhun Elmac{\i}o\u{g}lu, Kenji Fukaya, Sebastian Haney, Kei Irie, John Pardon, Dennis Sullivan, Chris Woodward, Guangbo Xu, and Frank Zheng for useful conversations or correspondences at various stages of the project.
This paper was partially supported by NSF award DMS-2203308 and also by Simons Foundation International, LTD.
\bigskip

\section{Heuristics of the  construction and related works}\label{Sec: SketchPf}

 In this section, we present a sketch of proof ignoring technical issues such as transversality (in chain-level string topology and moduli spaces of curves), with emphasis on geometric ideas and motivations. For example, we do not specify the chain model for the loop spaces (the reader may take $C_*\Omega_\star L$ to mean singular chain complex) and we assume that all the intersections are transversal. The string topology operations are presented in a style similar to the exposition in \cite{ChasSullivan}.

\subsection{Conventions and notations}
Throughout, all mentions of ``manifold'' mean manifold-without-boundary, unless otherwise specified.
We denote the de Rham complex of a manifold $M$ to be $\mathscr{A}^*(M)$ and the subcomplex of compactly-supported forms to be $\mathscr{A}^*_c(M)$.
We shall work with real coefficients $\K=\R$: for a space $X$, unless otherwise specified, $H_*(X)$ (\emph{resp.} $H^*(X)$) means the homology (\emph{resp.} cohomology) of $X$ with $\R$-coefficient.
We shall frequently identify $H_1(L;\Z)$ with $H_2(\C^n,L;\Z)$, and $\pi_1(L)$ with $\pi_2(\C^n,L)$, without further mention.

Let $L$ be a closed oriented manifold of dimension $n$, with a fixed basepoint $\star \in L$.  
We identify $S^1$ as the unit circle in $\R^2$, and fix a marked point $\star_{S^1}=1\in S^1$.

Fix a Lagrangian embedding $L\hookrightarrow \C^n$. Recall the two associated invariants:
\begin{enumerate}
	\item The symplectic energy $E\equiv E_L\colon \pi_1(L)\to \R$ (or $E\equiv E_L\colon H_1(L;\Z)\to \R$);
	\item The Maslov class $\mu\equiv \mu_L\colon \pi_1(L)\to \Z$ (or $\mu\equiv \mu_L\colon H_1(L;\Z)\to \Z$).
\end{enumerate}

\subsection{Holomorphic discs and string topology}
Our construction uses moduli spaces of holomorphic discs \cite{GromovPseudo}.
For each $k\in \Z_{\geq 0}$ and $\beta\in H_2(\C^n, L;\Z)$, let $\ms{M}_{k+1}(\beta)$ be the compactified moduli space of holomorphic discs (see e.g. section 7.2.2 of \cite{Irie2} for a complete definition). 
Its ``main stratum'' is the \emph{uncompactified moduli space of pseudo-holomorphic disc}, $\mathring{\ms{M}}_{k+1}(\beta)$, which we describe as follows (see also section 7.2.1 of \cite{Irie2}). When $\beta = 0$ and $k=0$ or $1$, define $\mathring{\ms{M}}_k(\beta)=\emptyset$. 
Otherwise, define $\mathring{\ms{M}}_{k+1}(\beta)$ as the space of equivalence classes of  tuples $(u, z_0,\dots, z_k)$ where 
\begin{itemize}
	\item $u\colon (\D^2, \partial \D^2)\to (\C^n,L)$ is a smooth map satisfying $\bar\partial u = 0$ and $u_*[\D^2] = \beta\in H_2(\C^n,L)$;
	\item $z_0,\dots, z_k\in \partial \D^2$ are distinct boundary marked points aligned in anti-clockwise order;
	\item such that for each automorphism $\rho \in \Aut(\D^2)$, we identify \[(u,z_0,\dots, z_k)\sim (u\circ \rho, \rho^{-1}(z_0),\dots, \rho^{-1}(z_k)).\]
\end{itemize}
For each $j=0,\dots, k$, define the \emph{evaluation map}
\[
	\ev_j\colon \mathring{\ms{M}}_{k+1}(\beta)\to L,\quad \quad \ev_j(u, z_0\dots, z_k):= u(z_j). 
\]
Then the ``codimension-1 stratum'' of the 1-marked moduli space  $\ms{M}_{1}(\beta)$ can be described as the fibre product \begin{align}\label{Eq: Codim1Boundary} \partial^1 \ms{M}_1(\beta)\cong  \bigsqcup_{\beta_1+\beta_2=\beta} \mathring{\ms{M}}_2(\beta_1) \fiberprod{\ev_1}{\ev_0} \mathring{\ms{M}}_1(\beta_2) \end{align}
(where in the above expression $\partial^1$  denotes the codimension-1 boundary).

In \cite{FukayaLagrangian}, Fukaya proposed the following perspective relating these moduli spaces with string topology:
\begin{enumerate}
	\item View $\ms{M}_{1}(\beta)$ as a chain in the free loop space $\ms{L}L := \Map(S^1, L)$. More precisely, one defines a map \[ \ev\colon \ms{M}_1(\beta)\to \ms{L}L \]and pushforward the ``virtual fundamental chain'' on $\ms{M}_1(\beta)$ to obtain a chain \[\MM(\beta):= \ev_* [\ms{M}_1(\beta)]\in C_{\dim L + \mu_L(\beta) -2}\ms{L}L;\]
	\item View the codimension-1 degeneration \eqref{Eq: Codim1Boundary} as a (chain-level) string topology operation --- specifically, let \[ [-,-]\colon C_*\ms{L}L \otimes C_*\ms{L}L \to C_{*+1-\dim L}\ms{L}L \] be a chain-level refinement of the loop bracket (see e.g. \cite{ChasSullivan}).
\end{enumerate}
Then \eqref{Eq: Codim1Boundary} reads \begin{align*}
		\partial \MM(\beta) - \frac{1}{2}\sum_{\beta_1+\beta_2=\beta}[\MM(\beta_1),\MM(\beta_2)] = 0 ,
	\end{align*}
	or, if we put together all the classes as an element $\MM:= \sum_\beta \MM(\beta) \in \wh{C_*\ms{L}L}$ in an appropriate completion of $C_*\ms{L}L$, the equation becomes a \emph{Maurer-Cartan equation} \begin{align*}
		\partial \MM - \frac{1}{2}[\MM,\MM] = 0.
	\end{align*}

This can then be interpreted as a deformation of the dg Lie algebra structure on $\wh{C_*\ms{L}L}$ provided by the loop bracket. This is used in \cite{FukayaLagrangian,Irie2} to prove a generalized version of Audin's conjecture as well as a classification of orientable, closed, prime 3-manifolds admitting a Lagrangian embedding into $\C^3$.

\subsection{Perturbed based holomorphic discs and based loops}\label{Subsc: PerturbedHolom}
In \cite{OhDisjEnergy}, Oh defined certain moduli spaces to study the displacing energy of a Lagrangian. 
We describe their codimension-0 strata as follows. 

Fix a basepoint $\star\in L$.
Let $H\in C_c^\infty(\C^n\times [0,1]_t)$ be a compactly supported time-dependent displacing Hamiltonian function satisfying Assumption \ref{Asmp: Disp Hamil}. 
Let $\chi\colon\R\to [0,1]$ be a smooth function such that 
\begin{itemize}
	\item $\chi(s) \equiv 0$ for $s\leq 0$;
	\item $\chi(s) \equiv 1$ for $s\geq 1$.
\end{itemize}
Define, for each $r\geq 0$,
\begin{align*}
	\chi_r(s):= \chi(r+s)\chi(r-s).
\end{align*}
By identifying $\D^2\setminus\{\pm 1\}$ with $\R_s\times [0,1]_t$, we obtain two coordinate functions $s\colon \D^2\setminus\{\pm 1\}\to \R$ and $t\colon \D^2\setminus\{\pm 1\}\to [0,1].$
For each $k\in \Z_{\geq 0}$, $\eta\in H_2(\C^n,L)$, we define the \emph{uncompactified moduli space of perturbed pseudo-holomorphic disc} $\mathring{\ms{N}}^\star_{k+1}(\eta)$ as the space of tuples $(r, u,z_0=1, z_1, \dots, z_k)$ where 
\begin{enumerate}
	\item $r\in \R_{\geq 0}$;
	\item $u\colon (\D^2, \partial \D^2)\to (\C^n,L)$ is a smooth map satisfying the perturbed pseudo-holomorphic equation \[ \bar\partial_r (u):= \left(du - X_{\chi_r(s)H_t}(u)\otimes dt\right)^{0,1} = 0 \] and $u_*[\D^2] = \eta\in H_2(\C^n,L)$;
	\item $1=z_0, z_1,\dots, z_k\in \partial \D^2$ are distinct boundary marked points aligned in anti-clockwise order;
	\item $u(z_0)=\star$.
\end{enumerate}

These spaces are equipped with evaluation maps
\begin{align*}
	\ev_j\colon \mathring{\ms{N}}^\star_{k+1}(a)\to L,\quad \textup{ given by } \ev_j(u,z_0,\dots, z_k):= u(z_j)
\end{align*}
for each $j=1,\dots, k$.

This construction is used in \cite{FukayaLagrangian} and is basically the same as in \cite{Irie2}, section 7.2.1 and 7.2.2, \emph{except  we require the marked point $z_0$ to pass through the basepoint $\star\in L$ (condition (4) above)}. Basically the same moduli space without condition (4) is also used by Abouzaid in \cite{AbouzaidFramedBordism}.
Also see \cite{BiranCieliebak, PolterovichMonotone}.

The ``codimension-1 stratum'' of the 1-marked such moduli spaces $\ms{N}^\star_1(\eta)$ consists of the following boundaries:
\begin{enumerate}
	\item \emph{Bubbling}: A pseudo-holomorphic disc can bubble off from these perturbed pseudo-holomorphic discs, and depending on where $z_0$ is positioned, we have the two parts in the codimension-1 boundary: \begin{align}\label{Eq: Pert Codim1 Part1}
		\bigsqcup_{\eta_1+\beta_2=\eta}\ms{N}_2^\star(\eta_1) \fiberprod{\ev_1}{\ev_0} \ms{M}_1(\beta_2)	\end{align}
		and \begin{align}\label{Eq: Pert Codim1 Part2}
			 \bigsqcup_{\beta_1+\eta_2=\eta} \ms{M}_2^\star(\beta_1) \fiberprod{\ev_1}{\ev_0} \ms{N}_1(\eta_2) 
		\end{align}
	where the moduli spaces in the above expressions with the $\star$ superscript consist of curves satisfying $u(z_0)=\star$, and ones without consist of curves not necessarily satisfying $u(z_0)=\star$. For example, $\ms{M}_2^\star(\beta_1):= \ev_0^{-1}(\ms{M}_2(\beta_1))$.
	\item \emph{$r=0$}: When the deformation parameter $r$ becomes 0, the perturbed Cauchy-Riemann operator $\ol{\partial}_0=\bar\partial$ becomes the unperturbed Cauchy-Riemann operator, so the curves in this boundary $\ms{N}_1^{0,\star}(\eta)$ are simply (unperturbed) pseudo-holomorphic curves. In particular, $\ms{N}_1^{0,\star}(\eta)=\emptyset$ unless (i) $E(\eta)>0$ or (ii) $\eta = 0$. Moreover, in case $\eta=0$, $\ms{N}_1^{0,\star}(0)$ is the singleton set consisting of the constant map to $\star\in L$.
\end{enumerate}
That is, 
\begin{align}\label{Eq: Pert Codim1Bdry}
	&\partial^1 \ms{N}_1^\star(\eta) \\ =\,\, &\ms{N}_1^{0,\star}(\eta) \sqcup 		\left(\bigsqcup_{\eta_1+\beta_2=\eta}\ms{N}_2^\star(\eta_1) \fiberprod{\ev_1}{\ev_0} \ms{M}_1(\beta_2)\right) \sqcup \left( \bigsqcup_{\beta_1+\eta_2=\eta} \ms{M}_2^\star(\beta_1) \fiberprod{\ev_1}{\ev_0} \ms{N}_1(\eta_2) \right).\nonumber
\end{align}

Following Fukaya's proposal outlined above, we interpret these moduli spaces in terms of string topology:
\begin{enumerate}
	\item View $\ms{N}_1^\star(\eta)$ as a chain in a loop space, but this time the based loop space $\Omega_\star L:= \Map((S^1,\star_{S^1}), (L, \star))$ where $\star_{S^1}$ is a fixed marked point in $S^1$: there is an evaluation map \[ \ev\colon \ms{N}_1^\star(\eta)\to \Omega_\star  L\]and we pushforward the virtual fundamental chain on  $\ms{N}_1^\star(\eta)$ to obtain a chain \[ \NN^\star(\eta):= \ev_*[\ms{N}_1^\star(\eta)] \in C_{\mu_L(\eta)+1} \Omega_\star L; \]
	Also, define \begin{align*}
		\NN^{0,\star}(\eta):= \ev_*[\ms{N}_1^{0,\star}(\eta)]\in C_{\mu_L(\eta)}\Omega_\star L.
	\end{align*}
	\item View the codimension-1 degeneration \eqref{Eq: Pert Codim1 Part1} as a (chain-level) string topology operation --- specifically, for each $\beta\in H_2(\C^n,L;\Z)$, define \begin{align*}
		\mm_{1,\beta}\colon C_*\Omega_\star L\to C_{*+\mu_L(\beta)-1}\Omega_\star L
	\end{align*}as following, assuming we are using singular chains and assuming transversality for now. Given a  $k$-chain $K_\alpha\xrightarrow{\alpha} \Omega_\star L$ where $K_\alpha$ is the underlying domain of the chain $\alpha$, define $\mm_{1,\beta}\alpha$ to be the chain $K_{\mm_{1,\beta}\alpha}\xrightarrow{\mm_{1,\beta}\alpha} \Omega_\star L$ where $K_{\mm_{1,\beta}}$ is the pre-image of the diagonal of $L\times L$ under \begin{align}\label{Eq: BasedLoopDegen1}
		K_\alpha \times [0,1]\times \ms{M}_1(\beta) &\to L\times L\\
		(k_x, s,u) &\mapsto (\alpha(k_x)(s), \ev_0(u)=u(1))\nonumber
	\end{align} 
	where we view $\alpha(k_x)$ as a map $S^1\xrightarrow{\alpha(k_x)} L$ and $u$ as a map $(\D^2,\partial \D^2)\xrightarrow{u} (\C^n,L)$ (where $\D^2$ is identified as the unit disc of $\R^2$). Then define $K_{\mm_{1,\beta}\alpha}\xrightarrow{\mm_{1,\beta}\alpha} \Omega_\star L$ by \begin{align}\label{Eq: BasedLoopDegen2}
		\big((\mm_{1,\beta}\alpha) (k_x,s,u)\big)(\tau):= \begin{cases}
			\alpha(k_x)(2\tau), & \tau \in [0,\frac{s}{2}) \\
			u(e^{2\pi i(2\tau -s)}), & \tau \in [\frac{s}{2}, \frac{s+1}{2})\\
			\alpha(k_x)(2\tau-1), & \tau \in [\frac{s+1}{2}, 1]
		\end{cases}.
	\end{align}
	This is analogous to the pre-Lie product $*$ in section 3 of \cite{ChasSullivan}. See Figure \ref{fig:m1}.
	\item The codimension-1 degeneration \eqref{Eq: Pert Codim1 Part2} doesn't have a good description in terms of chains in the based loop space because $\ms{N}_1(\eta_2)$ is not part of $C_*\Omega_\star L$; it is contributed by moduli spaces $\ms{M}_1^\star(\beta):= \ev_0^{-1}(\ms{M}_1(\beta))$ of holomorphic discs whose boundary pass through $\star\in L$. We record this data in the form of \[  \mm_{0,\beta}:= \ev_*[\ms{M}_1^\star(\beta)]\in C_{\mu_L(\beta)-2}\Omega_\star L, \] where $\ev\colon \ms{M}_1^\star(\beta)\to \Omega_\star L$ is the evaluation map. See Figure \ref{fig:m0}. 
\end{enumerate}

Recall that based loop concatenation endows the based loop space $\Omega_\star L$  with a product (assume for simplicity that we are using a model of based loop space where the product is strictly associative, e.g. the Moore loop space):
\begin{align*}
	\bullet\colon \Omega_\star L\times \Omega_\star L\to \Omega_\star L
\end{align*}
which makes the chains on $\Omega_\star L$ into a dg associative algebra $(C_*\Omega_\star L, \partial, \bullet)$ with the Pontryagin product \begin{align*}
	\bullet \colon C_*\Omega_\star L\otimes C_*\Omega_\star L\to C_*\Omega_\star L.
\end{align*}
This dg associative algebra has a unit $\underline{\star}\in C_0\Omega_\star L$, which is the 0-cycle given by the constant loop at the basepoint $\star\in L$.

Then, equation \eqref{Eq: Pert Codim1 Part1} is translated into the equation 
\begin{align}\label{Eq: Pert Codim1ChainLevel}
	\partial \NN^\star(\eta) + \sum_\beta \mm_{1,\beta} (\NN^\star(\eta -\beta)) +(\textup{terms involving }\mm_{0,\beta}) = \NN^{0,\star}(\eta),
\end{align}
or, if we define, in an appropriate completion $\wh{C_*\Omega_\star L}$,\begin{align*}
	\NN^\star:= \sum_\eta \NN^\star(\eta),\quad \mm_1:= \partial + \sum_\beta \mm_{1,\beta},\quad \mm_0:= \sum_\beta \mm_{0,\beta},
\end{align*} and notice that \begin{align*}
	\NN^{0,\star}(\eta)=\begin{cases}
		\underline{\star}, & \eta = 0\\
		0, & E(\eta)<0
	\end{cases},
\end{align*}
summing up equations \eqref{Eq: Pert Codim1ChainLevel} for all $\eta$ gives 
\begin{align}\label{Eq: unitIsPrimitive}
	\mm_1\NN^\star + (\textup{terms involving }\mm_0) = \underline{\star} + (\textup{terms of energy }E>\hbar)
\end{align}
where $\hbar>0$ is a constant (given by the minimal holomorphic disc energy).
The right hand side is invertible in the completion $\wh{C_*\Omega_\star L}$ so we might as  well pretend it is the unit $\underline{\star}$.

\bigskip
We have now defined the structures in our idealized Theorem \ref{IdealThm}. Most of the properties follow from the description (in an ideal situation, ignoring technical issues). For example, when $\mu_L=0$, the degree of $\mm_1$ follows directly from dimension-counting, and $\deg \mm_0 = -2$ so it should not contribute (homologically), as $H_*\Omega_\star L$ is concentrated in non-negative degrees. Property (3) follows from \eqref{Eq: unitIsPrimitive}, since $\mm_0$ vanishes in this situation and the right-hand side is invertible.
We now explain the heuristic for the proof of the identity $\mm_1^2(\alpha) = [\mm_0,\alpha]$ (where $\alpha \in C_*\Omega_\star L$), modulo signs.

Notice that the identity $\mm_1^2(\alpha)=[\mm_0,\alpha]$ is equivalent to \begin{align}\label{Eq: AinftyIdenEquiv}
	\partial\left(\mm_{1,\beta} \alpha\right) + \mm_{1,\beta}\left(\partial \alpha\right) + \sum_{\beta_1+\beta_2=\beta}\mm_{1,\beta_1}\left(\mm_{1,\beta_2} \alpha\right) = \mm_{0,\beta} \bullet \alpha  - \alpha \bullet \mm_{0,\beta}
\end{align}
for each $\beta\in H_2(\C^n,L;\Z)$.
By our explicit descriptions \eqref{Eq: BasedLoopDegen1} and \eqref{Eq: BasedLoopDegen2} of $\mm_{1,\beta}\alpha$, its boundary $\partial\mm_{1,\beta}\alpha$ consists of 
\begin{enumerate}
	\item The part of $K_{\mm_{1,\beta}}\subset K_\alpha \times [0,1] \times \ms{M}_1(\beta)$ contained in $\partial K_\alpha \times [0,1]\times \ms{M}_1(\beta)$, which is responsible for the term $\mm_{1,\beta}(\partial \alpha)$;
	\item The part contained in $K\times [0,1] \times \partial \ms{M}_1(\beta)$, where $\partial \ms{M}_1(\beta)$ is given in \eqref{Eq: Codim1Boundary}; this is responsible for the term $\sum_{\beta_1+\beta_2=\beta} \mm_{1,\beta_1}(\mm_{1,\beta_2}\alpha)$ (technically $\mm_{1,\beta_1}(\mm_{1,\beta_2}\alpha)$ consists also of compositions of loops where curves from $\ms{M}_1(\beta_1)$ and $\ms{M}_1(\beta_2)$ lands on two different points of curves in $\alpha$, but they cancel with the same term coming from $\mm_{1,\beta_2}(\mm_{1,\beta_1}\alpha)$ in the same way similar to Lemma 4.2 in \cite{ChasSullivan});
	\item The part contained in $K_\alpha \times \{0,1\} \times \ms{M}_1(\beta)$; for an element $(k_x, s, u)$ in this (where $s\in \{0,1\}$), we have $u(\star_{S^1}) = \star \in L$ and acccording to \eqref{Eq: BasedLoopDegen2}, the part of the boundary in  $K_\alpha \times \{0\}\times \ms{M}_1(\beta)$ gives the term $\mm_{0,\beta}\bullet \alpha$ and the part in $K_\alpha \times \{1\}\times \ms{M}_1(\beta)$  gives the term $\alpha \bullet \mm_{0,\beta}$.
\end{enumerate}

This verifies \eqref{Eq: AinftyIdenEquiv}.

\subsection{Open-closed string topology}\label{Subsc: OCStringSurvey}
The way we construct the curved dg algebra is via the construction of a closed-open map in chain-level string topology. We comment on the motivation and other works relevant to this construction.

A general theme in Floer-Fukaya type theories of Lagrangian submanifolds is that \emph{pseudo-holomorphic curves provide quantum deformations of classical algebraic topology.}
For example, the Lagrangian Floer (co)homology is an $A_\infty$-deformation of the ordinary (co)homology of the Lagrangian submanifold \cite{FOOO1}. 
The work of \cite{FukayaLagrangian,Irie1,Irie2} provided a conceptual explanation for Lagrangian Floer cohomology as a deformation of ordinary cohomology via the iterated integration map of Chen \cite{ChenIterIntegral}:
\begin{align*}
	C_*\ms{L}L \to \CC^*(C^*L,C^*L)
\end{align*} 
where $\CC^*(C^*L,C^*L)$ is the Hochschild cochain complex of the dg associative algebra $C^*L$ (with the cup product).

In e.g. \cite{ChasSullivan,ChasSullivan2,Sullivan,SullivanStringSurvey}, a rich collection of algebraic operations on various loop/path spaces are uncovered, including interactions between closed sector (e.g. free loop space) and the open sector (e.g. based loop space), under the name of \emph{open-closed string topology}. 
Algebraic frameworks and detailed constructions of various parts of this are done in e.g. \cite{MooreSegal,CostelloTCFT,Godin,BlumbergCohenTeleman,CohenGanatra}.
Combining ideas from open-closed string topology and work of \cite{FukayaLagrangian,Irie2}, we construct a homomorphism of dg Lie algebras, a closed-open string map
\begin{align}\label{Eq: StringTopOCMap}
	C_*\ms{L}L \to \CC^*(C_*\Omega_\star L, C_*\Omega_\star L)
\end{align}
where $\CC^*(C_*\Omega_\star L, C_*\Omega_\star L)$ is the Hochschild cochain complex of the dg associative algebra $C_*\Omega_\star L$ (with the Pontryagin product). See Theorem \ref{Thm: OCstringPackage}.

\begin{remark}
	The relation between homology of free loop spaces and Hochschild cohomology of chains on based loop spaces is first studied by \cite{Goodwillie,BurgheleaFiedorowicz}. Also see \cite{MalmThesis}.
\end{remark}

The Maurer-Cartan element $\MM\in \wh{C_*\ms{L}L}$ constructed in \cite{FukayaLagrangian,Irie2} out of moduli spaces of holomorphic discs is then pushed forward to a Maurer-Cartan element in $\wh{\CC^*}(C_*\Omega_\star L, C_*\Omega_\star L)$, a suitable completion of the Hochschild complex, and which is interpreted as a curved deformation of the dg associative algebra $C_*\Omega_\star L$ using deformation theory. See section \ref{Subsc: FiltrationGapped} for details.

\begin{remark}
	The map \eqref{Eq: StringTopOCMap} can be viewed as a string topology version of the closed-open map in e.g. \cite{AbouzaidGeneration,GanatraThesis}. In the context of symplectic Floer theory, the Maurer-Cartan element can be viewed in terms of a Borman-Sheridan class using (an extension of) the construction in \cite{BormanSheridan}. One could then possibly use this language to generalize our result to certain singular Lagrangian submanifolds.
\end{remark}

\bigskip

\section{A closed-open map in string topology}
\label{Sec: CO map}

\subsection{Summary of structures}
Let $L$ be a closed oriented manifold of dimension $n$, with a fixed basepoint $\star \in L$.  
The ingredients of the constructions in this section are purely topological and are intrinsic to $L$ itself, but we are going to fix a Lagrangian embedding $i_L\colon L\hookrightarrow \C^n$ (not assumed to have vanishing Maslov class), where $\C^n$ is endowed the standard symplectic structure, which we shall use to define the gradings of our constructions for later convenience.
Specifically we will use the Maslov index $\mu:=\mu_L\colon H_1(L;\Z)\to \Z$ of the Lagrangian embedding.

As before, denote by 
\[
	\mathscr{L}L:= \textup{Map}(S^1,L)
\]
the free loop space of $L$, and 
\[
	\Omega_{\star} L:= \textup{Map}((S^1,\star_{S^1}), (L,\star))
\]
the based loop space of $L$.
For each $a\in H_1(L;\Z)$, define $\ms{L}(a)$ (\emph{resp.} $\Omega_\star (a)$) to be the space of loops in $\ms{L}L$ (\emph{resp.} $\Omega_\star L$) with homology class $a$.
We are only concerned with the homotopy types of these spaces, which are independent of the regularity of the loops (see e.g. section 2.1 of \cite{ChataurOanceaBasic}), so we do not specify the regularity.

\begin{theorem}[Open-closed string topology package]
\label{Thm: OCstringPackage}
	Associate to a closed oriented manifold $L$ of dimension $n$ together with a Lagrangian embedding $i_L\colon L\hookrightarrow \C^n$ are the following structures: 
	\begin{enumerate}
		\item (Closed string state space) A dg Lie algebra $\CFr$ with a decomposition: \[ \CFr:= \bigoplus_{a\in H_1(L;\Z)} \CFr(a),\quad \partial \colon \Cfr{*} \to \Cfr{*}[-1],\quad   [-,-]\colon \CFr\otimes \CFr\to \CFr,\]which computes the free loop space homology $H_*(\mathscr{L}L)$\footnote{One could expect that the dg Lie algebra descends to the Chas-Sullivan Lie algebra on $H_*(\ms{L}L;\K)$ in Proposition 4.3 of \cite{ChasSullivan}, although this is not relevant for our purpose. See the second remark in Section 2.5 of \cite{WangThesis}.} up to grading shifts. More precisely, for each $a\in H_1(L;\Z)$, 
		\begin{align*}
			H_*(\CFr(a),\partial)\cong  H_{*+ n + \mu(a) - 1}(\ms{L}(a)).
		\end{align*}
		\item (Open string state space) A dg  associative algebra with a decomposition: \[C_*^{\Omega_\star} = \bigoplus_{a\in H_1(L;\Z)} C_*^{\Omega_\star}(a), \quad \partial\colon C_*^{\Omega_\star} \to C_*^{\Omega_\star}[-1], \quad \bullet\colon C_*^{\Omega_\star}\otimes C_*^{\Omega_\star}\to \CBa,\]which computes the based loop space homology $H_*(\Omega_\star L)$  together with the Pontryagin (concatenation) product, up to a grading shift. More precisely, for each $a\in H_1(L;\Z)$, 
		\begin{align*}
			H_*(\CBa(a),\partial) \cong H_{*+\mu(a)}(\Omega_\star (a)),
		\end{align*}
		and 
		\begin{align*}					\bigoplus_{a\in H_1(L;\Z)} \HBa(a) \cong \bigoplus_{a\in H_1(L;\Z)} H_{*+\mu(a)}(\Omega_\star(a))
		\end{align*}
		as graded algebras.
		\item (Closed-open string map) A homomorphism between dg Lie algebras preserving the decomposition into $H_1(L;\Z)$ classes: \[ \CO\colon \CFr\to \CC^*(\CBa, \CBa) \]
		where $\CC^*(\CBa,\CBa)$ is a model of Hochschild cochains together with the Hochschild differential and the Gerstenhaber Lie bracket (see section \ref{Subsubsc: Hochschild} for the precise definitions).
	\end{enumerate}
\end{theorem}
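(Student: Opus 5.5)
The plan is to build all three structures from a single chain model: de Rham chains on loop spaces in the sense of Irie \cite{Irie1,Irie2}. For the closed sector we take Irie's model $\CFr$ directly. This is a chain complex of "de Rham chains" on Moore free loop spaces $\ms{L}_{k+1}$ of loops with marked points. Its differential is the de Rham differential on the parameter space, and its bracket is the chain-level loop bracket, defined by fibre product along evaluation maps $\ev_j$ followed by concatenation. Irie shows that this is a dg Lie algebra computing $H_*(\ms{L}L)$. The shift by $n+\mu(a)-1$ is built into the grading convention: a chain with parameter space $U$ and differential form $\omega$ has degree $\dim U - \deg\omega + \mu(a) + n - 1$.

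For the open sector we use the same kind of de Rham chains, but on the Moore based loop space $\Omega_\star L$, graded by $\dim U-\deg\omega+\mu(a)$. We take Moore loops so that concatenation is strictly associative. The Pontryagin product is then the product of parameter spaces and wedge of forms, followed by concatenation. Strict associativity, the Leibniz rule and the unit $\underline{\star}$ are formal checks. The comparison $H_*(\CBa(a))\cong H_{*+\mu(a)}(\Omega_\star(a))$ is proved exactly as in Irie's comparison of de Rham chains with singular chains. The algebra isomorphism holds because the comparison map intertwines concatenation. Additivity of $\mu$ under concatenation makes all gradings compatible.

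For the closed-open map we follow the heuristic of Section \ref{Sec: SketchPf}. Given a free-loop chain $x$ and based-loop chains $\alpha_1,\dots,\alpha_k$, we define $\CO(x)(\alpha_1,\dots,\alpha_k)$ as follows. Take the chain of loops in $x$ with $k+1$ (or $k$, when $k=0$) marked points. Impose the fibre-product conditions that the first marked point lies at $\star$, and that at each intermediate marked point the based loop $\alpha_i$ is inserted, with its time parameter free as in \eqref{Eq: BasedLoopDegen1}. Then concatenate as in \eqref{Eq: BasedLoopDegen2}. The case $k=0$ recovers $\mm_0$, and the case $k=1$ recovers $\mm_1$.

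We then verify the two required identities by a boundary analysis of the fibre products, as sketched for \eqref{Eq: AinftyIdenEquiv}. First, $\CO$ is a chain map: $\CO(\partial x)=\delta_{\mathrm{Hoch}}\CO(x)$. The boundary strata where an insertion point collides with a neighbouring marked point, or with the basepoint, produce exactly the Hochschild terms $\alpha_1\bullet\CO(x)(\dots)$, $\CO(x)(\dots,\alpha_i\bullet\alpha_{i+1},\dots)$ and $\CO(x)(\dots)\bullet\alpha_k$. Second, $\CO([x,y])=[\CO(x),\CO(y)]$ for the Gerstenhaber bracket. Both sides are sums over nested insertions, and the mismatched terms cancel pairwise, as in Lemma 4.2 of \cite{ChasSullivan}. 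To make these identities strict rather than up to homotopy, we take the Hochschild model $\CC^*$ to be the reduced version with respect to Irie's approximate-unit conventions, or normalized cochains.

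The main obstacle is transversality and strictness at the chain level. The fibre products with $\star$ and with the $\alpha_i$ must be defined for all chains, not only generic ones. De Rham chains handle this: forms can be pulled back and pushed forward along submersive evaluation maps, so we require the evaluation maps from parameter spaces to be submersions. The remaining technical heart is checking the signs, orientations, and strict dg-Lie compatibility.
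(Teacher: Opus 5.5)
Parts (1) and the homology statements in (2) are essentially the standard ones, with three corrections. The closed differential must be $\partial^\dR+\partial^1$, not $\partial^\dR$ alone; without the coface term the complex does not compute $H_*(\ms{L}L)$. Your degree shifts have the wrong sign. In a marked-point model they also omit the $+k$.

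\emph{The gap is in (3), and it comes from your choice of open-string model.}
\begin{itemize}
\item With de Rham chains on Moore based loops \emph{without} marked points, inserting $x$ into $\alpha$ has to happen at a free time $s$, as in \eqref{Eq: BasedLoopDegen1}. But de Rham chains live on manifolds without boundary and carry compactly supported forms. A domain like $\{(v,s)\colon 0\le s\le T(v)\}$ carrying $\eta\times 1$ is not a de Rham chain.
\item Consequently there is no Stokes boundary at $s=0$ or $s=T$. Nothing produces the terms $\alpha\bullet\CO_0(x)$ and $\CO_0(x)\bullet\alpha$ that the Hochschild differential forces in arity one.
\item For the same reason there are no ``boundary strata where marked points collide'' in this formalism. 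Such collisions exist only algebraically, as coface terms in a differential.
\end{itemize}

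\emph{Your recipe for $\CO$ is also not the right operation.} In positive arity you still require the first marked point of $x$ to lie at $\star$, and you insert the $\alpha_i$ into $x$. In \eqref{Eq: BasedLoopDegen1}--\eqref{Eq: BasedLoopDegen2} it is the other way round: $\ev_0$ of the free loop is matched with a point of $\alpha$, nothing is pinned to $\star$, and the free loop is inserted into the based one. Your extra codimension-$n$ constraint means your arity-one operation is not $\mm_1$; for $n\ge 2$ it does not even have the right degree. Moreover, inserting based loops at free times of $x$ produces rotated loops. So $\CO(x)(\dots,\alpha_i\bullet\alpha_{i+1},\dots)$ does not match two adjacent insertions. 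Passing to normalized cochains fixes neither problem.

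\emph{The missing idea is to give the based loops marked points too,} using the same cosimplicial model as for free loops. Take $\CBa(a)=\prod_k C^\dR_{*+\mu(a)+k}(\Omega_\star^{k+1}(a))$ with $\partial=\partial^\dR+\partial^1$, where $\Omega_\star^{k+1}L\to L^{k}$ is a covering. This discretizes the free time parameter, and the paper's $\CO$ then has three pieces.
\begin{itemize}
\item $\CO_1(x)(\alpha)=\pm\alpha\circ x=\sum_i\pm\alpha\circ_i x$ is the fibre product of $\ev_i$ on $\alpha$ with $\ev_0$ on $x$, followed by concatenation. It is transverse because plots of $\ms{L}^{k+1}L$ have $\ev_0$ submersive.
\item $\CO_0(x)=\pm\mathfrak{o}(x)$ is the slice $\ev_0^{-1}(\star)$.
\item $\CO_\ell=0$ for $\ell\ge 2$. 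In arity two one then only needs $\CO_1(x)$ to be a derivation of $\bullet$, which is Lemma \ref{Lm: dRchainOpenClosedApp}(3).
\end{itemize}

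The would-be endpoint contributions become the outer cofaces $\delta_0,\delta_k$ in $\partial^1$ on $\CBa$. The paper writes these as $-\wt{\star^1}\bullet(-)$ and $\pm(-)\bullet\wt{\star^1}$. The chain-map property is then proved in three steps.
\begin{enumerate}
\item Check the pure de Rham parts.
\item Deform by the Maurer--Cartan elements $\wt{\LL}$ on $\CFr$ and $\CO_1(\wt{\LL})$ on $\CBa$. These produce all cofaces on $\CFr$ and the interior cofaces on $\CBa$.
\item Match the $\wt{\star^1}$-insertions in the first and last slots with $\mathfrak{o}(x)\bullet\alpha$ and $\alpha\bullet\mathfrak{o}(x)$, which is Lemma \ref{Lm: dRchainStarAnon}.
\end{enumerate}
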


\begin{remarks}
	\begin{enumerate}
		\item For grading and sign conventions regarding dg algebras, see section \ref{Subsc: ConventionDGalg}.
		\item This structure is only a small part of the open-closed string topology package; see the discussion and references in section \ref{Subsc: OCStringSurvey}, as well as Remarks \ref{Rmk: RichStructure} and \ref{Rmk: RichStructure2}.
		\item An ``open-closed map'' in string topology is constructed in section 4 of \cite{AbouzaidCotangent}; our construction follows the same vein, as well as ideas from e.g. \cite{AbouzaidLect}. The technical framework of the proof of Theorem \ref{Thm: OCstringPackage} is heavily inspired by \cite{Irie2,WangThesis}.
	\end{enumerate}
\end{remarks}

For readability, the main text in this section is mainly focused on constructions of the relevant structures, with verifications of properties relegated to Appendix \ref{Sec: OCchainSigns}.

The organization of this section is as follows. In section \ref{Subsc: dR Chain}  we review the de Rham chain complex construction in \cite{Irie1} which we will use to build our chain models. In section \ref{Subsc: StateSpace} we define the chain complexes $(\CFr,\partial)$ and $(\CBa,\partial)$ in part (1) and (2) of Theorem \ref{Thm: OCstringPackage}, following the approach of \cite{WangThesis}. In section \ref{Subsc: StringOper} we define various string topology operations on de Rham chains, which we put together in section \ref{Subsc: StringOperState} to obtain the stated structures in Theorem \ref{Thm: OCstringPackage}.

\subsection{Irie's machinery of de Rham chains}\label{Subsc: dR Chain}
The machinery we shall use for solving chain-level transversality problems is the formalism of de Rham chains, first proposed by Fukaya (under the name ``approximate de Rham chains'' in \cite{FukayaLagrangian} ) and rigorously constructed by Irie \cite{Irie1,Irie2}. 
We now briefly review this machinery on free and based loop spaces,  heavily borrowing from \cite{Irie1}.
For conventions on orientations, see section \ref{Subsc: Ori}.

We are going to use ``the space of all manifolds'' as domains of the chains. To avoid set-theoretic issues, let $\mathscr{U}$ be the set of all oriented $m$-dimensional submanifolds of $\R^N$ for all $0\leq m \leq N$.

\begin{definition}[\cite{Irie1}, Section 4.2]
	A \emph{differentiable space} is a set $X$ together with a collection of maps, called \emph{plots}, from $U\in \mathscr{U}$ to $X$, satisfying the property that
\begin{itemize}
	\item If $\varphi\colon U\to X$ is a plot, $U'\in \mathscr{U}$ and $\theta\colon U'\to U$ is a submersion, then $\varphi\circ \theta\colon U'\to X$ for $X$ is a plot for $Y$.
\end{itemize}
A map $f\colon X\to Y$ between differentiable spaces is said to be \emph{smooth} if it is a map of sets such that composition of $f$ with a plot $\varphi\colon U\to X$ is a plot.
\end{definition}
\begin{example}[\cite{Irie1}, Example 4.2 (i)]\label{Ex: DiffStr on Manifolds}
	Let $L$ be a smooth manifold. 
	We define a differentiable space with underlying set $L$ itself by stipulating a map $\varphi\colon U\to L$ to be a plot if $\varphi$ is smooth.
\end{example}

\begin{definition}\label{Defn: dRchainCplx}[\cite{Irie1}, Section 4.3]
	Let $X$ be a differentiable space. Define the \emph{de Rham chain complex} of $X$ to be
	\[ C_j^\dR(X):= \left(\bigoplus_{(U,\varphi)} \mathscr{A}_c^{\dim U-j}(U)\right)\bigg/ \sim\]
where 
\begin{itemize}
	\item The direct sum is taken over all $(U,\varphi)$ where $U\in \mathscr{U}$ and $\varphi\colon U\to X$ is a plot;
	\item The notation $\mathscr{A}_c^{\dim U-j}(U)$ is the space of compactly supported smooth forms on $U$ of degree $\dim U-j$. Denote an element $\omega\in \mathscr{A}_c^{\dim U-j}(U)$ belonging to the the summand labeled by $(U,\varphi)$ to be $(U,\varphi,\omega)$; frequently, to signify the target space $X$, we will also use the notation $(U\xrightarrow{\varphi} X; \omega)$ instead to mean the same object as $(U,\varphi,\omega)$.
	\item The equivalence relation is taken by quotienting out the subspace generated by \begin{align}\label{Eq: dRChainRelation}(U,\varphi, \pi_!\omega)- (U',\varphi\circ \pi, \omega)\end{align}
	where $U,U'\in \mathscr{U}$, $\pi\colon U'\to U$ is a submersion, $\omega \in \mathscr{A}_c^*(U')$, and $\pi_!\omega$ denotes integration over the fibres (which is well-defined since $\omega$ is compactly supported).
	Denote the equivalence class of the element $(U,\varphi, \omega)$  to be $[(U,\varphi, \omega)]\in C_j^\dR(X)$, or $[(U\xrightarrow{\varphi}X;\omega)]$. In later computations we often drop the square bracket when the meaning is clear for notational simplicity.
	\item The differential $\partial^\dR \colon C_*^\dR(X)\to C_{*-1}^\dR(X)$ is defined by \begin{equation}\label{Eq: DefnPartialDR}\partial^\dR [(U,\varphi, \omega)]:= (-1)^{|\omega|+1}[(U,\varphi, d\omega)].\end{equation}
\end{itemize}
\end{definition}

Then it follows immediately that a smooth map $f\colon X\to Y$ between smooth manifolds induces a pushforward \[ f_*\colon C_*^\dR(X)\to C_*^\dR(Y).\]

\begin{notation}\label{Ntn: degx}
For $x\in C_j^\dR(X)$, we write $\deg x:= j$.	
\end{notation}

\begin{remarks}\label{Rmk: SignOri}
	Consider $U\in \mathscr{U}$ and $\ol{U}$ denoting $U$ with the opposite orientation. The identity map $\id\colon U\to \ol{U}$ is a submersion, but recall sign convention of integration along the fibre (Section 4.2.3 of \cite{Irie2}) is such that for a submersion $\pi\colon X\to Y$, \[\int_Y \pi_!\omega\wedge \eta = \int_X \omega\wedge \pi^* \eta\]for any $\omega\in\mathscr{A}_c^*(X)$ and $\eta\in \mathscr{A}^*(Y)$. Therefore for any $\omega\in \mathscr{A}_c^*(U)$, under the orientation-reversing $\pi\colon U\to \ol{U}$, the pushout $\pi_!\omega =-\omega$. In particular, \[[(U,\varphi, \omega) ]=-[(\ol{U},\varphi,\omega)].\]
\end{remarks}

\subsection{Definition of the state spaces}\label{Subsc: StateSpace}

Let $L$ be as before.
Following the construction of Wang in \cite{WangThesis} (which is a generalization of Adams's cobar construction \cite{AdamsCobar} to non-simply-connected cases), we will construct cosimplicial models of the free and based loop spaces, to which we shall apply the machinery of de Rham chains in section \ref{Subsc: dR Chain} to obtain the chain complexes for the state spaces.

Let $\Pi_1 L$ be the fundamental groupoid:
\[
	\Pi_1 L := \{(p,q,[\sigma])\mid p,q\in L, [\sigma]\in \mathcal{P}_{p,q}/\textup{homotopy}\}.
\]
Denote the source and sink maps by
\begin{align*}
	\textsf{s}&\colon \Pi_1 L\to L;\quad \textsf{s}(p,q,[\sigma]):= p\\
	\textsf{t}&\colon \Pi_1 L\to L;\quad \textsf{t}(p,q,[\sigma]):= q.
\end{align*}

\begin{definition}\label{Defn: PathFundGroupoid}
	\begin{enumerate}
		\item Given two composable elements $(p,q,[\sigma]), (q,r,[\tau])\in \Pi_1 L$, denote their concatenation (the groupoid multiplication) by \begin{align}\label{Eq: Concat} (p,q,[\sigma])*(q,r,[\tau]):= (p,r,[\sigma * \tau]).\end{align}
		\item Given a point $y\in L$, denote by $\underline{y}\in \mc{P}_{p,q}$ the constant path at $y$, by $[\underline{y}]\in \mc{P}_{p,q}/\textup{homotopy}$ the homotopy class, and also by $[\underline{y}]$ the element $(y,y,[\underline{y}])\in \Pi_1 L$.
	\end{enumerate}
\end{definition}

Define 
\begin{itemize}
	\item $\mathscr{L}^{k+1}L$ denote the space of elements $(c_0,\dots, c_k)\in (\Pi_1L)^{k+1}$ such that $\textsf{t}(c_i) = \textsf{s}(c_{i+1})$ for all $i=0,\dots, k-1$, and $\textsf{t}(c_k)=\textsf{s}(c_0)$. 
	Then we have the evaluation maps \[ \ev=(\ev_0,\cdots, \ev_{k})\colon \mathscr{L}^{k+1} L \to L^{\times (k+1)},\quad (c_0,\dots, c_k)\mapsto (\textsf{s}(c_0),\textsf{s}(c_1),\dots, \textsf{s}(c_k)) ;\]
	\item $\Omega^{k+1}_\star L$, where $\star\in L$ is the basepoint, denote the space of elements $(c_0,\dots, c_k)\in (\Pi_1L)^{k+1}$ such that $\textsf{t}(c_i)=\textsf{s}(c_{i+1})$ for all $i=0,\dots, k-1$, and $\textsf{t}(c_k)=\textsf{s}(c_0) = \star$. Then we have the evaluation maps \[ \ev= (\ev_1,\dots, \ev_{k})\colon \Omega^{k+1}_\star L \to L^{\times k} ,\quad (c_0,\dots, c_k) \mapsto (\textsf{s}(c_1),\dots, \textsf{s}(c_k)). \]
\end{itemize}

\begin{lemma}\label{Lm: WangLoopSmooth}
	The maps $\mathscr{L}^{k+1}L\xrightarrow{\ev} L^{\times (k+1)}$, $\Omega^{k+1}_\star L\xrightarrow{\ev} L^{\times k}$ are covering maps.
\end{lemma}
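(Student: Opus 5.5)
The plan is to use the standard fact that the source–target map $(\textsf{s},\textsf{t})\colon \Pi_1 L\to L\times L$ is a covering map, and then to exhibit $\mathscr{L}^{k+1}L$ and $\Omega^{k+1}_\star L$ as iterated fibre products of copies of $\Pi_1 L$ over the diagonal conditions. Here $\Pi_1 L$ carries its usual topology (and smooth structure), with basic open sets given by homotopy classes of paths with endpoints varying in small contractible (say geodesically convex) balls. First I will verify the covering property for $\Pi_1 L$. Fix $(p,q)$ and convex balls $B_p\ni p$, $B_q\ni q$. For $(p',q')\in B_p\times B_q$ and a class $[\sigma]\in\mathcal{P}_{p,q}/\textup{homotopy}$, set $[\sigma]_{p',q'}:=[\gamma_{p'p}*\sigma*\gamma_{qq'}]$, where $\gamma_{xy}$ denotes the unique minimal geodesic inside the ball. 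Since the balls are contractible, every homotopy class of paths from $p'$ to $q'$ arises this way from exactly one $[\sigma]$. The preimage of $B_p\times B_q$ is therefore the disjoint union over $[\sigma]$ of sheets, each mapped homeomorphically onto $B_p\times B_q$.

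Next I will treat $\mathscr{L}^{k+1}L$. Given $(x_0,\dots,x_k)\in L^{k+1}$, an element of the fibre is a tuple $(c_0,\dots,c_k)$ with $c_i\in(\textsf{s},\textsf{t})^{-1}(x_i,x_{i+1})$, indices taken mod $k+1$. The source and target conditions are then automatic, because $\ev$ records exactly the $\textsf{s}(c_i)$ and $\textsf{t}(c_i)=\textsf{s}(c_{i+1})$. In other words, $\mathscr{L}^{k+1}L$ is the pullback of the product covering
\[
(\Pi_1 L)^{k+1}\xrightarrow{\prod(\textsf{s},\textsf{t})}(L\times L)^{k+1}
\]
along the smooth map
\[
L^{k+1}\to (L\times L)^{k+1},\quad (x_i)\mapsto \big((x_0,x_1),(x_1,x_2),\dots,(x_k,x_0)\big).
\]
Finite products of coverings are coverings, and pullbacks of coverings are coverings. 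Concretely, over $B_{x_0}\times\dots\times B_{x_k}$ the sheets are indexed by tuples of classes $([\sigma_0],\dots,[\sigma_k])$ from the local trivializations above. This shows $\ev$ is a covering.

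For $\Omega^{k+1}_\star L$ the argument is identical, using the map
\[
L^{k}\to (L\times L)^{k+1},\quad (x_1,\dots,x_k)\mapsto\big((\star,x_1),(x_1,x_2),\dots,(x_k,\star)\big),
\]
with the basepoint frozen; this is again a pullback of a covering. The case $k=0$ is a degenerate check: $\Omega^1_\star L\cong\pi_1(L,\star)$ is discrete over a point, and $\mathscr{L}^1L\to L$ is the covering whose fibre over $x$ is $\pi_1(L,x)$.

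The only step with content is the first one: making precise the topology on $\Pi_1 L$, and checking that the transport maps $[\sigma]\mapsto[\sigma]_{p',q'}$ give well-defined, continuous local sections. This relies on $L$ being locally simply connected (indeed locally contractible), so that changing endpoints within a ball does not depend on the choice of path. Everything after that is formal. If the paper intends smoothness, the local sheets also provide the smooth structures, making each $\ev$ a local diffeomorphism.
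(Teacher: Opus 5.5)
Your argument is correct. The key step is realizing $\mathscr{L}^{k+1}L$ and $\Omega^{k+1}_\star L$ as pullbacks of the product covering $\prod(\textsf{s},\textsf{t})\colon(\Pi_1L)^{k+1}\to(L\times L)^{k+1}$ along the cyclic-pair maps. The sheets indexed by tuples of homotopy classes are also what justify the paper's next step, pulling back the smooth structure from $L^{\times(k+1)}$.

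The paper gives no proof of this lemma; it is part of the setup taken from Wang's thesis, so there is no alternative route to compare against. One hypothesis you could state explicitly is that $L$ is connected. This is what makes every fibre of $\ev$ nonempty, and the paper assumes it implicitly.
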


Pulling back the smooth manifold structure on $L^{\times (k+1)}$ (\emph{resp.} $L^{\times k}$), we obtain a smooth manifold structure on $\mathscr{L}^{k+1}L$ (\emph{resp.} $\Omega_\star^{k+1} L$)  of dimension $(k+1)(\dim L)$ (\emph{resp.} $k(\dim L)$), so that each of the evaluation maps \[ \ev_i\colon \mathscr{L}^{k+1}L\to L, \quad \ev_j\colon \Omega_\star^{k+1} L \to L ,\quad\textup{ where } i=0,\dots, k, \textup{ and } j=1,\dots  k  \]
is a smooth map.

We now specify the structures of $\mathscr{L}^{k+1} L$ and $\Omega_\star^{k+1} L$ as differentiable spaces:

\begin{definition}[Differentiable spaces structures on $\mathscr{L}^{k+1}L$ and $\Omega^{k+1}_\star L$]\label{Defn: DiffSpaceOnLoops}For each $k\in \Z_{\geq 0}$,
\begin{itemize}
	\item For $\Omega^{k+1}_\star L$, we regard it as a differentiable space simply using its smooth structure (i.e. a map $\varphi\colon U\to \Omega^{k+1}_\star L$ is a plot if $\varphi$ is smooth; see Example \ref{Ex: DiffStr on Manifolds}).
	\item  For $\mathscr{L}^{k+1} L$, we need an additional constraint: a map $\varphi\colon U\to \mathscr{L}^{k+1}L$ is a plot if 
\begin{enumerate}
	\item $\varphi$ is smooth;
	\item $\ev_0\circ \varphi\colon U\to L$ is a submersion.
\end{enumerate}
\end{itemize}
\end{definition}
This allows us to define, for each $k\in \Z_{\geq 0}$, the de Rham chain complexes $(C_*^\dR(\mathscr{L}^{k+1} L), \partial^\dR)$ and $(C_*^\dR(\Omega_\star^{k+1} L), \partial^\dR)$.

\begin{lemma}\label{Lm: dRchainComputesHomology}
	Let $\mathcal{X}$ denote either of the symbols $\mathscr{L}$ or $\Omega_\star$.
	Then, for each $k\in\Z_{\geq 0}$,
	$(C_*^\dR(\mathcal{X}^{k+1} L), \partial^\dR)$ computes the ordinary homology $H_*(\mathcal{X}^{k+1} L)$.
\end{lemma}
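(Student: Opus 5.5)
By Lemma \ref{Lm: WangLoopSmooth}, $\mathcal{X}^{k+1}L$ is a finite-dimensional smooth manifold: a covering space of $L^{\times(k+1)}$, resp. $L^{\times k}$. It may be non-compact and has possibly infinitely many components, one for each choice of homotopy classes of the paths $c_i$. So the statement reduces to the known fact that Irie's de Rham chain complex of a manifold computes its singular homology (\cite{Irie1}, the result that $H_*(C^\dR_*(M))\cong H_*(M)$ for a smooth manifold $M$ with plots the smooth maps). The only extra point is the submersion condition on $\ev_0\circ\varphi$ in the free case.

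\textbf{Step 1: the based case.} Here $\Omega^{k+1}_\star L$ is given exactly the differentiable structure of Example \ref{Ex: DiffStr on Manifolds}, so I apply Irie's theorem directly. Both sides commute with disjoint unions over connected components: a de Rham chain has compact support, so its image meets only finitely many components, and the relation \eqref{Eq: dRChainRelation} respects this splitting. Hence the result holds even with infinitely many components. Non-compactness is handled in Irie's framework because plots carry compactly supported forms, so we get ordinary rather than Borel–Moore homology. I would briefly recall the mechanism: a map from de Rham chains to compactly supported currents, or equivalently a Poincaré-duality-type comparison with singular chains via smooth triangulations, which is natural and an isomorphism on homology.

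\textbf{Step 2: the free case.} Let $C^{\dR,\mathrm{sub}}_*$ denote the de Rham chains built from plots with $\ev_0\circ\varphi$ a submersion. I would show the inclusion $C^{\dR,\mathrm{sub}}_*(\mathscr{L}^{k+1}L)\to C^\dR_*(\mathscr{L}^{k+1}L)$, the latter taken with all smooth plots, is a quasi-isomorphism. This is the main obstacle.

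The approach follows Irie \cite{Irie1,Irie2}, where the same submersivity constraint is imposed on the evaluation at marked points. Choose a smooth family of diffeomorphisms of $L$ parametrized by a ball $B$: for instance, flows of vector fields spanning $TL$ at every point, giving $\Phi\colon B\times L\to L$ with $\Phi(b,\cdot)$ a diffeomorphism, $\Phi(0,\cdot)=\id$, and $b\mapsto\Phi(b,x)$ a submersion at $b=0$ for all $x$.

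Because $\mathscr{L}^{k+1}L\to L^{k+1}$ is a covering, the diagonal action of $\Phi(b,\cdot)$ lifts uniquely to $\mathscr{L}^{k+1}L$ by path-lifting from $b=0$. Define
\[
(U,\varphi,\omega)\mapsto (B\times U,\ \Phi\cdot\varphi,\ \rho\times\omega),
\]
where $\rho$ is a bump form of total integral $1$ supported near $0$. Then $\ev_0$ of the new plot is a submersion after shrinking $B$, so this map lands in $C^{\dR,\mathrm{sub}}_*$. The family $t\rho$-rescaling, or the homotopy $\Phi(sb,\cdot)$ with $s\in[0,1]$, gives a chain homotopy on $C^\dR_*$ between this smoothing map and the identity. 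On the subcomplex the same homotopy stays inside $C^{\dR,\mathrm{sub}}_*$, since composing with a submersion from $[0,1]\times B\times U$ preserves submersivity when $\ev_0\circ\varphi$ already was a submersion.

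Both compositions are therefore homotopic to the identity, so the inclusion is a quasi-isomorphism. The care needed is checking compatibility with the relation \eqref{Eq: dRChainRelation} and the signs in \eqref{Eq: DefnPartialDR}, which is routine. Combining Step 2 with Step 1 applied to the manifold $\mathscr{L}^{k+1}L$ gives the claim.
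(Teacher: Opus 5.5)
Your proposal is correct and follows the same route as the paper. The based case is Irie's Theorem 5.1 applied to the manifold $\Omega^{k+1}_\star L$, and the free case reduces to showing that requiring $\ev_0\circ\varphi$ to be a submersion does not change homology. The paper settles that reduction by citing the finite-dimensional analogue of Lemma 7.7 of \cite{Irie1}; you carry it out explicitly with the smoothing map $(U,\varphi,\omega)\mapsto(B\times U,\Phi\cdot\varphi,\rho\times\omega)$. Of your two suggested homotopies, use the one through $\Phi(sb,\cdot)$, not the $t\rho$-rescaling. Since de Rham chain domains have no boundary, realize it on $\mathbb{R}\times B\times U$ with a compactly supported form in $s$ whose derivative is concentrated where the reparametrized family is locally constant.
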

We will be a bit sketchy in the proof about the $\mc{X}=\ms{L}$ case, since strictly speaking we only need the $\mc{X}=\Omega_\star$ part of the lemma for this paper.
\begin{proof}
	For $\mc{X}=\Omega_\star$, this is Theorem 5.1 of \cite{Irie1} since $\Omega^{k+1}_\star L$ is an oriented smooth manifold and the plots we used are just smooth maps.
	For $\mc{X}=\ms{L}$, since the plots used to define the de Rham chain complex need to satisfy additionally that the compositions with $\ev_0\colon \ms{L}^{k+1} L\to L$ are submersions (condition (2) in Definition \ref{Defn: DiffSpaceOnLoops}), we also need to show that the chain complex is quasi-isomorphic to the one defined using all smooth maps $U\to \ms{L}^{k+1}L$ as plots. This is a finite-dimensional analogue of Lemma 7.7 in \cite{Irie1} and the proof is completely analogous.
\end{proof}

\bigskip

Still following \cite{WangThesis},
we then construct cosimplicial spaces $\mc{X}L$ as a collection of spaces $\{\mc{X}^{k+1} L\}_{k\in \Z_{\geq 0}}$ together with the following structure maps (where $\mathcal{X}$ denotes either $\mathscr{L}$ or $\Omega_\star$):
\begin{align*}
	\delta_i &\colon \mathcal{X}^{k}L\to \mathcal{X}^{k+1} L; \quad \delta_i(c_0,\dots, c_{k-1}):= \begin{cases}
		(c_0,\dots, c_{i-1},\underline{\textsf{s}(c_i)}, c_i,\dots, c_{k-1}), & 0 \leq i \leq k-1\\
		(c_0,\dots, c_{k-1}, \underline{\textsf{t}(c_{k-1})}), & i=k
	\end{cases}\\
	\sigma_i &\colon \mathcal{X}^{k}L\to \mathcal{X}^{k+1}L;\quad \sigma_i(c_0,\dots, c_{k+1}) := (c_0,\dots, c_i*c_{i+1},\dots, c_{k+1}), \quad i=0,\dots, k,
\end{align*}
where for a point $y\in L$, $\underline{y}$ denotes the constant path at $y$, and $*$ denotes composition (see Definition \ref{Defn: PathFundGroupoid}).

\bigskip

For each $a\in H_1(L;\Z)$, recall that we define $\mathscr{L}(a)$ to be the component in $\mathscr{L}L$ consisting of loops whose homology class is $a$, and $\Omega_\star(a)$ to be the component in $\Omega_\star L$ consisting of loops whose homology class is $a$. 

Similarly, for each element $(c_0,\dots, c_k)$ in $\mathscr{L}^{k+1} L$ or $\Omega_\star^{k+1} L$, there is a well-defined homology class $a\in H_1(L)$ associated to $c_0*\dots * c_k\in \Pi_1L$ based at $\textsf{s}(c_0)=\textsf{t}(c_k)$, and denote by $\mathscr{L}^{k+1}(a)$ and $\Omega^{k+1}_\star (a)$ accordingly.

\begin{definition}
For each $a\in H_1(L;\Z)$ and $k\in \Z_{\geq 0}$, we define
\begin{align*}
	C^\mathscr{L}(a,k)_*:= C^\dR_{*+\dim L + \mu(a) + k - 1}(\mathscr{L}^{k+1	}(a))
\end{align*}
and
\begin{align*}
	C^{\Omega_\star}(a,k)_*:= C^\dR_{*+\mu(a)+k}(\Omega_\star ^{k+1}(a)).
\end{align*}
The \emph{closed-string state space} is then defined as
\[ \CFr:= \bigoplus_{a\in H_1(L;\Z)} \underbrace{\prod_{k\in \Z_{\geq 0} } C^\mathscr{L}(a,k)_*}_{=: \CFr(a)}\]
and the \emph{open-string state space} is defined as
\[ \CBa:= \bigoplus_{a\in H_1(L;\Z)}\underbrace{\prod_{k\in \Z_{\geq 0} } C^{\Omega_\star}(a,k)_*}_{=: \CBa(a)}.\]
\end{definition}

\begin{notation}\label{Ntn: DegConvert}
	To clarify the notation on grading, recall that (see Notation \ref{Ntn: degx}) for an element $C_\ell^\dR(X)$, we use the notation $\deg x = \ell$; more concretely given a de Rham chain $x=[(U,\varphi,\omega)]$, its degree $\deg x := \dim U - |\omega|$ (where $|\omega|$ is the degree of $\omega$ as a differential form). In contrast, for an (homogeneous) element $x\in C_\ell^\mathcal{X}$, we use the notation $|x|=\ell$. Thus, in particular,
	\begin{itemize}
		\item For $x\in \CFr$, the $(a,k)$-component $x(a,k)$ has \begin{equation}\label{Eq: DegConvertFree}\deg x(a,k) = |x|+\dim L +\mu(a) + k -1; \end{equation}
		\item For $\alpha\in \CBa$, the $(a,k)$-component $\alpha(a,k)$ has \begin{equation}\label{Eq: DegConvertBased}
				 \deg \alpha(a,k) = |\alpha| + \mu(a) + k. 
			\end{equation}
	\end{itemize}
\end{notation}
\begin{remark}
	The grading on $\CFr$ is defined to be consistent with that in \cite{Irie2}. The same grading-shift convention (with a Riemann-Roch formula term appearing to homogenize the degrees of the operations) appears also in e.g. \cite{SullivanSigma}, in the context of closed pseudo-holomorphic curves.
\end{remark}

The differentials on both of the state spaces $\CFr,\CBa$ are defined by \begin{equation}\label{Eq: differential}\partial = \partial^0 + \partial^1\end{equation} where
\begin{equation}\label{Eq: differentialExplicit}
	(\partial^0 x)(a,k):= \partial^\dR(x(a,k)),\quad (\partial^1 x)(a,k):= (-1)^{\dim L + |x|}\sum_{i=0}^k (-1)^i(\delta_i)_*(x(a,k-1)).
\end{equation}

It is clear that $\partial$ preserves the decompositions \[ \CFr = \bigoplus_{a\in H_1(L;\Z)} \CFr(a),\quad \CBa = \bigoplus_{a\in H_1(L;\Z)} \CBa(a). \]

\bigskip

To state the result in \cite{WangThesis}  that $\CFr$ and $\CBa$ indeed compute the correct homology groups,
we first need to define the differentiable space structures (and thus de Rham chain complexes) on the $C^\infty$ free and based loop spaces as in Example 4.2 (ii) of \cite{Irie1} and show that $C^\dR_*(\mathscr{L} L)$, $C_*^\dR(\Omega_\star L)$ compute the ordinary homologies of the spaces (in a way similar to section 6 of \cite{Irie1}).
We then define the chain complexes, for each $a\in H_1(L;\Z)$,
\begin{gather*}
	C_*^\Delta(\ms{L}(a)):= \left( \prod_{k\geq 0} C^\dR_{*+\dim L + \mu(a) + k - 1}(\ms{L}(a)\times \Delta^k), \partial\right),\\
	C_*^\Delta (\Omega_\star(a)):= \left( \prod_{k\geq 0} C^\dR_{*+\mu(a)+k}(\Omega_\star (a)\times \Delta^k),\partial \right),
\end{gather*}
coming from the cosimplicial structure given by the standard $k$-simplices \[\Delta^k:= \{ (t_1,\dots, t_k)\in \R_{\geq 0}^{k}\mid 0\leq t_1\leq \dots \leq t_k \leq 1\} .\]
For each $k\in \Z_{\geq 0}$, $a\in H_1(L;\Z)$, and $\mc{X}\in \{\ms{L},\Omega_\star\}$, define
\begin{align*}
	e_k\colon \mathcal{X}(a)\times \Delta^k &\to \mathcal{X}^{k+1} (a),\\ \quad  (\gamma, t_1,\dots, t_k)&\mapsto \big((\gamma(t_i), \gamma(t_{i+1}), [\gamma|_{[t_i,t_{i+1}]}] \big)_{0\leq i \leq k} \in \mathcal{X}^{k+1} (a) \subset (\Pi_1 L)^{k+1}.
\end{align*}
This induces a cosimplicial map for each $a\in H_1(L;\Z)$:
\[
	e_*\colon C_*^\Delta(\mathcal{X}(a))\to C_*^\mathcal{X}(a).
\]

On the other hand, we have the chain maps
\begin{gather*}	
	\pr_0 \colon (C_*^\Delta(\ms{L}(a)), \partial)\to (C_{*+\dim L + \mu(a) - 1}^\dR(\ms{L} (a)),\partial^\dR) ,\quad (x_k)_{k\geq 0} \mapsto x_0; \\
	\pr_0 \colon (C_*^\Delta(\Omega_\star(a)), \partial)\to (C_{*+ \mu(a)}^\dR(\Omega_\star (a)),\partial^\dR) ,\quad (x_k)_{k\geq 0} \mapsto x_0.
\end{gather*}

The following theorem then follows from Theorem 2.2.1 and Lemma 2.2.3 in \cite{WangThesis} and section 6 of \cite{Irie1} (Section 6 of \cite{Irie1} shows that de Rham chains on the $C^\infty$-free loop spaces compute ordinary homology, but the argument can be adapted to based loop spaces in exactly the same way):
\begin{theorem}\label{Thm: cosimpHlgy}
	Both of the maps $e_*$ and $\pr_0$ are quasi-isomorphisms, for either $\mathcal{X} = \mathscr{L}$ or $\Omega_\star$.
	Therefore 
	\begin{gather*}
		H_*(\CFr(a), \partial) \cong H_{*+\dim L + \mu(a) -1}(\ms{L}(a));\\
		H_*(\CBa(a), \partial) \cong H_{*+ \mu(a)}(\Omega_\star(a)).
	\end{gather*}
\end{theorem}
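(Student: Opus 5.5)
The plan is to prove the two quasi-isomorphism claims separately and then compose them. Fix $a\in H_1(L;\Z)$ and $\mc{X}\in\{\ms{L},\Omega_\star\}$. Both $C^\Delta_*(\mc{X}(a))$ and $C^\mc{X}_*(a)$ are totalizations, with products over $k$, of cosimplicial chain complexes: $k\mapsto C^\dR_*(\mc{X}(a)\times\Delta^k)$ and $k\mapsto C^\dR_*(\mc{X}^{k+1}(a))$. The differential is $\partial^0+\partial^1$ as in \eqref{Eq: differentialExplicit}. The degree shifts by $\mu(a)$ and $\dim L-1$ are uniform in $k$ apart from the $+k$, which is exactly the shift built into totalization. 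So the shifts play no role, and I would work with unshifted totalizations.

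\textbf{Step 1: $\pr_0$ is a quasi-isomorphism.} I would filter $C^\Delta_*(\mc{X}(a))$ by the cosimplicial degree $k$. Since the total complex is a product, this filtration is complete and exhaustive. The associated graded complex in degree $k$ is $C^\dR_*(\mc{X}(a)\times\Delta^k)$ with differential $\partial^\dR$. The homotopy invariance of de Rham chains (Irie, section 6), adapted to the based and free loop spaces and combined with the contractibility of $\Delta^k$, identifies this with $H_*(\mc{X}(a))$ for every $k$. On the $E_1$ page, the map $\partial^1$ is then the alternating sum of the coface maps of a constant cosimplicial object. That sum is $0$ or the identity according to parity, so $E_2$ is concentrated in $k=0$. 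Because the filtration is complete and $E_2$ is concentrated in a single column, the spectral sequence converges. Hence $\pr_0$ induces an isomorphism onto $H_*(C^\dR(\mc{X}(a)))$, and Irie's section 6 identifies the latter with $H_*(\mc{X}(a))$.

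\textbf{Step 2: $e_*$ is a quasi-isomorphism.} I would use the same filtration on both sides; $e_*$ preserves it because $e_k$ is compatible with the cofaces $\delta_i$. On the $E_1$ page, Lemma \ref{Lm: dRchainComputesHomology} gives $H_*(\mc{X}^{k+1}(a))$ on the target side. By Lemma \ref{Lm: WangLoopSmooth}, $\mc{X}^{k+1}(a)$ is a covering of $L^{\times(k+1)}$ (respectively $L^{\times k}$). So the target $E_1$ page is the cosimplicial homology of Wang's cobar-type model, with cofaces inserting constant paths. The source $E_1$ page is $H_*(\mc{X}(a))$ in each column, and $e_k$ sends a loop to its sequence of path segments.

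This is where I would invoke Wang's Theorem 2.2.1 and Lemma 2.2.3. They state that the totalization of $\{\mc{X}^{k+1}\}$ models the free or based loop space; equivalently, the map $e$ from the constant cosimplicial loop space into this model is a weak equivalence of totalizations. This is the non-simply-connected Adams cobar theorem, and it is what makes the $\pi_1$-decorations essential. From it I would deduce that $e_*$ induces an isomorphism on the abutments.

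\textbf{Main obstacle.} The hard part is convergence and comparison in the non-simply-connected setting. The $E_1$ pages of the two sides do not match column by column, so a naive spectral sequence comparison fails. One needs Wang's argument, which uses the fundamental groupoid decorations in place of connectivity, to get a quasi-isomorphism of totalizations. The plan is to quote it after checking that his cosimplicial conventions (cofaces via constant paths, product totalization) agree with the $\delta_i$ and signs here. One must also confirm that de Rham chains, with the extra submersion condition on plots for $\ms{L}^{k+1}$, form a homology theory satisfying the hypotheses he needs: homotopy invariance, Mayer–Vietoris, and agreement with singular homology on manifolds. Those hypotheses are provided by Lemma \ref{Lm: dRchainComputesHomology} and Irie's section 6.

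\textbf{Conclusion.} Composing $e_*$ with $\pr_0$ and restoring the degree shifts gives $H_*(\CFr(a),\partial)\cong H_{*+\dim L+\mu(a)-1}(\ms{L}(a))$ and $H_*(\CBa(a),\partial)\cong H_{*+\mu(a)}(\Omega_\star(a))$.
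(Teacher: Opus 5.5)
Your proposal takes essentially the same route as the paper, which deduces the theorem directly from Wang's Theorem 2.2.1 and Lemma 2.2.3 together with Irie's Section 6; your explicit spectral-sequence argument for $\pr_0$ correctly unpacks part of what those citations provide. The one convention check you left open is settled in the paper: the sign of $\partial^1$ in \eqref{Eq: differentialExplicit} can differ from Wang's, depending on $\dim L$. In that case, multiplying the $k$-th component by $(-1)^k$ gives a chain isomorphism between the two complexes, so the cited results still apply.
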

Technically the $(\partial^1 x)$ term  in \eqref{Eq: differentialExplicit} may have the opposite sign as that in \cite{WangThesis} depending on $\dim L$; however in dimensions where this happen, the statement still holds because the chain map \[\prod_{k\in \Z_{\geq 0}} C_*^\mc{X}(a,k)_*\xrightarrow{(-1)^k}\prod_{k\in \Z_{\geq 0}} C_*^\mc{X}(a,k)_*\] is an isomorphism of chain complexes.

\subsection{String topology operations on de Rham chains}\label{Subsc: StringOper}
In this section we construct various string topology operations on de Rham chains on the components of the cosimplicial chain complexes $C_*^\dR(\ms{L}^{k+1}(a))$ and $C_*^\dR(\Omega_\star^{k+1}(a))$.
\subsubsection{Closed-string}
Define the (closed-string) concatenation map, for $k_1,k_2\in \Z_{\geq 0}$, $a_1,a_2\in H_1(L;\Z)$, and $i=1,\dots, k_1$, 
\[
	\conc_{k_1,i,k_2}\colon  \mathscr{L}^{k_1+1}(a_1) \fiberprod{\ev_i}{\ev_0} \mathscr{L}^{k_2+1}(a_2) \to \mathscr{L}^{k_1+k_2}(a_1+a_2)
\]
by 
	\begin{align*}
		&\conc_{k_1,i,k_2}\big((c_0,\dots, c_{k_1}), (c_0',\dots, c_{k_2}')\big) :=\\
		& \begin{cases}
			(c_0,\dots, c_{i-2}, c_{i-1}*c_0', c_1',\dots, c'_{k_2-1},c'_{k_2}* c_i, c_{i+1},\dots, c_{k_1}), & k_2\geq 1\\
			(c_0, \dots, c_{i-2}, c_{i-1}*c_0'*c_i,c_{i+1},\dots, c_{k_1}), & k_2=0
		\end{cases}.
	\end{align*}  
	
	Roughly speaking, $\conc_{k_1,i,k_2}$ looks for where the $i$-th marked point of the first loop coincides with the $0$-th marked point of the second loop, and concatenate the two at the coinciding point (compare with the construction of the loop product).
	Henceforth we will often write $\conc$ instead of $\conc_{k_1,i,k_2}$ whenever the context is clear.

Let $x\in C_{*}^\dR( \mathscr{L}^{k_1+1}(a_1))$ and $y\in C_{*}^\dR( \mathscr{L}^{k_2+1}(a_2))$.
For each $i=1,\dots, k_1$, we define a chain $x\circ_i^\mathscr{L} y\in  C^\dR_{\deg x+ \deg y+\dim L}(\mathscr{L}^{k_1+k_2}(a_1+a_2))$ as (up to signs) the composition of the chain-level fibre product and the chain-level pushforward along $\conc_{k_1,i,k_2}$. Explicitly, if $x=[(U_1,\varphi_1,\omega_1)]$, $y=[(U_2,\varphi_2, \omega_2)]$, then, using the abbreviation \[U_1\fiberprod{i}{0} U_2:= U_1\fiberprod{\ev_i\circ \varphi_1}{\ev_0\circ \varphi_2} U_2\] (this is a transverse fibre product under our assumption that $\ev_0\circ \varphi_2$ is a submersion; see Definition \ref{Defn: DiffSpaceOnLoops}), define
\begin{equation} \label{Eq: PartialLoopPre}
x\circ_i^\mathscr{L} y:= (-1)^{(\dim U_1-|\omega_1| -\dim L)|\omega_2|} [(U_1\fiberprod{i}{0} U_2, \varphi_1\circ_i^\mathscr{L} \varphi_2, \omega_1\times \omega_2|_{U_1{}_i\times_0 U_2})].\end{equation}
where $\varphi_1\circ_i^\mathscr{L} \varphi_2$ denotes the composition
\[
	U_1\fiberprod{i}{0} U_2 \xrightarrow{\varphi_1\times \varphi_2}\mathscr{L}^{k_1+1}(a_1)\fiberprod{\ev_i}{\ev_0} \mathscr{L}^{k_2+1}(a_2) \xrightarrow{\conc_{k_1,i,k_2}} \mathscr{L}^{k_1+k_2}(a_1+a_2).
\]
We shall frequently abuse notation by making the restriction of $\omega_1\times \omega_2$ to the correct domain $U_1\fiberprod{i}{0} U_2$ implicit.

Hence this induces a map 
\[
	\circ_i^\mathscr{L}\colon C_{i_1}^\dR(\mathscr{L}^{k_1+1}(a_1))\otimes C^\dR_{i_2}(\mathscr{L}^{k_2+1}(a_2))\to C^\dR_{i_1+i_2-\dim L}(\mathscr{L}^{k_1+k_2}(a_1+a_2))
\]
for $k_1,k_2\in \Z_{\geq 0}$, $a_1,a_2\in H_1(L;\Z)$, and $i=1,\dots, k_1$.

See Appendix \ref{Subsubsc: SignClosedString} for properties of $\circ_i^\ms{L}$.

\subsubsection{Open-string}
Define the (open-string) concatenation map, for $k_1,k_2\geq 0$ and $a_1,a_2\in H_1(L;\Z)$,
\[
	*\colon \Omega^{k_1+1}_\star (a_1)\times \Omega^{k_2+1}_\star (a_2)\to \Omega^{k_1+k_2+1}_\star (a_1+a_2)
\]
by
\[
	(c_0,\dots, c_{k_1}) * (c_0',\dots, c_{k_2}')\mapsto (c_0,\dots, c_{k_1-1}, c_{k_1}*c_{0}', c_1', \dots, c_{k_2}').
\]
This induces a map of de Rham chain complexes
\[
	\bullet\colon  C_{i_1}^\dR(\Omega^{k_1+1}_\star(a_1))\otimes C_{i_2}^\dR(\Omega_\star^{k_2+1}(a_2))\to C_{i_1+i_2}^\dR(\Omega_\star^{k_1+k_2+1}(a_1+a_2)).
\]
We spell this out more explicitly to fix the sign: for $\alpha\in C_{i_1}^\dR(\Omega^{k_1+1}_\star(a_1))$ given by $\alpha=[(V_1,\psi_1,\eta_1)]$, $\beta\in C^\dR_{i_2}(\Omega_\star^{k_2+1}(a_2))$ given by $\beta = [(V_2,\psi_2,\eta_2)]$, define
\begin{align}\label{Eq: PartialPontryagin}
	\alpha \bullet\beta := (-1)^{(\dim V_1- |\eta_1|)|\eta_2|}[(V_1\times V_2, \psi_1\bullet \psi_2, \eta_1\times \eta_2)]
\end{align}
where $\psi_1\bullet\psi_2$ is the abbreviation for the composition
\[
	V_1\times V_2 \xrightarrow{\psi_1\times \psi_2} \Omega^{k_1+1}_\star(a_1)\times \Omega_\star^{k_2+1}(a_2)\xrightarrow{*} \Omega_\star^{k_1+k_2+1}(a_1+a_2).
\]

See Appendix \ref{Subsubsc: SignOpenString} for properties of $\bullet$.

\subsubsection{Open-closed string}
Finally, define the (open-closed) concatenation map, for $k_1,k_2\in \Z_{\geq 0}$, $a_1,a_2\in H_1(L;\Z)$, and $i=1,\dots, k_1$, 
\[\concOmega_{k_1,i,k_2}\colon \Omega^{k_1+1}_\star (a_1) \fiberprod{\ev_i}{\ev_0}\mathscr{L}^{k_2+1}(a_2)\to \Omega^{k_1+k_2}_\star (a_1+a_2)
\] by exactly the same formula as before:
	\begin{align*}
		&\concOmega_{k_1,i,k_2}\big((c_0,\dots, c_{k_1}), (c_0',\dots, c_{k_2}')\big) :=\\
		& \begin{cases}
			(c_0,\dots, c_{i-2}, c_{i-1}*c_0', c_1',\dots, c'_{k_2-1},c'_{k_2}* c_i, c_{i+1},\dots, c_{k_1}), & k_2\geq 1\\
			(c_0, \dots, c_{i-2}, c_{i-1}*c_0'*c_i,c_{i+1},\dots, c_{k_1}), & k_2=0
		\end{cases}.
	\end{align*}
The same fibre product procedure therefore yields this operation on the chain level. 
Explicitly, if $\alpha=[(V,\psi, \eta)]\in C_*^\dR(\Omega_\star^{k_1+1}(a_1))$ and  $x=[(U,\varphi, \omega)] \in C_*^\dR(\mathscr{L}^{k_2+1}(a_2))$, define 
\begin{align}\label{Eq: PartialOCprod}
	\alpha\circ_i^{\Omega_\star} x:= (-1)^{(\dim V-|\eta|)|\omega|}[( V\fiberprod{i}{0}U, \psi \circ_i^{\Omega_\star} \varphi, \eta\times \omega )]
\end{align}
where $\psi\circ_i^{\Omega_\star} \varphi$ is short for
\[
	V\fiberprod{i}{0} U \xrightarrow{\psi\times \varphi}\Omega_\star^{k_1+1}(a_1)\fiberprod{\ev_i}{\ev_0}\mathscr{L}^{k_2+1}(a_2)\xrightarrow{\concOmega_{k_1,i,k_2}}\Omega_\star^{k_1+k_2}(a_1+a_2),
\]
and we follow the previous abbreviation of \[V\fiberprod{i}{0} U:= V\fiberprod{\ev_i\circ \psi}{\ev_0\circ \varphi} U,\] as well as the abuses of notation of making the restriction of $\eta\times \omega$ to the domain $V\fiberprod{i}{0}U$ implicit.

This induces the chain-level open-closed string topology operation 
\[
	\circ_i^{\Omega_\star} \colon C_{i_1}^\dR(\Omega_\star^{k_1+1}(a_1))\otimes C_{i_2}^\dR(\mathscr{L}^{k_2+1}(a_2))\to C_{i_1+i_2-\dim L}^\dR(\Omega_\star^{k_1+k_2}(a_1+a_2))
\]
for $k_1,k_2\in \Z_{\geq 0}$, $a_1,a_2\in H_1(L;\Z)$, and $i=1,\dots, k_1$.

See Appendix \ref{Subsubsc: OCSignAppendix} for properties of $\circ_i^{\Omega_\star}$.

\subsubsection{The anomaly map}
Define, for each $a\in H_1(L;\Z), k\in \Z_{\geq 0}$,
\[
	\mathfrak{o}\colon C_{*+\dim L}^\dR(\mathscr{L}^{k+1}(a))\to C_{*}^\dR(\Omega_{\star}^{k+1}(a))
\]as follows (here the notation $\mathfrak{o}$ is meant for ``obstruction'', similar to that in section 3.6.2 of \cite{FOOO1}).
Given a de Rham chain $x:=[(U\xrightarrow{\varphi} \mathscr{L}^{k+1}(a),\omega)]\in C_{*}^\dR(\mathscr{L}^{k+1}(a))$, 
define \begin{align}\label{Eq: AnomalyMap}
	\mathfrak{o}(x):=(-1)^{(\deg x)+1}[\big((\ev_0\circ\varphi)^{-1}(\star)\xrightarrow{\varphi}\Omega_\star^{k+1}(a), \omega\big)] \in C_{*}^\dR(\Omega_\star^{k+1}(a)).
\end{align}
Here, since we have assumed $\ev_0\circ \varphi\colon U\to L$ is a submersion (see Definition \ref{Defn: DiffSpaceOnLoops}), $(\ev_0\circ \varphi)^{-1}(\star)$ is transversely cut out. 
Also, $\varphi$ maps points in $(\ev_0\circ\varphi)^{-1}(\star)\subset U$ to loops in $\mathscr{L}^{k+1} (a)$ where the starting point is $\star$. Thus the image under $\varphi$ of $(\ev_0\circ \varphi)^{-1}(\star)$ does land in $\Omega^{k+1}_\star(a)$ as claimed.

See Appendix \ref{Subsubsc: SignAnomaly} for properties of $\mf{o}$.

\subsection{Resulting structures on the state spaces}\label{Subsc: StringOperState}
We now describe the algebraic structures on the state spaces $C_*^\mathscr{L}$ and $C_*^{\Omega_\star}$ by putting together the operations defined on components of the cosimplicial chain complexes in the previous section.

\subsubsection{The closed- and open-string dg algebras}

\begin{definition}
We define the following operations on $\CFr$ and $\CBa$ respectively:

\textbf{{(Loop bracket)}}
For $x,y\in C_*^\mathscr{L}$, define the pre-Lie product
	\[
		\circ\colon C_*^\mathscr{L}\otimes C_*^\mathscr{L}\to C_*^\mathscr{L}
	\] given by \begin{equation}\label{Eq: LoopPreBracket}
	(x\circ y)(a,k):= \sum_{\substack{k'+k''=k+1\\1\leq i\leq k'\\ a' + a''=a}}(-1)^{(i-1)(k''-1)+(k'-1)(|y|+1+k'')} x(a',k')\circ_i y(a'',k'').
\end{equation}
and the \emph{loop bracket} 
\[
	[\,,\,]\colon C_*^\mathscr{L}\otimes C_*^\mathscr{L}\to \CFr
\]
given by
\begin{equation}\label{Eq: LoopBracket}
	[x,y]:= x\circ y -(-1)^{|x||y|}y\circ x.
\end{equation} 

\textbf{{(Pontryagin product)}}
 For  $\alpha,\beta \in \CBa$, define the \emph{Pontryagin product} 
\[
	\bullet\colon \CBa \otimes \CBa \to \CBa
\]
given by
\begin{equation}\label{Eq: PontryaginProd}
	(\alpha \bullet \beta)(a,k) := \sum_{\substack{k'+k''=k\\a'+a''=a}} (-1)^{k'|\beta|}\alpha(a',k')\bullet \beta(a'',k'')
\end{equation}
\end{definition}

\begin{remark}
	The signs in the formulas here do not involve the Maslov classes, since in our case $L$ is orientable and $\mu_L\in 2\Z$, which do not contribute to the signs.
\end{remark}

\begin{definition}\label{Defn: CBaUnit}
	We define the element \[ [\underline{\star}] \in \Cba{0}(0) = \prod_{k\in \Z_{\geq 0}}C^\dR_k(\Omega_\star^{k+1}(0))\]as follows:
  \begin{itemize}
  	\item For $k=0$, consider the map $\pt\to \Omega^1_\star(0)$ where the single point is mapped to the constant based loop $[\underline{\star}]\in \Omega^1_\star(0)\subset \Pi_1 L$. The de Rham chain $[(\pt\to \Omega^1(0); 1\in \mathscr{A}^0_c(\pt))]$ defines a closed cycle in $C^\dR_0(\Omega^1(0))$, which we set to be the $k=0$ component of $[\underline{\star}]$;
  	\item For all $k>0$, set the $k$-the component of $[\underline{\star}]$ to be $0$.
  \end{itemize}
\end{definition}

\begin{lemma}
	[Structures on $C_*^\mathscr{L}$ and $C_*^\Omega$]\label{Lm: StructuresOnStates}
	\begin{enumerate}
		\item $(\CFr, \partial, [,])$ is a dg Lie algebra;
		\item $(\CBa, \partial, \bullet)$ is a dg associative algebra with strict unit $[\underline{\star}]$; 
		\item All the structures are compatible with the decomposition into $a\in H_1(L;\Z)$: more specifically, for $\mathcal{X},\mathcal{X}'$ denoting either $\mathscr{L}$ or $\Omega_\star$, and for any $a,b\in H_1(L;\Z)$, we  have $\partial\colon C_*^\mathcal{X}(a)\to C_{*-1}^\mathcal{X}(a)$, and all the binary operations have $C_*^\mathcal{X}(a)\otimes C_*^{\mathcal{X}'}(b)\to C_*^{\mathcal{X}'}(a+b)$.
	\end{enumerate}
\end{lemma}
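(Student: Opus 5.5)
The plan is to reduce everything to component-level identities for the partial operations $\circ_i^{\mathscr{L}}$ and $\bullet$ on de Rham chains of the cosimplicial pieces $\mathscr{L}^{k+1}(a)$ and $\Omega_\star^{k+1}(a)$. These are the sign-checked properties collected in Appendix \ref{Sec: OCchainSigns}. After that, assembling them with the sign twists in \eqref{Eq: LoopPreBracket}, \eqref{Eq: PontryaginProd} and \eqref{Eq: differentialExplicit} is bookkeeping.

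Part (3) is immediate from the constructions. The maps $\conc$, $*$, $\delta_i$ and $\sigma_i$ add homology classes (concatenation adds, constant paths contribute $0$), and $\partial^\dR$ does not change the plot. So I start with (3).

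For (1), I would proceed in three steps.
\begin{itemize}
\item First prove a Leibniz rule for the de Rham differential: $\partial^\dR(x\circ_i y)=\partial^\dR x\circ_i y\pm x\circ_i\partial^\dR y$. This holds because fibre products along the submersion $\ev_0\circ\varphi_2$ commute with $d$ on $\omega_1\times\omega_2$, and it is compatible with the relation \eqref{Eq: dRChainRelation} because integration along fibres commutes with the pulled-back fibre product.
\item Second, prove the cosimplicial compatibilities between $\delta_j$ and $\circ_i$. For $j$ away from the insertion slot, $(\delta_j)_*x\circ_i y$ equals $(\delta_{j'})_*(x\circ_{i'}y)$ with shifted indices. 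For $j$ near the slot, the face maps of $y$ and $x$ telescope: inserting a constant path at $c_{i-1}*c'_0$ or $c'_{k_2}*c_i$ matches $(\delta_0)_*y$ or $(\delta_{k_2+1})_*y$ inserted into $x$. The upshot is $\partial^1(x\circ y)=\partial^1x\circ y\pm x\circ\partial^1 y$, with the boundary terms cancelling in pairs. This is the standard argument that the Gerstenhaber pre-Lie product on a cosimplicial/operadic complex commutes with the differential.
\item Third, prove the pre-Lie identity. Associativity of fibre products and of groupoid composition gives $(x\circ_i y)\circ_j z=x\circ_i(y\circ_{j-i+1}z)$ for nested insertions. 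For non-nested insertions, $(x\circ_i y)\circ_j z=\pm(x\circ_{j'} z)\circ_{i'} y$. These make the associator of $\circ$ graded-symmetric in $(y,z)$, so $[\,,\,]$ satisfies Jacobi.
\end{itemize}

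For (2) the strategy is parallel but easier, since $\bullet$ is a plain product with no fibre product. The steps are:
\begin{itemize}
\item Associativity follows from associativity of $*$ on $\Pi_1L$ and of the product of forms, with the signs $(-1)^{k'|\beta|}$ checked to compose correctly.
\item Leibniz for $\partial^0$ is the product rule. Leibniz for $\partial^1$ comes from matching $(\delta_i)_*$ on $\alpha\bullet\beta$ with faces of $\alpha$ (for $i\le k'$) or of $\beta$ (for $i\ge k'$). The shared slot $i=k'$ is handled because $\delta_{k'}$ on $\alpha$ (constant path at $\mathsf{t}(c_{k'})=\star$ appended) and $\delta_0$ on $\beta$ give the same chain, and these two terms cancel with opposite signs.
\item Unitality: $[\underline{\star}]$ is concentrated in $k=0$ and concatenation with the constant path is the identity on $\Pi_1L$. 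So $[\underline{\star}]\bullet\beta=\beta=\beta\bullet[\underline{\star}]$ strictly, with sign $+1$ since $|[\underline{\star}]|=0$ and $k'=0$ in the first case. $\partial[\underline{\star}]=0$ because its $\partial^1$ lands in $k=1$ as $(\delta_0)_*-(\delta_1)_*$ of a point, and both give the same constant-path chain.
\end{itemize}

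The main obstacle is the sign verification in (1). The exponent in \eqref{Eq: LoopPreBracket} mixes the shift by $k$ and $\dim L$ from the degree conversion \eqref{Eq: DegConvertFree} with the Koszul signs of \eqref{Eq: PartialLoopPre}. Each of the component identities above must be checked to yield exactly the graded pre-Lie and derivation relations after the shift. I would do this by first converting everything to the $\deg$-grading, then treating $\prod_k C^\dR(\mathscr{L}^{k+1})$ as a (shifted) operadic-type complex. The plan is to imitate the sign computations in \cite{Irie2} (for the analogous closed-string operations) and \cite{WangThesis}, checking the cases $k_2=0$ and $k_2\ge1$ separately.
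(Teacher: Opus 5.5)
Your treatment of (3), of steps one and three of (1), and your overall plan for (2) are fine. The gap is step two of (1): $\partial^1$ is \emph{not} a derivation of the pre-Lie product $\circ$, only of the bracket.

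Expand $(\partial^1x)\circ y$. For $1\le j\le k_1$, the terms $((\delta_j)_*x)\circ_{j+1}y$ and $((\delta_j)_*x)\circ_j y$ cancel against $x\circ_j(\delta_0)_*y$ and $x\circ_j(\delta_{k_2+1})_*y$ in $x\circ\partial^1y$. Every other term matches a term of $\partial^1(x\circ y)$, with two exceptions: $((\delta_0)_*x)\circ_1y$ and $((\delta_{k_1+1})_*x)\circ_{k_1+1}y$. In these, $y$ is inserted at one of the two copies of the $0$-th marked point of $x$. They are the concatenations ``$y$ then $x$'' and ``$x$ then $y$'' at the base marked point, the analogue of the cup product, and nothing cancels them.

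Concretely, take $x,y$ concentrated in $k=0$. Then $x\circ y$, $x\circ\partial y$ and $\partial(x\circ y)$ all vanish. But $(\partial x)\circ y=\pm\bigl(((\delta_0)_*x)\circ_1y-((\delta_1)_*x)\circ_1y\bigr)$ is the difference of the families $c'_0*c_0$ and $c_0*c'_0$ in $\mathscr{L}^1L$. These are distinct chains, lying on different sheets of the covering $\mathscr{L}^1L\to L$, as soon as the two loop classes do not commute in $\pi_1L$. This is Gerstenhaber's classical identity. With $\partial^1$ in the role of the Hochschild differential $[m,-]$, the failure of the derivation property for $\circ$ is the graded commutator of the cup product. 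So the ``standard argument'' you invoke actually says the opposite of what you need.

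To repair your route, prove the Leibniz rule only for $[\,,\,]$. You have
\[
((\delta_0)_*x)\circ_1y=((\delta_{k_2+1})_*y)\circ_{k_2+1}x,\qquad ((\delta_{k_1+1})_*x)\circ_{k_1+1}y=((\delta_0)_*y)\circ_1x,
\]
so after a sign check the leftover terms of $x\circ y$ cancel against those of $(-1)^{|x||y|}y\circ x$.

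The paper avoids this bookkeeping. It shows $\partial^1=-[\wt{\LL},-]$, where $\wt{\LL}$ is, up to sign, the constant triangle $y\mapsto(\underline y,\underline y,\underline y)$ in the $k=2$ slot, and it is a Maurer--Cartan element of $(\CFr,\partial^\dR,[\,,\,])$. Then $\partial=\partial^\dR-[\wt{\LL},-]$ is a twisted differential. The Leibniz rule for the bracket and $\partial^2=0$ both follow from Jacobi and the Maurer--Cartan equation; your plan never checks $\partial^2=0$.

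For (2), your direct check works, up to an index slip. Appending on $\Omega^{k'+1}_\star L$ is $\delta_{k'+1}$, so the cancelling pair is $(\delta_{k'+1})_*\alpha\bullet\beta$ against $\alpha\bullet(\delta_0)_*\beta$. The paper again twists instead: first by $\CO_1(\wt{\LL})$, producing the interior faces, then by the bounding chain $\wt{\star^1}$, producing $\delta_0$ and $\delta_k$.
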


See Appendix \ref{Subsc: ConventionDGalg} for conventions regarding signs of dg algebras, and Appendix \ref{Subsc: dgAlgebraSignApp} the proof of this Lemma. Specifically, see Lemma \ref{Lm: StructuresOnClosedStatesApp} for  (1), and Lemma  \ref{Lm: StructuresOnOpenStatesApp} for (2); the statement in (3) regarding compatibility with the decomposition into $H_1(L;\Z)$ is clear from construction.

\begin{remark}\label{Rmk: RichStructure}
	We remark that there should be rich algebraic structures on $\CFr$ (e.g. a version of Deligne's conjecture in \cite{Irie1}),  but the only part we will use is the Lie bracket.
\end{remark}

\begin{proposition}
The isomorphism \begin{align*}
		\bigoplus_{a\in H_1(L;\Z)} \HBa(a) \cong \bigoplus_{a\in H_1(L;\Z)} H_{*+\mu(a)}(\Omega_\star(a))
	\end{align*}
	in Theorem \ref{Thm: cosimpHlgy} is an isomorphism of graded algebras.
\end{proposition}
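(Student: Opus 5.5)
The isomorphism of Theorem \ref{Thm: cosimpHlgy} is the composite of two quasi-isomorphisms,
\[
C^\Delta_*(\Omega_\star(a)) \xrightarrow{e_*} \CBa(a), \qquad C^\Delta_*(\Omega_\star(a)) \xrightarrow{\pr_0} C^\dR_{*+\mu(a)}(\Omega_\star(a)),
\]
followed by the identification of $H_*(C^\dR(\Omega_\star(a)))$ with singular homology. I will check multiplicativity on each leg separately. For this I will work with the Moore based loop space, so that concatenation is strictly associative and the Pontryagin product on $H_*(\Omega_\star L)$ is induced by a smooth map. I will then equip $C^\Delta_*(\Omega_\star L)=\bigoplus_a C^\Delta_*(\Omega_\star(a))$ with a product of the following form. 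For $x=(x_{k'})$ and $y=(y_{k''})$, the $k$-component of $x\bullet y$ is $\sum_{k'+k''=k}(-1)^{k'|y|}(\mathrm{sh}_{k',k''})_*(x_{k'}\times y_{k''})$. Here $\mathrm{sh}\colon (\Omega_\star\times\Delta^{k'})\times(\Omega_\star\times\Delta^{k''})\to \Omega_\star\times\Delta^{k'+k''}$ concatenates the two Moore loops $\gamma_1,\gamma_2$ of lengths $\ell_1,\ell_2$. It rescales the simplex coordinates into the time intervals occupied by each loop inside $\gamma_1*\gamma_2$, namely $t_i\mapsto \ell_1 t_i$ for the first and $t_j\mapsto \ell_1+\ell_2 t_j$ for the second, and then divides by $\ell_1+\ell_2$. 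Zero-length loops need a small reparametrization convention. Any point constraints can be ignored, because the target $\Omega_\star^{k+1}$ is a genuine manifold and plots are arbitrary smooth maps.

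\textbf{Step 1: $e_*$ is multiplicative on the nose.} The composite $e_{k'+k''}\circ \mathrm{sh}$ sends $((\gamma_1,t'),(\gamma_2,t''))$ to the tuple of path classes of $\gamma_1*\gamma_2$ cut at the rescaled times. This tuple is exactly $e_{k'}(\gamma_1,t')*e_{k''}(\gamma_2,t'')$ in $\Omega^{k'+k''+1}_\star$. The reason is that the last piece of the first tuple, ending at $\star$, is joined to the first piece of the second tuple, starting at $\star$, and this is precisely the open-string map $*$ defined above. So $e_*(x\bullet y)=e_*x\bullet e_*y$ at the chain level, provided the signs agree. The sign $(-1)^{k'|y|}$ in \eqref{Eq: PontryaginProd} was built in to match, and the de Rham sign in \eqref{Eq: PartialPontryagin} is the same Koszul sign on both sides. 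I will verify this by a short Koszul computation using \eqref{Eq: DegConvertBased}.

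\textbf{Step 2: $\pr_0$ is multiplicative.} On $k=0$ components, $\pr_0(x\bullet y)=(\mathrm{sh}_{0,0})_*(x_0\times y_0)=x_0\bullet y_0$, where $\bullet$ on the right is the de Rham Pontryagin product induced by Moore concatenation. So $\pr_0$ is strictly multiplicative. It remains that this product on $C^\dR_*(\Omega_\star L)$ induces the usual Pontryagin product under Irie's isomorphism with singular homology (Section 6 of \cite{Irie1}). That isomorphism is natural for smooth maps and compatible with cross products up to the standard sign, so applying naturality to the concatenation map gives the claim.

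\textbf{Main obstacle.} I expect the main difficulty to be verifying that the product on $C^\Delta_*$ is a chain map, i.e.\ that it is compatible with $\partial=\partial^0+\partial^1$. The coface maps $\delta_i$ must intertwine with $\mathrm{sh}$: the face $t_{k'}=1$ of the first factor and the face $t_1=0$ of the second factor both map to the same seam face. I will check that the extra boundary terms from $\delta_{k'}$ applied to $x$ and $\delta_0$ applied to $y$ combine, with the signs of \eqref{Eq: differentialExplicit}, into the single term $\delta_{k'}$ on the product. This is the same bookkeeping the appendix does for Lemma \ref{Lm: StructuresOnStates}(2), so I will model the sign check on it. If the rescaling causes smoothness problems at zero length, I will fall back on the homotopy-associative standard loop space instead. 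Then only the induced map on homology needs to be multiplicative, and a homotopy between $\mathrm{sh}$ and a reparametrized version suffices.
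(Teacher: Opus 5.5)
Your argument is correct in outline, but it takes a genuinely different route from the paper. The paper proves nothing directly here: it cites Proposition 2.5.1 of \cite{WangThesis}, and adds only that when the sign of $\partial^1$ differs from Wang's, the map $(-1)^k$ on the $k$-th component is an isomorphism of dg associative algebras, so Wang's statement transfers. You instead work entirely in the paper's own conventions, so no sign transfer is needed. You put a product on $C^\Delta_*$ for which $e\circ\mathrm{sh}=*\circ(e\times e)$ holds on the nose and $\pr_0$ is visibly multiplicative, and with your sign $(-1)^{k'|y|}$ the Leibniz rule does hold.

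Your description of the key check is wrong, though. The seam terms $\mathrm{sh}_*((\delta_{p+1})_*x_p\times y_q)$ and $\mathrm{sh}_*(x_p\times(\delta_0)_*y_q)$ are the same chain: a cut at the seam time, which is an interior wall of $\Delta^{p+q+1}$, not a face. They appear with opposite signs and cancel each other; they do not combine into $\delta_{k'}$ of the product. The cofaces of the product come from $\delta_0,\dots,\delta_p$ on $x$ and $\delta_1,\dots,\delta_{q+1}$ on $y$, via $\delta_i\circ\mathrm{sh}=\mathrm{sh}\circ(\delta_i\times\id)$ for $i\le p$ and $\delta_i\circ\mathrm{sh}=\mathrm{sh}\circ(\id\times\delta_{i-p})$ for $i\ge p+1$. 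This is the same cancellation of $\alpha\bullet\star^1\bullet\beta$ that underlies the deformation argument for Lemma~\ref{Lm: StructuresOnOpenStatesApp}.

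The main remaining cost of your route is Step 2. You assert, without proof, that the de Rham--singular comparison for $\Omega_\star L$ is compatible with cross products and pushforwards. This is exactly where the classical Pontryagin product enters, so it needs a real Eilenberg--Zilber or acyclic-models argument for a natural comparison map.

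Finally, Moore loops buy you nothing. The map $e$ only records endpoints and homotopy classes of the pieces, and strict associativity is irrelevant for a statement about homology algebras. What you do need is that concatenation is smooth on plots, for example by imposing flatness at the basepoint; variable length alone does not give this. Also, if the Moore model differs from the one used in Theorem~\ref{Thm: cosimpHlgy}, you would have to identify the two resulting isomorphisms.
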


\begin{proof}
	This is in
	Section 2.5 (Proposition 2.5.1) in \cite{WangThesis}.
	Similar to the remark after Theorem \ref{Thm: cosimpHlgy}, in cases where the $(\partial^1 x)$ term  in \eqref{Eq: differentialExplicit} has the opposite sign as that in \cite{WangThesis}, the chain map \[\bigoplus_{a\in H_1(L;\Z)}\prod_{k\in \Z_{\geq 0}} \CBa(a,k)_*\xrightarrow{(-1)^k}\bigoplus_{a\in H_1(L;\Z)}\prod_{k\in \Z_{\geq 0}} \CBa(a,k)_*\] is an isomorphism of dg associative algebras so the statement still holds.
\end{proof}

These structures are parts (1) and (2) of Theorem \ref{Thm: OCstringPackage}.

\subsubsection{A model for Hochschild cochains}\label{Subsubsc: Hochschild}
We now start to describe the last component of Theorem \ref{Thm: OCstringPackage}, i.e. the construction of the dg Lie algebra homomorphism 
\[
	\CO\colon C_*^\mathscr{L} \to \CC^*(\CBa,\CBa).
\]
In this section we first construct a model of $\CC^*(\CBa,\CBa)$.

We have shown that $(\CBa,\partial, \bullet)$ is a dg associative algebra with unit the constant loop class $[\underline{\star}] \in \Cba{0}$. We now construct a chain model of Hochschild cohomology that is similar to the construction of $C_*^\mathscr{L}$ (in particular, the main difference with the usual construction of Hochschild cochains is that we need to make explicit the decomposition into $a\in H_1(L;\Z)$). 

\begin{remark}\label{Rmk: RichStructure2}
Similar to the situation of $\CFr$ (see Remark \ref{Rmk: RichStructure}), we remark that there are rich algebraic structures on Hochschild cochains of a dg associative algebra (e.g. Gerstenhaber structure on the cohomology in \cite{Gerstenhaber}, and many later work on Deligne's conjecture of a $E_2$-algebra structure), but the only part we will use is the dg Lie algebra on Hochschild cochains.
\end{remark}

For degree and sign conventions, we roughly follow section 2.2 of \cite{Irie1}.

\begin{definition}\label{Defn: HochschildCochains}
	For any $a\in H_1(L;\Z)$ and $\ell\in \Z_{\geq 0}$, let 
	\begin{align*}
			\CC^*(&\CBa,\CBa)(a,\ell)\\&:= \prod_{\substack{a_1,\dots, a_\ell\\ \,\,\,\,\,\,\,\,\,\,\,\, \in H_1(L;\Z)}}\Hom_{*+\ell - 1} \left( \CBa(a_1)\otimes \dots \otimes \CBa(a_\ell), \CBa\left(a_1+\dots + a_\ell+a\right)\right).
	\end{align*}
	
	Then define \[ \CC^*(\CBa,\CBa):= \bigoplus_{a\in H_1(L;\Z)} \underbrace{\prod_{\ell\in \Z_{\geq 0}} \CC^*(\CBa,\CBa)(a,\ell)}_{=:\CC^*(\CBa,\CBa)(a)}. \]	
\end{definition}

\begin{remark}
	Here, the degree is shifted by 1 from the usual Hochschild cochain complex degrees (in e.g. Definition 2.27 of \cite{GanatraThesis}), to keep consistency with that of $\CFr$ in \cite{Irie1,Irie2}. In particular, similar to the situation in $\CFr$, the advantage for this degree shift is that the Gerstenhaber bracket, which is the structure we shall use, has degree 0 instead of $+1$.
\end{remark}

\begin{notation}\label{Ntn: l-ary}
Later on, for an element $\Phi\in \CC^*(\CBa,\CBa)$, we will refer to its component in $\CC^*(\CBa,\CBa)(a)$ as $\Phi(a)$, and the component of $\Phi(a)$ in $\Hom((\CBa)^{\otimes \ell}, \CBa)$ as the \emph{$\ell$-ary part} of the element $\Phi$, and write it as $\Phi_\ell(a)\in \Hom((\CBa)^{\otimes \ell}, \CBa)$. We will also denote by $\Phi_\ell \in \Hom((\CBa)^{\otimes \ell},\CBa)$ by the image of $\Phi$ under $\CC^*(\CBa,\CBa)\to \bigoplus_{a\in H_1(L;\Z)} \CC^*(\CBa,\CBa)(a,\ell) \to \Hom((\CBa)^{\otimes \ell},\CBa)$.
\end{notation}
\begin{notation}
	To clarify the notation on grading (similar to Notation \ref{Ntn: DegConvert}), for a homomorphism $\Phi \in \Hom_k(V^{\otimes \ell},W)$ where $V$ and $W$ are graded vector spaces and $\ell\in \Z_{\geq 0}$, we use the notation $\deg \Phi = k$. More precisely, given $\alpha_1,\dots, \alpha_\ell \in V$, we have \[ \deg \Phi(\alpha_1,\dots, \alpha_\ell) = \sum_{i=1}^\ell  |\alpha_i|  + \deg \Phi.\]
	In contrast, for an (homogeneous) element $\Phi \in \CC^k(\CBa,\CBa)$, we write $|\Phi| = k$. Thus, in particular, for $\Phi\in \CC^*(\CBa,\CBa)$, the $(a,\ell)$-component $\Phi_\ell(a)$ has \begin{align}\label{Eq: DegConvertHoch} \deg\Phi_\ell(a) = |\Phi| + \ell -1. \end{align}
\end{notation}

\begin{definition}
We define the following operations on the Hochschild cochain complex: 

\textbf{{(Hochschild differential)}} $\CC^*(\CBa,\CBa)$ has a differential of degree $-1$ 
	\[
		\delta\colon \CC^*(\CBa,\CBa)\to \CC^{*-1}(\CBa,\CBa),
		\]  where given an element $\Phi:= (\Phi_\ell)_{\ell\geq 0}\in \CC^*(\CBa,\CBa)$, its differential $\big((\delta\Phi)_\ell\big)_{\ell\geq 0}$ is given by  the two components \[ (\delta \Phi)_\ell:= \delta^0 \Phi_\ell + \delta^1\Phi_{\ell -1}, \] where 
		\begin{itemize}
			\item The component $\delta^0\Phi_\ell$ is induced by the differential on $\CBa$, i.e. given $\alpha_1,\dots, \alpha_\ell\in \CBa$,  \begin{align*}
				(\delta^0\Phi_\ell)&(\alpha_1,\dots, \alpha_\ell) \\&= \partial(\Phi_\ell(\alpha_1,\dots, \alpha_\ell)) - (-1)^{\deg \Phi_\ell} \sum_{i=1}^\ell (-1)^{\sum_{j=1}^{i-1} |\alpha_j|}\Phi_\ell(\alpha_1,\cdots, \partial \alpha_i, \cdots, \alpha_\ell). 
			\end{align*} 	
			\item The component $\delta^1\Phi_{\ell-1}$ is given as follows. Define, for each $\ell\geq 1$ and $i=0,\dots, \ell$, a chain $\delta^1_{\ell,i}\colon \Hom_*((\CBa)^{\otimes (\ell-1)}, \CBa)\to \Hom_*((\CBa)^{\otimes \ell}, \CBa)$, by \[ (\delta^1_{\ell,i}\Phi_{\ell-1})(\alpha_1,\dots, \alpha_\ell) := \begin{cases}
				(-1)^{|\alpha_1|(\deg \Phi_{\ell-1})}\alpha_1\bullet \Phi_{\ell-1}(\alpha_2,\dots, \alpha_\ell) , & i = 0\\
				\Phi_{\ell-1}(\alpha_1,\dots, \alpha_i\bullet \alpha_{i+1}, \dots, \alpha_\ell) , & 1\leq i \leq \ell-1 \\
				\Phi_{\ell-1}(\alpha_1,\dots, \alpha_{\ell-1})\bullet \alpha_\ell, & i = \ell
			\end{cases}. \]
			Set \[ \delta^1(\Phi_{\ell-1}):= (-1)^{\deg \Phi_{\ell-1}+ \ell - 1} \sum_{i=0}^\ell (-1)^i \delta^1_{\ell,i}\Phi_{\ell-1}.\]
			\end{itemize}

\begin{remark}
	The two components of the differential, $\delta = \delta^0+\delta^1$ are  analogous to the two components $\partial = \partial^\dR + \delta$ in $\CFr$ and $\CBa$.
\end{remark}

\textbf{{(Gerstenhaber bracket)}} $\CC^*(\CBa,\CBa)$ has a Lie bracket of degree 0 \[
 [-,-]\colon \CC^*(\CBa,\CBa)\otimes \CC^*(\CBa,\CBa)\to \CC^*(\CBa,\CBa)
\] 
defined by \[ [\Phi,\Psi]:= \Phi \circ \Psi - (-1)^{|\Phi|\cdot |\Psi|} \Psi \circ \Phi, \]

where given two elements $\Phi:=(\Phi_\ell)_{\ell\geq 0}, \Psi:=(\Psi_\ell)_{\ell\geq 0} \in \CC^*(\CBa,\CBa)$, we define the pre-Lie product $\circ$ as follows: given $\alpha_1,\dots, \alpha_{\ell}\in \CBa$,
\begin{align*}
	(\Phi \circ \Psi)_\ell &(\alpha_1,\dots, \alpha_\ell) \\
	:= & \sum_{\substack{\ell' + \ell''=\ell+1\\ 1 \leq i \leq \ell'}} (-1)^\dagger \Phi_{\ell'}(\alpha_1,\dots, \alpha_{i-1}, \Psi_{\ell''}(\alpha_i,\dots, \alpha_{i+\ell''-1}),\alpha_{i+\ell''},\dots, \alpha_\ell),
\end{align*}
where the sign is given by \[ \dagger:= (i-1)(\ell''-1)+(|\Psi|+\ell''+1)(|\alpha_1|+\dots + |\alpha_{i-1}| + \ell' - 1). \]
\end{definition}

\begin{lemma}
	[Structures on $\CC^*$]\label{Lm: StructuresOnHochschild}
	\begin{enumerate}
		\item $(\CC^*(\CBa,\CBa), \delta, [-,-])$ is a dg Lie algebra;
		\item All the structures are compatible with the decomposition into $H_1(L;\Z)$: more precisely,  for any $a,b\in H_1(L;\Z)$, we  have \[\delta\colon \CC^*(\CBa,\CBa)(a)\to \CC^{*-1}(\CBa,\CBa)(a),\]and \[[-,-]\colon \CC^*(\CBa,\CBa)(a)\otimes \CC^*(\CBa,\CBa)(b) \to \CC^*(\CBa,\CBa)(a+b).\]
	\end{enumerate}
\end{lemma}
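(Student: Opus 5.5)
The plan is to reduce Lemma \ref{Lm: StructuresOnHochschild} to the classical fact that the Hochschild cochains of a dg associative algebra, with the Hochschild differential and the Gerstenhaber bracket, form a dg Lie algebra. Only two features are non-standard here: the degree shift by one, and the decomposition indexed by $a\in H_1(L;\Z)$.

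\textbf{Step 1: reduce to a standard sign convention.} Consider the suspension $s\CBa$. Via the décalage isomorphism, identify $\Hom_{*+\ell-1}((\CBa)^{\otimes \ell},\CBa)$ with degree-$|\Phi|$ coderivation components on the bar coalgebra $T^c(s\CBa)$. Under this identification:
\begin{itemize}
\item the Pontryagin product together with $\partial$ gives a square-zero degree $-1$ coderivation $D$ of $T^c(s\CBa)$;
\item the pre-Lie product $\circ$ corresponds to composition of coderivations followed by corestriction;
\item the Gerstenhaber bracket becomes the graded commutator of coderivations.
\end{itemize}
The key check is that the sign $\dagger$ is exactly the Koszul sign produced by moving $\Psi$, of shifted degree $|\Psi|$, past $s\alpha_1,\dots,s\alpha_{i-1}$, plus the sign coming from the shift $\ell''$. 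I would verify this following section 2.2 of \cite{Irie1}, whose conventions the definition follows.

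\textbf{Step 2: deduce the dg Lie axioms.} Once the identification is in place, the axioms follow formally:
\begin{itemize}
\item graded antisymmetry of $[-,-]$ is immediate from its definition;
\item the Jacobi identity follows from the pre-Lie identity for $\circ$, equivalently from associativity of composition of coderivations;
\item the differential $\delta$ should equal $\Phi\mapsto [\mm,\Phi]$ up to an overall sign, where $\mm\in\CC^*(\CBa,\CBa)$ is the element with $1$-ary part $\partial$ and $2$-ary part $\pm\bullet$. Checking this means matching $\delta^0$ against the terms involving $\partial$ and $\delta^1$ against the terms involving $\bullet$, including the signs $(-1)^{\deg\Phi_{\ell-1}+\ell-1}(-1)^i$.
\end{itemize}
Given that identity, $\delta^2=0$ is equivalent to $[\mm,\mm]=0$. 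This in turn encodes $\partial^2=0$, the Leibniz rule, and associativity of $\bullet$, all of which hold by Lemma \ref{Lm: StructuresOnStates}(2). Finally, $\delta$ being a derivation of $[-,-]$ follows from the Jacobi identity applied to $\mm$.

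\textbf{Step 3: compatibility with the $H_1(L;\Z)$-decomposition.} The bracket and differential only compose maps and apply $\partial,\bullet$. These respect the $a$-grading by Lemma \ref{Lm: StructuresOnStates}(3), so if $\Phi$ shifts classes by $a$ and $\Psi$ by $b$, then $\Phi\circ\Psi$ shifts by $a+b$. Since $\partial$ preserves classes and $\bullet$ adds them, $\mm$ lies in the $a=0$ summand, so $\delta$ preserves each $\CC^*(\CBa,\CBa)(a)$.

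One point needs care. Because $\CC^*(\CBa,\CBa)(a)$ is a product over $\ell$ and over tuples $(a_1,\dots,a_\ell)$, the formulas must involve only finite sums. For fixed $\ell$, each component of $(\Phi\circ\Psi)_\ell$ is a finite sum over $\ell'+\ell''=\ell+1$, so the operations are well defined on the products. The outer direct sum over $a$ is then preserved, which gives part (2).

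\textbf{Main obstacle.} I expect the bulk of the work to be the sign bookkeeping in Step 2: confirming that the specific signs in $\delta^1$ and $\dagger$ assemble into $\delta=\pm[\mm,-]$ with a consistent choice of sign for the $2$-ary part of $\mm$. I would first test this on small cases, with $\ell',\ell''\le 2$, before carrying out the general check.
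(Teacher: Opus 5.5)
Your proposal is correct and is essentially the paper's argument. The paper cites the classical result in the operad-with-multiplication form of \cite{Irie1} (Example 2.7 and Theorem 2.8) and calls part (2) straightforward, and your pre-Lie/coderivation argument with $\delta=[\mm,-]$ is the standard proof of that result. The sign check you flag also works out on the nose: with the paper's $\dagger$ and $\delta^1$ one gets $\delta=[\mm,-]$ exactly for $\mm_1=\partial$, $\mm_2=-\bullet$, and $\mm\circ\mm=0$ unpacks to $\partial^2=0$, the Leibniz rule, and associativity of $\bullet$.
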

The proof of (1) is classical (see e.g. \cite{Irie1} Example 2.7 and Theorem 2.8); for conventions on degrees and signs of dg Lie algebras, see definition \ref{Defn: dgLa} in the appendix. Part (2) is straightforward.

\subsubsection{The closed-open string map}
We now define the closed-open map \[\CO\colon \CFr\to \CC^*(\CBa,\CBa).\]
We shall define, for each $a\in H_1(L;\Z)$ and $\ell \in \Z_{\geq 0}$,
\begin{align*}
	\CO_{\ell,a}\colon \CFr(a)\to \CC^*(\CBa,\CBa)(a,\ell),
\end{align*}
and accordingly, for each (fixed) $\ell$, the composition \[\CO_\ell\colon \CFr\xrightarrow{\bigoplus_a \CO_{\ell,a}} \bigoplus_a \CC^*(\CBa,\CBa)(a,\ell) \to  \Hom\big(\left(\CBa\right)^{\otimes \ell},\CBa\big),\] 
so that for $x\in \CFr$, under Notation \ref{Ntn: l-ary},
\begin{align*}
	\big(\CO(x)\big)_\ell(a):= \CO_{\ell, a}(x), \quad \big(\CO(x)\big)_\ell := \CO_\ell(x).
\end{align*}

\begin{definition}
We define the closed-open map map $\CO$ in arity/components as follows:

\textbf{(0-ary part)}
For each $a\in H_1(L;\Z), k\in \Z_{\geq 0}$, 
define the \emph{anomaly map} $\mathfrak{o}\colon \CFr\to \Cba{*-1}$ by
\begin{align}\label{Eq: AnomalyMapState}
	 \big(\mathfrak{o}(x)\big)(a,k):=  \mathfrak{o}\big(x(a,k)\big).
\end{align}
Then define
\begin{align}\label{Eq: CO0 OpenClosed}
\CO_{0,a}(x) := {(-1)^{|x|}}\bigg(\big(\mathfrak{o}(x)\big)(a,k)\bigg)_{k\in \Z_{\geq 0}} \in \Cba{|x|-1} .\end{align}

\textbf{(Unary part)}
Given $\alpha\in \CBa$ and $x\in \CFr$, define
\begin{align}\label{Eq: CO1 openclosed}\big\{\CO_{1}(x)\big\}(\alpha)=:{(-1)^{|\alpha||x|+1}}\alpha\circ x  \in \CBa,\end{align}
where for each $a\in H_1(L;\Z)$ and $k\in\Z_{\geq 0}$, we define
\begin{align}\label{Eq: CO1 Expand}(\alpha\circ x)(a, k):= \sum_{\substack{k'+k'' = k+1\\ 1\leq i \leq k'\\a'+ a'' = a}} {(-1)^{(i-1)(k''-1)+k'(|x|+1+k'')}} \alpha(a',k')\circ_i x(a'',k''). \end{align}

Then for each $a,a'\in H_1(L;\Z)$, the map $\CO_1$ restricts to
\[
	\CO_{1,a}(x)\colon \CBa(a')\to \Cba{*+|x|}(a+a')
\]
because in formula \eqref{Eq: CO1 Expand}, for $\alpha(a',k') \in \CBa(a')$ and $x(a'',k'')\in \CFr(a'')$, the resulting $\alpha(a',k')\circ_i x(a'',k'')\in \CBa(a'+a'')$.

\textbf{(Higher arity parts)} All higher arity parts are set to 0. That is, for each $\ell\geq 2$, we set $\CO_\ell\colon \CFr\to \Hom((\CBa)^{\otimes \ell}, \CBa)$ to be 0.

\bigskip
Then these operations $\{\CO_{k,a}\}_{\substack{k\in \Z_{\geq 0}, a\in H_1(L;\Z)}}$ are packaged into 
\[
	\CO \colon \CFr\to \CC^* (\CBa,\CBa).
\]
This is a degree-0 map.
\end{definition}

We claim that this is a homomorphism of dg Lie algebra:
\begin{lemma}[Lemma \ref{Lm: dgLieMorphismApp}]\label{Lm: dgLieMorphism}
	$\CO$ is a homomorphism of dg Lie algebra. Moreover, $\CO$ respects the decompositions of $\CFr$ and $\CC^*(\CBa,\CBa)$ into $a\in H_1(L;\Z)$; that is, if $x\in \CFr(a)$ for some $a\in H_1(L;\Z)$, then $\CO(x)\in \CC^*(\CBa,\CBa)(a)$.
\end{lemma}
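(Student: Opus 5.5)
The second claim, that $\CO$ respects the $H_1(L;\Z)$-decompositions, is immediate from the construction. $\mf{o}$ maps $\ms{L}^{k+1}(a)$-chains to $\Omega_\star^{k+1}(a)$-chains, and $\circ_i^{\Omega_\star}$ sends $\CBa(a')\otimes\CFr(a'')$ to $\CBa(a'+a'')$. So I concentrate on the dg Lie homomorphism property. Because $\CO_\ell=0$ for $\ell\ge 2$, both identities reduce to finitely many arity components.

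\textbf{Chain map.} I must show $\delta\,\CO(x)=\CO(\partial x)$, and I compare arities $\ell=0,1,2$.
\begin{itemize}
\item \emph{Arity $0$:} $(\delta\CO(x))_0=\partial\,\mf{o}(x)$, up to sign. I check $\partial^\dR\mf{o}=\pm\mf{o}\,\partial^\dR$; this holds because restricting forms to the transversely cut out fibre $(\ev_0\circ\varphi)^{-1}(\star)$ commutes with $d$. I also check that $\mf{o}$ commutes with the cofaces $(\delta_i)_*$, since $\delta_i$ preserves $\ev_0$ for $i\ge1$, and for $i=0$ the inserted constant path sits at $\textsf{s}(c_0)=\star$.
\item \emph{Arity $1$:} $\delta^0\CO_1(x)+\delta^1\CO_0(x)$ must equal $\CO_1(\partial x)$. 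The $\delta^0$ part is the Leibniz rule for $\alpha\circ x$ with respect to $\partial^\dR$ (Stokes on fibre products) and with respect to the coface terms $\partial^1$. The coface terms of the composite mostly match $\circ_i$ applied to cofaces of $\alpha$ or $x$. The leftover boundary terms are where the inserted loop $x$ is attached at the ends $i=1$ or $i=k'$ through the constant piece at $\star$. These produce exactly $\alpha\bullet\mf{o}(x)$ and $\mf{o}(x)\bullet\alpha$, which is $\delta^1\CO_0(x)$. This is the chain-level version of case (3) in the heuristic verification of \eqref{Eq: AinftyIdenEquiv}.
\item \emph{Arity $2$:} $\delta^1\CO_1(x)=0$. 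This is the statement that $\alpha\mapsto\alpha\circ x$ is a derivation of $\bullet$. It holds because inserting $x$ into $\alpha\bullet\beta$ at a marked point inserts it into either $\alpha$ or $\beta$.
\end{itemize}

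\textbf{Bracket.} I must show $\CO([x,y])=[\CO(x),\CO(y)]$.
\begin{itemize}
\item \emph{Arity $1$:} the Gerstenhaber pre-Lie product of two unary cochains gives $\alpha\mapsto(\alpha\circ y)\circ x$. So the identity reduces to $\alpha\circ(x\circ y)=(\alpha\circ x)\circ y$ minus the terms where $x$ and $y$ are inserted at distinct marked points of $\alpha$. Those two-point terms are symmetric and cancel in the graded commutator, as in Lemma 4.2 of \cite{ChasSullivan}. This is an associativity statement for iterated fibre products and concatenation, checked componentwise in $(a,k)$.
\item \emph{Arity $0$:} $(\CO(x)\circ\CO(y))_0=\CO_1(x)(\CO_0(y))=\pm\,\mf{o}(y)\circ x$, and symmetrically. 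So I need $\mf{o}([x,y])=\pm\,\mf{o}(y)\circ x\mp\mf{o}(x)\circ y$. This holds because restricting to $\ev_0=\star$ commutes with $\circ_i$ for $i\ge1$: the $0$-th marked point of $x\circ_i y$ is that of $x$.
\item \emph{Arity $2$:} since $\CO_\ell=0$ for $\ell\ge2$, these terms vanish automatically.
\end{itemize}

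The main obstacle is signs. Each geometric identity above is transparent, but the sign conventions \eqref{Eq: PartialOCprod}, \eqref{Eq: CO1 Expand}, \eqref{Eq: AnomalyMap} and $\dagger$ must be reconciled, together with orientations of fibre products and the degree shifts \eqref{Eq: DegConvertBased}, \eqref{Eq: DegConvertHoch}. I would first prove sign-precise local lemmas for $\circ_i^{\Omega_\star}$ and $\mf{o}$, namely Leibniz, associativity and cofaces, parallel to those for $\circ_i^{\ms{L}}$ in Irie's framework. I would then assemble them one arity at a time, tracking the global factors $(-1)^{|x|}$ and $(-1)^{|\alpha||x|+1}$ last.
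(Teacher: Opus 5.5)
Your proposal is correct. For the bracket (arities $0$ and $1$) and for arities $0$ and $2$ of the chain-map identity it is the paper's argument: compatibility of $\mf{o}$ with $\partial^\dR$, with the cofaces and with $\circ_i$; the pre-Lie/associativity identity for $\circ^{\Omega_\star}$; and compatibility of $\circ^{\Omega_\star}$ with $\bullet$.

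The real difference is in the unary part of $\delta\,\CO(x)=\CO(\partial x)$. You match coface terms directly. The paper instead realizes the coface parts of both differentials as Maurer--Cartan twists:
\begin{itemize}
\item on $\CFr$ it writes $\partial^1=-[\wt{\LL},-]$;
\item on $\CBa$ the interior cofaces $\delta_1,\dots,\delta_{k-1}$ come from twisting $\partial^\dR$ by $\CO_1(\wt{\LL})$;
\item $\delta_0$ and $\delta_k$ come from the bounding chain $\wt{\star^1}$, via $\wt{\star^1}\bullet-$ and $-\bullet\wt{\star^1}$.
\end{itemize}
Lemma \ref{Lm: OCdR dgLaHomomApp} together with Lemma \ref{Lm: dgDefo} then disposes of all interior cofaces at once (Lemma \ref{Lm: COint preserves}). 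The only remaining explicit check is Lemma \ref{Lm: dRchainStarAnon}. Its parts (2) and (4) are exactly your leftover identities $\mf{o}(x)\bullet\alpha=\pm(\star^1\bullet\alpha)\circ_1x$ and $\alpha\bullet\mf{o}(x)=\pm(\alpha\bullet\star^1)\circ_{k'+1}x$.

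The paper's route saves sign work: the interior cancellations inherit their signs from the already-verified associativity identity applied to $\LL(0,2)$. Your route is more elementary and transparent, but you must check by hand that, for $1\le j\le k'$:
\begin{itemize}
\item $(\delta_j\alpha)\circ_jx$ cancels $\alpha\circ_j(\delta_{k''+1}x)$;
\item $(\delta_j\alpha)\circ_{j+1}x$ cancels $\alpha\circ_j(\delta_0x)$.
\end{itemize}
These pairs cancel among the terms of $(\partial\alpha)\circ x\pm\alpha\circ(\partial x)$. Every coface of the composite $\alpha\circ x$ itself is matched exactly, so ``mostly match'' understates this.

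One small slip: the two unmatched terms are $(\delta_0\alpha)\circ_1x$ and $(\delta_{k'+1}\alpha)\circ_{k'+1}x$. That is, $x$ is inserted at the new marked point at $\star$ created by an extremal coface, not at marked points $1$ or $k'$ of $\alpha$ itself.
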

We verify this in Lemma \ref{Lm: dgLieMorphismApp} in the appendix.

This finishes our construction of the open-closed string topology package, i.e. Theorem \ref{Thm: OCstringPackage}.

\bigskip

\section{Holomorphic curves as deformation}
\label{Sec: HolomorphicCurves}

We now use moduli spaces of pseudo-holomorphic discs with boundary on the Lagrangian $L\subset \C^n$ to produce a curved $A_\infty$-deformation of the open string dg algebra $\CBa$ of $L$.
As mentioned in the introduction, the basic idea comes from Fukaya (\cite{FukayaLagrangian}) in the context of free loop spaces (which is realized by Irie in \cite{Irie1,Irie2}; for our purpose we work in the framework of \cite{WangThesis}) and from the proposal of Abouzaid (\cite{AbouzaidLect}) in the context of based loop spaces.

The main result of this section is Theorem \ref{Cor: BasedAinfty}, which is a rigorous version of the idealized Theorem \ref{IdealThm} in the introduction, constructing a gapped (curved) associative algebra deforming the Pontryagin algebra structure on the open string state space $\CBa$. 
This follows from the construction of the closed-open map in section \ref{Sec: CO map}, as well as the construction of a Maurer-Cartan element in the closed-string state space $\CFr$ coming from moduli spaces of pseudo-holomorphic discs (Theorem \ref{Thm: VFCmain}). In section \ref{Subsc: FiltrationGapped} we discuss the energy filtration and completion of the state space, as well as gappedness of Maurer-Cartan elements and $A_\infty$-structures,  to deal with various convergence issues.

\subsection{Preliminaries on energy filtration and gappedness}\label{Subsc: FiltrationGapped}

To deal with convergence of operations coming from pseudo-holomorphic discs, we need to consider the energy filtration on $\CFr$ and $\CBa$, introduced below. The issue of convergence is dealt with often by introducing the Novikov  field or ring (e.g. in ordinary Lagrangian Floer theory, see \cite{FOOO1,FOOO2}). In our setting, it is more convenient to avoid introducing the Novikov field and simply work with the completion of the energy filtration on $\CFr$ and $\CBa$, as is done in \cite{Irie2}.

\subsubsection{Energy filtration and completion}
Suppose $L\subset \C^n$ is a closed Lagrangian. Recall that we have the energy homomorphism $E\colon H_1(L;\Z)\to \R$, given by e.g. integrating the Liouville 1-form (the primitive to the standard symplectic form).

\begin{definition}
Let $\textbf{C}$ denote one of the three chain complexes: $\CFr,\CBa$ or $\CC^*(\CBa,\CBa)$. In each case we have, by construction, a decomposition \[ \textbf{C}= \bigoplus_{a\in H_1(L;\Z)} \textbf{C}(a). \] 

\begin{enumerate}
	\item 	The \emph{energy filtration} on $\textbf{C}$ is given by $\{\mathscr{F}^\lambda \textbf{C}\}_{\lambda\in \R}$ where for each $\lambda \in \R$,
	\begin{align*}
		\ms{F}^\lambda \textbf{C}:= \bigoplus_{E(a)>\lambda} \textbf{C}(a).
	\end{align*}
	\item 	The \emph{completion} of $\textbf{C}$ with respect to the energy filtration is denoted
\[
	\wh{\textbf{C}}:= \varprojlim_{\lambda \to+\infty} \textbf{C}\big /\ms{F}^\lambda \textbf{C}.
\]
\end{enumerate}
\end{definition}

An element $x\in \wh{\textbf{C}}$ can be identified as an (possibly) infinite sum \[ x = \sum_{i=1}^\infty x(a_i), \quad  \textup{ where }a_i \in H_1(L;\Z), \,\, x(a_i) \in \textbf{C}(a_i), \textup{ and } E(a_i)\to +\infty.\]

Since the differential in each of the three cases ($\textbf{C}= \CFr,\CBa,$ or $\CC^*(\CBa,\CBa)$) preserves the splitting of $\textbf{C}$ into $H_1(L;\Z)$ classes (Lemma \ref{Lm: StructuresOnStates} (3) and Lemma \ref{Lm: StructuresOnHochschild} (2)), it preserves the energy filtration by definition, and thus we obtain, on the homology $\textbf{H}$ of $\textbf{C}$, a filtration $\{\ms{F}^\lambda \textbf{H}\}_{\lambda \in \R}$ . More precisely, $\textbf{H}$ splits into
\begin{align*}
	\textbf{H} = \bigoplus_{a \in H_1(L;\Z)} \textbf{H}(a),
\end{align*}
and the energy filtration on $\textbf{H}$ is given by, for each $\lambda \in \R$,
\begin{align*}
	\ms{F}^\lambda \textbf{H} = \bigoplus_{E(a)>E} \textbf{H}(a).
\end{align*}
We also define its completion to be 
\begin{align*}
	\wh{\textbf{H}}:=  \varprojlim_{E\to\infty} \textbf{H}/\ms{F}^\lambda \textbf{H}.
\end{align*}

By Lemma \ref{Lm: StructuresOnStates} (3), all the string topology operations defined in the previous section respect the splitting into $a\in H_1(L;\Z)$, and therefore extends to the completions $\wh{\CFr}$ and $\wh{\CBa}$.
Similarly by Lemma \ref{Lm: StructuresOnHochschild} (2), the Gerstenhaber bracket extends to the completion $\wh{\CC^*}(\CBa,\CBa)$.
In particular, $\wh{\CFr}$ and $\wh{\CC^*} (\CBa,\CBa)$ are dg Lie algebras and $\wh{\CBa}$ is a dg associative algebra.
Moreover by Lemma \ref{Lm: dgLieMorphism}, the closed-open map $\CO\colon \CFr\to \CC^*(\CBa,\CBa)$ preserves the splitting into $a\in H_1(L;\Z)$, so $\CO$ extends to $\CO\colon \wh{\CFr}\to \wh{\CC^*}(\CBa,\CBa)$.

\subsubsection{Gappedness}
One essential point for many of our arguments is that the energy levels of pseudo-holomorphic discs as well as the perturbed pseudo-holomorphic discs are distributed according to Gromov compactness theorem. Following \cite{FOOO1} (see Condition 3.1.6 and Definition 3.2.26; also see \cite{Irie2,YuanHangThesis}), we say
\begin{definition}\label{Defn: monoid}
A subset $\mathfrak{G}\subset H_1(L;\Z)$ is \emph{a monoid of curve classes} if 
\begin{enumerate}
	\item $\mathfrak{G}$ is a submonoid of $H_1(L;\Z)$, i.e. $0\in\mathfrak{G}$, and if $\beta_1,\beta_2\in \mathfrak{G}$ then $\beta_1+\beta_2\in \mathfrak{G}$;
	\item The image of $\mathfrak{G}$ under the energy homomorphism $E\colon H_1(L;\Z)\to \R$ is discrete;
	\item For any $\beta\in \mathfrak{G}$, its energy $E(\beta)\geq 0$, and the only $\beta\in \mathfrak{G}$ with $E(\beta)=0$ is $\beta=0$;
	\item For each energy level $\lambda\in \R$, there are only finitely many $\beta\in \mathfrak{G}$ with $E(\beta)=\lambda$.
\end{enumerate}
\end{definition}

Let $H\in C_c^\infty(\C^n\times [0,1]_t)$ be a compactly supported time-dependent Hamiltonian function. 
The \emph{Hofer norm} of the Hamiltonian $H$ is
\begin{equation}\label{Eq: HoferNorm}
	\norm{H} := \int_0^1 \left(\max H_t - \min H_t\right)\,dt.
\end{equation}

\begin{definition}\label{Defn: module}
	We say a subset $\mathfrak{N}\subset H_1(L;\Z)$ is a \emph{module of $H$-perturbed curve classes over $\mathfrak{G}$} if
\begin{enumerate}
	\item $\mathfrak{N}$ is a module over $\mathfrak{G}$, i.e.  $0\in \mf{N}$, and if $\eta\in \mathfrak{N}$ and $\beta\in\mathfrak{G}$, then $\eta+\beta\in \mathfrak{N}$ (in particular $\mf{G}\subset \mf{N}$);
	\item The image of $\mathfrak{N}$ under the energy homomorphism $E\colon H_1(L;\Z)\to \R$ is discrete;
	\item For any $\eta\in \mathfrak{N}$, its energy $E(\eta)\geq -\norm{H}$;
	\item For each energy level $\lambda\in \R$, there are only finitely many $\eta\in \mathfrak{N}$ with $E(\eta)=\lambda$. 
\end{enumerate}
\end{definition}

\bigskip

As before, let $\textbf{C}$ denote one of the three: $\CFr,\CBa$, or $\CC^*(\CBa,\CBa)$.

\begin{definition}\label{Defn: G-gapped}
	An element $x\in \wh{\textbf{C}}$ is \emph{$\mathfrak{G}$-gapped} if we can decompose $x$ as 
	\[ x = \sum_{\beta\in\mathfrak{G}} x(\beta), \quad x(\beta)\in \textbf{C}(\beta). \]

	This expression for $x$ makes sense as an element in $\wh{\textbf{C}}$ since we can sort elements in $\mathfrak{G}$ so that $\mathfrak{G} = \{\beta_0,\beta_1,\beta_2,\cdots \}$ where $0\leq E(\beta_0) \leq E(\beta_1)\leq E(\beta_2)\leq \cdots\to  +\infty$ by Definition \ref{Defn: monoid}; so $x=\sum_{i=1}^\infty x(\beta_i)$ is an element in $\wh{\textbf{C}}$.
	
	Define $\wh{\textbf{C}}_\mathfrak{G}$ to be the subspace of $\wh{\textbf{C}}$ consisting of $\mathfrak{G}$-gapped elements. 
	
	Similarly, an element $y\in \wh{\textbf{C}}$  is \emph{$\mathfrak{N}$-gapped} if we can decompose $y$ as \[ y = \sum_{\eta\in\mathfrak{N}} y(\eta),\quad y(\eta)\in \textbf{C}(\eta). \]
	Define $\wh{\textbf{C}}_{\mathfrak{N}}$  to be the subspace of $\wh{\textbf{C}}$ consisting of $\mathfrak{N}$-gapped elements.
\end{definition}

An element $\Phi$ in $\wh{\CC^*}_\mf{G}(\CBa,\CBa)$ is a collection of operations 
\[\Phi_{\ell,\beta}:=\Phi_\ell(\beta)\colon \CBa(a_1)\otimes \dots \otimes \CBa(a_\ell) \to \CBa(a_1+\dots +a_\ell + \beta) \]
for each $\ell\in \Z_{\geq 0}$, $\beta\in \mf{G}$, and $a_1,\dots, a_\ell \in H_1(L;\Z)$. We sometimes refer to this as a \emph{$\mf{G}$-gapped operator system}
(this is similar to \cite{YuanHangThesis}, Definition 2.1; later on the $\mf{G}$-gapped operator systems we use will eventually need to satisfy the requirement in \emph{ibid.} that $\Phi_0(0)=0$ which we don't require for now).

\bigskip

Recall that the closed-open string map \[ \CO \colon \CFr \to \CC^*(\CBa,\CBa) \]
is a homomorphism of dg Lie algebras which preserves the decomposition into $H_1(L;\Z)$ classes (Lemma \ref{Lm: dgLieMorphism}).
Thus it induces maps
\begin{align*}
	\CO\colon \wh{\CFr}_\mf{G}\to \wh{\CC^*}_\mf{G}(\CBa,\CBa),\quad \CO \colon \wh{\CFr}_\mf{N}\to \wh{\CC^*}_\mf{N}(\CBa,\CBa).
\end{align*}
In particular, given a $\mf{G}$-gapped element $x\in \wh{\CFr}_\mf{G}$, applying the closed-open map gives a $\mf{G}$-gapped operator system $\CO(x)\in \wh{\CC^*}_{\mf{G}}(\CBa,\CBa).$

\begin{notation}
	\label{Ntn: PositiveG}
	For a monoid of curve classes $\mf{G}\subset H_1(L;\Z)$, 
	we write \[\mf{G}^+:= \mf{G} \setminus \{0\}.\]
	We denote by $\wh{\mathbf{C}}_{\mf{G}^+}$ the collection of $\mf{G}$-gapped elements with no zero-energy term, i.e. $x\in \wh{\mathbf{C}}_{\mf{G}^+}\subset \wh{\mathbf{C}}_{\mf{G}}$  if \[ x = \sum_{\beta\in \mf{G}^+} x(\beta),\quad x(\beta)\in \mathbf{C}(\beta). \]
\end{notation}
\begin{definition}
	An element $x\in \wh{\Cfr{-1}}_{\mathfrak{G}^+}$ is an \emph{Maurer-Cartan element} if it satisfies the \emph{Maurer-Cartan equation}  \[ \partial  x - \frac{1}{2}[x,x ] = 0. \]
\end{definition}

The following definition is a working definition tailored for our situation:
\begin{definition}\label{Defn: G-gapped assdefo}
	A \emph{$\mathfrak{G}$-gapped curved dg associative deformation} of $\CBa$ is a $\mf{G}$-gapped operator system $\mm\in \wh{\CC^*}_{\mf{G}}(\CBa,\CBa)$ with \[ \mm_{\ell,\beta}\colon \CBa(a_1)\otimes \dots \otimes \CBa(a_\ell)\to \CBa(a_1+\dots + a_\ell + \beta) \] for all $\ell\in \Z_{\geq 0}$, $\beta\in \mf{G}$, and $a_1,\dots, a_\ell \in H_1(L;\Z)$, such that
	\begin{enumerate}
		\item $\mm\in \wh{\CC^{-1}}_\mf{G}(\CBa,\CBa)$. This is equivalent to: for each $\ell\in \Z_{\geq 0}$ and $\beta\in \mf{G}$, \[ \deg \mm_{\ell,\beta} = \ell - 2;\]
		\item If $\beta = 0$, we require \[\mm_{0,0}=0, \quad \mm_{1,0}=\partial, \quad \mm_{2,0} = \bullet;\]
		\item If either of the following holds:
		\begin{enumerate}
			\item $\beta=0$ and $\ell > 2$;
			\item $\beta \neq 0$ (so that $E(\beta)>0$) and $\ell \geq 2$,
		\end{enumerate}
		we require $\mm_{\ell,\beta} = 0$.
		\item The following identities hold:
		\begin{enumerate}
			\item $\mm_1(\mm_0)= 0$.
			That is, for any $\beta\in \mf{G}$, \[ \sum_{\substack{\beta_1,\beta_2\in \mf{G}\\ \beta_1+\beta_2=\beta}} \mm_{1,\beta_1}(\mm_{0,\beta_2})=0; \]
			\item $\mm_1(\mm_1 \alpha) = \mm_0 \bullet \alpha - \alpha \bullet \mm_0$. That is, for any $\beta \in \mf{G}$ and $\alpha \in \CBa$, \[ \sum_{\substack{\beta_1,\beta_2\in \mf{G}\\ \beta_1+\beta_2=\beta}} \mm_{1,\beta_1}(\mm_{1,\beta_2} \alpha) = \mm_{0,\beta} \bullet \alpha - \alpha \bullet \mm_{0,\beta};\]
			\item $\mm_1$ is a derivation with respect to $\bullet$. That is, for any $\beta \in \mf{G}$ and $\alpha_1,\alpha_2 \in \CBa$, \[ \mm_{1,\beta}(\alpha_1\bullet \alpha_2) = \mm_{1,\beta}(\alpha_1)\bullet \alpha_2 + (-1)^{|\alpha_1|} \alpha_1 \bullet \mm_1(\alpha_2). \]
		\end{enumerate}
	\end{enumerate}
\end{definition}

\begin{lemma}\label{Lm: MC Ainfty}
	Suppose that $x\in \wh{\Cfr{-1}}_{\mathfrak{G}^+}$ satisfies the Maurer-Cartan equation. 
	Then the $\mf{G}$-gapped operator system given by \[ \mm_{0,\beta}:= \begin{cases}
			0, & \beta = 0 \\
			-\CO_{0,\beta}(x), & \beta \in \mf{G}^+
		\end{cases},
		\quad \mm_{1,\beta}:= \begin{cases}
			\partial, & \beta =0\\
			-\CO_{1,\beta}, & \beta \in \mf{G}^+
		\end{cases}, \quad \mm_{2,\beta}:= \begin{cases}
			\bullet, & \beta = 0\\
			0, & \beta \in \mf{G}^+
		\end{cases}  \]
		for any $\beta\in \mf{G}$,
		and $\mm_{\ell,\beta}=0$ for $\ell \geq 3$, is a $\mf{G}$-gapped curved dg associative deformation of $\CBa$.
\end{lemma}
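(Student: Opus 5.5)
The plan is to derive the lemma from the standard fact that a Maurer--Cartan element of a dg Lie algebra is carried to a Maurer--Cartan element by a dg Lie algebra homomorphism. Then I will read off the Maurer--Cartan equation in the Hochschild complex as the curved associativity identities. The first step is to apply Lemma \ref{Lm: dgLieMorphism} to the completed map $\CO\colon \wh{\CFr}_{\mf{G}}\to \wh{\CC^*}_{\mf{G}}(\CBa,\CBa)$. Since $\CO$ is a degree-$0$ chain map commuting with brackets and preserving the $H_1(L;\Z)$-splitting, $y:=\CO(x)\in \wh{\CC^{-1}}_{\mf{G}^+}(\CBa,\CBa)$ satisfies $\delta y-\frac12[y,y]=0$. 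By construction $y$ has only $0$-ary and unary parts, since $\CO_\ell=0$ for $\ell\ge 2$. Each class in $\mf{G}^+$ has positive energy and only finitely many classes lie at each energy level, so every component of the equation involves finitely many terms and there is no convergence issue.

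The next step is to introduce $\mu\in\wh{\CC^{-1}}_{\mf{G}}(\CBa,\CBa)$, the undeformed element with $\mu_1=\partial$ and $\mu_2=\bullet$ in class $0$. I expect, up to an overall sign fixed by the conventions of Definition \ref{Defn: HochschildCochains}, that the Hochschild differential is $\delta=\pm[\mu,-]$ and that $[\mu,\mu]=0$ encodes $\partial^2=0$, associativity of $\bullet$ and the Leibniz rule. This has to be checked against the explicit formulas for $\delta^0$, $\delta^1$ and $\dagger$. Granting it, $\mm:=\mu-y$ (with the signs in the lemma, $\mm_{0,\beta}=-\CO_{0,\beta}(x)$ and $\mm_{1,\beta}=-\CO_{1,\beta}(x)$) satisfies $[\mm,\mm]=0$, or equivalently $\mm\circ\mm=0$ since $|\mm|=-1$. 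Conditions (1)--(3) of Definition \ref{Defn: G-gapped assdefo} are then immediate. The degree condition holds because $|y|=-1$, which gives $\deg y_\ell=\ell-2$ by \eqref{Eq: DegConvertHoch}. The vanishing $\mm_{\ell,\beta}=0$ for $\ell\ge 2$ and $\beta\ne 0$ follows from $\CO_\ell=0$ for $\ell\ge2$. The condition $\mm_{0,0}=0$ holds because $x$ has no class-$0$ component.

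Finally I will expand $\mm\circ\mm=0$ by arity, using the explicit pre-Lie product. The $0$-ary component gives $\mm_1(\mm_0)=0$. The unary component gives $\mm_1\mm_1\alpha\pm \mm_2(\mm_0,\alpha)\pm \mm_2(\alpha,\mm_0)=0$, which is $\mm_1^2\alpha=\mm_0\bullet\alpha-\alpha\bullet\mm_0$. The binary component gives the derivation property of $\mm_1$ together with the terms $\mm_2(\mm_2(\cdot,\cdot),\mm_0)$, which vanish identically because $\mm_{3}=0$. The ternary component is associativity of $\bullet$. The main obstacle is the sign bookkeeping. This means verifying that $\delta$ coincides with $[\mu,-]$ under the $\dagger$ convention, and that the signs $(i-1)(\ell''-1)+\dots$ produce exactly the signs $+\mm_0\bullet\alpha-\alpha\bullet\mm_0$ and $(-1)^{|\alpha_1|}$ claimed in Definition \ref{Defn: G-gapped assdefo}. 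My approach would be to compute each arity separately on homogeneous inputs. If the overall sign turns out to be $\delta=-[\mu,-]$, that is precisely what forces the minus signs in $\mm_{0,\beta}$ and $\mm_{1,\beta}$, and I would adjust accordingly.
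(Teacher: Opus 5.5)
Your overall route is the paper's: push the Maurer--Cartan element forward along $\CO$, then read the curved identities off arity by arity. The repackaging step you insert in the middle, however, fails under the paper's sign conventions.

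The Hochschild differential is not $\pm[\mu,-]$ for $\mu=(\partial,\bullet)$. The $\partial$-terms of $\delta$ match those of $[\mu,-]$, but every $\bullet$-term comes with the opposite sign. Already for a purely $0$-ary $\Phi$ with $|\Phi|=-1$ you get $(\delta\Phi)_0=\partial\Phi_0=[\mu,\Phi]_0$, but $(\delta\Phi)_1(\alpha)=\alpha\bullet\Phi_0-\Phi_0\bullet\alpha=-[\mu,\Phi]_1(\alpha)$, so no overall sign works. Checking against the formulas for $\delta^0$, $\delta^1$ and $\dagger$ in general gives $\delta=[\mu',-]$ exactly, with $\mu'_1=\partial$ and $\mu'_2=-\bullet$.

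Consequently $\mm=\mu-\CO(x)$ is not Maurer--Cartan. With the $\dagger$-signs, the unary part of $\mm\circ\mm$ is $\mm_1^2\alpha+\mm_0\bullet\alpha-\alpha\bullet\mm_0$. Using $\delta=[\mu',-]$ and the Maurer--Cartan equation for $\CO(x)$, this equals $2(\mm_0\bullet\alpha-\alpha\bullet\mm_0)$, which need not vanish. Had you granted $\mm\circ\mm=0$, you would have derived identity (4)(b) of Definition \ref{Defn: G-gapped assdefo} with the wrong sign. Your contingency plan only allows for an overall sign, not for this relative sign between the $\partial$-part and the $\bullet$-part.

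The repair is straightforward: work with $\mm^\sharp:=\mu'-\CO(x)$, so $\mm^\sharp_0=\mm_0$, $\mm^\sharp_1=\mm_1$, $\mm^\sharp_2=-\bullet$. The relation $[\mu',\mu']=0$ encodes $\partial^2=0$, the Leibniz rule and associativity from Lemma \ref{Lm: StructuresOnStates}. Together with $\delta y-\tfrac12[y,y]=0$ it gives $[\mm^\sharp,\mm^\sharp]=0$. The arity $0,1,2,3$ parts of $\mm^\sharp\circ\mm^\sharp=0$ are then:
\begin{itemize}
\item $\mm_1(\mm_0)=0$;
\item $\mm_1^2\alpha-\mm_0\bullet\alpha+\alpha\bullet\mm_0=0$;
\item the derivation rule, which is linear in $\mm_2$ and so insensitive to its sign;
\item associativity.
\end{itemize}
These are exactly (4)(a)--(c). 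In other words, the lemma's $\mm$ is the Hochschild Maurer--Cartan element only after the twist $\mm_2\mapsto-\mm_2$.

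The paper sidesteps this by never introducing $\mu$. It expands $\delta\Phi-\tfrac12[\Phi,\Phi]=0$ for $\Phi=\CO(x)$ directly; only arities $0,1,2$ are nontrivial because $\Phi_\ell=0$ for $\ell\ge 2$. This yields $\partial\Phi_0=\Phi_1(\Phi_0)$, $\Phi_1\Phi_1\alpha=\partial\Phi_1\alpha+\Phi_1\partial\alpha+\alpha\bullet\Phi_0-\Phi_0\bullet\alpha$, and the derivation rule for $\Phi_1$. It then substitutes $\mm_1=\partial-\Phi_1$ and $\mm_0=-\Phi_0$.

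One smaller point: the binary part of $\mm\circ\mm$ contains no term of the form $\mm_2(\mm_2(\cdot,\cdot),\mm_0)$. The terms that vanish there are $\mm_3(\dots,\mm_0,\dots)$.
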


\begin{proof}
	$\mm$ is a $\mf{G}$-gapped operator system by construction, and properties (1) to (3) in Definition \ref{Defn: G-gapped assdefo} is clear.

	For (4): for brevity write $\Phi:= \CO(x)$. Since $\CO$ is a homomorphism of dg Lie algebras and by assumption $x$ satisfies the Maurer-Cartan equation, we have \[ \delta \Phi - \frac{1}{2}[\Phi,\Phi] = 0 \quad \textup{in } \wh{\CC^{-1}}(\CBa,\CBa). \]
	By inspecting different arities of this equation, we get, for any $\alpha,\alpha_1,\alpha_2\in \CBa$, 
	\begin{align*}
		\begin{cases}
			\partial \Phi_0 - \Phi_1(\Phi_0) = 0 \\
			\Phi_1(\Phi_1(\alpha)) = \partial(\Phi_1(\alpha)) + \Phi_1(\partial \alpha) + (\alpha \bullet \Phi_0 - \Phi_0\bullet \alpha) \\
			\Phi_1(\alpha_1\bullet \alpha_2) = \Phi_1(\alpha_1) \bullet \alpha_2 + (-1)^{|\alpha_1|} \alpha_1\bullet \Phi_1(\alpha_2)
		\end{cases}.
	\end{align*}
	Then the three identities in (4) follow:
	\begin{itemize}
		\item For (a): \begin{align*}
			\mm_1(\mm_0) = -(\partial - \Phi_1)(\Phi_0) = 0.
		\end{align*}
		\item For (b): \begin{align*}
			\mm_1^2\alpha &=  (\partial - \Phi_1)^2\alpha = -\partial (\Phi_1(\alpha)) -\Phi_1(\partial \alpha) +\Phi_1\circ \Phi_1(\alpha)  \\
			&= \alpha \bullet \Phi_0 - \Phi_0\bullet \alpha = \mm_0\bullet \alpha - \alpha \bullet \mm_0.
		\end{align*} 
		\item For (c): 
		\begin{align*}
			\mm_1(\alpha_1\bullet \alpha_2) = \partial(\alpha_1\bullet\alpha_2) - \Phi(\alpha_1\bullet\alpha_2) = \mm_1(\alpha_1)\bullet \alpha_2 +(-1)^{|\alpha_1|} \alpha_1\bullet \mm_1(\alpha_2).
		\end{align*}
	\end{itemize}
\end{proof}

\subsection{Deformation in closed string}\label{Subsc: ClosedStringDefo}

In this section we discuss the deformation of the closed string state space coming from moduli spaces of pseudo-holomorphic discs, in the form of a Maurer-Cartan element in $\CFr$. 

Let $L$ be a closed, oriented, spin manifold of dimension $n$.
Assume we are given a Lagrangian embedding of $L$ into $\C^n$ (equipped with the standard symplectic structure $\omega=\omega_{\C^n}$).

Let $H\in C_c^\infty(\C^n\times [0,1]_t)$ be a compactly supported time-dependent Hamiltonian function. 
For each $t\in [0,1]$, let $X_{H_t}$ be the Hamiltonian vector field on $\C^n$ associated to $H_t:=H(-,t)\in C_c^\infty(\C^n)$, i.e. satisfying \[
	dH_t(-) = -\omega_{\C^n}(X_{H_t},-).
\] 
Let $(\varphi_H^t)_{t\in[0,1]}$ be the time-$t$ flow of the 1-parameter family of vector fields $X_{H_t}$.
We further assume that 
\begin{assumption}\label{Asmp: Disp Hamil}The Hamiltonian $H\in C_c^\infty(\C^n\times [0,1]_t)$ satisfies
	\begin{itemize}
	\item $H$ is a \emph{displacing Hamiltonian function}, i.e. $\varphi^1_H(L) \cap L = \emptyset$;
	\item $H_t\equiv 0 $ when $t\in [0,1/3]\cup [2/3,1]$. 
\end{itemize}
\end{assumption}
Such a displacing Hamiltonian function exists for any compact $L\subset \C^n$. Fix such a choice.

\begin{definition}\label{Defn: LL^0}
We define the element \[
	\LL^0\in \Cfr{1}(0) = \prod_{k\in \Z_{\geq 0}} C^\dR_{\dim L + k} (\mathscr{L}^{k+1}(0))\]
 as follows: 
 \begin{itemize}
 	\item For $k=0$,  consider the map $L\to \mathscr{L}^1(0)$ defined by $y\mapsto [\underline{y}]\in \mathscr{L}^1(0)\subset \Pi_1 L$. The de Rham chain $(-1)^{\dim L + 1}[(L\to \mathscr{L}^1(0); 1\in \mathscr{A}_c^0(L))]$ defines a closed cycle in $C^\dR_{\dim L}(\mathscr{L}^1(0))$, which we set to be the $k=0$ component of $\LL^0$;
 	\item For all $k> 0$, set the $k$-th component of $\LL^0$ to be 0.
  \end{itemize}
\end{definition}

\bigskip
\begin{theorem}\label{Thm: VFCmain}
	Under the setup above, there exists the following data:
	\begin{itemize}
	\item A monoid of curve classes $\mathfrak{G}\subset H_1(L;\Z)$ (Definition \ref{Defn: monoid})  and a module $\mathfrak{N}\subset H_1(L;\Z)$ of $H$-perturbed curve classes over $\mathfrak{G}$ (Definition \ref{Defn: module});
	\item For each $\beta\in\mathfrak{G}^+$ a chain $\MM(\beta)\in \Cfr{-1}(\beta)$, and for each $\eta\in \mathfrak{N}$ a chain $\NN^{\geq 0}(\eta)\in \Cfr{2}(\eta)$ and a chain $\NN^0(\eta)\in \Cfr{1}(\eta)$;
\end{itemize}
	such that 
	\begin{enumerate}
		\item The element \[\MM := \sum_{\beta\in\mathfrak{G}^+} \MM(\beta) \in \wh{\Cfr{-1}}_{\mathfrak{G}^+}, \]where $\MM(\beta)\in \Cfr{-1}(\beta)$,
	satisfies the Maurer-Cartan equation  \[ \partial \MM + \frac{1}{2} \big[\MM,\MM\big]  = 0. \]
		\item The elements \[\NN^{\geq 0} := \sum_{\eta\in\mathfrak{N}} \NN^{\geq 0}(\eta) \in \wh{\Cfr{2}}_\mathfrak{N},\quad 
	\NN^{ 0} := \sum_{\eta\in\mathfrak{N}} \NN^{ 0}(\eta)\in \wh{\Cfr{1}}_\mathfrak{N}, \]where $\NN^{\geq 0}(\eta)\in \Cfr{2}(\eta)$, $\NN^0(\eta) \in \Cfr{1}(\eta)$,
	satisfy \[ \partial  \NN^{\geq 0}- \big[\MM, \NN^{\geq 0}\big] = \NN^{0}.   \]
	\item $\NN^0(\eta)\neq 0$ only if $\eta \in \mf{G}$. Moreover, in case $\eta = 0$, $\NN^0(0) \in \Cfr{1}(0)$ is a cycle which is homologous  to $\LL^0$. 
	\end{enumerate}
\end{theorem}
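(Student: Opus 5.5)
The plan follows the strategy of Irie's construction in \cite{Irie2} (Theorem 7.2 there and the accompanying Kuranishi-structure arguments). The difference is that we land in the cosimplicial model $\CFr$ of \cite{WangThesis} rather than in the de Rham chains on the $C^\infty$ free loop space.

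First, I take $\mf{G}$ to be the monoid generated by the classes $\beta$ for which $\ms{M}_1(\beta)\ne\emptyset$. Gromov compactness with the energy identity $E(\beta)=\int u^*\omega>0$ for nonconstant discs gives discreteness and finiteness per energy level, as in Definition \ref{Defn: monoid}. For the module, I take $\mf{N}$ to be the $\mf{G}$-module generated by the classes $\eta$ with $\ms{N}_1(\eta)\ne\emptyset$. The standard estimate $E(\eta)\ge -\norm{H}$ for solutions of the $\chi_r H$-perturbed equation, together with compactness over $r\in[0,\infty]$, gives Definition \ref{Defn: module}. Displaceability (Assumption \ref{Asmp: Disp Hamil}) makes $\ms{N}_1(\eta)$ compact: for $r\to\infty$ there are no solutions, since these would give a Hamiltonian chord from $L$ to $\varphi^1_H(L)$, which is empty.

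Next, on the moduli spaces $\ms{M}_{k+1}(\beta)$ and the free-marked versions $\ms{N}_{k+1}(\eta)$ (no $\star$ constraint) I build compatible systems of Kuranishi structures and CF-perturbations. The evaluation map $(\ev_0,\dots,\ev_k)$ lifts to $\ms{L}^{k+1}(\beta)$ via the groupoid classes of boundary arcs between consecutive marked points, and $\ev_0$ must be a submersion on perturbed zero sets, so that we get plots in the sense of Definition \ref{Defn: DiffSpaceOnLoops}. I would arrange this by choosing the CF-perturbations to be transverse to $\ev_0$, as in Irie's ``$\ev_0$-submersive'' perturbations. Pushing forward the virtual fundamental chains produces $\MM(\beta)(k)\in C^\dR(\ms{L}^{k+1}(\beta))$, and after the degree shift these assemble into an element of $\Cfr{-1}(\beta)$. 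Here the degree is $n+\mu(\beta)+k-2$, which matches the degree convention \eqref{Eq: DegConvertFree}. The analogous pushforward from $\ms{N}_{k+1}(\eta)$ gives $\NN^{\ge0}(\eta)$ of degree $2$, and the $r=0$ boundary stratum gives $\NN^0(\eta)$ of degree $1$.

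The relations then come from Stokes' formula for de Rham chains pushed forward from Kuranishi spaces. For the $\MM$ relation, the codimension-one boundary of $\ms{M}_{k+1}(\beta)$ consists of disc bubbling, $\ms{M}_{k_1+1}(\beta_1)\fiberprod{\ev_i}{\ev_0}\ms{M}_{k_2+1}(\beta_2)$, which matches $\circ_i^{\ms L}$ through $\conc$. Forgetting marked points gives compatibility with $\delta_i$, which accounts for the $\partial^1$ part of the differential. Summing over these gives $\partial\MM+\frac12[\MM,\MM]=0$. For the $\NN$ relation, the boundary of $\ms{N}_{k+1}(\eta)$ consists of the $r=0$ stratum and bubbling of $\ms{M}$-discs at boundary points. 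Because no $\star$ constraint is imposed here, both bubbling types \eqref{Eq: Pert Codim1 Part1} and \eqref{Eq: Pert Codim1 Part2} assemble into the single bracket term $[\MM,\NN^{\ge0}]$. This yields $\partial\NN^{\ge0}-[\MM,\NN^{\ge0}]=\NN^0$.

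For item (3), $\NN^0(\eta)$ comes from genuinely holomorphic discs. These are nonempty only if $\eta\in\mf{G}$, or if $\eta=0$, in which case the moduli space is the space of constant discs, identified with $L$. With a perturbation that is trivial on the constant locus, $\NN^0(0)$ equals the chain $L\to\ms{L}^{k+1}(0)$ given by constant loops. In arity $k=0$ this is $\pm\LL^0$, and in higher arity it is degenerate in the cosimplicial direction. The higher components cancel or are exact under $\partial^1$, so $\NN^0(0)$ is homologous to $\LL^0$; checking this also fixes the sign $(-1)^{\dim L+1}$.

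I expect the main obstacle to be building the CF-perturbations coherently across all $(k,\beta)$ and $(k,\eta)$ at once. They must be compatible with the fibre-product boundary strata and with forgetful maps, which is needed for the $\delta_i$ terms. They must also keep $\ev_0$ submersive, and that has to hold uniformly in energy so that the sums converge in the gapped completion. I would handle this by induction on energy and $k$ as in \cite{Irie2}, Section 7, taking the forgetful-map compatibility from the cyclic/forgetful-compatible Kuranishi structures of FOOO. The sign verification comes second.
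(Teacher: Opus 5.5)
Your overall framework matches the paper: Irie's Kuranishi and CF-perturbation machinery, pushforward into the finite-dimensional cosimplicial model, the two bubbling types assembling into $[\MM,\NN^{\geq 0}]$, and displaceability ruling out $r\to\infty$. The gap is in how you produce the cosimplicial part $\partial^1$ of the differential, which is exactly where the paper's new work lies.

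\textbf{Where $\partial^1$ comes from.} In $\CFr$ one has $\partial^1=-[\wt{\LL},-]$ (proof of Lemma \ref{Lm: StructuresOnClosedStatesApp}), where $\wt{\LL}$ is the pushforward of the constant-disc space $\ms{M}_3(0)\cong L$. So the $\delta_i$-terms are the boundary strata $\ms{M}_{k}(\beta)\fiberprod{\ev_i}{\ev_0}\ms{M}_3(0)$ where two adjacent marked points collide; they do not come from forgetful maps. The codimension-one boundary also contains the ghost strata $\ms{M}_{k_1+1}(\beta)\fiberprod{\ev_i}{\ev_0}\ms{M}_{k_2+1}(0)$ with $k_2\geq 3$, where $\ms{M}_{k_2+1}(0)\cong L\times D^{k_2-2}$. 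These are neither $\mf{G}^+$-bubbling nor $\delta_i$-terms, and your bookkeeping never says why they contribute nothing.

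\textbf{How the paper handles this.} The paper keeps the energy-zero spaces in the system, each with a single chart, trivial obstruction bundle and \emph{trivial} CF-perturbation. This is possible because $\ev_0=\pr_L$ is already a submersion (Theorem \ref{Thm: ExistCF}). Then $\ev_*\ms{M}_3(0)$ is exactly $\LL$ up to the fixed sign. For $k\geq 3$ the chain $[(L\to\ms{L}^{k+1}(0);(\pr_L)_!1)]$ vanishes, since $(\pr_L)_!1=0$ when the fibre $D^{k-2}$ has positive dimension. One first proves the Maurer--Cartan equation with respect to $\partial^\dR$ for the full $\MM$, whose energy-zero part equals $\wt{\LL}$ \emph{on the chain level}. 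Then $\MM-\wt{\LL}$ satisfies it for $\partial=\partial^\dR-[\wt{\LL},-]$, as in Lemma 5.3 of \cite{Irie2}, and the same shift gives the $\NN$-equation. Forgetful-compatible Kuranishi structures and CF-perturbations, demanded simultaneously with $\ev_0$-strong submersivity and fibre-product compatibility, are a much stronger input that you would have to justify. The argument does not need them.

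\textbf{The limiting step.} There is no single coherent system of CF-perturbations on all $\ms{M}_{k+1}(\beta)$ and $\ms{N}_{k+1}(\eta)$ at once, because the admissible perturbation size depends on the space. Following \cite{Irie2}, the paper perturbs the energy-truncated spaces $\ms{M}_{k+1}(\beta:\{m\})$ and their interpolations over $[m,m+1]$. This gives approximate solutions valid only modulo $F^i$ (Theorem \ref{Thm: ApproxSoln}), which are then passed to an algebraic limit (Theorem \ref{Thm: VFC}). The new requirement is that every correction step keep the energy-zero components exactly equal to $\wt{\LL}$, resp.\ $\ol{\LL}$ in the interval model. This works because $e_+$ carries $\ol{\LL}$ to the energy-zero element, so the corrections $\ol{\Delta}^i_\MM(0,k)$ can be taken to be zero. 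A merely homologous energy-zero part would not suffice, since the passage $\partial^\dR\rightsquigarrow\partial$ needs $-[\MM(0),-]=\partial^1$ on the nose.

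\textbf{A minor point on (3).} The higher-arity components of the constant-map contribution to $\NN^0(0)$ vanish because they are degenerate de Rham chains, for the same fibre-integration reason as above. They do not vanish by cancelling or being exact under $\partial^1$.
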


This theorem is analogous to Theorem 5.1 (and section 5 in general) of \cite{Irie2}. The elements $\MM, \NN, \NN^0$ are constructed using virtual fundamental chains of moduli spaces of pseudo-holomorphic and perturbed pseudo-holomorphic discs (see section \ref{Sec: SketchPf} for the geometric motivations of these elements). 
The  proof of this theorem requires the theory of Kuranishi structures, similar as those in \cite{Irie2} (but simpler since we are using a simplified chain model by \cite{WangThesis}), and is contained in Appendix \ref{Sec: Kuranishi}.

\subsection{Deformation in open string}\label{Subsc: Ainfty}
In this section we construct a $\mf{G}$-gapped curved dg associative algebra deforming the Pontryagin algebra structure on $\CBa$.

\subsubsection{Statement}
Recall our setup in section \ref{Subsc: ClosedStringDefo}: $L$ is a closed, oriented, spin manifold of dimension $n$, together with a Lagrangian embedding into $\C^n$; $H\in  C_c^\infty(\C^n\times [0,1]_t)$ is a Hamiltonian satisfying Assumption \ref{Asmp: Disp Hamil}.

\begin{theorem}\label{Cor: BasedAinfty}
	Under the setup above, there exists the following data:
	\begin{itemize}
		\item A monoid of curve classes $\mathfrak{G}\subset H_1(L;\Z)$ (Definition \ref{Defn: monoid})  and a module $\mathfrak{N}\subset H_1(L;\Z)$ of $H$-perturbed curve classes over $\mathfrak{G}$ (Definition \ref{Defn: module});
		\item A $\mf{G}$-gapped curved dg associative deformation of $\CBa$, which we denote as  $\mm \in \wh{\CC^{-1}}_\mf{G}(\CBa,\CBa)$;
		\item For each $\eta\in\mathfrak{N}$, a chain $\NNN^{\geq 0}(\eta)\in \Cba{1}(\eta)$ and a chain $\NNN^0(\eta)\in \Cba{0}(\eta)$,
	\end{itemize}
	such that 
	\begin{enumerate}
		\item $\NNN^0(\eta)\neq 0$ only if $\eta \in \mf{G}$. Moreover, in case $\eta = 0$, $\NNN^0(0) \in \Cba{0}(0)$ is a cycle which is homologous to $(-1)^{\dim L} [\underline{\star}]$ (here $[\underline{\star}]\in \Cba{0}(0)$ is the unit of $\CBa$; see Definition \ref{Defn: CBaUnit}).
		\item When the Maslov class $\mu_L$ vanishes, (under a deformation of the structures and data keeping all the above properties) \[ \mm_0=0\] and  the elements \[ \NNN^{\geq 0}:= \sum_{\eta\in\mathfrak{N}} \NNN^{\geq 0}(\eta)\in \wh{\Cba{1}}_{\mathfrak{N}} , \quad \NNN^0:= \sum_{\eta\in\mathfrak{N}} \NNN^0(\eta) \in \wh{\Cba{0}}_{\mathfrak{N}}\]
		 satisfy \[ \mm_1(\NNN^{\geq 0}) = \NNN^0.  \]
	\end{enumerate}
\end{theorem}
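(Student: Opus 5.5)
Take $\mf{G},\mf{N},\MM,\NN^{\geq 0},\NN^0$ from Theorem \ref{Thm: VFCmain} and push everything forward along the closed-open map of Theorem \ref{Thm: OCstringPackage}. The sign convention in Theorem \ref{Thm: VFCmain} is $\partial\MM+\frac12[\MM,\MM]=0$, while Lemma \ref{Lm: MC Ainfty} uses $\partial x-\frac12[x,x]=0$. So set $x:=-\MM\in\wh{\Cfr{-1}}_{\mf{G}^+}$, which satisfies the latter, and let $\mm$ be the $\mf{G}$-gapped curved dg associative deformation supplied by Lemma \ref{Lm: MC Ainfty}. Thus $\mm_0=-\CO_0(x)=\mf{o}$-type terms of $\MM$ up to sign, and $\mm_1=\partial-\CO_1(x)=\partial+\CO_1(\MM)$.

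Next define $\NNN^{\geq0}(\eta):=\pm\CO_{0,\eta}(\NN^{\geq0}(\eta))$ and $\NNN^0(\eta):=\pm\CO_{0,\eta}(\NN^0(\eta))$, i.e.\ the anomaly map $\mf{o}$ restricted to the fibre over $\star$. Degrees match since $\CO_0$ lowers degree by one ($\Cfr{2}\to\Cba{1}$, $\Cfr{1}\to\Cba{0}$). Because $\CO$ preserves the $H_1$-decomposition, gappedness and the vanishing of $\NNN^0(\eta)$ for $\eta\notin\mf{G}$ are inherited. For $\eta=0$, $\CO_0$ is a chain map in arity $0$: the $0$-ary part of $\delta\CO(y)$ is $\partial\CO_0(y)$, since $\delta^1$ raises arity. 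Hence $\NNN^0(0)$ is a cycle homologous to $\pm\CO_0(\LL^0)$. Unwinding \eqref{Eq: AnomalyMap} on $\LL^0=(-1)^{\dim L+1}[(L\to\ms{L}^1(0);1)]$, the fibre over $\star$ is the point mapping to $[\underline{\star}]$. I will then track the signs $(-1)^{\deg x+1}$ and $(-1)^{|x|}$ to get $(-1)^{\dim L}[\underline{\star}]$, fixing the overall sign convention in the definition of $\NNN$ accordingly.

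For (2), apply $\CO$ to $\partial\NN^{\geq0}-[\MM,\NN^{\geq0}]=\NN^0$. This gives $\delta\Psi-[\Phi,\Psi]=\CO(\NN^0)$ with $\Psi=\CO(\NN^{\geq0})$ and $\Phi=\CO(\MM)$. Read off the arity-$0$ component. Since $\Phi_\ell,\Psi_\ell$ vanish for $\ell\geq2$, the arity-$0$ part of $[\Phi,\Psi]$ is $\Phi_1(\Psi_0)\pm\Psi_1(\Phi_0)$. Therefore
\[\partial\Psi_0-\Phi_1(\Psi_0)\mp\Psi_1(\Phi_0)=\CO_0(\NN^0),\]
which reads $\mm_1(\NNN^{\geq0})=\NNN^0\pm\Psi_1(\mm_0)$ after sign normalization.

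The main obstacle is removing the curvature contribution $\mm_0$ when $\mu_L=0$. By the gradings, $\mm_{0,\beta}=\pm\mf{o}(\MM(\beta))$ lies in $\Cba{-2}(\beta)$, whose $(\beta,k)$-components are de Rham chains of degree $k-2$ on $\Omega^{k+1}_\star(\beta)$, a manifold of dimension $kn$. These are nonzero a priori, so $\mm_0=0$ will not hold on the nose. The plan, following the ``deformation of the structures'' clause, is to argue inductively over the energy filtration. Since $\HBa_{-2}=0$ when $\mu=0$ (based loop homology is nonnegatively graded), each lowest-energy piece of $\mm_0$ is a $\partial$-cycle by $\mm_1\mm_0=0$, hence exact: $\mm_{0,\beta}=\partial b_\beta$. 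Then modify by a gauge transformation in the Hochschild dg Lie algebra, namely $\mm\mapsto e^{\mathrm{ad}\,b}\mm$ with $b\in\wh{\CC^{-1}}$ of arity $0$ and $\mf{G}^+$-gapped, which conjugates $\mm_1$ by inner derivations. Correspondingly modify $\NNN^{\geq0}$ by $\NNN^{\geq0}\mapsto\NNN^{\geq0}\bullet(\ldots)$-type corrections so the relation persists. This preserves gappedness, the arity constraints of Definition \ref{Defn: G-gapped assdefo}, and $\NNN^0(0)$ up to homology. Convergence follows from discreteness of $E(\mf{G})$. Once $\mm_0=0$, the extra term $\Psi_1(\mm_0)$ disappears and $\mm_1(\NNN^{\geq0})=\NNN^0$. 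The delicate part is verifying that the gauge-transformed data still satisfy all identities of Definition \ref{Defn: G-gapped assdefo} and that the correction to $\NNN^{\geq0}$ remains $\mf{N}$-gapped.
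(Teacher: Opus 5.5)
Your route is the paper's: push $\MM$ forward along $\CO$, apply Lemma \ref{Lm: MC Ainfty}, set $\NNN=\CO_0(\NN)$, and remove $\mm_0$ by an energy induction using $\Hba{-2}(\beta)=0$. Lemma \ref{Lm: COtoStar} gives $(-1)^{\dim L}[\underline{\star}]$ with no extra sign. Your gauge transformation by an arity-zero, $\mf{G}^+$-gapped $b$ is exactly the paper's bounding-chain twist: arity $0$ gives $\mm_0+\mm_1b+b\bullet b$, arity $1$ gives $\mm_1^b$, arity $2$ gives $\bullet$. Your induction is Lemma \ref{Lm: BoundingChain}.

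\textbf{The gap is the last step.} Vanishing of the \emph{new} curvature does not make the extra term disappear. If you keep $\NNN^{\geq0}=\Psi^{\geq0}_0$, then (in the paper's conventions)
\[
\mm_1^b(\Psi^{\geq0}_0)=\Psi^0_0+\Psi^{\geq0}_1(\mm_0)+b\bullet\Psi^{\geq0}_0+\Psi^{\geq0}_0\bullet b,
\]
where $\mm_0=-\mm_1b-b\bullet b\neq0$ is the \emph{old} curvature. The correction is not of the form $\NNN^{\geq0}\bullet(\ldots)$. You must apply the same gauge transformation to $\Psi^{\geq0}=\CO(\NN^{\geq0})$ and to $\Psi^0=\CO(\NN^0)$. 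Since these have arity $\le1$, their transformed arity-zero parts are $\Psi^{\geq0}_0+\Psi^{\geq0}_1(b)$ and $\Psi^0_0+\Psi^0_1(b)$, so $\NNN^0$ changes too. Because $e^{\mathrm{ad}\,b}$ preserves the Gerstenhaber bracket, the arity-zero part of the transformed relation is $\mm_1^b(\Psi^{\geq0}_0+\Psi^{\geq0}_1(b))=\Psi^0_0+\Psi^0_1(b)$.

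The paper checks this by hand in Lemma \ref{Lm: ThmPfIdKilled}, using all three arity components of $\delta\Psi^{\geq0}-[\Phi,\Psi^{\geq0}]=\Psi^0$:
\begin{itemize}
\item the arity-one component cancels $b\bullet\Psi^{\geq0}_0+\Psi^{\geq0}_0\bullet b$;
\item the derivation property turns $b\bullet\Psi^{\geq0}_1(b)+\Psi^{\geq0}_1(b)\bullet b$ into $\Psi^{\geq0}_1(b\bullet b)$;
\item what remains is $\Psi^{\geq0}_1(\mm_0+\mm_1b+b\bullet b)=0$.
\end{itemize}
Property (1) then holds exactly, not just up to homology. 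Indeed $\Psi^0_1(b)$ is supported in $\mf{G}^+$, because $\NN^0$ is supported in $\mf{G}$ and $b$ in $\mf{G}^+$. Also $\Psi^{\geq0}_1(b)$ is $\mf{N}$-gapped because $\mf{N}$ is a $\mf{G}$-module.

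\textbf{Two smaller slips.} First, do not replace $\MM$ by $-\MM$. With $x=-\MM$ your $\mm_1$ is $\partial+\CO_1(\MM)$. But the arity-zero part of the transported relation is
\[
(\partial-\Phi_1)\Psi^{\geq0}_0+\Psi^{\geq0}_1(\Phi_0)=\Psi^0_0,
\]
and no sign normalization of $\NNN$ turns $\partial-\Phi_1$ into $\partial+\Phi_1$. The two equations of Theorem \ref{Thm: VFCmain} only fit together if (1) reads $\partial\MM-\frac12[\MM,\MM]=0$, as in Theorem \ref{Thm: ApproxSoln} and Lemma \ref{Lm: ThmPfIdKilled}. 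The paper applies Lemma \ref{Lm: MC Ainfty} with $x=\MM$.

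Second, $b$ takes values in $\Cba{-1}$, so as an arity-zero Hochschild cochain it lies in $\wh{\CC^{0}}(\CBa,\CBa)$, not $\wh{\CC^{-1}}$. That is what makes $e^{\mathrm{ad}\,b}$ degree-preserving. Arity-zero cochains bracket to zero, so your successive gauges compose to a single gauge by $b=\sum_j b_j$.
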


\begin{remark}\label{Rmk: PfBasedAinfty}
Pushing forward the Maurer-Cartan element $\MM\in \wh{\Cba{-1}}_{\mf{G}^+}$ and using Lemma \ref{Lm: MC Ainfty} to cook up the $\mf{G}$-gapped curved dg associative deformation $\mm$ of $\CBa$, and defining $\NNN^{\geq 0}:= \CO_0(\NN^{\geq 0})$, $\NNN^0:= \CO_0(\NN^0)$, would  satisfy everything in the Theorem but property (2).

Ideally, the term $\mm_0$ should be 0 in case $L$ has vanishing Maslov class, morally because the moduli spaces of Maslov-zero curves with one marked point have virtual dimension $\dim L -2$ and the curves do not generically intersect the basepoint $\star\in L$. However, a technical issue that appears is that the moduli spaces of curves with a large number of boundary marked points still have positive virtual dimensions after intersecting with $\star\in L$, and therefore has non-zero contribution to the curvature $\mm_0$ at the chain level. 
To deal with this, we use the machinery of bounding chains following \cite{FOOO1}.
For completeness, we sketch the obstruction theory of bounding chains developed in e.g. section 3.6 of \cite{FOOO1} in our very specific setting of curved dg algebras (instead of curved $A_\infty$-algebras in general) in section \ref{Subsc: BoundingChain}, before going into the proof of Theorem \ref{Cor: BasedAinfty}.
\end{remark}

\subsubsection{Obstruction theory for bounding chains}\label{Subsc: BoundingChain}
In this section, we temporarily denote by $\mm \in \wh{\CC^{-1}}_\mf{G}(\CBa,\CBa)$ the $\mf{G}$-gapped curved dg associative deformation of $\CBa$ given by $\MM\in \wh{\Cba{-1}}_{\mf{G}^+}$ and Lemma \ref{Lm: MC Ainfty}. 
	\begin{definition}[\cite{FOOO1} Definition 3.6.4, 3.6.16]
	A \emph{bounding chain over $\mathfrak{G}$} is an element $b\in \wh{\Cba{-1}}_{\mathfrak{G}^+}$ such that the Maurer-Cartan equation\begin{align}\label{Eq: BoundingChain}
		\mm_0 + \mm_1 b + b\bullet b = 0
	\end{align}holds. We say the curved algebra $\CBa$ is \emph{unobstructed over $\mf{G}$} if it admits a bounding chain $b$ over $\mf{G}$.
\end{definition}

We now suppose $L$ has vanishing Maslov class $\mu_L$.
\begin{lemma}\label{Lm: BoundingChain}
	If the Maslov class $\mu_L= 0 \in H^1(L;\Z)$, then the deformed algebra $(\CBa, \mm)$ is unobstructed over $\mf{G}$.
\end{lemma}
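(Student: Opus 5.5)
The plan is the standard FOOO-style obstruction argument. We build $b$ order by order in the energy filtration, and observe that when $\mu_L=0$ every obstruction class lies in a homology group of negative degree. Such groups vanish for the based loop space. Since $\mf{G}$ is a monoid of curve classes, we can list the energies of $\mf{G}^+$ as $0<\lambda_1<\lambda_2<\cdots\to+\infty$, with only finitely many $\beta\in\mf{G}$ at each level $\lambda_j$ (Definition \ref{Defn: monoid}). For each $j$ we will construct
\[ b^{(j)}=\sum_{\beta\in\mf{G}^+,\,E(\beta)\le\lambda_j} b(\beta),\qquad b(\beta)\in\Cba{-1}(\beta), \]
such that $\mm_0+\mm_1b^{(j)}+b^{(j)}\bullet b^{(j)}$ has no component in any class $\beta$ with $E(\beta)\le\lambda_j$. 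Each step adds only terms of energy $\lambda_{j+1}$, so the $b^{(j)}$ stabilize levelwise and $b=\lim_j b^{(j)}$ exists in $\wh{\Cba{-1}}_{\mf{G}^+}$. It then satisfies \eqref{Eq: BoundingChain}, because every component of the left side is eventually zero.

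The inductive step works as follows. Set $\mf{o}:=\mm_0+\mm_1b^{(j)}+b^{(j)}\bullet b^{(j)}$. For each $\beta$ with $E(\beta)=\lambda_{j+1}$, let $o(\beta)$ be the $\beta$-component of $\mf{o}$. Adding $b(\beta)$ for these $\beta$ changes the level-$\lambda_{j+1}$ components only through $\partial b(\beta)$. Indeed, $\mm_{1,\beta'}b(\beta)$ with $\beta'\ne0$ and the products $b(\beta)\bullet b(\cdot)$, $b(\cdot)\bullet b(\beta)$ all land at strictly higher energy, since every $\beta'\in\mf{G}^+$ has positive energy. So it suffices to show that each $o(\beta)$ is a $\partial$-exact cycle, and then to take $b(\beta):=-c(\beta)$ where $\partial c(\beta)=o(\beta)$.

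Closedness is the main computational step. We compute $\mm_1\mf{o}$ using the identities (4)(a)–(c) of Definition \ref{Defn: G-gapped assdefo}, in the form $\mm_1\mm_0=0$, $\mm_1^2=[\mm_0,-]$, and the Leibniz rule for $\mm_1$. This gives the standard identity
\[ \mm_1\mf{o}=[\mf{o},b^{(j)}] \]
(possibly up to sign), mirroring the proof of Proposition 3.6.10 in \cite{FOOO1} but much simpler because $\mm_{\ge3}=0$. Take the $\beta$-component with $E(\beta)=\lambda_{j+1}$. By the induction hypothesis, $\mf{o}$ has no components below $\lambda_{j+1}$, and $b^{(j)}$ has only positive-energy components. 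Hence the right side and the terms $\mm_{1,\beta'}(\mf{o})$ with $\beta'\ne0$ contribute nothing at level $\lambda_{j+1}$, leaving $\partial o(\beta)=0$. I expect the main care to be needed here, in checking the signs so that the cross terms genuinely cancel; the energy bookkeeping itself is routine.

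Exactness is where the hypothesis on $\mu_L$ enters. The degrees are $\deg\mm_0=-2$, $b\in\Cba{-1}$, $\mm_1$ of degree $-1$ and $\bullet$ of degree $0$, so $o(\beta)\in\Cba{-2}(\beta)$. By Theorem \ref{Thm: OCstringPackage}(2) (Theorem \ref{Thm: cosimpHlgy}),
\[ H_{-2}(\CBa(\beta),\partial)\cong H_{-2+\mu(\beta)}(\Omega_\star(\beta))=H_{-2}(\Omega_\star(\beta))=0, \]
because $\mu_L=0$ and the homology of a space vanishes in negative degrees. Therefore $o(\beta)=\partial c(\beta)$ for some $c(\beta)\in\Cba{-1}(\beta)$, which completes the induction. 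The base case is the same computation with $b^{(0)}=0$ and $\mf{o}=\mm_0$, using $\mm_{0,0}=0$.
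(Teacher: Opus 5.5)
Your proposal is correct and follows the paper's proof essentially step for step. Both arguments induct over the energy levels of $\mf{G}$ starting from $\mm_{0,0}=0$. Both get closedness of each new obstruction component from $\mm_1(\mm_0+\mm_1 b+b\bullet b)=(\mm_0+\mm_1 b)\bullet b-b\bullet(\mm_0+\mm_1 b)$; this equals your commutator exactly, with no sign ambiguity, because the triple products $b\bullet b\bullet b$ cancel. Both get exactness from $H_{-2}(\CBa(\beta),\partial)\cong H_{-2}(\Omega_\star(\beta))=0$ when $\mu_L=0$.
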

\begin{proof}
	This follows from the obstruction theory of bounding (co)chains developed in e.g. Theorem 3.6.18 of \cite{FOOO1}. We repeat the argument here because of different grading conventions.
	
	The bounding chain $b$ is to be constructed by induction on energy levels.
	We sort the elements in the monoid $\mf{G}$ by energy:
	\begin{align*}
		\mf{G} = \{0=\beta_0,\beta_1,\beta_2,\dots \}\textup{ such that } 0=E(\beta_0)\leq E(\beta_1)\leq E(\beta_2)\leq \cdots
	\end{align*}
	Also, sort the set  $E(\mathfrak{G})\subset \R$ in order:
	\[ E(\mf{G})= \{0=E^{(0)}<E^{(1)}<E^{(2)}<\cdots\}.\] 
	These are possible by the gappedness assumption (Definition \ref{Defn: monoid}).
	Denote by $K^{(i)}\in \Z_{\geq 1}$ the largest integer such that $E(\beta_{K^{(i)}})=E^{(i)}$.
	In the following proof, for a given energy level $\lambda$, an expression holds ``mod $\lambda$'' is taken to mean that it holds in \[\CBa/\ms{F}^{\lambda}\CBa\cong \bigoplus_{E(a)\leq \lambda} \CBa(a).\]

	For the base case, at energy level $E^{(1)}$,  we have $E(\beta_1)=\dots =E(\beta_{K^{(1)}})=E^{(1)}$. 
	Define 
	\begin{align*}
		o_i:= \mm_{0,\beta_i}\in \Cba{-2},\quad i=1,\dots, K^{(1)}.
	\end{align*}
	We claim that $\partial o_i=0$ for $i=1,\dots, K^{(1)}$. 
	Indeed, by Definition \ref{Defn: G-gapped assdefo} (4a), $\mm_1(\mm_0) = 0$, and since $\mm_{0,0}=0$, we have that for 
	\begin{align*}
		0=\mm_1(\mm_0) \equiv  \sum_{i=1}^{K^{(1)}}\partial \mm_{0,\beta_i}  \mod E^{(1)}.
	\end{align*}
	Then $\partial o_i=\partial \mm_{0,\beta_i} =0$ because they are in different summands $\Cba{*}(\beta_i)\subset \CBa$.
Moreover, by Theorem \ref{Thm: cosimpHlgy} and the Maslov-zero assumption (so that the grading works out), $\Hba{-2}(\beta_i)\cong H_{-2}(\Omega_\star (\beta_i))=0$. Therefore \[\partial b_i+o_i = 0\] for some $b_i\in \Cba{-1}(\beta_i)$. Define
\begin{align*}
	b^{(1)}:= \sum_{i=1}^K b_i\in \Cba{-1}.
\end{align*}
Then we have \begin{align*}
	\mm_0 + \mm_1\left(b^{(1)}\right) + b^{(1)}\bullet b^{(1)} \equiv 0 \mod E^{(1)}.
\end{align*}
	
	Now suppose that for some $j>1$, we have found $b^{(j-1)}= \sum_{i=1}^{K^{(j-1)}} b_i$ where $b_i\in \Cba{-1}(\beta_i)$, such that 
	\begin{align}\label{Eq: BoundingChain (j-1)}
		\mm_0 + \mm_1\left(b^{(j-1)}\right) + b^{(j-1)}\bullet b^{(j-1)} \equiv 0 \mod E^{(j-1)}.
	\end{align}
	For $i=K^{(j-1)}+1,K^{(j-1)}+2, \dots , K^{(j)}$, define the component of the left-hand side in $\Cba{-2}(\beta_i)$ as $o_i\in \Cba{-2}(\beta_i)$.
	We now claim that $\partial o_i=0$ for all such $i$. This follows from applying $\mm_1$ to the left-hand side of \eqref{Eq: BoundingChain (j-1)} in two different ways:
	\begin{itemize}
		\item On the one hand, since $\mm_1= \partial + \sum_{i=1}^\infty \mm_{1,\beta_i}$,\begin{align*}
			\mm_1\left(\mm_0 + \mm_1(b^{(j-1)}) + b^{(j-1)}\bullet b^{(j-1)}\right) \equiv \partial \sum_{i=K^{(j-1)}+1}^{K^{(j)}} o_i \mod E^{(j)};
		\end{align*}
		\item On the other hand, by the $A_\infty$-identities,
		\begin{align*}
			&\mm_1\left(\mm_0 + \mm_1(b^{(j-1)}) + b^{(j-1)}\bullet b^{(j-1)}\right)\\=& \, \mm_1(\mm_0) + \mm_1^2(b^{(j-1)}) + \mm_1(b^{(j-1)}\bullet b^{(j-1)}) \\
			= & \left( \mm_0+\mm_1(b^{(j-1)}) \right)\bullet b^{(j-1)} - b^{(j-1)} \bullet \left( \mm_0+\mm_1(b^{(j-1)}) \right)\\
			\equiv & - b^{(j-1)}\bullet b^{(j-1)}\bullet b^{(j-1)} + b^{(j-1)} \bullet b^{(j-1)}\bullet b^{(j-1)}\mod E^{(j)}\\
			\equiv & \,\, 0 \mod E^{(j)}.
		\end{align*}
	\end{itemize}
	This shows that $\partial o_i=0$ in each $\CBa(\beta_i)$. Again by Theorem \ref{Thm: cosimpHlgy} and the Maslov-zero assumption, $[o_i]\in \Hba{-2}(\beta_i)=0$ is null-homologous and therefore \begin{align*}
		\partial b_i + o_i =0
	\end{align*}
	for some $b_i\in \Cba{-1}(\beta_i)$ for each $i=K^{(j-1)}+1,\dots K^{(j)}$. Then it follows that \begin{align*}
\mm_0 + \mm_1\left(b^{(j)}\right) + b^{(j)}\bullet b^{(j)} \equiv 0 \mod E^{(j)}.	\end{align*}
Since this induction can be carried out for every $j$, we can take the limit
\begin{align*}
	b:= \lim_{j\to\infty}b^{(j)} \in \Cba{-1}
\end{align*}
to get a bounding chain over $\mf{G}$.
\end{proof}

\begin{definition}\label{Defn: m1BoundingChainDeformed}
	Suppose $b\in \wh{\Cba{-1}}_\mf{G}$ is a bounding chain over $\mf{G}$. 
	Then define \begin{align*}
		\mm_1^b\colon \wh{\CBa}\to \wh{\CBa}; \quad \mm_1^b(\alpha):= \mm_1(\alpha) +b\bullet \alpha - (-1)^{|\alpha|} \alpha \bullet b.
	\end{align*}
\end{definition}
By \eqref{Eq: BoundingChain}, it can be verified that $(\mm_1^b)^2=0$.

For notational simplicity, we temporarily write  
\begin{align*}
	\Phi:= \CO(\MM), \quad \Psi^{\geq 0} := \CO(\NN^{\geq 0}),\quad \Psi^{0}:= \CO(\NN^{0}).
\end{align*}

\begin{lemma}\label{Lm: ThmPfIdKilled}
	$\mm_1^b(\Psi_0^{\geq 0} + \Psi_1^{\geq 0}(b)) = \Psi_0^{0} + \Psi_1^{0}(b),$ where
	\begin{enumerate}
		\item $\Psi_1^{\geq 0}(b)\in \wh{\Cba{1}}_\mf{N}$;
		\item $\Psi_1^{0}(b) \in \wh{\Cba{0}}_\mf{N}$ and the component of $\Psi_1^{0}(b)$ in $\Cba{0}(\eta)$ is non-zero only if $\eta\in \mf{G}^+$.
	\end{enumerate}
\end{lemma}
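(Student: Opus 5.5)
The plan is to transport the closed-string identity of Theorem \ref{Thm: VFCmain}(2) along the closed-open map and then twist it by the bounding chain $b$. Since $\CO$ is a homomorphism of dg Lie algebras (Lemma \ref{Lm: dgLieMorphism}) that respects the $H_1(L;\Z)$-decomposition, applying it to $\partial\NN^{\geq 0}-[\MM,\NN^{\geq 0}]=\NN^0$ will give
\[
\delta\Psi^{\geq 0}-[\Phi,\Psi^{\geq 0}]=\Psi^0 \quad\text{in } \wh{\CC^*}(\CBa,\CBa).
\]
I will then split this identity by arity, exactly as in the proof of Lemma \ref{Lm: MC Ainfty}. Only the arity-$0$ and arity-$1$ parts of $\Phi,\Psi^{\geq 0},\Psi^0$ are nonzero.
\begin{itemize}
\item Arity $0$. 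The bracket contributes only $\Phi_1(\Psi^{\geq 0}_0)$ and $\Psi^{\geq 0}_1(\Phi_0)$. This gives, up to signs,
\[
\partial\Psi^{\geq 0}_0-\Phi_1(\Psi^{\geq 0}_0)\pm\Psi^{\geq 0}_1(\Phi_0)=\Psi^0_0 .
\]
\item Arity $1$. For each $\alpha$, the term $\delta^0\Psi^{\geq 0}_1$, together with $\delta^1\Psi^{\geq 0}_0$ (which yields $\alpha\bullet\Psi_0^{\geq 0}-\Psi_0^{\geq 0}\bullet\alpha$) and the commutator $\Phi_1\Psi^{\geq 0}_1\mp\Psi^{\geq 0}_1\Phi_1$, equals $\Psi^0_1(\alpha)$.
\item Arity $2$. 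All arity-$\geq 2$ parts vanish, so this reads $\delta^1\Psi^{\geq 0}_1=0$. That is, $\Psi^{\geq 0}_1$ is a derivation of $\bullet$, just as in the third identity in the proof of Lemma \ref{Lm: MC Ainfty}.
\end{itemize}

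Next I will expand the left-hand side of the lemma using Definition \ref{Defn: m1BoundingChainDeformed}, with $\mm_1=\partial-\Phi_1$ and $\mm_0=-\Phi_0$. This gives $\mm_1(\Psi^{\geq 0}_0)+\mm_1(\Psi^{\geq 0}_1(b))$ plus the graded commutators of $b$ with $\Psi^{\geq 0}_0$ and with $\Psi^{\geq 0}_1(b)$.
\begin{itemize}
\item The arity-$0$ identity replaces $\mm_1(\Psi_0^{\geq 0})$ by $\Psi^0_0\mp\Psi^{\geq0}_1(\Phi_0)$.
\item The arity-$1$ identity at $\alpha=b$ replaces $\mm_1(\Psi_1^{\geq 0}(b))$ plus the commutator of $b$ with $\Psi_0^{\geq 0}$ by $\Psi^0_1(b)-\Psi^{\geq0}_1(\partial b-\Phi_1 b)$.
\item The bounding chain equation \eqref{Eq: BoundingChain} reads $\partial b-\Phi_1 b=\Phi_0-b\bullet b$. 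So the leftover terms become $-\Psi^{\geq 0}_1(\Phi_0)$, which cancels the $\Psi_1^{\geq 0}(\Phi_0)$ term, and $+\Psi^{\geq0}_1(b\bullet b)$.
\item By the derivation property, $\Psi^{\geq0}_1(b\bullet b)=\Psi^{\geq0}_1(b)\bullet b\pm b\bullet\Psi^{\geq 0}_1(b)$. This cancels the commutator of $b$ with $\Psi^{\geq0}_1(b)$.
\end{itemize}
What remains is $\Psi^0_0+\Psi^0_1(b)$, as claimed.

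For (1) and (2), I will use degree counting and gappedness.
\begin{itemize}
\item Degrees. $\CO$ has degree $0$ and $|\NN^{\geq 0}|=2$, so $\deg\Psi^{\geq0}_1=|\Psi^{\geq 0}|=2$. Since $|b|=-1$, this gives $\Psi^{\geq 0}_1(b)\in\Cba{1}$. Similarly $\Psi^0_1(b)\in\Cba{0}$.
\item Gappedness. $b$ is $\mf{G}^+$-gapped and $\NN^{\geq 0}$ is $\mf N$-gapped, so $\Psi_1^{\geq0}(b)$ lies in classes $\mf N+\mf G^+\subset\mf N$. By Theorem \ref{Thm: VFCmain}(3), $\NN^0$ is supported in $\mf G$. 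Hence $\Psi^0_1(b)$ is supported in $\mf G+\mf G^+\subset\mf G^+$, using that $\mf G$ is a monoid.
\item Convergence in the completion follows from the energy filtration, as in section \ref{Subsc: FiltrationGapped}.
\end{itemize}

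The main obstacle is bookkeeping the signs. The Gerstenhaber pre-Lie signs $\dagger$, the $\delta^1$ signs, and the sign conventions \eqref{Eq: CO0 OpenClosed} and \eqref{Eq: CO1 openclosed} must combine so that the three cancellations happen exactly. I will first check them with $|\Psi^{\geq 0}|$ even and $|b|$ odd, mirroring the computation in the proof of Lemma \ref{Lm: MC Ainfty}, and adjust if a relative sign appears.
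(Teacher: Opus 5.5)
Your route is the paper's: you transport $\partial\NN^{\geq 0}-[\MM,\NN^{\geq 0}]=\NN^0$ along $\CO$, split $\delta\Psi^{\geq 0}-[\Phi,\Psi^{\geq 0}]=\Psi^0$ into arities $0,1,2$, and substitute into $\mm_1^b$ using the bounding chain equation and the derivation property. Your degree and gappedness arguments for (1) and (2) also match the paper. The problem is that the signs you committed to do not close up, so the cancellation fails at the last step as written.

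With the paper's conventions, every $\dagger$ vanishes, $|\Phi|\,|\Psi^{\geq 0}|$ is even, $\deg\Psi_0^{\geq 0}=1$ and $\deg\Psi_1^{\geq 0}=2$. The arity-$0$ identity is therefore $\mm_1\Psi_0^{\geq 0}=\Psi_0^0+\Psi_1^{\geq 0}(\mm_0)=\Psi_0^0-\Psi_1^{\geq 0}(\Phi_0)$. Because $\deg\Psi_0^{\geq 0}$ is odd, $\delta^1\Psi_0^{\geq 0}(\alpha)=\Psi_0^{\geq 0}\bullet\alpha-(-1)^{|\alpha|}\alpha\bullet\Psi_0^{\geq 0}$, not $\alpha\bullet\Psi_0^{\geq 0}-\Psi_0^{\geq 0}\bullet\alpha$ as in the case of $\Phi_0$. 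So the arity-$1$ identity at $\alpha=b$ reads
\[
\mm_1\big(\Psi_1^{\geq 0}(b)\big)+\Psi_0^{\geq 0}\bullet b+b\bullet\Psi_0^{\geq 0}=\Psi_1^0(b)+\Psi_1^{\geq 0}(\mm_1 b).
\]
The term $\Psi_1^{\geq 0}(\mm_1 b)=\Psi_1^{\geq 0}(\partial b-\Phi_1 b)$ enters with a plus sign, not the minus sign you wrote.

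With the correct signs, the leftover is $\Psi_1^{\geq 0}(\mm_0+\mm_1 b)=-\Psi_1^{\geq 0}(b\bullet b)$. Since $\deg\Psi_1^{\geq 0}=2$ is even, the arity-$2$ identity gives the \emph{unsigned} rule
\[
\Psi_1^{\geq 0}(b\bullet b)=b\bullet\Psi_1^{\geq 0}(b)+\Psi_1^{\geq 0}(b)\bullet b.
\]
This exactly cancels the twisting terms $b\bullet\Psi_1^{\geq 0}(b)+\Psi_1^{\geq 0}(b)\bullet b$ produced by $\mm_1^b$, both of which carry sign $+$ because $|\Psi_1^{\geq 0}(b)|=1$.

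With your signs, the $\Phi_0$-terms cancel only if the arity-$0$ sign is also taken to be the wrong one. The leftover is then $+\Psi_1^{\geq 0}(b\bullet b)$, which doubles the twisting terms instead of cancelling them. There is no $\pm$ freedom in the derivation rule to absorb this. Once these two signs are corrected, your computation coincides with the paper's.
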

\begin{proof}
By construction of $\CO$, each of $\Phi, \Psi^{\geq 0}$ and $\Psi^{0}$ only have 0- and 1-ary components in $\CC^*(\CBa,\CBa)$.
Since $\CO$ is a homomorphism of dg Lie algebras, we have \[ \delta \Psi^{\geq 0} - \left[\Phi, \Psi^{\geq 0}\right] = \Psi^{ 0}. \]
This equation translates to 
\begin{align}\label{Eq: BoundChainDisp}
	\begin{cases}
		\mm_1\Psi_0^{\geq 0} - \Psi_1^{\geq 0}(\mm_0) = \Psi_0^{0}\\
		\mm_1(\Psi_1^{\geq 0}(\alpha)) - \Psi_1^{\geq 0}(\mm_1(\alpha)) + \Psi_0^{\geq 0} \bullet \alpha - (-1)^{|\alpha|} \alpha\bullet \Psi_0^{\geq 0} = \Psi_1^{0}(\alpha)\\
		\Psi_1^{\geq 0}(\alpha_1\bullet\alpha_2) = \alpha_1\bullet\Psi_1^{\geq 0}(\alpha_2) + \Psi_1^{\geq 0}(\alpha_1)\bullet \alpha_2
	\end{cases}
\end{align}
for all $\alpha,\alpha_1,\alpha_2\in \CBa$.

Then 
\begin{align*}
	\mm_1^b(\Psi_0^{\geq 0} + \Psi_1^{\geq 0}(b)) &= \mm_1\Psi_0^{\geq 0} + \mm_1(\Psi_1^{\geq 0}(b)) + b\bullet \Psi_0^{\geq 0} + b\bullet \Psi_1^{\geq 0}(b) + \Psi_0^{\geq 0}\bullet b + \Psi_1^{\geq 0}(b)\bullet b\\
	&= \Psi_0^{0}+\Psi_1^{\geq 0}(\mm_0) + \Psi_1^{0}(b) + \Psi_1^{\geq 0}(\mm_1(b))+ b \bullet \Psi_1^{\geq 0}(b) + \Psi_1^{\geq 0}(b)\bullet b\\
	&= \Psi_0^{0} + \Psi_1^{0}(b),
\end{align*}
by the the properties in Definition \ref{Defn: G-gapped assdefo} (4) as well as \eqref{Eq: BoundChainDisp}.

The claims that $\Psi_1^{\geq 0}(b), \Psi_1^{0}(b) \in \wh{\CBa}_\mf{N}$ follow because $\mf{N}$ is a module over $\mf{G}$ (Definition \ref{Defn: module}), $\Psi^{\geq 0}, \Psi^{0}\in \wh{\CC^*}_{\mf{N}}(\CBa,\CBa)$, and $b\in \wh{\CBa}_{\mf{G}^+}$.
Finally the component of $\Psi_1^{0}(b)$ in $\CBa(\eta)$  is non-zero only if $\eta\in \mf{G}^+$, since $\NN^{0}(\eta) \neq 0$ unless $\eta \in \mf{G}$ (Theorem \ref{Thm: VFCmain} (3)), and $b \in \wh{\Cba{-1}}_{\mf{G}^+}$ (so in particular the energy of $\Psi_1^{0}(b)=(\CO_1(\NN^{0}))(b)$ has to be strictly positive).
\end{proof}

\subsubsection{Proof of Theorem \texorpdfstring{\ref{Cor: BasedAinfty}}{4.12}}

\begin{lemma}\label{Lm: COtoStar}
	The element $\LL^0 \in \Cfr{1}(0)$ defined in Definition \ref{Defn: LL^0} pushes forward under $\CO$ to an element $\CO(\LL^0)\in \CC^{1}(\CBa,\CBa)(0)$ whose 0-nary component $\CO_0(\LL^0)\in \Cba{0}(0)$ corresponds to 
	\begin{align*}
		\CO_0(\LL^0) = (-1)^{\dim L}[\underline{\star}]
	\end{align*}
	where $[\underline{\star}]\in \Cba{0}(0)$ is the unit of $\CBa$ (Definition \ref{Defn: CBaUnit}).
\end{lemma}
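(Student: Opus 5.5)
This is a direct unwinding of the definitions. By \eqref{Eq: CO0 OpenClosed}, $\CO_0(\LL^0)=(-1)^{|\LL^0|}\mf{o}(\LL^0)=-\mf{o}(\LL^0)$, since $|\LL^0|=1$. The map $\mf{o}$ acts componentwise in $k$, and $\LL^0$ has only a $k=0$ component. So only $k=0$ survives, and we must compute $\mf{o}$ of
\[
(-1)^{\dim L+1}[(L\to \ms{L}^1(0);1)], \qquad y\mapsto [\underline{y}].
\]
This is the same support as $[\underline{\star}]$, whose only nonzero component is also at $k=0$. Note also that $\CO(\LL^0)$ lies in the $a=0$ summand by Lemma \ref{Lm: dgLieMorphism}, and that $\CO_0(\LL^0)\in\Cba{0}(0)$ by the degree count $|x|-1$ after the shift.

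Next we apply \eqref{Eq: AnomalyMap} to $x_0=[(L\xrightarrow{\varphi}\ms{L}^1(0);1)]$. Here $\ev_0\circ\varphi=\id_L$, which is a submersion, so $\varphi$ is indeed a plot in the sense of Definition \ref{Defn: DiffSpaceOnLoops}. The preimage $(\ev_0\circ\varphi)^{-1}(\star)$ is the single point $\{\star\}$, and it maps to the constant loop $[\underline{\star}]\in\Omega^1_\star(0)$. The degree is $\deg x_0=\dim L-0=\dim L$, so
\[
\mf{o}(x_0)=(-1)^{\dim L+1}\,[(\pt\to\Omega^1_\star(0);1)].
\]
Combining the three signs gives
\[
\CO_0(\LL^0)=-(-1)^{\dim L+1}(-1)^{\dim L+1}[\underline{\star}]=-[\underline{\star}]
\]
up to the orientation of the point $\{\star\}$.

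The main obstacle is therefore the orientation convention on the transverse preimage $(\ev_0\circ\varphi)^{-1}(\star)$. It should be read off from the fibre product and orientation conventions in the orientation section (\ref{Subsc: Ori}), following Irie's conventions. I would treat the preimage as the fibre product $U\fiberprod{\ev_0}{}\{\star\}$, or as the fibre of $\ev_0\circ\varphi$ with the quotient orientation, with $U=L$ and the submersion the identity. Depending on whether the fibre is oriented as $U\cong (\text{fibre})\times L$ or $L\times(\text{fibre})$, the point acquires a sign. I expect this sign to be $(-1)^{\dim L+1}$ or $-1$ relative to the naive positive orientation. For the claimed answer $(-1)^{\dim L}[\underline{\star}]$, the point must receive the sign $(-1)^{\dim L+1}$, so that the total is $-(-1)^{\dim L+1}=(-1)^{\dim L}$. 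I would verify this by checking the convention on a simple case where $\mf{o}$ should commute with $\partial$, using Remark \ref{Rmk: SignOri} to flip orientations as needed. I would then note that, up to the sign conventions recorded in the appendix (\ref{Subsubsc: SignAnomaly}), the identity holds on the nose at the chain level, with no homotopy needed.
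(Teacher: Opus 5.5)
Your unwinding of the definitions is the same as the paper's, and your sign bookkeeping up to $-[\underline{\star}]$ is correct. The gap is the last step: you propose to orient $(\ev_0\circ\varphi)^{-1}(\star)$ with whatever sign produces the target, and that orientation is not yours to choose. In the paper's convention (Section~\ref{Subsc: Ori}, as used in Remark~\ref{Rmk: Ori}(3) and Lemma~\ref{Lm: AnomalyApp}(1)), the fibre of a submersion $\pi\colon M\to X$ is oriented as $\ker d\pi$ with $TM\cong TX\oplus\ker d\pi$. For $\pi=\id_L$ the kernel is zero, so $\{\star\}$ is the positively oriented point. Because the fibre is $0$-dimensional, swapping the order of the summands changes nothing either. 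The check you suggest could not detect this sign anyway, since both sides of Lemma~\ref{Lm: AnomalyApp}(2) live on the same fibre and any orientation choice cancels. Carried out honestly, your computation gives $\CO_0(\LL^0)=-[\underline{\star}]$. This agrees with the claimed $(-1)^{\dim L}[\underline{\star}]$ only when $\dim L$ is odd.

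The paper's proof reaches $(-1)^{\dim L}$ because, when it applies \eqref{Eq: AnomalyMap}, it writes $\mf{o}[(L\to\ms{L}^1(0);1)]=[(\{\star\}\to\Omega^1_\star(0);1)]$. In doing so it drops the prefactor $(-1)^{\deg x+1}=(-1)^{\dim L+1}$, which you correctly kept. That prefactor cannot be discarded, for two reasons:
\begin{enumerate}
\item It is what produces $\partial^{\dR}\mf{o}=-\mf{o}\,\partial^{\dR}$ in Lemma~\ref{Lm: AnomalyApp}(2).
\item Combined with the $(-1)^{|x|}$ in \eqref{Eq: CO0 OpenClosed}, and using that $\mu(a)$ is even, it gives $\CO_0(x)(a,k)=(-1)^{\dim L+k}[((\ev_0\circ\varphi)^{-1}(\star)\to\Omega^{k+1}_\star(a);\omega)]$ for $x(a,k)=[(U,\varphi,\omega)]$. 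This is exactly the sign that makes $\CO_0$ commute with both $\partial^{\dR}$ and $\partial^1$, i.e.\ the $0$-ary part of Lemma~\ref{Lm: dgLieMorphism}.
\end{enumerate}
So with the definitions as written, $\CO_0(\LL^0)=-[\underline{\star}]$, and the stated sign is off by $(-1)^{\dim L+1}$. This does no harm downstream: Theorem~\ref{Cor: BasedAinfty}(1) and the proof of Theorem~\ref{Thm: MainRestate} only need $\NNN^0(0)$ to be homologous to a nonzero multiple of the unit. That is the conclusion you should record, rather than adjusting an orientation convention to reproduce the stated sign.
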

\begin{proof}
	By Definition \ref{Defn: LL^0}, the element $\LL^0$ has components
	\begin{align*}
		\LL^0(0,k) = \begin{cases}
			(-1)^{\dim L + 1}[(L\xrightarrow{\iota} \mathscr{L}^1(0); 1\in \mathscr{A}_c^0(L))], & k = 0\\
			0, & k> 0
		\end{cases}
	\end{align*}
	where $\iota\colon y \mapsto [\underline{y}]\in \ms{L}^1(0)$.
	By \eqref{Eq: AnomalyMap}, \[ \mf{o}\left[\big(L\to \mathscr{L}^1(0); 1\in \mathscr{A}_c^0(L)\big)\right] =\left[\big( (\ev_0\circ \iota)^{-1}(\star) \to \Omega^1_\star(0); 1 \big)\right]  \]
	where $\ev_0\circ\iota \colon L\to L$ is the identity, and therefore 
	\begin{align*}
		\mf{o}(\LL^0(0,k)) &= \begin{cases}
		(-1)^{\dim L + 1} [(\pt \to \Omega^1_\star (0); 1\in \ms{A}^0_c(\pt))], & k = 0\\
		0, & k > 0
	\end{cases} \\
	&= (-1)^{\dim L + 1}[\underline{\star}](0,k),
	\end{align*}
	where $\pt \to \Omega^1_\star(0)$ sends the point to the constant based loop $[\underline{\star}]\in \Omega^1_\star (0)$.
	
	Finally by \eqref{Eq: AnomalyMapState} and \eqref{Eq: CO0 OpenClosed}, $\CO_0(\LL^0)= (-1)^{|\LL^0|}\mf{o}(\LL^0) = (-1)^{\dim L }[\underline{\star}]$.
\end{proof}

\begin{proof}[Proof of Theorem \ref{Cor: BasedAinfty}]
	It remains to deal with the case where the Maslov class $\mu_L$ vanishes (see Remark \ref{Rmk: PfBasedAinfty}). 
	Under the setting of Lemma \ref{Lm: BoundingChain}  we choose a bounding chain $b$, and set consider the curved dg associative deformation with  unary term $\mm_1^b$ (Definition \ref{Defn: m1BoundingChainDeformed}), binary term $\bullet$, and all the other operations set to 0.
	Set $\NNN^{\geq 0} := \Psi_0^{\geq 0} + \Psi_1^{\geq 0}(b)$ and $\NNN^{0} := \Psi_0^{0} + \Psi_1^{0}(b)$
	in the notation of Lemma \ref{Lm: ThmPfIdKilled}.
	Finally, to show $\NNN^0(0)$ is a cycle which is homologous to $(-1)^{\dim L} [\underline{\star}]$, we have $\NNN^0(0)= \Psi_0^{0}(0)= \CO_0(\NN^{0})$ since by Lemma \ref{Lm: ThmPfIdKilled} the 0-ary component of $\Psi_1^{0}(b)$ vanishes, and $\NN^{0}(0) \in \Cfr{0}(0)$ is a cycle which is homologous to $\LL^0$; the conclusion then follows from Lemma \ref{Lm: COtoStar}.
\end{proof}

\bigskip

\section{Lagrangians in \texorpdfstring{$\C^n$}{Cn} with vanishing Maslov classes}
\label{Sec: MainProof}

\subsection{Proof of Theorem \ref{Thm: Main}}\label{Subsc: PfMainThm}
For convenience, we restate Theorem \ref{Thm: Main} here:
\begin{theorem}\label{Thm: MainRestate}(Theorem \ref{Thm: Main})
	If $L$ is a closed spin manifold and $\pi_2(L)=0$, then $L$ does not admit a Lagrangian embedding into $\C^n$ with vanishing Maslov class.
\end{theorem}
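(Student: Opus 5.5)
We argue by contradiction. Suppose $L\hookrightarrow \C^n$ is a Lagrangian embedding with $\mu_L\equiv 0$, where $L$ is closed, spin (hence we may fix an orientation on each component; we assume $L$ connected) and $\pi_2(L)=0$. Fix a displacing Hamiltonian $H$ as in Assumption \ref{Asmp: Disp Hamil}. Theorem \ref{Cor: BasedAinfty} then provides $\mf{G}$, $\mf{N}$, a curved deformation $\mm$ with $\mm_0=0$, and chains $\NNN^{\geq 0}$, $\NNN^0$ with $\mm_1(\NNN^{\geq 0})=\NNN^0$. Since $\mu_L=0$, Theorem \ref{Thm: cosimpHlgy} gives $\Hba{*}(a)\cong H_*(\Omega_\star(a))$ with no grading shift. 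Moreover $\mm_1$ has degree $-1$ and preserves $\wh{\CBa}$, and $\mm_1^2=0$ because $\mm_0=0$.

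\textbf{Step 1: the $\mm_1$-homology vanishes.} Write $\NNN^0=\NNN^0(0)+R$, where $R$ has components only in classes $\eta\in\mf{G}^+$, so $E(\eta)\geq\hbar>0$ by Theorem \ref{Cor: BasedAinfty}(1). By that same result, $\NNN^0(0)=(-1)^{\dim L}[\underline{\star}]+\partial c$ for some $c\in\Cba{1}(0)$. Then
\[(-1)^{\dim L}[\underline{\star}]+R=\mm_1(\NNN^{\geq 0}-c)+(\mm_1-\partial)c,\]
and the last term also has strictly positive energy. Hence $u:=[\underline{\star}]+R'$ is $\mm_1$-exact for some $R'\in\ms{F}^{\hbar/2}\wh{\Cba{0}}$. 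Since $\mm_1$ is a derivation for $\bullet$ (Definition \ref{Defn: G-gapped assdefo}(4c)), $\mm_1(u)=0$, and $u$ is invertible in the completed algebra with inverse $\sum_k(-R')^{\bullet k}$, which converges because $R'$ has positive energy. We want $u^{-1}$ to be an $\mm_1$-cycle. This follows from
\[0=\mm_1(u\bullet u^{-1})=u\bullet\mm_1(u^{-1}),\]
after multiplying on the left by $u^{-1}$. Write $u=\mm_1(y)$. Then $\mm_1(y\bullet u^{-1})=[\underline{\star}]$, and so $\mm_1(z\bullet y\bullet u^{-1})=\pm z$ for any cycle $z$. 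Thus $H_*(\wh{\CBa},\mm_1)=0$.

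\textbf{Step 2: spectral sequence and contradiction.} Choose $\lambda_0>0$ smaller than the gap. Concretely, because $\mm_1-\partial$ only involves classes in $\mf{G}^+$, whose energies lie in a discrete set bounded below by $\hbar$, any $\lambda_0<\hbar$ ensures $\mm_1-\partial$ strictly raises the filtration $\mf{F}^q:=\ms{F}^{q\lambda_0}$. Energies of $\mf{N}$-classes are bounded below by $-\norm{H}$, so on the relevant subcomplex the filtration is bounded below and complete. We therefore get a convergent spectral sequence whose $E_1$ page is $\bigoplus_a H_*(\Omega_\star(a))$, suitably completed, and whose abutment is $0$.

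The class $[\underline{\star}]\in H_0(\Omega_\star(0))$ is nonzero on $E_1$, so it must be hit by a higher differential from a degree-$1$ class. But $H_1(\Omega_\star(a))\cong H_1(\Omega_\star(0))\cong H_1(\Omega_0 L)$, where $\Omega_0L$ is the identity component, since all components are homotopy equivalent. Because $\Omega_0L$ is path-connected, $H_1(\Omega_0L;\Z)=\pi_1(\Omega_0L)^{ab}$ and $\pi_1(\Omega_0 L)=\pi_2(L)=0$. So $H_1(\Omega_\star(a);\R)=0$ for every $a$, the whole $E_1^{\deg 1}$ vanishes, and $[\underline{\star}]$ survives to $E_\infty$. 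This contradicts the vanishing abutment.

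\textbf{Main obstacle.} The delicate point is convergence and completeness in Step 2. The elements live in the completion $\wh{\CBa}$, and the spectral sequence must converge for a filtration that is complete but possibly unbounded above. I would handle this most robustly by avoiding spectral sequences and arguing directly by energy induction. Suppose $[\underline{\star}]+(\text{higher})=\mm_1(y)$ with $y$ of degree $1$. Take the lowest-energy component of $y$. Every component of $y$ in the class $a$ is a degree-$1$ chain, and by an induction on energy each can be made $\partial$-closed, hence $\partial$-exact since $H_1=0$. This lets us modify $y$ by $\mm_1$-exact terms, pushing its lowest energy upward. In the limit $y$ is replaced by something whose energy-$0$ part is zero, which contradicts the energy-$0$ part of the equation, namely $[\underline{\star}]=\partial(y_0)$ with $[\underline{\star}]\ne0\in H_0$.

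This is essentially the filtered homological-perturbation argument mentioned in the introduction, and the bookkeeping of which classes in $\mf{N}$ appear is where care is needed.
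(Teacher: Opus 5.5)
Your proposal is correct and is essentially the paper's argument. The direct energy induction in your last paragraph is exactly the proof given in the paper: the lowest-energy degree-$1$ components of $\NNN^{\geq 0}$ are $\partial$-closed, hence $\partial$-exact because $H_1(\Omega_\star(a))=0$ (Lemma \ref{Lm: BasedHlgy}), and one subtracts their $\mm_1$-primitives until reaching energy $0$, where the class-$0$ equation forces $\NNN^0(0)\sim(-1)^{\dim L}[\underline{\star}]$ to be $\partial$-exact; at that stage you do not make the energy-$0$ part of $y$ vanish, the contradiction is read off there. Your Steps 1--2 reproduce the spectral-sequence heuristic from the introduction, and your invertibility trick is a valid way to make that heuristic rigorous, though the direct induction does not need it because it only uses the equation modulo $\ms{F}^{\hbar}$.
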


The proof goes through the homology of the based loop space:
\begin{lemma}\label{Lm: BasedHlgy}
For any path connected space $L$,
	\[H_1(\Omega_\star L;\Z) = \bigoplus_{a \in \pi_1 L} \pi_2(L).\]
\end{lemma}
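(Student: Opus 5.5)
The plan is to decompose $\Omega_\star L$ into its path components and reduce to the Hurewicz theorem on each one. The components of $\Omega_\star L$ are indexed by $\pi_0(\Omega_\star L)=\pi_1(L,\star)$. Singular homology is additive over path components, so
\[
H_1(\Omega_\star L;\Z)=\bigoplus_{a\in\pi_1 L} H_1(\Omega_\star(a);\Z),
\]
where here $\Omega_\star(a)$ denotes the component of loops in the based homotopy class $a$. It therefore suffices to show $H_1(\Omega_\star(a);\Z)\cong\pi_2(L)$ for each $a$.

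Because $\Omega_\star L$ is an $H$-space under loop concatenation, any two of its path components are homotopy equivalent. Concretely, fix a loop $\gamma_a$ representing $a$ and its inverse $\gamma_a^{-1}$. Concatenation with $\gamma_a$ gives a map $\Omega_\star(0)\to\Omega_\star(a)$, and concatenation with $\gamma_a^{-1}$ gives a map back. Both composites are homotopic to the identity, via the standard reparametrization homotopies together with a contraction of $\gamma_a*\gamma_a^{-1}$ to the constant loop.

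This reduces everything to the identity component $\Omega_\star(0)$. It is path connected, and its fundamental group is
\[
\pi_1(\Omega_\star(0))=\pi_1(\Omega_\star L,\underline{\star})\cong\pi_2(L,\star),
\]
using the adjunction $[S^1,\Omega_\star L]_*\cong[S^2,L]_*$. This group is abelian, as it must be, since fundamental groups of $H$-spaces are abelian.

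The Hurewicz theorem in degree one gives $H_1(\Omega_\star(0);\Z)\cong\pi_1(\Omega_\star(0))^{\mathrm{ab}}$. Since $\pi_2(L)$ is already abelian, this equals $\pi_2(L)$. Summing over the components $a\in\pi_1 L$ yields the stated formula.

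There is no substantial obstacle. The only point needing care is that the homotopy equivalence between components must be a genuine homotopy equivalence, not merely a bijection on $\pi_0$. This holds because $\Omega_\star L$ is a grouplike $H$-space, and the explicit homotopies above verify it directly. For the application in Theorem \ref{Thm: MainRestate}, one then notes that with real coefficients $H_1(\Omega_\star L;\R)=H_1(\Omega_\star L;\Z)\otimes\R$ by universal coefficients, because $H_0$ is free. Hence $\pi_2(L)=0$ forces $H_1(\Omega_\star(a))=0$ for every $a$.
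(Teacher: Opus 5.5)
Your proof is correct and follows the paper's route. You split $\Omega_\star L$ into path components indexed by $\pi_1 L$, identify all components up to homotopy via the $H$-space structure, and apply Hurewicz in degree one with $\pi_1(\Omega_\star L)\cong\pi_2(L)$; the only difference is that you spell out the concatenation homotopy equivalences that the paper leaves implicit.
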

\begin{proof}
	Since $\Omega_\star L$ is an $H$-space, its fundamental group is abelian and all of its components $\Omega_\star(a)$ (labeled by $a\in \pi_1 L\cong \pi_0\Omega_\star L$) are homotopy equivalent to each other. Therefore by Hurewicz theorem,
	\begin{align*}
		H_1(\Omega_\star L)\cong \bigoplus_{a\in \pi_1 L\cong \pi_0 \Omega_\star L} H_1(\Omega_\star (a)) \cong \bigoplus_{a\in \pi_1L} \pi_1(\Omega_\star (a)) \cong  \bigoplus_{a\in \pi_1 L} \pi_2 L.
	\end{align*}
\end{proof}

\begin{proof}
	[Proof of Theorem \ref{Thm: Main}]	
	As in the proof of Lemma \ref{Lm: BoundingChain}, for a given energy level $\lambda$, we say an expression holds ``mod $\lambda$'' is taken to mean that it holds in \[\CBa/\ms{F}^{\lambda}\CBa\cong \bigoplus_{E(a)\leq \lambda} \CBa(a).\]

	By Theorem \ref{Cor: BasedAinfty}, assuming $L$ admits a Maslov-zero Lagrangian embedding into $\C^n$,  there exist elements $\NNN^{\geq 0}\in \wh{\Cba{1}}_\mf{N}$ and $\NNN^0\in \wh{\Cba{0}}_\mf{N}$ with $\mm_1(\NNN^{\geq 0}) = \NNN^0$, where 
	\[
	[\NNN^0] = (-1)^{\dim L} [\underline{\star}]\quad \textup{ in }\HBa/\ms{F}^\hbar \HBa
	\]
	for a sufficiently small $\hbar>0$ so that $\hbar < E(\beta)$ for all $\beta \in \mf{G}$. Thus
	\begin{equation}
		\label{Eq: MainPfEq}
		[\mm_1\NNN^{\geq 0}] = (-1)^{\dim L} [\underline{\star}] \quad \textup{in } \HBa/\ms{F}^\hbar\HBa.
	\end{equation} 
	We are going to perform a sequence of modifications on $\NNN^{\geq 0}$ which increases its energy but still keeps the equation \eqref{Eq: MainPfEq}.

	Similar to proof of Lemma \ref{Lm: BoundingChain}, we sort $\mf{N}$ into $\mf{N}=\{\eta_1,\eta_2,\cdots\}$ where $E(\eta_1)<E(\eta_2)<\cdots$, and sort $E(\mf{N})\subset \R$ into $E(\mf{N}) = \{E^{(1)}<E^{(2)}<\cdots \}$ so that $E^{(K)}=0$ for some $K\in\Z_{>0}$ (such $K$ exists by Definition \ref{Defn: module}; also $K$ is necessarily larger than 1 since otherwise $\mm_1\NNN^{\geq 0}$ has energy $>0$ whereas $\LL^0$ doesn't).
	Decompose $\NNN^{\geq 0}$ into \begin{align*}
		\NNN^{\geq 0} = \sum_{j=1}^\infty (\NNN^{\geq 0})^{(j)},\quad 
		\textup{ where } (\NNN^{\geq 0})^{(j)}= \sum_{\substack{\eta\in \mf{N} \\ E(\eta) = E^{(j)}}}\NNN^{\geq 0}(\eta),\quad \NNN^{\geq 0}(\eta)\in \CBa(\eta). 
	\end{align*}
	Equation \eqref{Eq: MainPfEq} becomes $\partial (\NNN^{\geq 0})^{(1)}=0$ in $\Cba{1}/\ms{F}^{E^{(1)}} \Cba{1}$, 
	and therefore each $\NNN^{\geq 0}(\eta)$ where $E(\eta)=E^{(1)}$ represents a homology class in $\Hba{1}(\eta)\cong H_1(\Omega_\star L(\eta))$, which is 0 by Lemma \ref{Lm: BasedHlgy}. 
	Thus there exist $\chi_\eta\in \Cba{2}(\eta)$ such that $\partial \chi_\eta + \NNN^{\geq 0}(\eta)=0$ in $\Cba{1}(\eta)$. 
	Then the element
	\[ (\NNN^{\geq 0})_{1}:=\NNN^{\geq 0} + \sum_{\substack{\eta \in \mf{N}\\ E(\eta) = E^{(1)}}} \mm_1(\chi_\eta) \in \ms{F}^{E^{(1)}}\Cba{1}  \]
	satisfies $[\mm_1(\NNN^{\geq 0})_1] = (-1)^{\dim L} [\underline{\star}]$ in $\HBa/\ms{F}^\hbar\HBa$ still and has energy strictly larger than $E^{(1)}$.  
	Continue this procedure until we obtain $(\NNN^{\geq 0})_{K-1}$, which satisfies $[\mm_1(\NNN^{\geq 0})_{K-1}] = (-1)^{\dim L} [\underline{\star}]$ in $\HBa/\ms{F}^\hbar\HBa$ with $(\NNN^{\geq 0})_{K-1}$ consisting only of elements of energy $\geq 0$. But then the energy of $\mm_1(\NNN^{\geq 0})_{K-1}$ is strictly larger than $\hbar$ whereas $(-1)^{\dim L} [\underline{\star}]$ has energy 0.
	This is a contradiction because $(-1)^{\dim L} [\underline{\star}]\in \HBa/\ms{F}^\hbar \HBa$ is non-zero in homology.
\end{proof}

\begin{remark}
	Alternatively, using homological perturbation lemma (in the filtered context, as done in e.g. Chapter 5 in \cite{FOOO1} or Section 2 in \cite{Irie2}), we can also show that there is an (uncurved) $\mathfrak{G}$-gapped, filtered $A_\infty$-structure on $\wh{H_*^\Omega}$, where $\mm_1$ has degree $-1$, which is quasi-isomorphic to $\wh{\CBa}$ and thus acyclic. This can also be used to conclude Theorem \ref{Thm: Main}. We do not spell out the detail here.
	As an another alternative, one can also use a spectral sequence argument after choosing a bounding chain.
\end{remark}

\subsection{Examples}\label{Subsc: Examples}
\begin{lemma}
	The condition $\pi_2 L=0$ is preserved under taking (finite) connected sums if $\dim L\geq 4$, or products in any dimension.
\end{lemma}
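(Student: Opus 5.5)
The plan is to treat products and connected sums separately, working componentwise (all manifolds are assumed connected).

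\emph{Products.} For path-connected $L_1, L_2$ the long exact sequence of the trivial fibration $L_1\times L_2\to L_2$ (or simply the fact that $\pi_k$ commutes with products) gives $\pi_2(L_1\times L_2)\cong \pi_2(L_1)\oplus\pi_2(L_2)$. This vanishes when both factors have $\pi_2=0$, and no dimension hypothesis is needed.

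\emph{Connected sums.} Let $M=L_1\# L_2$ with $n=\dim L_i\geq 4$. By induction it suffices to treat two summands.
\begin{enumerate}
\item Write $L_i^\circ=L_i\setminus \mathrm{int}(D^n)$. Removing a ball (codimension $n\geq 3$) does not change $\pi_1$. For $\pi_2$, a general position argument shows that maps of $S^2$ and homotopies $S^2\times I$ (of dimension $3<n$) can be pushed off a point. Hence $\pi_2(L_i^\circ)\cong\pi_2(L_i)=0$, since the inclusion $L_i^\circ\hookrightarrow L_i$ is $(n-1)$-connected and $n-1\geq 3$.
\item Then $M=L_1^\circ\cup_{S^{n-1}} L_2^\circ$, with $S^{n-1}$ simply connected because $n\geq 3$. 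I pass to the universal cover $\tilde M$. Since $\pi_1 M\cong\pi_1L_1*\pi_1L_2$ by van Kampen, $\tilde M$ is a tree-like union of copies of the universal covers $\widetilde{L_i^\circ}$, glued along copies of $S^{n-1}$. Each $\widetilde{L_i^\circ}$ is simply connected with $H_2=\pi_2(L_i^\circ)=0$ by Hurewicz.
\item Apply Mayer--Vietoris to $\tilde M$. Because the gluing spheres $S^{n-1}$ have $H_1=H_2=0$ when $n\geq 4$, the sequence gives $H_2(\tilde M)\cong\bigoplus H_2(\text{pieces})=0$. To handle the infinitely many pieces, I work with finite subcomplexes and pass to a direct limit.
\item Finally, $\pi_2(M)\cong\pi_2(\tilde M)\cong H_2(\tilde M)=0$ by Hurewicz.
\end{enumerate}

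The main obstacle is making step 2 rigorous, i.e. describing the universal cover as a tree of pieces. An alternative that avoids it is to compute the covering of $M$ directly: $\pi_1$ of each piece injects into the free product, so the preimage of each $L_i^\circ$ in $\tilde M$ is a disjoint union of copies of its universal cover.

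This is also exactly where $n\geq4$ is used. For $n=3$ the gluing sphere $S^2$ has $H_2\neq0$ and contributes essential spheres, so the statement fails, consistent with the earlier remark about $T^3\# T^3$.
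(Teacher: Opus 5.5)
Your proof is correct and follows the same route as the paper, which passes to the universal cover, uses $\pi_2(L_1\# L_2)\cong \pi_2(\widetilde{L_1\# L_2})\cong H_2(\widetilde{L_1\# L_2};\Z)$, and applies Mayer--Vietoris; your Step 1 supplies the detail the paper leaves implicit. One small correction to your closing remark: the Mayer--Vietoris vanishing only needs $H_1(S^{n-1})=0$, which already holds for $n\geq 3$, so $n\geq 4$ really enters in your Step 1 rather than through $H_2(S^{n-1})$, because for $n=3$ the punctured pieces $\widetilde{L_i^\circ}$ can already have $H_2\neq 0$, generated by the boundary spheres of the removed balls.
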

\begin{proof}
	The statement for products is straightforward. For connected sums, we use 
	\begin{align*}
		\pi_2(L_1\# L_2) \cong \pi_2(\wt{L_1 \# L_2}) \cong H_2(\wt{L_1\# L_2};\Z)
	\end{align*}
	and apply Mayer-Vietoris to the universal cover $\wt{L_1\# L_2}$.
\end{proof}

\begin{corollary}
	If $\dim L\geq 4$ and $L$ is an arbitrary (finite) connected sums or products of
	\begin{enumerate}
		\item Aspherical manifolds, i.e. $K(\pi,1)$'s;
		\item Spherical manifolds, i.e. $S^n/\Gamma$ where $\Gamma\subset SO_{n+1}$ is a finite subgroup acting freely on $S^n$ by rotations;
		\item Compact Lie groups,
	\end{enumerate}
	then, if $L$ is spin, any Lagrangian embedding of $L$ into $\C^n$ has non-vanishing Maslov class.
\end{corollary}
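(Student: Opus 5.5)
The corollary follows directly from Theorem \ref{Thm: Main} (equivalently Theorem \ref{Thm: MainRestate}) and the preceding lemma on connected sums and products. It therefore suffices to show that each of the three basic building blocks has vanishing $\pi_2$. The preceding lemma then propagates $\pi_2=0$ through any finite combination of connected sums (using $\dim L\geq 4$) and products. Since $L$ is spin by hypothesis, and is closed and orientable once we make the orientation caveat below, Theorem \ref{Thm: Main} rules out a Maslov-zero Lagrangian embedding into $\C^n$.

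The verification for the building blocks goes case by case.
\begin{enumerate}
\item For an aspherical manifold $K(\pi,1)$, all higher homotopy groups vanish by definition, so $\pi_2=0$.
\item For a spherical space form $S^m/\Gamma$, the universal cover is $S^m$, so $\pi_2(S^m/\Gamma)\cong\pi_2(S^m)$. This vanishes for $m\geq 3$. Low-dimensional factors (such as $S^2$ or $\R P^2$) should be excluded; I would read ``spherical manifold'' in the statement as having dimension at least $3$, and state this explicitly.
\item For a compact Lie group $G$, $\pi_2(G)=0$ by Bott's theorem \cite{BottLieGroup}, as quoted in the introduction. If $G$ is disconnected, apply this to each component, since $L$ is assumed connected (it carries a basepoint).
\end{enumerate}

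Next I would check the hypotheses of Theorem \ref{Thm: Main} for the resulting $L$. The manifold is closed, since finite products and connected sums of closed manifolds are closed. Spin is assumed. Orientability is needed in Theorem \ref{Thm: Main}, but a spin manifold is by definition orientable ($w_1=0$), so this is automatic; it also forces the pieces entering a connected sum to be oriented, which is consistent. Finally, $\pi_2 L=0$ follows from the building-block computations together with the lemma: for products directly, and for connected sums in dimension $\geq 4$ via the Mayer--Vietoris argument on the universal cover.

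The main point needing care is the connected-sum step when the pieces have different dimensions. Connected sums only make sense between manifolds of equal dimension, so I would phrase the iterative construction as follows. First form products in which every factor has $\pi_2=0$. Then take connected sums of the resulting manifolds of a common dimension $\geq 4$. The lemma is applied at each stage: to products in any dimension, and to connected sums only once the dimension is at least $4$. Connected sums of $3$-dimensional pieces are deliberately excluded, since they can create an essential $S^2$, as noted in the remarks after Theorem \ref{Thm: Main}.
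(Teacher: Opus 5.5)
Your proposal is correct and is essentially the paper's argument. The paper gives no separate proof because the corollary is just Theorem~\ref{Thm: Main} combined with the preceding product/connected-sum lemma, once each building block is known to have $\pi_2=0$ (by asphericity, by passing to the universal cover $S^m$ with $m\geq 3$, and by \cite{BottLieGroup}).

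Your two caveats are genuine sharpenings of the literal statement, consistent with the introduction's phrasing. $S^2$ factors (e.g.\ $S^2\times T^2$) and connected sums taken in dimension $3$ before a product (e.g.\ $(T^3\# T^3)\times S^1$) both give spin $4$-manifolds with $\pi_2\neq 0$. One small correction: $\R P^2$ is not of the form $S^2/\Gamma$ with $\Gamma\subset SO_3$ acting freely, so only $S^2$ itself needs to be excluded.
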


\bigskip

\appendix

\section{Verification of various identities in open-closed string topology}
\label{Sec: OCchainSigns}

In section \ref{Subsc: Ori}, we fix conventions on orientations of direct and fibre products of manifolds which we use in section \ref{Subsc: SignOper} to verify properties of various string topology operations on de Rham chains.
In section, \ref{Subsc: ConventionDGalg} we fix signs for various dg algebras, and use them in section \ref{Subsc: dgAlgebraSignApp} to verify properties of various string topology dg algebras and homomorphisms between them.

\subsection{Conventions on orientations}
\label{Subsc: Ori}
We follow the signs from \cite{Irie2} section 4.2 by default throughout the paper.
Here we briefly review the conventions on orientations of direct and fibre products of manifolds, and point out some consequence on signs that we shall use afterwards.

For direct products, let $M_1$ and $M_2$ be oriented manifolds; then we orient $M_1\times M_2$ by \[ T(M_1\times M_2) \cong TM_1\oplus TM_2. \]

For fibre products, let $M_1,M_2, X$ be oriented manifolds and $\pi_i\colon M_i\to X$ $(i=1,2)$ be smooth maps, where $\pi_2\colon M_2\to X$ is a submersion. Then the fibre product $M_1\fiberprod{\pi_1}{\pi_2}M_2$ can be formed. 
We orient $\ker(d\pi_2)\subset TM_2$ such that 
\[
	TM_2\cong TX\oplus\ker (d\pi_2)
\]as oriented vector bundles,
and orient $T(M_1\fiberprod{\pi_1}{\pi_2}M_2)$ as
\[
	T(M_1\fiberprod{\pi_1}{\pi_2} M_2) \cong TM_1\oplus \ker(d\pi_2).
\]

\begin{remarks}\label{Rmk: Ori}
In verifying various signs below, we will need to compare orientations in the following three situations:	
\begin{enumerate}
	\item If we have $M_1,M_2,M_3$ oriented smooth manifolds and 
\[ \pi_1^1,\pi_1^2\colon M_1\to X, \quad \pi_2\colon M_2\to X, \quad \pi_3\colon M_3\to X\]
 smooth maps, with $\pi_2,\pi_3$ submersions, then
\[
	T((M_1\fiberprod{\pi^1_1}{\pi_2} M_2)\fiberprod{\pi^2_1}{\pi_3} M_3)\cong T(M_1\fiberprod{\pi_1^1}{\pi_2}M_2)\oplus\ker(d\pi_3)\cong TM_1\oplus \ker (d\pi_2)\oplus \ker (d\pi_3)
\]
and
\[
	T((M_1\fiberprod{\pi_1^2}{\pi_3}M_3)\fiberprod{\pi_1^1}{\pi_2}M_2)\cong T(M_1\fiberprod{\pi_1^2}{\pi_3}M_3)\oplus \ker(d\pi_2)\cong TM_1\oplus\ker(d\pi_3)\oplus\ker(d\pi_2).
\]
Thus the orientation of these two fibre products differ by $(-1)^{(\dim M_2-\dim X)(\dim M_3-\dim X)}$.

\item 
Similarly, given 
\[
	\pi_1\colon M_1\to X, \quad \pi_2^1,\pi_2^2\colon M_2\to X, \quad \pi_3\colon M_3\to X
\]
 with $\pi_2^1,\pi_3$ submersions, 
then \begin{align*}
 	T((M_1\fiberprod{\pi_1}{\pi_2^1}M_2)\fiberprod{\pi_2^2}{\pi_3} M_3)\cong T(M_1\fiberprod{\pi_1}{\pi_2^1}M_2)\oplus \ker (d\pi_3) \cong TM_1\oplus \ker (d\pi_2^1)\oplus \ker (d\pi_3)
 \end{align*}
 and 
 \begin{align*}
 	T(M_1\fiberprod{\pi_1}{\pi_2^1}(M_2\fiberprod{\pi_2^2}{\pi_3}M_3))&\cong TM_1 \oplus\ker(d\pi_2^1\colon T(M_2\fiberprod{\pi_2^2}{\pi_3}M_3)\to TX)\\
 	&\cong TM_1\oplus \ker (TM_2\oplus \ker (d\pi_3)\xrightarrow{d\pi_2\oplus 0} TX)\\
 	&\cong TM_1\oplus\ker(d\pi_2^1)\oplus \ker(d\pi_3).
 \end{align*}
 Therefore these two fibre products have the same orientation.
 
 \item If we have \[  \pi_1^1,\pi_1^2\colon M_1\to X,\quad \pi_2\colon M_2\to X \]where $\pi_1^1,\pi_2$ are submersions and $\star\in X$ is a point, then 
 \begin{align*}
 	T((\pi_1^1)^{-1}(\star)\fiberprod{\pi_1^2}{\pi_2} M_2) \cong T((\pi_1^1)^{-1}(\star)) \oplus \ker (d\pi_2) \cong \ker (d\pi_1^1) \oplus \ker (d\pi_2)
 \end{align*}
 and 
 \begin{align*}
 	(M_1\fiberprod{\pi_1^2}{\pi_2} M_2\xrightarrow{\pi_1^1\circ \pr_1} X)^{-1}(\star) \cong \ker( TM_1\oplus \ker (d\pi_2) \xrightarrow{d\pi_1^1 \oplus 0} TX)\cong \ker (d\pi_1^1) \oplus d\pi_2.
 \end{align*}
 Therefore these two fibre products have the same orientation.
 \end{enumerate}
\end{remarks}

\subsection{Signs for string topology operations}\label{Subsc: SignOper}

\subsubsection{Closed string}\label{Subsubsc: SignClosedString}
Recall that given $x\in C_{\dim L +d_1}^\dR( \mathscr{L}^{k_1+1}(a_1))$, $y\in C_{\dim L + d_2}^\dR(\mathscr{L}^{k_2+1} (a_2))$, with $x=[(U_1,\varphi_1,\omega_1)], y=[(U_2,\varphi_2,\omega_2)],$
\[
	x\circ_i^\mathscr{L} y:= (-1)^{d_1|\omega_2|}[(U_1\fiberprod{i}{0}U_2, \varphi_1\circ_i^\mathscr{L} \varphi_2, \omega_1\times \omega_2)].
\]
\begin{lemma}\label{Lm: dRchainClosedStringApp}
	\begin{enumerate}
		\item (Leibniz rule)  If $x\in C_{\dim L +d}^\dR( \mathscr{L}^{k_1+1}(a_1))$, $y\in C_{\dim L + d'}^\dR(\mathscr{L}^{k_2+1} (a_2))$,\[\partial^\dR(x\circ_i y)  = (\partial^\dR x) \circ_i y + (-1)^{d} x\circ_i (\partial^\dR y);\]
		\item (Associativity) If $x_i\in C^\dR_{\dim L+d_i}(\mathscr{L}^{k_i+1}(a_i))$ $(i=1,2,3)$,
		\begin{align*}
			(x_1\circ_{i_1} x_2)\circ_{k_2+i_2-1} x_3&= (-1)^{d_2d_3}(x_1\circ_{i_2} x_3)\circ_{i_1}x_2 \quad (1\leq i_1<i_2\leq k_1);\\
			(x_1\circ_{i_1} x_2)\circ_{i_1+i_2-1}x_3 &= x_1\circ_{i_1} (x_2\circ_{i_2} x_3) \quad (1\leq i_1\leq k_1, 1\leq i_2\leq k_2).
		\end{align*}
	\end{enumerate}
\end{lemma}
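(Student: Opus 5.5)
Both parts reduce to manipulations of representatives $x=[(U_1,\varphi_1,\omega_1)]$, $y=[(U_2,\varphi_2,\omega_2)]$ (and $x_3=[(U_3,\varphi_3,\omega_3)]$), using the defining formula \eqref{Eq: PartialLoopPre}, the differential \eqref{Eq: DefnPartialDR}, and the orientation comparisons in Remarks \ref{Rmk: Ori}. First, note that $\circ_i$ is well defined on equivalence classes. If $\pi\colon U_1'\to U_1$ is a submersion, then the fibre product $U_1'\fiberprod{i}{0}U_2\to U_1\fiberprod{i}{0}U_2$ is again a submersion, and integration along the fibre commutes with taking $\times\omega_2$ and restricting. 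The same holds in the second factor, using that $\ev_0\circ\varphi_2$ is a submersion. Also, $\conc$ composed with a plot is a plot, because $\ev_0\circ\conc=\ev_0\circ\pr_1$ is a submersion on the fibre product.

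For the Leibniz rule, work on the manifold $U_1\fiberprod{i}{0}U_2$. Since restriction commutes with $d$, we have $d(\omega_1\times\omega_2)=d\omega_1\times\omega_2+(-1)^{|\omega_1|}\omega_1\times d\omega_2$. Applying $\partial^\dR$ contributes the sign $(-1)^{|\omega_1|+|\omega_2|+1}$. Compare this with $(\partial^\dR x)\circ_i y$, which carries the sign $(-1)^{|\omega_1|+2+d_1|\omega_2|}$; note that $d_1$ does not change, since $\deg\partial^\dR x=\deg x-1$ means $d$ shifts by $-1$, so recompute with $d-1$. Compare it also with $x\circ_i\partial^\dR y$, which carries $(-1)^{|\omega_2|+2+d(|\omega_2|+1)}$. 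Writing $d=\dim U_1-|\omega_1|-\dim L$ and tracking parities then gives the result. This step is routine bookkeeping.

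For associativity, both sides of each identity are represented by the same form $\omega_1\times\omega_2\times\omega_3$ on an iterated fibre product, and the underlying loop maps agree by direct inspection of $\conc$. In the first identity the points where $x_2$ and $x_3$ are inserted are disjoint; the index shift $k_2+i_2-1$ accounts for the $k_2-1$ extra marked points inserted by $x_2$. In the second identity, $x_3$ is inserted inside the inserted $x_2$. What remains is to compare orientations and signs. In the second identity, Remark \ref{Rmk: Ori}(2) says the two iterated fibre products have the same orientation. The Koszul signs also match: on the left we get $(-1)^{d_1|\omega_2|+(d_1+d_2)|\omega_3|}$, using $\deg(x_1\circ x_2)=\dim L+d_1+d_2$; on the right we get $(-1)^{d_2|\omega_3|+d_1(|\omega_2|+|\omega_3|)}$, and these agree. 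In the first identity, Remark \ref{Rmk: Ori}(1) gives an orientation discrepancy of $(-1)^{(\dim U_2-\dim L)(\dim U_3-\dim L)}$. Reordering $\omega_2\times\omega_3$ into $\omega_3\times\omega_2$ gives $(-1)^{|\omega_2||\omega_3|}$, and the Koszul prefactors differ by $(-1)^{d_1|\omega_2|+(d_1+d_2)|\omega_3|-d_1|\omega_3|-(d_1+d_3)|\omega_2|}$. Since $\dim U_j-\dim L=d_j+|\omega_j|$, the product of all three is $(-1)^{d_2d_3}$.

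The main obstacle is this last sign collection, in particular the point that the wedge-reordering of $\omega_2$ and $\omega_3$ must be combined with the fibre-direction swap from Remark \ref{Rmk: Ori}(1). I would carry it out carefully in the local model where all forms are pulled back to products, and check it on a case where the $\omega_j$ are top-degree forms as a sanity check.
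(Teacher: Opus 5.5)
Your route is the paper's: fix representatives and compare the prefactor in \eqref{Eq: PartialLoopPre}, the sign in \eqref{Eq: DefnPartialDR}, the reordering of forms, and the orientation comparisons of Remarks \ref{Rmk: Ori}. Your associativity bookkeeping is correct and matches the paper's.
\begin{itemize}
\item In the second identity the prefactors $(-1)^{d_1|\omega_2|+(d_1+d_2)|\omega_3|}$ and $(-1)^{d_2|\omega_3|+d_1(|\omega_2|+|\omega_3|)}$ agree, and Remark \ref{Rmk: Ori}(2) gives equal orientations.
\item In the first identity, the orientation swap, the reordering sign $(-1)^{|\omega_2||\omega_3|}$ and the prefactor discrepancy multiply to $(-1)^{d_2d_3}$.
\end{itemize}
So the step you call the main obstacle is already complete. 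Your well-definedness paragraph goes beyond the paper, which takes this for granted. One correction there: in the second slot, fibre integration commutes with $\omega_1\times(-)$ only up to $(-1)^{d_1\dim F}$, where $F$ is the fibre of $\pi$. This sign is cancelled by the change of $|\omega_2|$ in the prefactor, so ``the same holds'' needs that remark.

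The Leibniz step, however, is wrong as written, and since the lemma is pure sign bookkeeping, this is the one thing to fix. The sign $(-1)^{|\omega|+1}$ in $\partial^\dR$ uses the degree of $\omega$ \emph{before} applying $d$. Hence $(\partial^\dR x)\circ_i y$ carries $(-1)^{|\omega_1|+1+(d-1)|\omega_2|}$, and $x\circ_i(\partial^\dR y)$ carries $(-1)^{|\omega_2|+1+d(|\omega_2|+1)}$. The prefactor of $\circ_i$ does change in the first case, because $\deg(\partial^\dR x)-\dim L=d-1$; drop your aside that $d_1$ does not change. You wrote $|\omega_1|+2$ and $|\omega_2|+2$ instead. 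In $\partial^\dR(x\circ_i y)$ the coefficient of $d\omega_1\times\omega_2$ is $(-1)^{|\omega_1|+|\omega_2|+1+d|\omega_2|}$ and that of $\omega_1\times d\omega_2$ is $(-1)^{|\omega_2|+1+d|\omega_2|}$. With your exponents (and $d-1$ in the prefactor), both terms on the right-hand side get the opposite of these signs. The parity check you defer would then return $\partial^\dR(x\circ_i y)=-\big((\partial^\dR x)\circ_i y+(-1)^d x\circ_i\partial^\dR y\big)$. With the corrected exponents both coefficients match, which is exactly the paper's verification.
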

\begin{proof}
		Recall that given a de Rham chain $[(U,\varphi,\omega)]$, the de Rham differential on it is given by
\[
	\partial^\dR [(U,\varphi,\omega)]=(-1)^{|\omega|+1}[(U,\varphi,d\omega)],
\]
	\begin{enumerate}
		\item Note that $d = \dim U_1-|\omega_1|-\dim L$, so \begin{align*}
			&\partial^\dR(x\circ_iy)\\= &\partial^\dR\left((-1)^{d|\omega_2|} (U_1\fiberprod{i}{0} U_2,\conc\circ (\varphi_1\times \varphi_2), \omega_1\times \omega_2) \right)\\
			=& (-1)^{|\omega_1|+|\omega_2|+1}(-1)^{d|\omega_2|}(U_1\fiberprod{i}{0}U_2, \conc\circ (\varphi_1\times \varphi_2), d\omega_1\times \omega_2)\\
			&+ (-1)^{|\omega_1|+|\omega_2|+1}(-1)^{d|\omega_2|}(-1)^{|\omega_1|}(U_1\fiberprod{i}{0}U_2, \conc \circ (\varphi_1\times \varphi_2), \omega_1\times d\omega_2)
		\end{align*}
		Now 
		\begin{align*}
			(\partial^\dR x)\circ_i y =(-1)^{(d+1)|\omega_2|} (-1)^{|\omega_1|+1}(U_1\fiberprod{i}{0}U_2, \conc\circ (\varphi_1\times \varphi_2), d\omega_1\times \omega_2)
		\end{align*}
		and \begin{align*}
			x\circ_i (\partial^\dR y)= (-1)^{d(|\omega_2|+1)}(-1)^{|\omega_2|+1}(U_1\fiberprod{i}{0}U_2,\conc\circ (\varphi_1\times \varphi_2), \omega_1\times d\omega_2).
		\end{align*}
		Then comparing the exponents checks the signs of (1).
	\item Write $x_i=[(U_i, \varphi_i,\omega_i)]$ for $i=1,2,3$. We have $d_i = \dim U_i -|\omega_i| -\dim L$.
	Notice that $\deg(x\circ_i y) -\dim L=(\deg x-\dim L)+(\deg y - \dim L)$.
	
	For the first identity, 
	\begin{align*}
		&(x_1\circ_{i_1}x_2)\circ_{k_2+i_2-1} x_3 \\ =& (-1)^{d_1|\omega_2|}(U_1\fiberprod{i_1}{0} U_2, \conc \circ(\varphi_1\times \varphi_2), \omega_1\times \omega_2 )\circ_{k_2+i_2-1}x_3\\
		=&(-1)^{d_1|\omega_2|}(-1)^{(d_1+d_2)|\omega_3|}((U_1\fiberprod{i_1}{0}U_2)\fiberprod{k_2+i_2-1}{0}U_3 , \conc\circ((\conc\circ(\varphi_1\times \varphi_2))\times \varphi_3), \omega_1\times \omega_2\times \omega_3 )
	\end{align*}
	and 
	\begin{align*}
		&(x_1\circ_{i_2}x_3)\circ_{i_1} x_2\\
		=& (-1)^{d_1|\omega_3|}(U_1\fiberprod{i_2}{0}U_3,\conc \circ( \varphi_1\times \varphi_3),  \omega_1\times \omega_3)\circ_{i_1} x_2\\
		=& (-1)^{d_1|\omega_3|}(-1)^{(d_1+d_3)|\omega_2|}((U_1\fiberprod{i_2}{0}U_3)\fiberprod{i_1}{0}U_2, \conc\circ((\conc\circ(\varphi_1\times\varphi_3))\times \varphi_2), \omega_1\times \omega_3\times \omega_2)\\
		=& (-1)^{d_1|\omega_3|}(-1)^{(d_1+d_3)|\omega_2|}(-1)^{|\omega_2||\omega_3|}((U_1\fiberprod{i_2}{0}U_3)\fiberprod{i_1}{0}U_2, \conc\circ((\conc\circ(\varphi_1\times\varphi_3))\times \varphi_2), \omega_1\times \omega_2\times \omega_3).
	\end{align*}
	Notice that the difference between the exponents of $(-1)$ in the front of the two expressions is 
	\begin{align*}
		&(d_1|\omega_2|+(d_1+d_2)|\omega_3|)- (d_1|\omega_3|+(d_1+d_3)|\omega_2|+|\omega_2||\omega_3| ) \\
		\equiv & d_2|\omega_3|+ d_3|\omega_2|+|\omega_2||\omega_3| \equiv (d_2+|\omega_2|)(d_3+|\omega_3|)+d_2d_3\\
		\equiv &  (\dim U_2 -\dim L)(\dim U_3-\dim L) + d_2d_3\mod 2.
	\end{align*}
	Moreover from Section \ref{Subsc: Ori} we see that the orientation of $(U_1\fiberprod{i_2}{0}U_3)\fiberprod{i_1}{0}U_2$ and $(U_1\fiberprod{i_2}{0}U_3)\fiberprod{i_1}{0}U_2$ differs by $(-1)^{(\dim U_2-\dim L)(\dim U_3-\dim L)}$, so from Remark \ref{Rmk: SignOri} we see that the first associativity identity is correct.
	
	For the second identity, 
	\begin{align*}
		&(x_1\circ_{i_1} x_2)\circ_{i_1+i_2-1}x_3 \\
		=& (-1)^{d_1|\omega_2|}(-1)^{(d_1+d_2)|\omega_3|}((U_1\fiberprod{i_1}{0}U_2 )\fiberprod{i_1+i_2-1}{0}U_3,\conc\circ((\conc\circ (\varphi_1\times \varphi_2))\times \varphi_3) , \omega_1\times \omega_2\times \omega_3)
	\end{align*}
	and 
	\begin{align*}
		&x_1\circ_{i_1} (x_2\circ_{i_2}x_3) \\
		=& (-1)^{d_2|\omega_3|}x_1\circ_{i_1}(U_2\fiberprod{i_2}{0}U_3, \conc\circ(\varphi_2\times \varphi_3), \omega_2\times \omega_3)\\
		=& (-1)^{d_2|\omega_3|}(-1)^{d_1(|\omega_2|+|\omega_3|)}(U_1\fiberprod{i_1}{0}(U_2\fiberprod{i_2}{0} U_3), \conc\circ((\conc\circ(\varphi_2\times \varphi_3))\times \varphi_1, \omega_1\times \omega_2\times \omega_3).
	\end{align*}
	By the discussion in Section \ref{Subsc: Ori},  the two domains of these two de Rham chains have the same orientation, and comparing the exponents of $(-1)$ shows that these two de Rham chains are the same. This proves the second associativity identity.
	\end{enumerate}
\end{proof}

\subsubsection{Open string}\label{Subsubsc: SignOpenString}  
Recall that given $\alpha \in C_{d_1}^\dR(\Omega^{k_1+1}_\star(a_1))$, $\beta\in C_{d_2}^\dR(\Omega^{k_2+1}_\star(a_2))$, with $\alpha=[(V_1,\psi_1,\eta_1)]$, $\beta = [(V_2,\psi_2,\eta_2)]$ (so that $d_1=\deg\alpha=\dim V_1-|\eta_1|$ and similarly for $\beta$),
\[
	\alpha\bullet \beta:= (-1)^{d_1|\eta_2|} [(V_1\times V_2, \psi_1\bullet \psi_2, \eta_1\times \eta_2)].
\]

\begin{lemma}\label{Lm: dRchainBulletProdApp}
	\begin{enumerate}
		\item (Leibniz rules) If $\alpha\in C^\dR_{d_1}(\Omega^{k_1+1}_\star (a_1)), \beta \in C^\dR_{d_2}(\Omega_\star^{k_2+1}(a_2))$, \[\partial^\dR( \alpha\bullet \beta )= (\partial^\dR \alpha)\bullet \beta + (-1)^{d_1}\alpha \bullet (\partial^\dR \beta);\]
		\item (Associativity of $\bullet$) If $\alpha_i\in C^\dR_*(\Omega_\star^{k_i+1}(a_i))$ $(i=1,2,3)$, \[ (\alpha_1 \bullet \alpha_2)\bullet \alpha_3 = \alpha_1\bullet (\alpha_2\bullet \alpha_3).\]
		\end{enumerate}
\end{lemma}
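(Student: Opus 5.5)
The plan is to compute directly with representatives, in the same way as Lemma \ref{Lm: dRchainClosedStringApp}, but more simply, since $\bullet$ uses a direct product rather than a fibre product. Write $\alpha=[(V_1,\psi_1,\eta_1)]$ and $\beta=[(V_2,\psi_2,\eta_2)]$, with $d_i=\dim V_i-|\eta_i|$. First I will check that $\bullet$ is well defined on de Rham chains, i.e.\ compatible with the relation \eqref{Eq: dRChainRelation}. If $\pi\colon V_1'\to V_1$ is a submersion, then $\pi\times\id\colon V_1'\times V_2\to V_1\times V_2$ is a submersion, and fibre integration satisfies $(\pi\times \id)_!(\omega\times\eta_2)=\pi_!\omega\times\eta_2$. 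The degree $d_1$ is unchanged because $\dim V_1'-|\omega|=\dim V_1-|\pi_!\omega|$. The analogous statement in the second factor picks up the sign $(-1)^{(\dim V_1)(\text{fibre dim})}$ from the fibre-integration convention, and this is absorbed by the change of $|\eta_2|$ in the prefactor, so that case also needs a check. This compatibility is implicit in the statement, but I will record it.

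For (1), I will apply $d(\eta_1\times\eta_2)=d\eta_1\times\eta_2+(-1)^{|\eta_1|}\eta_1\times d\eta_2$ together with \eqref{Eq: DefnPartialDR}. The left-hand side is
\[(-1)^{d_1|\eta_2|}(-1)^{|\eta_1|+|\eta_2|+1}\big[(V_1\times V_2,\psi_1\bullet\psi_2,d\eta_1\times\eta_2+(-1)^{|\eta_1|}\eta_1\times d\eta_2)\big].\]
For the right-hand side, $(\partial^\dR\alpha)\bullet\beta$ has degree $d_1-1$, so it carries the sign $(-1)^{(d_1-1)|\eta_2|}(-1)^{|\eta_1|+1}$. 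The term $(-1)^{d_1}\alpha\bullet\partial^\dR\beta$ carries $(-1)^{d_1}(-1)^{d_1(|\eta_2|+1)}(-1)^{|\eta_2|+1}$. Comparing exponents mod 2: for the first term, $d_1|\eta_2|+|\eta_1|+|\eta_2|+1\equiv (d_1-1)|\eta_2|+|\eta_1|+1$; for the second, $d_1|\eta_2|+|\eta_2|+1\equiv d_1+d_1|\eta_2|+d_1+|\eta_2|+1$. Both hold.

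For (2), I will use that the map $*$ on $\Omega_\star^{\bullet}$ is strictly associative. This holds because composition in $\Pi_1L$ is associative: both bracketings give $(\dots,c_{k_1}*c_0'*c_0'',\dots)$ when $k_2=0$, and the obvious tuple otherwise. Hence $(\psi_1\bullet\psi_2)\bullet\psi_3=\psi_1\bullet(\psi_2\bullet\psi_3)$ on $V_1\times V_2\times V_3$, whose product orientation is associative. Since $(\eta_1\times\eta_2)\times\eta_3=\eta_1\times(\eta_2\times\eta_3)$, it remains to compare signs. The left-hand side has exponent $d_1|\eta_2|+(d_1+d_2)|\eta_3|$ and the right-hand side has $d_2|\eta_3|+d_1(|\eta_2|+|\eta_3|)$; these agree.

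No step is a genuine obstacle. The only places where care is needed are the well-definedness check in the second factor and keeping track of the degree shift of $\partial^\dR\alpha$ in the prefactor.
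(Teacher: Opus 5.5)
Your proof of (1) and (2) is the same direct computation the paper gives. You expand $d(\eta_1\times\eta_2)$ and track the degree shift of $\partial^\dR\alpha$ in the prefactor; for (2) you use strict associativity of $*$ and compare $d_1|\eta_2|+(d_1+d_2)|\eta_3|$ with $d_2|\eta_3|+d_1(|\eta_2|+|\eta_3|)$. Your sign bookkeeping in both parts is correct.

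One correction to your optional well-definedness aside, which the paper does not include. Fibre integration along $\id\times\pi$ gives the sign $(-1)^{f(\dim V_1-|\eta_1|)}=(-1)^{fd_1}$, where $f$ is the fibre dimension, not $(-1)^{f\dim V_1}$. Only this corrected sign exactly cancels the change $(-1)^{d_1 f}$ in the prefactor.
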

\begin{proof}
	\begin{enumerate}
		\item Given $\alpha = [(V_1,\psi_1,\eta_1)]$ and $\beta=[(V_2,\psi_2,\eta_2)]$, 
		\begin{align*}
			\partial^\dR(\alpha\bullet \beta)&= \partial^\dR \left((-1)^{(\deg \alpha_1) |\eta_2|} [( V_1\times V_2, \psi_1 \bullet \psi_2 , \eta_1\times \eta_2 )]\right)\\
			&= (-1)^{|\eta_1|+|\eta_2|+1}(-1)^{(\deg \alpha)|\eta_2|}[(V_1\times V_2, \psi_1 \bullet \psi_2, d\eta_1\times \eta_2)] \\
				&\quad + (-1)^{|\eta_1|+|\eta_2|+1}(-1)^{(\deg \alpha) |\eta_2|}(-1)^{|\eta_1|}[(V_1\times V_2, \psi_1 \bullet \psi_2, \eta_1\times d\eta _2)].
		\end{align*} 
		On the other hand
		\begin{align*}
			(\partial^\dR \alpha)\bullet \beta 
			= (-1)^{|\eta_1|+1}(-1)^{(\deg \alpha - 1)|\eta_2|}[(V_1\times V_2, \psi_1 \bullet \psi_2, d\eta_1 \times \eta_2)]
		\end{align*}
		and 
		\begin{align*}
			 \alpha \bullet (\partial^\dR \beta) 
			= (-1)^{|\eta_2|+1} (-1)^{ (\deg \alpha)(|\eta_2|+1) }[(V_1\times V_2 , \psi_1 \bullet \psi_2, \eta_1\times d\eta_2)].
		\end{align*}

		\item This follows from the fact that the concatenation product $*$ is associative, i.e. 
		\[
\begin{tikzcd}
\Omega^{k_1+1}_\star(a_1)\times \Omega^{k_2+1}_\star (a_2) \times \Omega^{k_3+1}_\star(a_3) \arrow[r] \arrow[d] & \Omega^{k_1+k_2+1}_\star(a_1+a_2)\times \Omega^{k_3+1}_\star(a_3) \arrow[d] \\
\Omega_\star^{k_1+1}(a_1) \times \Omega_\star^{k_2+k_3+1} (a_2+a_3) \arrow[r]                             & \Omega_\star^{k_1+k_2+k_3+1}(a_1+a_2+a_3)                            
\end{tikzcd}
		\]
		strictly commutes.
		Also the sign that appears on the left-hand side of the expression in (2)  is $(-1)^{(\deg\alpha_1)|\eta_2|}(-1)^{(\deg\alpha_1+\deg\alpha_2)|\eta_3|}$ and the sign on the right hand side is $(-1)^{(\deg \alpha_1)(|\eta_2|+|\eta_3|)}(-1)^{(\deg \alpha_2) |\eta_3|}$.
	\end{enumerate}
\end{proof}

\subsubsection{Open-closed string}\label{Subsubsc: OCSignAppendix}
Recall that given $\alpha \in C_{d_1}^\dR(\Omega^{k_1+1}_\star(a_1))$, $x\in C^\dR_{\dim L+d_2}(\ms{L}^{k_2+1}(a_2))$, with $\alpha = [(V,\psi,\eta)], x= [(U,\varphi,\omega)]$, 
\[
	\alpha \circ_i^\Omega x:= (-1)^{d_1|\omega|}[(V\fiberprod{i}{0}U, \psi \circ_i^\Omega \varphi, \eta \times \omega)].
\]

\begin{lemma}\label{Lm: dRchainOpenClosedApp}
	\begin{enumerate}
		\item (Leibniz rule) If $\alpha\in C^\dR_{d_1}(\Omega_\star^{k_1+1}(a_1))$, $x\in C^\dR_{\dim L + d_2}(\mathscr{L}^{k_2+1}(a_2))$, 
		\[\partial^\dR(\alpha\circ_i^\Omega x) =(\partial^\dR \alpha)\circ_i^\Omega x + (-1)^{d_1} \alpha \circ_i^\Omega (\partial^\dR x);\]
		\item (Associativity) If $\alpha\in C_{d_1}^\dR(\Omega_\star^{k_1+1}(a_1))$, $x\in C_{\dim L + d_2}^\dR (\mathscr{L}^{k_2+1}(a_2))$, $y\in C^\dR_{\dim L + d_3}(\mathscr{L}^{k_3+1}(a_3))$,\begin{align*}
			(\alpha\circ_{i_1}^\Omega x)\circ^\Omega_{k_2+i_2-1} y &= (-1)^{d_2d_3} (\alpha\circ_{i_2}^\Omega y)\circ_{i_1}^\Omega x \quad (1 \leq i_1 < i_2 \leq k_1);\\
			(\alpha \circ_{i_1}^\Omega x)\circ_{i_1+i_2-1}^\Omega y&= \alpha\circ_{i_1}^\Omega(x\circ_{i_2}^\mathscr{L} y)  \quad  (1 \leq i_1 \leq k_1, 1 \leq i_2 \leq k_2).
		\end{align*}
		\item (Compatibility with $\bullet$) If $\alpha\in C_{d_1}^\dR(\Omega_\star^{k_1+1}(a_1))$, $\beta \in C_{d_2}^\dR(\Omega_\star^{k_2+1}(a_2))$, $x\in C_{\dim L+d_3}^\dR(\mathscr{L}^{k_3+1}(a_3))$, \begin{align*}
			(\alpha \bullet \beta) \circ^\Omega_i x&= (-1)^{d_2d_3} (\alpha\circ^\Omega_i x) \bullet \beta \quad (1\leq i \leq k_1) \\
			(\alpha \bullet \beta) \circ_i^\Omega x&=  \alpha \bullet (\beta \circ_{i-k_1}^\Omega x)\quad (k_1+1\leq i \leq k_1+k_2)
		\end{align*}
	\end{enumerate}
\end{lemma}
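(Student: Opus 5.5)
We verify the three parts directly on representatives, following the pattern of Lemmas \ref{Lm: dRchainClosedStringApp} and \ref{Lm: dRchainBulletProdApp}. Write $\alpha=[(V,\psi,\eta)]$, $x=[(U,\varphi,\omega)]$, $y=[(U',\varphi',\omega')]$ and $\beta=[(V_2,\psi_2,\eta_2)]$. Then $d_1=\dim V-|\eta|$ and $d_2=\dim U-|\omega|-\dim L$. In every identity the two sides are pushforwards of the same product form $\eta\times\omega\times\cdots$, possibly with the factors reordered. Each domain is a fibre product that, up to orientation, is the same manifold. The underlying maps into $\Omega_\star$ agree because concatenation in the fundamental groupoid $\Pi_1L$ is strictly associative. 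So each identity reduces to comparing three things: the explicit prefactor signs, the Koszul sign from reordering forms, and the orientation sign from Remarks \ref{Rmk: Ori} together with Remark \ref{Rmk: SignOri}.

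For (1), apply $\partial^\dR$ to $(-1)^{d_1|\omega|}[(V\fiberprod{i}{0}U,\ldots,\eta\times\omega)]$. Use $d(\eta\times\omega)=d\eta\times\omega+(-1)^{|\eta|}\eta\times d\omega$, and note that restriction to the transverse fibre product commutes with $d$. Then compare with $(\partial^\dR\alpha)\circ_i^\Omega x$, which has prefactor $(-1)^{(d_1+1)|\omega|+|\eta|+1}$, and with $\alpha\circ_i^\Omega(\partial^\dR x)$, which has prefactor $(-1)^{d_1(|\omega|+1)+|\omega|+1}$. This is the same exponent bookkeeping as in Lemma \ref{Lm: dRchainClosedStringApp}(1).

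For (2), the first identity $(i_1<i_2)$ involves the two domains $(V\fiberprod{i_1}{0}U)\fiberprod{k_2+i_2-1}{0}U'$ and $(V\fiberprod{i_2}{0}U')\fiberprod{i_1}{0}U$. By Remark \ref{Rmk: Ori}(1) their orientations differ by $(-1)^{(\dim U-\dim L)(\dim U'-\dim L)}$. Reordering $\omega$ and $\omega'$ contributes $(-1)^{|\omega||\omega'|}$. The prefactors differ by $d_2|\omega'|+d_3|\omega|$ modulo 2. The total is $(d_2+|\omega|)(d_3+|\omega'|)+d_2d_3+\ldots$, which leaves exactly $(-1)^{d_2d_3}$, as in the closed-string case. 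The second identity uses Remark \ref{Rmk: Ori}(2), under which the orientations agree. The prefactors $(-1)^{d_1|\omega|+(d_1+d_2)|\omega'|}$ and $(-1)^{d_2|\omega'|+d_1(|\omega|+|\omega'|)}$ coincide. Here we use that $\deg(\alpha\circ_i^\Omega x)=d_1+d_2$, and that the closed-open concatenations compose as $\concOmega\circ(\concOmega\times\id)=\concOmega\circ(\id\times\conc)$ on the relevant fibre products. That last equality is a strict identity of tuples in $(\Pi_1L)^{\times}$, which we check separately in the cases $k_2,k_3\ge1$ and $=0$.

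For (3), two cases arise. When $i\le k_1$, the marked point lies in $\alpha$. The domain $(V\times V_2)\fiberprod{i}{0}U$ is compared with $(V\fiberprod{i}{0}U)\times V_2$. Moving $\ker d(\ev_0\circ\varphi)$, of dimension $\dim U-\dim L$, past $TV_2$ gives the orientation sign $(-1)^{(\dim U-\dim L)\dim V_2}$. Reordering $\eta_2$ and $\omega$ gives $(-1)^{|\eta_2||\omega|}$. The prefactors also differ, and these three signs together reduce to $(-1)^{d_2d_3}$ with $d_2=\dim V_2-|\eta_2|$. When $i>k_1$, the domains $(V\times V_2)\fiberprod{i}{0}U$ and $V\times(V_2\fiberprod{i-k_1}{0}U)$ are identically oriented. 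The prefactors $(-1)^{d_1|\eta_2|+(d_1+d_2)|\omega|}$ and $(-1)^{d_2|\omega|+d_1(|\eta_2|+|\omega|)}$ then agree. Throughout we also need that the map-level diagrams commute, i.e.\ that $*$ and $\concOmega$ commute on the appropriate components. This is immediate from the definitions, since the index $i\le k_1$ or $i>k_1$ determines which factor's $c$'s are modified.

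The main obstacle is sign bookkeeping, especially the orientation comparison in (3) for $i\le k_1$, which is not literally one of the three cases of Remarks \ref{Rmk: Ori}. I would handle it by writing $T((V\times V_2)\fiberprod{i}{0}U)\cong TV\oplus TV_2\oplus\ker$ and swapping blocks explicitly. A secondary check is the edge case $k_2=0$ in the concatenation formula, where the strict equality of groupoid tuples has to be verified directly.
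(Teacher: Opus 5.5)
Your proposal is correct and follows essentially the same route as the paper. In both, each identity is checked directly on representatives by comparing three things: the explicit prefactor signs, the Koszul sign from reordering the forms, and the orientation sign of the fibre-product domains. This yields $(-1)^{d_2d_3}$ in the two ``swap'' identities and exact cancellation in the other three. Your extra care with the orientation comparison in (3) for $i\le k_1$ and with the $k_2=0$ case of the concatenation formula just makes explicit points the paper asserts without comment.
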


\begin{proof}
	\begin{enumerate}
		\item Suppose that $\alpha=[(V,\psi, \eta)], x= [(U,\varphi,\omega)]$, then
		\begin{align*}
			\partial^\dR(\alpha\circ_i x)&= \partial ^\dR\left((-1)^{(\deg \alpha) |\omega|} [(V\fiberprod{i}{0} U, \concOmega\circ (\psi \times \varphi), \eta\times \omega)]\right)\\
			&= (-1)^{|\eta|+|\omega|+1}(-1)^{(\deg\alpha ) |\omega|}[(V\fiberprod{i}{0} U ,\concOmega\circ(\psi \times \varphi), d\eta\times \omega)]\\
			&\quad + (-1)^{|\eta|+|\omega|+1}(-1)^{(\deg\alpha)|\omega|}(-1)^{|\eta|}[(V\fiberprod{i}{0} U,\concOmega\circ(\psi\times \varphi) , \eta\times d\omega)].
		\end{align*}
		On the other hand,
		\begin{align*}
			(\partial^\dR \alpha)\circ_i x &= (-1)^{((\deg \alpha) +1)|\omega|}(-1)^{|\eta|+1}[ (V\fiberprod{i}{0} U, \concOmega\circ (\psi\times \varphi), d\eta \times \omega ]
		\end{align*}
		and 
		\begin{align*}
			\alpha\circ_i (\partial^\dR x)= (-1)^{(\deg \alpha)(|\omega|+1)}(-1)^{|\omega|+1}[ ( V\fiberprod{i}{0} U, \concOmega\circ(\psi \times \varphi), \eta \times d\omega)].
		\end{align*}
		\item Write 
		\begin{align*}
			\alpha =[(V,\psi,\eta)], \quad x=[(U_1,\varphi_1,\omega_1)], \quad y =[(U_2,\varphi_2,\omega_2)].
		\end{align*}
		We have that $d_1=\deg \alpha$ and $d_2=\deg x - \dim L, d_3=\deg y - \dim L$.
		
		For the first identity, 
		\begin{align*}
			&(\alpha\circ_{i_1}^\Omega x)\circ^\Omega_{k_2+i_2-1} y\\
			=& \left( (-1)^{d_1 |\omega_1|} [(V\fiberprod{i_1}{0} U_1, \psi \circ_{i_1}^\Omega \varphi_1 , \eta\times \omega_1)] \right) \circ_{k_2+i_2-1} y\\
			=& (-1)^{d_1|\omega_1|}(-1)^{(d_1+d_2) |\omega_2|}[((V\fiberprod{i_1}{0} U_1)\fiberprod{k_2+i_2-1}{0} U_2, (\psi\circ_{i_1}^\Omega \varphi_1)\circ_{k_2+k_2-1}^\Omega \varphi_2 , \eta\times \omega_1\times \omega_2)]
		\end{align*}
		and 
		\begin{align*}
			& (\alpha\circ_{i_2}^\Omega y)\circ^\Omega_{i_1} x\\
			=& (-1)^{d_1|\omega_2|}[( V\fiberprod{i_2}{0}U_2, \psi \circ_{i_2}^\Omega \varphi_2, \eta \times \omega_2 )]\circ_{i_1}^\Omega x\\
			=& (-1)^{d_1|\omega_2|}(-1)^{(d_1+d_3)|\omega_1|}[ ((V\fiberprod{i_2}{0} U_2)\fiberprod{i_1}{0}U_1, (\psi\circ_{i_2}^\Omega \varphi_2)\circ_{i_1}^\Omega \varphi_1, \eta\times \omega_2\times \omega_1) ].
		\end{align*}
		The orientation of the domains $(V\fiberprod{i}{0} U_1)\fiberprod{k_2+i_2-1}{0} U_2$ and $(V\fiberprod{i_2}{0} U_2)\fiberprod{i_1}{0}U_1$ of these two de Rham chains differ by $(-1)^{(\dim U_1 -\dim L) (\dim U_2 - \dim L)}$ and to turn the differential forms $\eta\times \omega_2\times \omega_1$ into $\eta\times \omega_1\times \omega_2$ requires a sign of $(-1)^{|\omega_1||\omega_2|}$, so the overall sign difference between the two de Rham chains is \begin{align*} &d_1|\omega_1|+(d_1+d_2)|\omega_2| - d_1 |\omega_2|-(d_1+d_3)|\omega_1|- (\dim U_1-\dim L)(\dim U_2 - \dim L) -|\omega_1||\omega_2| \\
		\equiv& d_2|\omega_2|  -d_3|\omega_1| -(\dim U_1-\dim L)(\dim U_2-\dim L) - |\omega_1||\omega_2|\\\equiv& d_2|\omega_2|-d_3|\omega_1| -(d_2+|\omega_1|)(d_3+|\omega_2|)-|\omega_1||\omega_2| \equiv d_2d_3\mod 2,
		\end{align*}using that $d_2=\dim U_1 -|\omega_1|-\dim L, d_3=\dim U_2-|\omega_2|-\dim L$.

		For the second identity, 
		\begin{align*}
			&(\alpha \circ_{i_1}^\Omega x)\circ_{i_1+i_2-1}^\Omega y\\
			=& (-1)^{d_1|\omega_1|}(-1)^{(d_1+d_2)|\omega_2|}[((V\fiberprod{i_1}{0}U_1)\fiberprod{i_1+i_2-1}{0}U_2, (\psi \circ^\Omega_{i_1} \varphi_1)\circ_{i_1+i_2+1}^\Omega \varphi_2, \eta\times \omega_1\times \omega_2)]
		\end{align*}
		and 
		\begin{align*}
			&\alpha\circ_{i_1}^\Omega(x\circ_{i_2}^\mathscr{L} y) \\
			=& (-1)^{d_2|\omega_2|} \alpha\circ_{i_1}^\Omega [( V_1\fiberprod{i_2}{0}V_2, \varphi_1\circ_{i_2}^\mathscr{L} \varphi_2, \omega_1\times \omega_2 )]\\
			=& (-1)^{d_2|\omega_2|}(-1)^{d_1(|\omega_1|+|\omega_2|)}[(U\fiberprod{i_1}{0}(V_1\fiberprod{i_2}{0}V_2), \psi\circ_{i_1}^\Omega (\varphi_1\circ_{i_2}^\mathscr{L} \varphi_2),\eta\times \omega_1\times \omega_2)].
		\end{align*}
		The signs cancel.
		
		\item Write \begin{align*}
			\alpha = [(V_1, \psi_1,\eta_1)], \quad \beta = [(V_2,\psi_2,\eta_2)], \quad x=[(U,\varphi,\omega)].
		\end{align*}
		For the first identity, 
		\begin{align*}
			&(\alpha\bullet \beta)\circ_i^\Omega x \\
			=& (-1)^{d_\alpha|\eta_2|} (-1)^{(d_\alpha+d_\beta)|\omega|}[((V_1\times V_2)\fiberprod{i}{0}U, (\psi_1\bullet \psi_2)\circ_i^\Omega \varphi, \eta_1\times \eta_2 \times \omega)]
		\end{align*}
		and 
		\begin{align*}
			&(\alpha\circ_i^\Omega x)\bullet \beta \\
			=&(-1)^{d_\alpha|\omega|}(-1)^{(d_\alpha +d_x)|\eta_2|}[((V_1\fiberprod{i}{0}U)\times V_2, (\psi_1\circ_i^\Omega \varphi)\bullet \psi_2, \eta_1\times \omega\times \eta_2)].
		\end{align*}
		The orientation difference between the two domains is $(-1)^{\dim V_2(\dim U-\dim L)}$ and the sign difference between the two differential forms is $|\eta_2||\omega|$, so the overall sign difference is 
		\begin{align*}
			&d_\alpha |\eta_2|+(d_\alpha+d_\beta)|\omega| - d_\alpha|\omega| -(d_\alpha + d_x)|\eta_2| -\dim V_2(\dim U - \dim L) - |\eta_2||\omega|\\
			\equiv & d_\beta|\omega| - d_x |\eta_2| - \dim V_2(\dim U -\dim L)-|\eta_2||\omega|\\
			\equiv & d_\beta |\omega| -d_x|\eta_2| - (d_\beta - |\eta_2|)(d_x - |\omega|)-|\eta_2||\omega| \equiv d_\beta d_x.
		\end{align*}
		For the second identity, 
		\begin{align*}
			& \alpha \bullet (\beta\circ_{i-k_1}^\Omega x)\\
			\equiv & (-1)^{d_\alpha(|\eta_2|+|\omega|)}(-1)^{d_\beta |\omega|}[(V_1\times (V_2\fiberprod{i-k_1}{0} U), \psi_1\bullet (\psi_2\circ_{i-k_1}\varphi), \eta_1\times \eta_2\times \omega)].
		\end{align*}
		The sign cancels with the sign in front of $(\alpha\bullet \beta)\circ_i^\Omega x$.
	\end{enumerate}
\end{proof}

\subsubsection{The anomaly map}\label{Subsubsc: SignAnomaly}
Recall that given $x\in C^\dR_*(\mathscr{L}^{k+1}(a))$ represented by the de Rham chain $[(U\xrightarrow{\varphi} \mathscr{L}^{k+1}(a), \omega)]$, we defined 
\[
	\mathfrak{o}(x) := (-1)^{(\deg x)+1} [((\ev_0\circ \varphi)^{-1}(\star)\xrightarrow{\varphi} \Omega_\star^{k+1}(a);\omega)]\in C_*^\dR(\Omega^{k+1}_\star(a)).
\]

\begin{lemma}\label{Lm: AnomalyApp}
	\begin{enumerate}
		\item (Compatibility with $\circ^\mathscr{L}$ and $\circ^\Omega$) If $x\in C_*^\dR(\mathscr{L}^{k_1+1}(a_1))$ and $y\in C^\dR_*(\mathscr{L}^{k_2+1}(a_2))$, then \[ \mathfrak{o}(x\circ_i^\mathscr{L} y) = (-1)^{\deg(y)-\dim L}\mathfrak{o}(x) \circ_i^\Omega y; \]
		\item (Compatibility with $\partial^\dR$)  If $x\in C_*^\dR(\mathscr{L}^{k+1}(a))$, then \[ \partial^\dR \mathfrak{o}(x) =-\mathfrak{o}(\partial^\dR x). \]
		\item (Compatibility with $\delta_i$) If $x\in C^\dR_*(\mathscr{L}^{k+1}(a))$, then for $i=0,\dots, k+1$, \[ \mathfrak{o}((\delta_i)_*(x)) = (\delta_i)_*(\mathfrak{o}(x)). \](See section \ref{Subsc: StateSpace}  for the definition of $\delta_i$.)
	\end{enumerate}
\end{lemma}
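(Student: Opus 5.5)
We would prove all three parts by working with a single representative de Rham chain. Each operation involved is defined as a fibre product, a restriction, or a pushforward on representatives, so an identity on representatives descends to the quotient by the relation \eqref{Eq: dRChainRelation}. We would first record that $\mf{o}$ is well defined. If $\pi\colon U'\to U$ is a submersion, then $\pi$ restricts to a submersion $(\ev_0\circ\varphi\circ\pi)^{-1}(\star)\to(\ev_0\circ\varphi)^{-1}(\star)$. Fibre integration commutes with this restriction, since the fibres are the same, and the orientation conventions of Section \ref{Subsc: Ori} match. This is essentially case (3) of Remarks \ref{Rmk: Ori}.

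\textbf{Part (3).} Both sides are given by the same geometric operation, namely restricting the domain to $(\ev_0\circ\varphi)^{-1}(\star)$ and then composing with $\delta_i$. For $0\le i\le k$ the map $\delta_i$ does not change $\textsf{s}(c_0)$, and for $i=k+1$ it appends a constant path at $\textsf{t}(c_k)=\textsf{s}(c_0)$. Hence $\ev_0\circ\delta_i=\ev_0$, so the preimages of $\star$ coincide. The maps $\delta_i$ on $\ms{L}$ and on $\Omega_\star$ are defined by the same formula, so they agree on the image. Pushforward does not change the degree, so the sign $(-1)^{\deg x+1}$ is the same on both sides.

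\textbf{Part (2).} Restricting a compactly supported form to the transverse submanifold $(\ev_0\circ\varphi)^{-1}(\star)$ commutes with $d$. The forms keep their degrees, and $\partial^\dR$ carries the sign $(-1)^{|\omega|+1}$ on both sides. The only discrepancy comes from the prefactor: $\deg(\partial^\dR x)=\deg x-1$, so its prefactor has the opposite sign. This yields $\partial^\dR\mf{o}(x)=-\mf{o}(\partial^\dR x)$.

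\textbf{Part (1).} This is the main step. Write $x=[(U_1,\varphi_1,\omega_1)]$ and $y=[(U_2,\varphi_2,\omega_2)]$. Since $i\ge1$, we have $\ev_0\circ\conc=\ev_0\circ\pr_1$. Therefore the domain of $\mf{o}(x\circ_i y)$ is $\bigl(U_1\fiberprod{i}{0}U_2\bigr)\cap(\ev_0\circ\varphi_1\circ\pr_1)^{-1}(\star)$. The domain of $\mf{o}(x)\circ_i^\Omega y$ is $(\ev_0\circ\varphi_1)^{-1}(\star)\fiberprod{i}{0}U_2$. Case (3) of Remarks \ref{Rmk: Ori}, with $\pi_1^1=\ev_0\circ\varphi_1$ and $\pi_1^2=\ev_i\circ\varphi_1$, shows that these are the same oriented manifold. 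The maps to $\Omega_\star^{k_1+k_2}(a_1+a_2)$ agree because $\concOmega$ and $\conc$ are given by the same formula. The forms are both $\omega_1\times\omega_2$.

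It remains to compare signs, and here we expect the only real risk of error. On the left the sign is $(\deg x+\deg y-\dim L+1)+(\deg x-\dim L)|\omega_2|$. On the right it is $(\deg x+1)+(\deg\mf{o}(x))|\omega_2|$, where $\deg\mf{o}(x)=\deg x-\dim L$ because the restriction cuts $\dim U_1$ by $\dim L$ and leaves $|\omega_1|$ unchanged. The $|\omega_2|$ terms therefore agree. The remaining difference is $\deg y-\dim L$, which is exactly the stated factor $(-1)^{\deg(y)-\dim L}$.

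We would double-check that the fibre product convention $T(M_1\fiberprod{}{}M_2)\cong TM_1\oplus\ker d\pi_2$ is the one used in \eqref{Eq: PartialOCprod}. With that convention, case (3) applies verbatim and introduces no extra sign.
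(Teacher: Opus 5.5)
Your proof of (1)--(3) is the paper's proof:
\begin{itemize}
\item For (3), both sides are the same restriction once you note $\ev_0\circ\delta_i=\ev_0$.
\item For (2), restriction to $(\ev_0\circ\varphi)^{-1}(\star)$ commutes with $d$, and the prefactor $(-1)^{\deg x+1}$ flips because $\deg$ drops by one.
\item For (1), you identify the two domains by case (3) of Remarks \ref{Rmk: Ori} and compare prefactors using $\deg\mf{o}(x)=\deg x-\dim L$ and $\deg(x\circ_i^{\ms{L}}y)=\deg x+\deg y-\dim L$. Your sign count agrees with the paper's.
\end{itemize}

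The part that fails is your opening claim that $\mf{o}$ is well defined on the quotient by \eqref{Eq: dRChainRelation}. The paper's proof does not address this point, and your justification does not go through.

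Case (3) of Remarks \ref{Rmk: Ori} compares a preimage of $\star$ with a fibre product. It says nothing about fibre integration along a submersion $\pi\colon U'\to U$. There the two conventions pull in different directions:
\begin{itemize}
\item The convention $\int_Y\pi_!\omega\wedge\eta=\int_X\omega\wedge\pi^*\eta$ puts the fibre first, $TU'\cong\ker d\pi\oplus TU$.
\item The preimage convention that makes case (3) sign-free is $TU\cong TL\oplus T\bigl((\ev_0\circ\varphi)^{-1}(\star)\bigr)$.
\end{itemize}
Together they give the fibre of $U'_\star\to U_\star$ the orientation $(-1)^{\dim L\cdot(\dim U'-\dim U)}$ relative to the fibre of $\pi$.

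Here is a concrete example. Take $L=S^1$, $\varphi\colon L\to\ms{L}^1(0)$, $y\mapsto[\underline{y}]$, and $\rho\in C^\infty_c(\R)$ with $\int\rho=1$. Then $[(L,\varphi,1)]=[(\R\times L,\varphi\circ\pr_2,\rho(t)\,dt)]$. Yet the defining formula for $\mf{o}$ returns $+[\pt]$ on the first representative and $-[\pt]$ on the second.

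You could instead orient preimages kernel-first. That makes $\mf{o}$ well defined, but it puts a sign $(-1)^{\dim L(\dim U_2-\dim L)}$ into case (3), which breaks your sign count in (1). So for odd $\dim L$ your two claims cannot both hold.

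You should delete that paragraph. As the paper does, read (1)--(3) as identities between the specific representatives written down. The underlying defect lies in the sign conventions for $\mf{o}$ when $\dim L$ is odd; the second slot of $\circ_i^{\Omega_\star}$ has the same problem. Repairing it requires an extra representative-dependent sign in the definitions, after which the signs in (1) and (2) have to be recomputed.
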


\begin{proof}
	\begin{enumerate}
		\item Write \[ x=[(U_1\xrightarrow{\varphi_1}\mathscr{L}^{k_1+1}(a_1);\omega_1)] \in C_{d_1+\dim L}^\dR(\mathscr{L}^{k_1+1}(a_1))\] and \[ y = [(U_2\xrightarrow{\varphi_2}\mathscr{L}^{k_2+1}(a_2);\omega_2)] \in C^\dR_{d_2+\dim L}(\mathscr{L}^{k_2+1}(a_2)) . \]
		Then 
		\begin{align*}
			&\mathfrak{o}(x\circ_i^\mathscr{L} y) \\=& (-1)^{d_1 |\omega_2|}\mathfrak{o}([U_1\fiberprod{i}{0} U_2\xrightarrow{\varphi_1\circ_i \varphi_2} \mathscr{L}^{k_1+k_2}(a_1+a_2);  \omega_1\times \omega_2])\\
			=& (-1)^{d_1|\omega_2|+\deg(x\circ_i^\mathscr{L} y)+1}[( (\ev_0\circ(\varphi_1\circ_i\varphi_2))^{-1}(\star) \xrightarrow{\varphi_1\circ_i \varphi_2} \Omega_\star^{k_1+k_2}(a_1+a_2); \omega_1\times \omega_2 ) ],
		\end{align*}
		and
		\begin{align*}
			&\mathfrak{o}(x)\circ_i^\Omega y\\=& (-1)^{(\deg x)+1}[( (\ev_0\circ\varphi_1)^{-1}(\star)\xrightarrow{\varphi_1} \Omega_\star^{k_1+1}(a_1) ;\omega_1  )]\circ_i^\Omega y\\
			=& (-1)^{(\deg x)+1+d_1|\omega_2|}[( \ev_0\circ\varphi_1)^{-1}(\star) \fiberprod{i}{0} U_2 \xrightarrow{\varphi_1\circ_i \varphi_2} \Omega_\star^{k_1+k_2}(a_1+a_2)  ; \omega_1\times \omega_2)]
		\end{align*}
		By Remark \ref{Rmk: Ori} (3) in section \ref{Subsc: Ori}, the two domains $(\ev_0\circ(\varphi_1\circ_i\varphi_2))^{-1}(\star)$ and  $( \ev_0\circ\varphi_1)^{-1}(\star) \fiberprod{i}{0} U_2$ have the same orientation. The claim  now follows from the fact that $\deg(x\circ_i y) = \deg(x)+\deg (y)-\dim L$.
		\item Write $x=[(U\xrightarrow{\varphi} \mathscr{L}^{k+1}(a);\omega)]$. We have that 
		\begin{align*}
			\partial^\dR \mathfrak{o}(x) = (-1)^{|\omega|+1+\deg x + 1}[((\ev_0\circ \varphi)^{-1}(\star)\xrightarrow{\varphi} \Omega_\star^{k+1}(a);d\omega)]
		\end{align*}
		and  \begin{align*}
			\mathfrak{o}(\partial^\dR x)& = \mathfrak{o}\big((-1)^{|\omega|+1} [(U\xrightarrow{\varphi} \mathscr{L}^{k+1}(a); d\omega)]\big) \\&= (-1)^{|\omega|+1+(\deg x+1)+1}[((\ev_0\circ \varphi)^{-1}(\star)\xrightarrow{\varphi} \Omega_\star^{k+1}(a); d\omega)].
		\end{align*}
		\item Write $x=[(U\xrightarrow{\varphi} \mathscr{L}^{k+1}(a);\omega)]$. To distinguish $\delta_i$ in the context of free and based loop spaces, we temporarily use the notations \[ \mathscr{L}^{k+1}(a)\xrightarrow{\delta_i^\mathscr{L}} \mathscr{L}^{k+2}(a) \quad \textup{and}\quad \Omega_\star^{k+1}(a) \xrightarrow{\delta_i^{\Omega_\star}} \Omega_\star^{k+2}(a). \] 
		We have that \begin{align*}
			\mathfrak{o}((\delta^\mathscr{L}_i)_*(x)) &= \mathfrak{o}[(U\xrightarrow{\delta^\mathscr{L}_i\circ\varphi} \mathscr{L}^{k+2}(a);\omega)]\\
			&= (-1)^{\deg x+1} [((\ev_0\circ \delta^\mathscr{L}_i \circ \varphi)^{-1}(\star) \xrightarrow{\delta^\mathscr{L}_i\circ \varphi} \Omega_\star^{k+2}(a);\omega)]
		\end{align*}
		and 
		\begin{align*}
			(\delta^{\Omega_\star}_i)_*(\mathfrak{o}(x)) = (-1)^{\deg x+1}[(\ev_0\circ \varphi)^{-1}(\star) \xrightarrow{ \delta^{\Omega_\star}_i\circ\varphi}\Omega_\star^{k+2}(a);\omega)].
		\end{align*}
		Now the two maps $\mathscr{L}^{k+1}\xrightarrow{\delta^{\mathscr{L}}_i} \mathscr{L}^{k+2} \xrightarrow{\ev_0} L$ and $\mathscr{L}^{k+1}\xrightarrow{\ev_0} L$ are identical, so the domains of the two de Rham chains are the same. The two maps from the domains to $\Omega_\star^{k+2}(a)$ are the same because the two maps $\Omega_\star^{k+1}(a)\xrightarrow{\delta_i^{\Omega_\star}} \Omega_\star^{k+2}(a)$ and $\mathscr{L}^{k+1}(a)\xrightarrow{\delta_i^\mathscr{L}}\mathscr{L}^{k+2}(a)$ are compatible under $\Omega_\star^{k+1}(a)\subset \mathscr{L}^{k+1}(a)$.
	\end{enumerate}
\end{proof}

\subsection{Conventions on dg algebras}\label{Subsc: ConventionDGalg}
We fix signs for various dg algebras used in this paper, following Chapter 2 of \cite{FukayaDefo} (adjusting for the difference that the degrees we use is homological whereas they are cohomological in \cite{FukayaDefo}) and are consistent with \cite{Irie2}.

Given a graded $\K$-module $C$ and a homogeneous element $x\in C_k$, we write $|x|=k$.

\begin{definition}\label{Defn: dgAa}
	A \emph{differential graded associative algebra (dg associative algebra)} is a graded $\K$-module $C$ together with maps 
	\[
		\partial \colon C_*\to C_{*-1}
	\]
	of degree $-1$ and 
	\[
		\bullet\colon C_*\otimes C_*\to C_*
	\]
	of degree $0$, such that 
	\begin{align}
		& \partial(x\bullet y) = (\partial x) \bullet y + (-1)^{|x|} x \bullet (\partial y); \label{Eq: dgaLeibniz}\\
		& (x\bullet y)\bullet z = x\bullet (y\bullet z).\label{Eq: dgaAssoc}
	\end{align}
\end{definition}

\begin{definition}\label{Defn: dgLa}
	A \emph{differential graded Lie algebra (dg Lie algebra)} is a graded $\K$-module $C$ together with maps \[\partial\colon C_*\to C_{*-1}\] of degree $-1$ and \[[,]\colon C_*\otimes C_*\to C_*\]of degree $0$, such that 
	\begin{align}
		& \partial [x,y]= [\partial x, y]+(-1)^{|x|}[x,\partial y]; \label{Eq: dgLa Leib}\\
		& [x,y] = (-1)^{|x||y|+1}[y,x];\label{Eq: dgLa antisym} \\
		& [[x,y],z] + (-1)^{(|x|+|y|)|z|}[[z,x],y]+ (-1)^{(|y|+|z|)|x|}[[y,z],x]=0. \label{Eq: dgLa Jacobi}
	\end{align}
\end{definition}

\begin{definition}
	A \emph{homomorphism of dg Lie algebras} $(C_*,\partial,[,])$ and $(C'_*,\partial,[,])$  is a degree-0 linear map $f\colon C_*\to C'_*$ that is a chain map preserving the Lie bracket, i.e.\begin{align*}
		 & \partial f(x) = f(\partial x) \quad \textup{for all }x\in C_*; 	\\&[f(x),f(y)] = f([x,y])\quad \textup{ for all }x,y\in C_*. 
	\end{align*} 
\end{definition}

For later purposes, we record the following well-known facts (modulo some unconventional signs) about deformation theories of dg algebras in the forms we need. For general theory of deformations of dg algebras, see e.g. \cite{FukayaDefo} and \cite{FOOO1}.
\begin{lemma}\label{Lm: dgDefo}
	\begin{enumerate}
		\item Let $(C_*,\partial,[,])$ be a dg Lie algebra, and $x\in C_{-1}$. Then if $x$ is a Maurer-Cartan element, i.e. \[ \partial x - \frac{1}{2}[x,x] = 0, \]then the following deformation of $\partial$,\[ \partial^x\colon C_*\to C_{*-1};\quad \partial^x(y):= \partial y - [x,y],\]is a differential. Moreover, $(C_*,\partial^x,[,])$ is still a dg Lie algebra.
		\item Let $(C_*, \partial, \bullet)$ be a dg associative algebra, and $\alpha \in C_{-1}$. Then $\alpha$ is a Maurer-Cartan element (``bounding chain'' in \cite{FOOO1} terminology), i.e. \[ \partial \alpha - \alpha \bullet \alpha = 0, \]
		then the following deformation of $\partial$,
		\[
			\partial^\alpha\colon C_*\to C_{*-1},\quad \partial^\alpha(\beta):= \partial \alpha - \alpha \bullet \beta + (-1)^{|\beta|} \beta \bullet \alpha,
		\]
		is a differential. Moreover, $(C_*, \partial^\alpha, \bullet)$ is still a dg Lie algebra;
		\item Let $(C_*, \partial,\bullet)$ be a dg associative algebra. Consider the Hochschild cochain complex $\CC^*(C_*,C_*) = \bigoplus_{\ell\geq 0} \Hom_{*+\ell-1}(C_*^{\otimes \ell}, C_*)$. Suppose $\Phi\in \CC^{-1}(C_*,C_*)$ is only non-zero in the $1$-ary component and write that component as $\Phi_1\in \Hom_{-1}(C_*,C_*)$, and that it satisfies the Maurer-Cartan equation in $\CC^*(C_*,C_*)$: \[ \delta \Phi - \frac{1}{2}[\Phi,\Phi]=0. \]Then the following deformation of $\partial$, 
		\[ \partial^\Phi\colon C_*\to C_{*-1}, \quad \partial^\Phi(\beta):=  \partial \beta -\Phi_1(\beta)  \]is a differential, with $(C_*, \partial^\Phi, \bullet)$ still a dg associative algebra. Denote this deformed dg associative algebra by $C_*^\Phi:= (C_*,\partial^\Phi,\bullet)$.
		\item Under the same setting, the Hochschild cochain complex differential of $C_*$ deformed by $\Phi\in \CC^{-1}(C_*,C_*)$ (as a dg Lie algebra) \[ \delta_{C_*}^\Phi\colon \CC^*(C_*,C_*)\to \CC^*(C_*,C_*); \quad \Psi \mapsto  \delta\Psi - [\Phi,\Psi]  \]agrees with the Hochschild cochain complex differential of $C_*^\Phi$ \[ \delta_{C_*^\Phi}\colon \CC^*(C_*^\Phi,C_*^\Phi)\to \CC^*(C_*^\Phi,C_*^\Phi). \]Here we have used that the underlying graded vector spaces of $C_*$ and $C_*^\Phi$ are identical.
	\end{enumerate}
\end{lemma}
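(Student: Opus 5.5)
Each of the four parts is a direct computation from the definitions; the only real care is with signs, following the conventions of Definitions \ref{Defn: dgAa} and \ref{Defn: dgLa}.

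\textbf{Part (1).} Write $\partial^x y=\partial y-[x,y]$ and expand $(\partial^x)^2 y$. The cross terms $-\partial[x,y]-[x,\partial y]$ reduce, by the Leibniz rule \eqref{Eq: dgLa Leib} with $|x|=-1$, to $-[\partial x,y]$. The remaining quadratic term $[x,[x,y]]$ equals $\tfrac12[[x,x],y]$, by the Jacobi identity \eqref{Eq: dgLa Jacobi} specialised to $|x|$ odd. Together these give $(\partial^x)^2y=-[\partial x-\tfrac12[x,x],y]$, which vanishes by the Maurer--Cartan equation. To see that $\partial^x$ is still a derivation of the bracket, combine Leibniz for $\partial$ with Jacobi for $\mathrm{ad}_x$; the latter is again a consequence of \eqref{Eq: dgLa Jacobi}.

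\textbf{Part (2).} I read the formula as intended to be $\partial^\alpha\beta=\partial\beta-\alpha\bullet\beta+(-1)^{|\beta|}\beta\bullet\alpha$, i.e.\ $\partial$ minus the graded commutator with $\alpha$. The statement's ``$\partial\alpha$'' is a typo for $\partial\beta$, and ``dg Lie'' should read ``dg associative''. With that reading, the derivation property follows termwise from \eqref{Eq: dgaLeibniz} and \eqref{Eq: dgaAssoc}. Squaring $\partial^\alpha$, the quadratic terms reduce by associativity to $\pm\alpha\bullet\alpha\bullet\beta$ and $\pm\beta\bullet\alpha\bullet\alpha$. The linear terms, via Leibniz, give $(\partial\alpha)\bullet\beta$ and $\beta\bullet\partial\alpha$. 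Everything therefore assembles into the commutator of $\beta$ with $\partial\alpha-\alpha\bullet\alpha$, which is zero.

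\textbf{Part (3).} Evaluate the Maurer--Cartan equation $\delta\Phi-\tfrac12[\Phi,\Phi]=0$ arity by arity, exactly as in the proof of Lemma \ref{Lm: MC Ainfty}. Since $\Phi$ has only a unary component, $[\Phi,\Phi]$ is purely unary and equals $2\Phi_1\circ\Phi_1$ up to sign. Moreover $\delta\Phi$ has only unary and binary components, namely $\delta^0\Phi_1$ and $\delta^1\Phi_1$.
\begin{itemize}
\item The unary component gives $\partial\Phi_1+\Phi_1\partial=\Phi_1\Phi_1$, which is precisely $(\partial-\Phi_1)^2=0$.
\item The binary component, $\delta^1\Phi_1=0$, says that $\Phi_1$ is a derivation of $\bullet$ with the same Koszul sign as $\partial$.
\end{itemize}
Hence $\partial^\Phi=\partial-\Phi_1$ satisfies \eqref{Eq: dgaLeibniz}, and associativity is untouched because $\bullet$ is unchanged.

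\textbf{Part (4).} The Hochschild differential of $C^\Phi_*$ is $\delta^0$ built from $\partial^\Phi$, plus $\delta^1$ built from the same $\bullet$. The $\delta^1$ parts agree on the nose. It remains to check that $[\Phi,\Psi]$, with $\Phi$ unary, reproduces $-(\delta^0_{\partial^\Phi}-\delta^0_\partial)\Psi$. Unwinding the pre-Lie products:
\begin{itemize}
\item $\Phi\circ\Psi$ is $\Phi_1$ applied to the output of $\Psi$;
\item $\Psi\circ\Phi$ is $\Psi$ with $\Phi_1$ inserted in each input slot.
\end{itemize}
This matches the structure of $\delta^0$ term by term.

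The main obstacle is purely the sign bookkeeping in (3) and (4). There one has to check that the sign $\dagger$ in the pre-Lie product, for $\ell'=1$ or $\ell''=1$ and $|\Phi|=-1$ (so $\deg\Phi_1=-1$), matches the signs $(-1)^{\deg\Psi_\ell}(-1)^{\sum_{j<i}|\alpha_j|}$ in $\delta^0$. I would do this by first verifying the unary case $\ell=1$, and then noting that the dependence on the insertion slot $i$ in both formulas is the same Koszul factor $(-1)^{|\alpha_1|+\dots+|\alpha_{i-1}|}$.
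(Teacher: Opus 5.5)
Your proposal is correct. It takes a genuinely different route from the paper, which gives no proof at all: it records the lemma as ``well-known facts (modulo some unconventional signs)'' and defers to the deformation-theory literature of Fukaya and of Fukaya--Oh--Ohta--Ono.

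Your direct, arity-by-arity verification is self-contained. What it buys is a check that the paper's nonstandard conventions fit together, which the citations do not directly provide. Those conventions are the degree-shifted Hochschild complex, the normalization $(-1)^{\deg\Phi_{\ell-1}+\ell-1}$ in $\delta^1$, and the sign $\dagger$ in the pre-Lie product.

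Your reading of the two typos in (2) is right: $\partial\beta$ for $\partial\alpha$, and ``dg associative'' for ``dg Lie''. With that reading one gets exactly $(\partial^\alpha)^2=-[\partial\alpha-\alpha\bullet\alpha,\,\cdot\,]$, since the $\alpha\bullet\beta\bullet\alpha$ and $\alpha\bullet\partial\beta$, $(\partial\beta)\bullet\alpha$ terms cancel.

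In (3) the signs are as you claim, in fact without any ``up to sign'':
\begin{itemize}
\item $\tfrac12[\Phi,\Phi]=\Phi_1\circ\Phi_1$ exactly.
\item The unary component of $\delta\Phi$ has coefficient $-(-1)^{\deg\Phi_1}=+1$ on $\Phi_1\partial$.
\item The binary component is $(-1)^{|\alpha_1|}\alpha_1\bullet\Phi_1(\alpha_2)-\Phi_1(\alpha_1\bullet\alpha_2)+\Phi_1(\alpha_1)\bullet\alpha_2$.
\end{itemize}

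The sign check you deferred in (4) also goes through:
\begin{itemize}
\item For $\Phi\circ\Psi$ one has $\ell'=i=1$, so $\dagger=0$.
\item For $\Psi\circ\Phi$ one has $\ell''=1$ and $|\Phi|=-1$, so $\dagger=\sum_{j<i}|\alpha_j|+\ell-1$.
\item Combined with $(-1)^{|\Phi||\Psi|}=(-1)^{|\Psi|}$, the insertion terms of $-[\Phi,\Psi]_\ell$ carry $(-1)^{|\Psi|+\ell-1}(-1)^{\sum_{j<i}|\alpha_j|}=(-1)^{\deg\Psi_\ell}(-1)^{\sum_{j<i}|\alpha_j|}$.
\end{itemize}
This is precisely the change in $\delta^0$ when $\partial$ is replaced by $\partial-\Phi_1$, while $\delta^1$ is unchanged.
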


\subsection{String topology dg algebras}\label{Subsc: dgAlgebraSignApp}

The goal of this section is to verify Lemma \ref{Lm: StructuresOnStates} and Lemma \ref{Lm: dgLieMorphism} regarding the dg algebra structures on $\CFr,\CBa$ and the homomorphism $\CFr\xrightarrow{\CO} \CC^*(\CBa,\CBa)$.

\subsubsection{Closed string state space}\label{Subsubsc: AppClosedState}

\begin{definition}\label{Defn: SpecialEltLL}
	Define an element
	\[ \LL \in \Cfr{-1}(0) = \prod_{k\in \Z_{\geq 0}} C^\dR_{\dim L + k - 2} (\mathscr{L}^{k+1}(0))\]
where 
\begin{itemize}
	\item For $k=2$, consider the map $L\xrightarrow{\iota} \mathscr{L}^3 (0)$ defined by $y \mapsto (\underline{y},\underline{y},\underline{y})$  (recall that for a point $y \in L$, we use $\underline{y}$ to denote the constant path at $y$; see Section \ref{Subsc: StateSpace}). The de Rham chain $[(L\xrightarrow{\iota} \mathscr{L}^3 (0); 1\in \mathscr{A}_c^0(L))]$ defines a closed cycle in $C^\dR_{\dim L}(\mathscr{L}^3(0))$, which we set to be the $k=2$ component of $\LL$; 
	\item For all $k\neq 2$, set the $k$-th component of $\LL$ to be 0.
\end{itemize}
	Define \[ \wt{\LL}:= (-1)^{\dim L + 1} \LL. \]
\end{definition}

The following proof imitates the construction in section 5 in \cite{Irie2}, where energy-zero moduli spaces of pseudo-holomorphic curves (whose role is played by $\LL$ in our proof) are used to deform the dg Lie algebra $(\CFr,\partial^\dR, [,])$.

\begin{lemma}[Lemma \ref{Lm: StructuresOnStates} (1)]\label{Lm: StructuresOnClosedStatesApp}
		$(\CFr, \partial, [,])$ is a dg Lie algebra.
\end{lemma}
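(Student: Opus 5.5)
The plan is to follow Irie's strategy from section 5 of \cite{Irie2}: first show that $(\CFr,\partial^0,[,])$ is a dg Lie algebra, then show that $\partial^1$ is the deformation of $\partial^0$ by the Maurer--Cartan element $\wt{\LL}$, and finally apply Lemma \ref{Lm: dgDefo} (1). Concretely, the argument has three steps.

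\textbf{Step 1 (pre-Lie structure).} Using only the sign rules of Lemma \ref{Lm: dRchainClosedStringApp}, I will check that the product $\circ$ of \eqref{Eq: LoopPreBracket} is a graded pre-Lie product with respect to the grading $|\cdot|$. This means the associator
\[ (x\circ y)\circ z - x\circ(y\circ z) \]
is graded symmetric in $y$ and $z$.

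The associator splits according to where $y$ and $z$ are inserted into $x$:
\begin{itemize}
\item \emph{Nested insertions}, where $z$ is inserted into $y$. These come from the second associativity identity and cancel against $x\circ(y\circ z)$.
\item \emph{Disjoint insertions}, at positions $i_1<i_2$. These are exchanged by the first associativity identity, which carries the sign $(-1)^{d_2d_3}$.
\end{itemize}
The extra signs $(-1)^{(i-1)(k''-1)+(k'-1)(|y|+1+k'')}$ are exactly the Gerstenhaber-type signs for which this bookkeeping works. Graded antisymmetry of $[,]$ and the Jacobi identity then follow formally from the pre-Lie identity.

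Leibniz for $\partial^0$ follows from Lemma \ref{Lm: dRchainClosedStringApp} (1), after converting $d=\deg x-\dim L$ to $|x|$ via \eqref{Eq: DegConvertFree}.

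\textbf{Step 2 (identifying $\partial^1$).} I will show that
\[ \partial^1 x = -[\wt{\LL},x] \quad\text{(up to an overall sign convention).} \]
The key computation uses that $\LL$ is supported at $k=2$ on the constant triple $(\underline{y},\underline{y},\underline{y})$. It gives:
\begin{itemize}
\item $\LL\circ_1 x$ and $\LL\circ_2 x$ produce, via $\conc$, the cofaces $\delta_0$ and $\delta_{k+1}$ applied to $x$. Here one uses that the fibre product over $\ev_0$ with the diagonal $L$ just reproduces $U$.
\item $x\circ_i\LL$ inserts a constant path at marked point $i$, giving $\delta_i$ for $1\le i\le k$.
\end{itemize}
Matching these with the alternating sum $(-1)^{\dim L+|x|}\sum_i(-1)^i(\delta_i)_*$ fixes the normalization $\wt{\LL}=(-1)^{\dim L+1}\LL$. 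Since $\LL$ is $\partial^0$-closed, I also need $[\wt{\LL},\wt{\LL}]=0$. This should hold because $\wt{\LL}\circ\wt{\LL}$ is a sum of two terms (insertion at position $1$ or $2$) representing the same chain with opposite signs, and by the antisymmetry of $\circ$ on odd elements. Together these show that $\wt{\LL}$ (with appropriate sign) is Maurer--Cartan in $(\CFr,\partial^0,[,])$.

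\textbf{Step 3 (conclusion).} Lemma \ref{Lm: dgDefo} (1) then gives that $\partial=\partial^0+\partial^1$ squares to zero and that $(\CFr,\partial,[,])$ is a dg Lie algebra.

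The main obstacle is the sign bookkeeping in Step 2. Matching every coface term, including the boundary cases $i=0$ and $i=k+1$ coming from the two sides of the bracket, requires careful tracking of orientations of fibre products with the diagonal. If that route becomes unmanageable, the fallback is to verify $\partial^2=0$ and the Leibniz rule for $\partial^1$ directly from the cosimplicial identities, using compatibility of $\conc$ with $\delta_i$.
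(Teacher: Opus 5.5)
Your proposal is correct and follows the paper's proof essentially step for step. The paper first shows $(\CFr,\partial^\dR,[,])$ is a dg Lie algebra via the pre-Lie identity from associativity of $\circ_i^{\mathscr{L}}$. It then shows $\wt{\LL}$ is Maurer--Cartan using $\LL(0,2)\circ_1\LL(0,2)=\LL(0,2)\circ_2\LL(0,2)$, identifies $-[\wt{\LL},-]=\partial^1$ by exactly your coface computations, and concludes with Lemma \ref{Lm: dgDefo} (1). One small correction: no ``antisymmetry of $\circ$'' is needed for $[\wt{\LL},\wt{\LL}]=0$. Since $\wt{\LL}$ is odd, $[\wt{\LL},\wt{\LL}]=2\,\wt{\LL}\circ\wt{\LL}$, so the vanishing comes entirely from the cancellation of the two insertion terms.
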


Instead of checking directly, we will first use Lemma \ref{Lm: dRchainClosedStringApp} to directly verify that $(\CFr, \partial^\dR, [,])$ is a dg Lie algebra, and show that the term $\partial^1 = \partial - \partial^\dR=\partial - \partial^0$ can be given by deforming $(\CFr, \partial^\dR, [,])$ by a certain Maurer-Cartan element, and thus $(\CFr, \partial,[,])$ is a dg Lie algebra.

\begin{lemma}\label{Lm: StructureOnClosedStatesDRApp}
	$(\CFr,\partial^\dR, [,])$ is a dg Lie algebra.
\end{lemma}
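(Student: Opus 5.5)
The plan is to reduce the statement to the componentwise identities of Lemma \ref{Lm: dRchainClosedStringApp}, using the standard fact that a pre-Lie product with a compatible differential yields a dg Lie algebra. Concretely, it suffices to show three things. First, $\partial^\dR$ squares to zero on $\CFr$. Second, the pre-Lie product $\circ$ of \eqref{Eq: LoopPreBracket} satisfies the graded Leibniz rule
\[
\partial^\dR(x\circ y)=(\partial^\dR x)\circ y+(-1)^{|x|}x\circ(\partial^\dR y).
\]
Third, $\circ$ satisfies the graded right pre-Lie identity: the associator $A(x,y,z):=(x\circ y)\circ z-x\circ(y\circ z)$ is graded symmetric in $y,z$, i.e.\ $A(x,y,z)=(-1)^{|y||z|}A(x,z,y)$. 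Given these, antisymmetry \eqref{Eq: dgLa antisym} is immediate from \eqref{Eq: LoopBracket}. The Leibniz rule for the bracket \eqref{Eq: dgLa Leib} follows from the Leibniz rule for $\circ$. The Jacobi identity \eqref{Eq: dgLa Jacobi} follows from the pre-Lie identity by the classical expansion, as in Gerstenhaber's argument, and Example 2.7/Theorem 2.8 of \cite{Irie1} give exactly this implication in the same conventions.

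\textbf{Step 1 (Leibniz).} Apply $\partial^\dR$ componentwise to $(x\circ y)(a,k)$ and use Lemma \ref{Lm: dRchainClosedStringApp}(1) on each summand $x(a',k')\circ_i y(a'',k'')$. The only point is converting degrees. By \eqref{Eq: DegConvertFree}, $d=\deg x(a',k')-\dim L=|x|+\mu(a')+k'-1$, and $\mu$ is even since $L$ is orientable. The sign $(-1)^{d}$ from the lemma therefore combines with the prefactor $(-1)^{(i-1)(k''-1)+(k'-1)(|y|+1+k'')}$. Because $|\partial^\dR y|=|y|-1$, the prefactor changes by $(-1)^{k'-1}$ when $y$ is replaced by $\partial^\dR y$. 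Together these give $(-1)^{|x|}$. The term with $\partial^\dR x$ needs no correction, since the prefactor does not involve $|x|$.

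\textbf{Step 2 (pre-Lie identity).} Expand $(x\circ y)\circ z$ as a sum of terms $(x\circ_{i_1}y)\circ_j z$. Split them according to whether the second insertion index $j$ lands inside the block coming from $y$, namely $i_1\le j\le i_1+k''-1$, or outside it. The inside terms match $x\circ(y\circ z)$ term by term by the second identity of Lemma \ref{Lm: dRchainClosedStringApp}(2). The outside terms, with $j<i_1$ or $j\ge i_1+k''$, are exactly $A(x,y,z)$. The first identity of Lemma \ref{Lm: dRchainClosedStringApp}(2) pairs each such term with a term of $(x\circ z)\circ y$ carrying the sign $(-1)^{d_2d_3}$, and this pairing gives the graded symmetry of $A$ in $y,z$.

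The main obstacle is the sign bookkeeping in Step 2. One must verify that the combinatorial prefactors $(-1)^{(i-1)(k''-1)+(k'-1)(\cdots)}$, after reindexing $(i_1,j)\mapsto(i_2,i_1)$ with the shifted indices $k_2+i_2-1$, combine with $(-1)^{d_2d_3}$ to give precisely $(-1)^{|y||z|}$, where $d_2\equiv|y|+k''-1$ and $d_3\equiv|z|+k'''-1$ modulo $2$. The plan is to mimic the standard Gerstenhaber-bracket sign verification for Hochschild cochains, where the same prefactor $\dagger$ appears. The cosimplicial index $k$ plays the role of the arity $\ell$, because $\deg x(a,k)-\dim L$ and $\deg\Phi_\ell$ are both shifted by the index minus one. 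So the computation should reduce verbatim to the Hochschild case in \cite{Irie1}. Finally, compatibility with the $H_1(L;\Z)$ decomposition is automatic, since $\conc$ adds classes.
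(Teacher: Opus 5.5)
Your proposal is correct and is essentially the paper's argument: the Leibniz rule from Lemma~\ref{Lm: dRchainClosedStringApp}(1), antisymmetry from \eqref{Eq: LoopBracket}, and Jacobi from the right pre-Lie identity for $\circ$, which you get from the two associativity identities of Lemma~\ref{Lm: dRchainClosedStringApp}(2) via exactly your inside/outside split of the second insertion index. The sign check you defer does close up directly: the inside terms match $x\circ(y\circ z)$ with sign $+1$, and paired outside terms have total prefactors differing by $(-1)^{(k''-1)|z|+(k'''-1)(|y|+k''-1)}$, which multiplied by $(-1)^{d_2d_3}$ is exactly $(-1)^{|y||z|}$, so no appeal to the Hochschild case is needed.
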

\begin{proof}
The Leibniz identity \eqref{Eq: dgLa Leib} follows from the Leibniz rule of $\circ_i^\mathscr{L}$ (Lemma \ref{Lm: dRchainClosedStringApp} (1)), anti-symmetry \eqref{Eq: dgLa antisym} follows from the definition \eqref{Eq: LoopBracket} directly, and the Jacobi identity follows from the fact that $\circ$ is a pre-Lie product:
\[
	(x\circ y)\circ z - x\circ (y\circ z)  = (-1)^{|y||z|} \big((x\circ z)\circ y - x\circ(z\circ y)\big),
\]
which in turn follows from associativity of $\circ_i^\mathscr{L}$ (Lemma \ref{Lm: dRchainClosedStringApp} (2)).
\end{proof}

\begin{lemma}\label{Lm: E=0 MC elt}
	$\wt{\LL}:= (-1)^{\dim L + 1}\LL$ is a Maurer-Cartan element in $(\CFr, \partial^\dR, [,])$, i.e. 
\begin{align*}
	\partial^\dR \wt{\LL} - \frac{1}{2}\left[\wt{\LL},\wt{\LL}\right]  = 0.
\end{align*}
\end{lemma}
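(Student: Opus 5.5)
The plan is to check the Maurer--Cartan equation componentwise in the cosimplicial index $k$; by construction both terms live only in the class $a=0$. First, $\partial^\dR\wt{\LL}=0$. The only nonzero component of $\LL$ is the $k=2$ component $[(L\xrightarrow{\iota}\ms{L}^3(0);1)]$, and $d1=0$, so by \eqref{Eq: DefnPartialDR} this component is a cycle. Note that $\iota$ is a legitimate plot, since $\ev_0\circ\iota=\id_L$ is a submersion. So the equation reduces to showing $[\wt{\LL},\wt{\LL}]=0$, which is equivalent to $\wt{\LL}\circ\wt{\LL}=0$. Indeed, $|\wt{\LL}|=-1$ is odd, so \eqref{Eq: LoopBracket} gives $[\wt{\LL},\wt{\LL}]=2\,\wt{\LL}\circ\wt{\LL}$.

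Next, compute $\LL\circ\LL$ using \eqref{Eq: LoopPreBracket}. With $k'=k''=2$, only the output component $k=3$ is nonzero, and it equals
\[
\sum_{i=1}^{2}(-1)^{(i-1)+(|\LL|+3)}\,\LL(0,2)\circ_i^{\ms{L}}\LL(0,2),
\]
where $(k'-1)(|y|+1+k'')=|\LL|+3$. Since $|\LL|+3=2$ is even, the two summands carry opposite signs: $+$ for $i=1$ and $-$ for $i=2$.

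Then identify the two chains $\LL(0,2)\circ_i\LL(0,2)$ for $i=1,2$ geometrically.
1. The domain is the fibre product $L\fiberprod{\ev_i}{\ev_0}L$, which is the diagonal $L$ with the orientation of $TL\oplus\ker(d\,\id)=TL$. In particular the orientation is the same for $i=1$ and $i=2$.
2. The form is $1\times1=1$.
3. The sign prefactor $(-1)^{(\dim U_1-|\omega_1|-\dim L)|\omega_2|}$ in \eqref{Eq: PartialLoopPre} is trivial because $|\omega_2|=0$.
4. The map is $\conc_{2,i,2}$ applied to two triples of constant paths at $y$. By the formula for $k_2\geq1$, both $i=1$ and $i=2$ produce the quadruple $(\underline y,\underline y,\underline y,\underline y)\in\ms{L}^4(0)$, because concatenating constant paths at $y$ gives the constant path class at $y$.

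So the two de Rham chains are literally equal, and the alternating sum cancels. This gives $\LL\circ\LL=0$, hence $\wt{\LL}\circ\wt{\LL}=0$, since scaling by $\pm1$ squares to $+1$.

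The main obstacle is sign bookkeeping: confirming the exponent $(i-1)(k''-1)+(k'-1)(|y|+1+k'')$ correctly, and making sure no hidden orientation sign distinguishes the fibre products for $i=1$ versus $i=2$. I would handle the latter by noting both fibre products are the graph of the identity, oriented via the same convention in \S\ref{Subsc: Ori}. If a sign mismatch did appear there, the fallback is to recheck it against Lemma \ref{Lm: dRchainClosedStringApp}(2), but I expect the cancellation to be exact.
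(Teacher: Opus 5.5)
Your proof is correct and follows essentially the same route as the paper. You show that $\partial^\dR$ kills the single $k=2$ component. You show that $\LL\circ\LL$ is concentrated in $(a,k)=(0,3)$ as $\LL(0,2)\circ_1\LL(0,2)-\LL(0,2)\circ_2\LL(0,2)$. The two terms coincide because both fibre products are the diagonal copy of $L$, each point mapping to $(\underline{y},\underline{y},\underline{y},\underline{y})$. Your explicit checks of the sign exponent, the orientation of the diagonal and the trivial prefactor are slightly more careful than the paper's write-up, but they are not a different argument.
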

\begin{proof}
Now we show this by first verifying that $\LL$ is  a Maurer-Cartan element.
The term $\partial^\dR \LL$ vanishes since the de Rham chain representing only non-vanishing component ($a=0,k=2$) of $\LL$ is $[(L\xrightarrow{\iota} \mathscr{L}^3(0); 1\in \mathscr{A}^0_c(L))]$. By arity reasons, $\LL\circ \LL$ is only non-zero in the component $a=0$, $k=3$, and moreover by definition \eqref{Eq: LoopPreBracket}, \[ \LL \circ \LL (0,3) = \LL(0,2) \circ_1 \LL(0,2) - \LL(0,2)\circ_2 \LL(0,2). \]
We claim that $\LL(0,2)\circ_1\LL(0,2) = \LL(0,2)\circ_2\LL(0,2)$. 
By definition of $\circ_i^\mathscr{L}$ (see \eqref{Eq: PartialLoopPre}), we have, for $i=1$ or $2$,
\[
	\LL(0,2)\circ_i \LL(0,2) = [(L\fiberprod{i}{0} L, \iota\circ_i \iota, 1\times 1)];
\]
here the two evaluation maps $\ev_i\circ \iota$ and $\ev_0\circ \iota$ from $L$ to $L$ coincide, so $L\fiberprod{i}{0} L\cong L\subset L\times L$ is simply the diagonal. Also, $\iota\circ_i\iota$ denotes the composition
\[
	L\cong L\fiberprod{i}{0}L \xrightarrow{\iota\times \iota}\mathscr{L}^3(0)\fiberprod{\ev_i}{\ev_0} \mathscr{L}^3(0) \xrightarrow{\conc_{2,i,2}} \mathscr{L}^4(0).
\]
For any $y\in L\cong L\fiberprod{i}{0}L$, one directly checks that $\iota\circ_i\iota (y) = (\underline{y},\underline{y},\underline{y},\underline{y})$ for either $i=1$ or $2$ (since the composition of two constant loops $\underline{y} * \underline{y}$ returns the same constant loop $\underline{y}$).
Therefore $\LL(0,2)\circ_1\LL(0,2) = \LL(0,2)\circ_2 \LL(0,2)$, and thus
\begin{align*}
	\frac{1}{2} [\LL,\LL](a,k) &= \LL\circ \LL(a,k) =  \begin{cases}
		\LL(0,2)\circ_1 \LL(0,2) - \LL(0,2) \circ_2\LL(0,2), &  a= 0, k=2\\
		0, &\textup{otherwise}
	\end{cases}\\
	&= 0
\end{align*}
for any $a\in H_1(L;\Z), k\in \Z_{\geq 0}$. 
This verifies that $\LL$ is a Maurer-Cartan element.
Now since $\partial^\dR \LL=0$, it follows immediately that modifying $\LL$ by a sign, $\wt{\LL}:= (-1)^{\dim L + 1} \LL$, also gives a Maurer-Cartan element.
\end{proof}

\begin{proof}[Proof of Lemma \ref{Lm: StructuresOnClosedStatesApp}]
We claim that $-[\wt{\LL}, -] = \partial^1$.
Given any $x\in \CFr$, we have, for each $a\in H_1(L;\Z)$ and $k\in\Z_{\geq 0}$,
\begin{align*}
	[\LL, x](a,k)  =& (\LL \circ x)(a,k) - (-1)^{|x|} (x\circ \LL)(a,k) \\
	=& (-1)^{|x|+k} \LL(0,2)\circ_1 x(a,k-1) + (-1)^{|x|} \LL(0,2)\circ_2 x(a,k-1)\\
	&-(-1)^{|x|}\sum_{i=1}^{k-1} (-1)^{i-1} x(a,k-1)\circ_i \LL(0,2).
\end{align*}

Now recall that (see \eqref{Eq: differentialExplicit}) 
\[(\partial^1 x)(a,k):= 
	(-1)^{\dim L + |x|}\sum_{i=0}^k (-1)^i(\delta_i)_*(x(a,k-1)),\]where 
		\[\delta_i \colon \mathscr{L}^{i}L\to \mathscr{L}^{i+1} L; \quad \delta_i(c_0,\dots, c_{k-1}):= \begin{cases}
		(c_0,\dots, c_{i-1},\underline{\textsf{s}(c_i)}, c_i,\dots, c_{k-1}), & 0 \leq i \leq k-1\\
		(c_0,\dots, c_{k-1}, \underline{\textsf{t}(c_{k-1})}), & i=k
	\end{cases}.
	\]

	One checks that  (see section \ref{Subsc: Ori} for orientation convention)
	\begin{align*}
		\LL(0,2) \circ_1 x(a,k-1) = (\delta_k)_*(x(a,k-1)),\quad \LL(0,2)\circ_2 x(a,k-1) = (\delta_0)_*(x(a,k-1)),
	\end{align*}
	and for each $i=1,\dots, k-1$, \[ x(a,k-1) \circ_i \LL(0,2) = (\delta_i)_*(x(a,k-1)). \]
	We see that the signs in front of each terms also match up once we replace $\LL$ by $-\wt{\LL}$:
	\begin{align}\label{Eq: [L,-]=delta}
		-[\wt{\LL}, x] =   \partial^1 x.
	\end{align}

From general deformation theory of dg Lie algebras 
(Lemma \ref{Lm: dgDefo} (1)), since $\wt{\LL}$ is a Maurer-Cartan element, the deformed algebra $(\CFr,\partial^\dR- [\wt{\LL},-], [,])$ is a dg Lie algebra. Since $-[\wt{\LL},-] = \partial^1$, we can conclude that $(\CFr, \partial=\partial^0+\partial^1, [,])$ is a dg Lie algebra.
\end{proof}

\subsubsection{Open string --- the de Rham chain part}
For the verification of part (2) of Lemma \ref{Lm: StructuresOnStates} and Lemma \ref{Lm: dgLieMorphism}, we partially imitate the strategy in section \ref{Subsubsc: AppClosedState}; e.g. to show $(\CBa,\partial,[,])$ is a dg associative algebra,
\begin{enumerate}
	\item Prove the statements for the ``de Rham chain'' part of the structures, i.e. $(\CBa, \partial^\dR, [,])$ is a dg associative algebra;
	\item Use general deformation theory machinery to deform the the dg associative structures using tautological elements in $\CFr$ and $\CBa$;
	\item Identify the deformation with $\partial^1 = \partial - \partial^0=\partial - \partial^\dR$.
\end{enumerate}

We start with part (1) in this section.

\begin{lemma}
	$(\CBa,\partial^\dR, \bullet)$ is a dg associative algebra with strict unit $[\underline{\star}]\in \Cba{0}$.
\end{lemma}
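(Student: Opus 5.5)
The plan is to reduce everything to the componentwise statements of Lemma \ref{Lm: dRchainBulletProdApp}, and then handle the sign $(-1)^{k'|\beta|}$ in \eqref{Eq: PontryaginProd} and the degree shift of Notation \ref{Ntn: DegConvert}. For homogeneous $\alpha,\beta\in\CBa$, the component $\alpha(a',k')$ has de Rham degree $d_1=|\alpha|+\mu(a')+k'$. Since $L$ is oriented, $\mu(a')$ is even, so $d_1\equiv |\alpha|+k' \pmod 2$.

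\emph{Leibniz rule.} Only $\partial^\dR=\partial^0$ is involved, and it acts componentwise. Lemma \ref{Lm: dRchainBulletProdApp} (1) gives
\[
\partial^\dR(\alpha(a',k')\bullet\beta(a'',k''))=\partial^\dR\alpha(a',k')\bullet\beta(a'',k'')+(-1)^{d_1}\alpha(a',k')\bullet\partial^\dR\beta(a'',k'').
\]
Now multiply by $(-1)^{k'|\beta|}$ and compare with the two terms of $(\partial^\dR\alpha)\bullet\beta+(-1)^{|\alpha|}\alpha\bullet(\partial^\dR\beta)$:
\begin{itemize}
\item In the first term $|\beta|$ is unchanged, so its prefactor is again $(-1)^{k'|\beta|}$ and it matches directly.
\item In the second term $|\partial^\dR\beta|=|\beta|-1$, so its prefactor is $(-1)^{|\alpha|+k'(|\beta|-1)}$. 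Against $(-1)^{k'|\beta|+d_1}$ the discrepancy is $(-1)^{d_1+|\alpha|+k'}=1$.
\end{itemize}
Summing over $k'+k''=k$ and $a'+a''=a$ gives the Leibniz identity.

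\emph{Associativity.} By Lemma \ref{Lm: dRchainBulletProdApp} (2), it suffices to compare the signs of the $(k_1,k_2,k_3)$ summands.
\begin{itemize}
\item On the left, $(-1)^{k_1|\beta|}$ is followed by $(-1)^{(k_1+k_2)|\gamma|}$, since $\alpha\bullet\beta$ sits in arity $k_1+k_2$.
\item On the right, $(-1)^{k_2|\gamma|}$ is followed by $(-1)^{k_1(|\beta|+|\gamma|)}$, using $|\beta\bullet\gamma|=|\beta|+|\gamma|$.
\end{itemize}
These exponents agree.

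\emph{Unit.} $[\underline{\star}]$ is concentrated in $(a,k)=(0,0)$ and has degree $0$. In $[\underline{\star}]\bullet\beta$ the sign is $(-1)^{0}=1$, and in $\beta\bullet[\underline{\star}]$ the sign is $(-1)^{k'\cdot 0}=1$. The de Rham sign in \eqref{Eq: PartialPontryagin} is trivial because the form on $\pt$ has degree $0$ and $\dim\pt=0$. Geometrically, $\pt\times V\cong V\cong V\times\pt$ with compatible orientations. Moreover, concatenating with the constant path $[\underline{\star}]$, i.e.\ forming $c_{k}*[\underline\star]$ or $[\underline\star]*c_0'$ in $\Pi_1L$, is the identity because $\Pi_1L$ is a groupoid with strict identities. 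So the plots coincide and $[\underline{\star}]$ is a strict two-sided unit. It is a cycle for $\partial^\dR$, since its form is $1$ and $d1=0$.

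\emph{Main obstacle.} The only subtle point is the parity bookkeeping between $\deg$ and $|\cdot|$, and it uses evenness of $\mu$. I expect no real difficulty there. One must also confirm that the infinite product over $k$ poses no issue: each $(a,k)$ component of $\alpha\bullet\beta$ is a finite sum over $k'+k''=k$, and a locally finite sum over $a'+a''=a$, since $\alpha$ and $\beta$ each lie in finitely many $a$-summands.
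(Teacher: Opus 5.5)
Your proposal is correct and follows essentially the same route as the paper. Both arguments reduce the Leibniz rule and associativity to the componentwise statements of Lemma \ref{Lm: dRchainBulletProdApp} and treat unitality as immediate from the definitions. You additionally write out the compatibility of the $(-1)^{k'|\beta|}$ signs with the degree shift, using that $\mu$ is even for oriented $L$; the paper leaves this bookkeeping implicit.
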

\begin{proof}
	The Leibniz rule \eqref{Eq: dgaLeibniz} follows from the Leibniz rule of $\bullet$ (Lemma \ref{Lm: dRchainBulletProdApp} (1)), and 
	associativity follows from the associativity of $\bullet$ (Lemma \ref{Lm: dRchainBulletProdApp} (2)).
	The strict unitality property is clear.
\end{proof}

\subsubsection{Closed-open string map preserves the Lie bracket}

A remark on notations:
recall that, given $x\in \CFr$, we have $\CO_0(x) \in \CBa$ and $\CO_1(x)\in \Hom(\CBa,\CBa)$. For ease of reading, we will put a curly bracket $\big\{\CO_1(x)\big\}$ when thinking of it as a map, so that e.g. $\big\{\CO_1(x)\big\} \big(\CO_0(y)\big)$ means applying the homomorphism $\CO_1(x)\colon \CBa\to \CBa$ on the element $\CO_0(y)\in \CBa$.

\begin{lemma}\label{Lm: COpreserveLieApp}
	The map $\CO\colon \CFr\to \CC^*(\CBa,\CBa)$ preserves the Lie bracket.
\end{lemma}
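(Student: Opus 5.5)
Since $\CO(x)$ has only $0$- and $1$-ary parts, I would verify $\CO([x,y])=[\CO(x),\CO(y)]$ one arity at a time. By the definition of the Gerstenhaber pre-Lie product, $\CO(x)\circ\CO(y)$ has a $0$-ary part $\{\CO_1(x)\}(\CO_0(y))$ and a $1$-ary part $\alpha\mapsto\{\CO_1(x)\}(\{\CO_1(y)\}(\alpha))$. Its $2$-ary part would be $\CO_2(x)(\dots)$, which vanishes. So the bracket $[\CO(x),\CO(y)]$ again lives only in arities $0$ and $1$, and the statement reduces to two identities:
\begin{align*}
\CO_0([x,y]) &= \{\CO_1(x)\}(\CO_0(y)) - (-1)^{|x||y|}\{\CO_1(y)\}(\CO_0(x)),\\
\{\CO_1([x,y])\}(\alpha) &= \{\CO_1(x)\}\{\CO_1(y)\}(\alpha) - (-1)^{|x||y|}\{\CO_1(y)\}\{\CO_1(x)\}(\alpha).
\end{align*}
I would check both on the $(a,k)$-components of homogeneous elements, writing everything in terms of the partial operations $\circ_i^\ms{L}$, $\circ_i^{\Omega_\star}$ and $\mf{o}$.

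For the $0$-ary identity, expand $[x,y]=x\circ y-(-1)^{|x||y|}y\circ x$ using \eqref{Eq: LoopPreBracket}. Then apply Lemma \ref{Lm: AnomalyApp} (1) termwise: $\mf{o}(x(a',k')\circ_i y(a'',k''))=\pm\mf{o}(x(a',k'))\circ_i^{\Omega_\star} y(a'',k'')$. This turns $\CO_0(x\circ y)$ into exactly the sum \eqref{Eq: CO1 Expand} defining $\mf{o}(x)\circ y$, that is, into $\pm\{\CO_1(y)\}(\CO_0(x))$ up to a global sign. The bookkeeping is to collect the signs from \eqref{Eq: CO0 OpenClosed}, \eqref{Eq: CO1 openclosed}, \eqref{Eq: AnomalyMap}, \eqref{Eq: LoopPreBracket} and \eqref{Eq: CO1 Expand}. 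One converts $\deg$ to $|\cdot|$ via \eqref{Eq: DegConvertFree} and \eqref{Eq: DegConvertBased}; Maslov indices drop out mod $2$ because $\mu\in 2\Z$. The point is that the $(-1)^{(i-1)(k''-1)+(k'-1)(\cdots)}$ in \eqref{Eq: LoopPreBracket} and the $(-1)^{(i-1)(k''-1)+k'(\cdots)}$ in \eqref{Eq: CO1 Expand} differ exactly by the shift $\deg\mf{o}(x)=\deg x-\dim L$ together with the extra $(-1)^{|x|}$. The two sides then match with the roles of $x$ and $y$ swapped, which is why $\CO_0(x\circ y)$ corresponds to $\{\CO_1(y)\}(\CO_0(x))$ and the antisymmetrization produces the right-hand side.

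For the $1$-ary identity, I would first prove that $\alpha\mapsto\alpha\circ x$ is a right action of the pre-Lie algebra:
\[
(\alpha\circ x)\circ y-\alpha\circ(x\circ y)=(-1)^{|x||y|}\big((\alpha\circ y)\circ x-\alpha\circ(y\circ x)\big).
\]
Expanding $(\alpha\circ x)\circ y$ with \eqref{Eq: CO1 Expand} gives two kinds of terms, according to whether $y$ is inserted at a marked point coming from $x$ or from $\alpha$. The nested terms are identified with $\alpha\circ(x\circ y)$ by the second identity in Lemma \ref{Lm: dRchainOpenClosedApp} (2). The disjoint insertions at positions $i_1<i_2$ of $\alpha$ are symmetric in $x$ and $y$ by the first identity there, with sign $(-1)^{d_2d_3}$, and so they cancel in the graded antisymmetrization. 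This is the same argument as in Lemma \ref{Lm: StructureOnClosedStatesDRApp}. Substituting $\{\CO_1(x)\}(\alpha)=(-1)^{|\alpha||x|+1}\alpha\circ x$ then gives the commutator identity.

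The main obstacle is the sign reconciliation. One must check that the $k$-dependent exponents $(i-1)(k''-1)$ and $k'(|x|+1+k'')$, after re-indexing $k\mapsto k'+k''-1$ and composing, produce exactly $(-1)^{d_2d_3}$ in the disjoint case and no sign in the nested case. I would first carry this out for fixed $(k',k'',k''')$ and insertion indices, mirroring the closed-string pre-Lie verification. Only afterwards would I absorb the global prefactors $(-1)^{|\alpha||x|+1}$ and $(-1)^{|x|}$.
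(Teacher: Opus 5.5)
Your proposal is correct and follows essentially the same route as the paper: an arity-by-arity check. The $0$-ary part is reduced to $\CO_0(x\circ y)=(-1)^{|x||y|+1}\{\CO_1(y)\}(\CO_0(x))$ (equivalently $\CO_0(x\circ y)=(-1)^{|y|}\CO_0(x)\circ y$) via Lemma \ref{Lm: AnomalyApp} (1), and the unary part is reduced to the right pre-Lie module identity coming from the two associativity relations in Lemma \ref{Lm: dRchainOpenClosedApp} (2). One correction to your $0$-ary sign heuristic: the $k$-dependent exponents from \eqref{Eq: LoopPreBracket} and \eqref{Eq: CO1 Expand} differ by $|y|+1+k''\equiv\deg y(a'',k'')-\dim L$, which is exactly the sign supplied by Lemma \ref{Lm: AnomalyApp} (1). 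The $(-1)^{|x|}$ factors appear on both sides and cancel, rather than accounting for the discrepancy.
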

\begin{proof}
	We need to verify that for all $x,y\in \CFr$,
\[ \CO([x,y])=[\CO(x),\CO(y)] .\]
	We verify it arity-by-arity.
\begin{itemize}
\item  \emph{For the 0-ary part}: 
We need to show that \[ \CO_0(x\circ y - (-1)^{|x||y|} y\circ x) = \big\{ \CO_1(x) \big\}\big(\CO_0(y)\big) - (-1)^{|x||y|} \big\{\CO_1(y)\big\} \big(\CO_0(x)\big). \]

It suffices to show that \[ \CO_0(x\circ y) = (-1)^{|x||y|+1}  \big\{\CO_1(y)\big\}\big(\CO_0(x)\big).\]
Using \eqref{Eq: CO1 openclosed}, this is the same as
\begin{align*}
	\CO_0(x\circ y) = (-1)^{|x||y|+1} (-1)^{(|x|-1)|y|+1} \CO_0(x)\circ y = (-1)^{|y|} \CO_0(x)\circ y.
\end{align*}
where $\circ$ on the right-hand side is the open-closed product.
Using \eqref{Eq: CO0 OpenClosed} and \eqref{Eq: CO1 Expand} to expand, for each $a\in H_1(L;\Z)$ and $k\in \Z_{\geq 0}$, we have
\begin{align*}
	\CO_0(x\circ y)(a,k) = \sum_{\substack{k'+k''=k+1\\1\leq i\leq k'\\ a'+a'' = a}}(-1)^{\ddagger_1} (-1)^{|x|+|y|}\mathfrak{o}\big(x(a',k')\circ_i y(a'',k'')\big)
\end{align*}
and 
\begin{align*}
	(-1)^{|y|}(\CO_0(x)\circ y)(a,k)&= \sum_{\substack{k'+k''=k+1\\1\leq i\leq k'\\ a'+a'' = a}}(-1)^{\ddagger_2} (-1)^{|x|+|y|}\mathfrak{o}\big(x(a',k')\big) \circ_i y(a'',k'')
\end{align*}
where (see equations \eqref{Eq: LoopPreBracket} and \eqref{Eq: CO1 Expand})
\begin{align*}
	\ddagger_1 &= (i-1)(k''-1)+(k'-1)(|y|+1+k'')\\
	\ddagger_2 &= (i-1)(k''-1) + k'(|y|+1+k'')
\end{align*}
and therefore the discrepancy is $|y|+1+k'' \equiv \deg(y(a'',k'')) -\dim L \mod 2$ (see \eqref{Eq: DegConvertFree}).
The result then follows from the fact that (Lemma \ref{Lm: AnomalyApp} (1))
\begin{align*}
	\mathfrak{o}\big(x(a',k')\circ_i y(a'',k'')\big) = (-1)^{\deg(y(a'',k'')) - \dim L} \mathfrak{o}\big(x(a',k')\big) \circ_i y(a'',k'').
\end{align*}

	\bigskip

\item 	
\emph{For the unary part:} We need to show that 
\[   \big\{\CO_1([x,y])\big\}(\alpha) = \big\{[\CO(x),\CO(y)]_1\big\}(\alpha) \] for $\alpha\in \CBa$ and $x, y\in \CFr$.

		Expanding both sides, this really means 
\begin{align*}
	\big\{\CO_1(x\circ y)\big\} \alpha &- (-1)^{|x||y|} \big\{ \CO_1(y\circ x)\big\} \alpha \\
	&= \big\{\CO_1(x)\big\}\big(\{\CO_1(y)\}\alpha\big) - (-1)^{|x||y|}\big\{\CO_1(y)\big\} \big( \{\CO_1(x)\} \alpha\big).
\end{align*}
Using \eqref{Eq: CO1 openclosed}, this turns into \[ \alpha\circ (x\circ y) - (\alpha\circ x)\circ y = (-1)^{|x||y|} \left(\alpha\circ (y\circ x) - (\alpha\circ y ) \circ x\right). \]
This formula is analogous to the Jacobi for the loop bracket (or really that the $\circ$-product is a pre-Lie product), e.g. Lemma 4.2 of \cite{ChasSullivan}.
It follows from Lemma part (2) (the associativity part) of \ref{Lm: dRchainOpenClosedApp}, analogous to that in the proof of Lemma \ref{Lm: StructureOnClosedStatesDRApp}.

\bigskip

\item \emph{For higher arity parts:} both sides of the equation automatically vanish by construction. 
\end{itemize}
\end{proof}

\subsubsection{Closed-open string map --- the de Rham chain part}
We will temporarily use the notation $(\CFr)^\dR$ to denote the dg Lie algebra $(\CFr,\partial^\dR, [-,-])$, and $(\CBa)^\dR$ to denote the dg associative algebra $(\CBa, \partial^\dR, [-,-])$.
We temporarily use the following notation
\begin{align*}
	\CO_1\colon (\CFr)^\dR \to \CC^*\big((\CBa)^\dR, (\CBa)^\dR\big)
\end{align*}
to mean the map with only the unary component being non-zero and is given by $\CO_1$ as in \eqref{Eq: CO1 openclosed}, and all the $\ell$-ary component where $\ell\neq 1$ are set to 0.
\begin{lemma}\label{Lm: OCdR dgLaHomomApp}
	The map $\CO_1\colon (\CFr)^\dR \to \CC^*\big((\CBa)^\dR, (\CBa)^\dR\big)$ is a homomorphism of dg Lie algebras.
\end{lemma}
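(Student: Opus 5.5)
Since $\CO_1$ has only a unary component, being a homomorphism of dg Lie algebras amounts to two checks: it preserves brackets, and it is a chain map from $\partial^\dR$ on $(\CFr)^\dR$ to the Hochschild differential of $(\CBa)^\dR$.

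For the bracket, the Gerstenhaber bracket of two purely unary cochains is again purely unary, namely the graded commutator of the two endomorphisms. So bracket preservation is exactly the unary computation already done in Lemma \ref{Lm: COpreserveLieApp}. That computation reduces to the pre-Lie identity
\[
\alpha\circ(x\circ y)-(\alpha\circ x)\circ y=(-1)^{|x||y|}\big(\alpha\circ(y\circ x)-(\alpha\circ y)\circ x\big),
\]
which follows from Lemma \ref{Lm: dRchainOpenClosedApp} (2). It never uses the differential, so it applies verbatim here.

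For the chain-map property, I will compute $\delta(\CO_1(x))$ arity by arity, with $\delta$ the Hochschild differential built from $\partial^\dR$ and $\bullet$.
\begin{itemize}
\item In arity 0 both sides vanish.
\item In arity 1 we need $\delta^0\CO_1(x)=\CO_1(\partial^\dR x)$, i.e.
\[
\partial^\dR(\alpha\circ x)\mp(\partial^\dR\alpha)\circ x=\pm\,\alpha\circ\partial^\dR x .
\]
This is the Leibniz rule of Lemma \ref{Lm: dRchainOpenClosedApp} (1), applied componentwise to each term $\alpha(a',k')\circ_i x(a'',k'')$ in \eqref{Eq: CO1 Expand}.
\item In arity 2 we need $\delta^1\CO_1(x)=0$, i.e.
\[
\alpha_1\bullet(\alpha_2\circ x)-(\alpha_1\bullet\alpha_2)\circ x+(\alpha_1\circ x)\bullet\alpha_2=0
\]
up to the prescribed signs. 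In words, $\CO_1(x)$ is a graded derivation of $\bullet$. Split the sum defining $(\alpha_1\bullet\alpha_2)\circ x$ by the insertion index: indices $1\le i\le k_1'$ land in the $\alpha_1$ factor, and indices $k_1'<i$ land in the $\alpha_2$ factor. Lemma \ref{Lm: dRchainOpenClosedApp} (3) identifies these two pieces with $\pm(\alpha_1\circ_i x)\bullet\alpha_2$ and $\alpha_1\bullet(\alpha_2\circ_{i-k_1'}x)$ respectively.
\item All higher arities vanish by construction.
\end{itemize}

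The main obstacle is sign bookkeeping. Each check must combine:
\begin{itemize}
\item the de Rham signs from Lemma \ref{Lm: dRchainOpenClosedApp};
\item the cosimplicial signs $(-1)^{(i-1)(k''-1)+k'(|x|+1+k'')}$ in \eqref{Eq: CO1 Expand} and $(-1)^{k'|\beta|}$ in \eqref{Eq: PontryaginProd};
\item the prefactor $(-1)^{|\alpha||x|+1}$;
\item the Hochschild signs $\dagger$ and $(-1)^{\deg\Phi+\ell-1}$;
\item the degree conversions \eqref{Eq: DegConvertFree} and \eqref{Eq: DegConvertBased}.
\end{itemize}
The derivation identity is the delicate case. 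After reindexing, the insertion index $i$ for the $\alpha_2$ factor is shifted by $k_1'$, and the parity $k'=k_1'+k_2'$ changes. I will check that the factor $(-1)^{(i-1)(k''-1)}$ combines with $(-1)^{k'|\beta|}$ to produce exactly the Koszul sign $(-1)^{d_2d_3}$ from Lemma \ref{Lm: dRchainOpenClosedApp} (3). I would first verify everything with all $k$ equal to zero and all forms of degree zero, and then track parity shifts in $k$ separately.
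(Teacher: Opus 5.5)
Your proposal is correct and follows essentially the same route as the paper: bracket preservation reduces to the unary pre-Lie identity already handled in Lemma \ref{Lm: COpreserveLieApp} (via Lemma \ref{Lm: dRchainOpenClosedApp}(2)), and the chain-map property is checked arity by arity, using the Leibniz rule of Lemma \ref{Lm: dRchainOpenClosedApp}(1) in arity one and the $\bullet$-compatibility of Lemma \ref{Lm: dRchainOpenClosedApp}(3) in arity two. The paper simply cites these lemmas without spelling out the state-space sign bookkeeping. One correction for when you do that check: in the case where $x$ is inserted into the $\alpha_1$ factor, the two sides carry the same factor $(-1)^{(i-1)(k''-1)}$. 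There it is the factors $(-1)^{k'(|x|+1+k'')}$ and $(-1)^{k'|\beta|}$ that absorb $(-1)^{d_2d_3}$ and leave the required $(-1)^{|\alpha_2||x|}$.
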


\begin{proof}
By arity reasons, to verify that $\CO_1$ preserves the Lie bracket, the only equation needed to be checked is
\[   \big\{\CO_1([x,y])\big\}(\alpha) = \big\{[\CO(x),\CO(y)]_1\big\}(\alpha) \quad \quad  \textup{for all }\alpha\in \CBa\textup{ and }x, y\in \CFr.\]
This follows from (the proof of) Lemma \ref{Lm: COpreserveLieApp}.

To show that $\CO$ preserves the differential, we need to verify that for all $x\in \CFr$, 
\[			\delta\big( \CO(x)\big)=\CO(\partial^\dR x).\]

\begin{itemize}
	\item \emph{For the unary part}: we need \[\big\{\delta(\CO(x))\big\}_1(\alpha) = \big\{\CO_1(\partial^\dR x)\big\}(\alpha)\] for $\alpha\in \CBa$.
	That is, 
	\begin{align*}
		  \big\{ \CO_1(\partial^\dR x)\big\} (\alpha)= \partial^\dR &\left( \big\{ \CO_1(x)\big\} \alpha \right) - (-1)^{|x|} \big\{ \CO_1(x)\big\} (\partial^\dR \alpha) .
	\end{align*}	
	Using \eqref{Eq: CO1 openclosed} we see that this is the same as 
	\[
		\partial^\dR(\alpha \circ x) = (\partial^\dR \alpha) \circ x + (-1)^{|\alpha|} \alpha\circ (\partial^\dR x).
	\]
	But this follows from the Leibniz rule of $\circ =\circ^\Omega$ in Lemma \ref{Lm: dRchainOpenClosedApp} (1).
\bigskip

	\item \emph{For the binary part}, we need 
	\[\big\{\delta(\CO(x))\big\}_2(\alpha_1,\alpha_2)=\big\{\CO_2(\partial x)\big\}(\alpha_1,\alpha_2) \] for $\alpha_1,\alpha_2\in \CBa$.
		The right-hand side  vanishes because $\CO_2$  is $0$ by definition.
		Expanding the left-hand side, we see that this equation is the same as  \begin{align*}
			\big\{\CO_1(x)\big\} (\alpha_1\bullet\alpha_2)= (-1)^{|\alpha_1||x|} \alpha_1 \bullet \big( \big\{ \CO_1(x)\big\}(\alpha_2) \big) + \big( \big\{ \CO_1(x)\big\} (\alpha_1) \big)\bullet \alpha_2,
		\end{align*}	
		or, using \eqref{Eq: CO1 openclosed}, \[ (\alpha_1\bullet \alpha_2) \circ x = \alpha_1\bullet(\alpha_2\circ x) + (-1)^{|\alpha_2||x|} (\alpha_1\circ x) \bullet \alpha_2. \]
		But this follows from the compatibility of $\circ=\circ^\Omega$ with $\bullet$ in Lemma \ref{Lm: dRchainOpenClosedApp} (3). 

\item \emph{For $0$-ary and higher arity parts:} both sides of the equation automatically vanish by construction.
\end{itemize}
\end{proof}

\subsubsection{Open string}
In this section we will show:
\begin{lemma}[Lemma \ref{Lm: StructuresOnStates} (2)]\label{Lm: StructuresOnOpenStatesApp}
	$(\CBa, \partial, \bullet)$ is a dg associative algebra with strict unit $[\underline{\star}] \in \Cba{0}$;
\end{lemma}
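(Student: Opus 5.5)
The plan mirrors the closed-string argument in the proof of Lemma \ref{Lm: StructuresOnClosedStatesApp}. We already know that $(\CBa,\partial^\dR,\bullet)$ is a dg associative algebra with strict unit $[\underline{\star}]$. We also know that $\CO_1\colon (\CFr)^\dR\to \CC^*((\CBa)^\dR,(\CBa)^\dR)$ is a homomorphism of dg Lie algebras (Lemma \ref{Lm: OCdR dgLaHomomApp}). Finally, $\wt{\LL}$ is a Maurer--Cartan element in $(\CFr,\partial^\dR,[,])$ (Lemma \ref{Lm: E=0 MC elt}). Pushing forward, $\Phi:=\CO_1(\wt{\LL})\in \CC^{-1}((\CBa)^\dR,(\CBa)^\dR)$ is concentrated in arity one and satisfies $\delta\Phi-\frac12[\Phi,\Phi]=0$. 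By Lemma \ref{Lm: dgDefo} (3), $\partial^\dR-\Phi_1$ is then a differential, and $(\CBa,\partial^\dR-\Phi_1,\bullet)$ is a dg associative algebra.

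The substantive step is to identify $-\Phi_1$ with $\partial^1$ on $\CBa$. Unwinding \eqref{Eq: CO1 openclosed} and \eqref{Eq: CO1 Expand}, we need
\[
-\{\CO_1(\wt{\LL})\}(\alpha)=(-1)^{|\alpha||\wt{\LL}|}\,\alpha\circ\wt{\LL}=(-1)^{|\alpha|}\alpha\circ \wt{\LL}.
\]
Only the component $\wt{\LL}(0,2)$ is nonzero, so $(\alpha\circ\wt{\LL})(a,k)$ is a signed sum over $i=1,\dots,k'$ with $k'=k-1$ of $\alpha(a,k-1)\circ_i^{\Omega_\star}\LL(0,2)$. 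Exactly as in the closed case, one checks geometrically that the fibre product along the diagonal $L\fiberprod{i}{0}L\cong L$, followed by $\concOmega$, inserts a constant path in slot $i$. Hence $\alpha(a,k-1)\circ_i^{\Omega_\star}\LL(0,2)=(\delta_i)_*\alpha(a,k-1)$ for $1\le i\le k-1$, and the orientation convention of Section \ref{Subsc: Ori} gives no extra sign because $\omega=1$ has degree zero.

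The main obstacle is that the open-closed product only uses $i=1,\dots,k_1$. The coface maps $\delta_0$ and $\delta_k$, which insert a constant path at the basepoint at either end, therefore do not appear in $\alpha\circ\wt{\LL}$. In the closed case they came from $\LL\circ x$, which has no open-string analogue. To recover them I would use the unit: $(\delta_0)_*\alpha$ and $(\delta_k)_*\alpha$ should equal, up to sign, $u\bullet\alpha$ and $\alpha\bullet u$. Here $u\in C_1^{\Omega_\star}(0)$ is the element whose only nonzero component is the point chain at $(\underline{\star},\underline{\star})\in\Omega^2_\star(0)$, which one checks from the definition of $*$ on $\Omega^{k+1}_\star$. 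Concretely, $u=\pm\CO_0(\wt{\LL})$, since $\mf o$ applied to $L\to\mathscr{L}^3(0)$ produces exactly the point chain at $(\underline{\star},\underline{\star},\underline{\star})$.

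So the plan is to use the full $\Phi=\CO(\wt{\LL})$, with its $0$-ary part, rather than only $\CO_1$. We verify $\partial^1\alpha=-\Phi_1(\alpha)+(\text{inner derivation } [\Phi_0,\alpha])$, where the sign choices make the inner derivation terms match the $i=0$ and $i=k$ summands of \eqref{Eq: differentialExplicit}. Then $\partial=\partial^\dR+\partial^1$ is the twist of $\partial^\dR$ by a bounding-chain-plus-derivation, as in Lemma \ref{Lm: dgDefo} (2),(3). This requires $\Phi_0\bullet\Phi_0$ and the curvature terms to cancel. That cancellation is checked directly, in the same way as $\LL(0,2)\circ_1\LL(0,2)=\LL(0,2)\circ_2\LL(0,2)$, using $\underline{\star}*\underline{\star}=\underline{\star}$.

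The Leibniz rule and associativity then follow from deformation theory, and the sign bookkeeping reduces to comparing exponents as in Appendix \ref{Subsc: SignOper}. Strict unitality of $[\underline{\star}]$ for $\bullet$ is immediate from the formula for $*$ with $k=0$. It remains to check $\partial[\underline{\star}]=0$: $\partial^\dR$ kills the point chain, and in $\partial^1[\underline{\star}]$ the $\delta_0$ and $\delta_1$ terms are both the point chain at $(\underline{\star},\underline{\star})$ with opposite signs $(-1)^0,(-1)^1$, so they cancel.
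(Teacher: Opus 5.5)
Your first stage agrees with the paper. Twisting $\partial^{\dR}$ by $-\CO_1(\wt{\LL})$ gives the intermediate algebra $(\CBa)^{\textup{int}}$, whose differential contains only the $(\delta_i)_*$ with $1\le i\le k-1$. You also correctly see that the missing $(\delta_0)_*$ and $(\delta_k)_*$ are left and right multiplication by the point chain $u$ at $(\underline{\star},\underline{\star})\in\Omega^2_\star(0)$. This is the element the paper calls $\star^1$; it lies in $\Cba{-1}(0)$, not in degree $1$.

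The gap is the identification $u=\pm\CO_0(\wt{\LL})$, on which your whole second stage rests. As your own sentence says, $\CO_0(\wt{\LL})$ is, up to sign, the point chain at $(\underline{\star},\underline{\star},\underline{\star})\in\Omega^3_\star(0)$. So it sits in the $k=2$ component and has degree $|\wt{\LL}|-1=-2$, whereas $u$ sits in the $k=1$ component and has degree $-1$. This has two consequences:
\begin{itemize}
\item The map $\alpha\mapsto\Phi_0\bullet\alpha\pm\alpha\bullet\Phi_0$ has degree $-2$ and inserts \emph{two} constant paths at an end, so it cannot produce the $i=0$ and $i=k$ summands of $\partial^1$.
\item $\Phi_0$, having degree $-2$, is a curvature term rather than a bounding chain, so Lemma \ref{Lm: dgDefo} (2) does not apply to it.
\end{itemize}
Nor can you fall back on deformation theory for the full element: $\CO(\wt{\LL})$ is not Maurer--Cartan in $\CC^*((\CBa)^{\dR},(\CBa)^{\dR})$. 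Since $\CO_1(\wt{\LL})$ alone already satisfies the arity-one equation, the arity-one equation for the full element would force $\alpha\bullet\Phi_0=\Phi_0\bullet\alpha$ for all $\alpha$, which is false. This is why the paper uses only $\CO_1$ at the de Rham level, and proves that the full $\CO$ is a chain map only after the present lemma is established.

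The missing idea is to introduce $u$ as an independent element $\wt{\star^1}=(-1)^{\dim L+1}\star^1$, not coming from $\CO$, and to check by hand that it is a bounding chain for $(\CBa)^{\textup{int}}$. First, $\partial^{\dR}\star^1=0$. Next,
\begin{itemize}
\item $((\partial^1)^{\textup{int}}\star^1)(0,2)=(-1)^{\dim L}(\delta_1)_*\star^1(0,1)$;
\item $(\star^1\bullet\star^1)(0,2)=-(\delta_1)_*\star^1(0,1)$, where the sign $(-1)^{k'|\beta|}$ in \eqref{Eq: PontryaginProd} is $-1$ because $k'=1$ and $|\star^1|=-1$.
\end{itemize}
Both chains are the point chain at $(\underline{\star},\underline{\star},\underline{\star})$, and all other components vanish, so $\partial^{\textup{int}}\wt{\star^1}-\wt{\star^1}\bullet\wt{\star^1}=0$. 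This uses the $\delta_1$ summand of $\partial^{\textup{int}}$: $\star^1$ is not a bounding chain for $\partial^{\dR}$ alone, since $\star^1\bullet\star^1\neq 0$. So the second twist must be taken relative to $\partial^{\textup{int}}$.

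One then checks $\star^1(0,1)\bullet\alpha(a,k-1)=(\delta_0)_*\alpha(a,k-1)$ and $\alpha(a,k-1)\bullet\star^1(0,1)=(\delta_k)_*\alpha(a,k-1)$. With the Pontryagin signs, these give exactly the terms $(-1)^{\dim L+|\alpha|}(\delta_0)_*$ and $(-1)^{\dim L+|\alpha|+k}(\delta_k)_*$ of $\partial^1$, and Lemma \ref{Lm: dgDefo} (2) finishes the argument. Your verification of strict unitality and of $\partial[\underline{\star}]=0$ is fine.
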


We have previously shown that $(\CBa)^\dR$ is a dg associative algebra.
The proof will proceed by deforming it to $\CBa$ in two stages:
\begin{itemize}
	\item \emph{Step 1} (Lemma \ref{Lm: OpenStrApp Step1}): We first consider an intermediate algebra $(\CBa)^\textup{int}:=\big(\CBa, \partial^{\textup{int}}, \bullet\big)$ with differential \begin{align*}
		\partial^{\textup{int}} := \partial^0 + (\partial^1)^{\textup{int}}
	\end{align*}
	where, for each $\alpha \in \CBa$, $a\in H_1(L;\Z)$, $k\in \Z_{\geq 0}$,
	\begin{align}\label{Eq: differentialInt}
			\partial^0:=\partial^\dR,\quad \big((\partial^1)^{\textup{int}} \alpha\big)(a,k):= 
	(-1)^{\dim L+|\alpha|}\sum_{i=1}^{k-1} (-1)^i(\delta_i)_*(\alpha(a,k-1)).
	\end{align}
	The difference between this and the differential $\partial=\partial^0+\partial^1$ (in  equations \eqref{Eq: differential}, \eqref{Eq: differentialExplicit}) is that the summation in $\partial^{\textup{int}}$ does not contain the two ``boundary'' marked points $i=0$ and $k$.
	This step is achieved by pushing forward the Maurer-Cartan element $\wt{\LL} \in (\CFr)^\dR$ along $\CO^1\colon (\CFr)^\dR\to \CC^*((\CBa)^\dR,(\CBa)^\dR)$ and using general deformation theoretic machinery.
	\item \emph{Step 2} (Lemma \ref{Lm: OpenStrApp Step2}): We then identify a Maurer-Cartan element of $(\CBa)^{\textup{int}}$ to deform it into the desired dg associative algebra $\CBa$.
\end{itemize}

\begin{lemma}\label{Lm: OpenStrApp Step1}
	$(\CBa)^{\textup{int}}$ is a dg associative algebra.
\end{lemma}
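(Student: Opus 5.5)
The plan is to follow the strategy announced in Step 1: push the Maurer--Cartan element $\wt{\LL}$ forward along the dg Lie homomorphism of Lemma \ref{Lm: OCdR dgLaHomomApp}, then apply the deformation theory of Lemma \ref{Lm: dgDefo} (3).

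First, by Lemma \ref{Lm: E=0 MC elt}, $\wt{\LL}$ is a Maurer--Cartan element in $(\CFr)^\dR$. Lemma \ref{Lm: OCdR dgLaHomomApp} says that
\[ \CO_1\colon (\CFr)^\dR\to \CC^*\big((\CBa)^\dR,(\CBa)^\dR\big) \]
is a homomorphism of dg Lie algebras. Hence $\Phi:=\CO_1(\wt{\LL})$ satisfies $\delta\Phi-\frac12[\Phi,\Phi]=0$ in $\CC^{-1}((\CBa)^\dR,(\CBa)^\dR)$. Moreover $\Phi$ is concentrated in the unary component by construction.

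Second, Lemma \ref{Lm: dgDefo} (3) then gives that $(\CBa,\partial^\dR-\Phi_1,\bullet)$ is a dg associative algebra. So the whole lemma reduces to the chain-level identity
\[ -\Phi_1(\alpha) = -\big\{\CO_1(\wt{\LL})\big\}(\alpha) = (\partial^1)^{\textup{int}}\alpha \quad\text{for all }\alpha\in\CBa. \]

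Third, I verify this identity by unwinding \eqref{Eq: CO1 openclosed} and \eqref{Eq: CO1 Expand}. Since $\LL$ is supported only in the component $(a,k)=(0,2)$, we have $k''=2$ and $k'=k-1$. Hence
\[ (\alpha\circ\LL)(a,k)=\sum_{i=1}^{k-1}\pm\,\alpha(a,k-1)\circ_i^{\Omega_\star}\LL(0,2). \]
Exactly as in the proof of Lemma \ref{Lm: StructuresOnClosedStatesApp}, the fibre product of $V$ with the diagonal copy of $L$ is canonically $V$ itself. Because $\underline{y}*c=c=c*\underline{y}$ in $\Pi_1L$, the map $\concOmega_{k-1,i,2}$ composed with $\psi\times\iota$ equals $\delta_i\circ\psi$. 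Therefore
\[ \alpha(a,k-1)\circ_i^{\Omega_\star}\LL(0,2)=(\delta_i)_*\alpha(a,k-1), \qquad 1\le i\le k-1, \]
using the orientation convention of \S\ref{Subsc: Ori}, where the $\ker d\ev_0$ factor of $L$ is a point with the trivial orientation. The boundary indices $i=0$ and $i=k$ are absent because $\circ_i^{\Omega_\star}$ only allows $1\le i\le k_1$. This is precisely why only the interior faces appear, and it matches the definition of $(\partial^1)^{\textup{int}}$ in \eqref{Eq: differentialInt}.

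The main obstacle is the sign bookkeeping. One must combine $(-1)^{|\alpha||\wt{\LL}|+1}$ from \eqref{Eq: CO1 openclosed}, the factor $(-1)^{(i-1)(k''-1)+k'(|x|+1+k'')}$ with $|x|=-1$, $k''=2$, $k'=k-1$ (which gives $(-1)^{i-1}$ up to a global sign), the sign $(-1)^{\dim L+1}$ in $\wt{\LL}$, and the sign $(-1)^{(\dim V-|\eta|)\cdot 0}=1$ from \eqref{Eq: PartialOCprod}. The target is to obtain $(-1)^{\dim L+|\alpha|}(-1)^i$. I would do this parity check directly, mirroring the computation that yielded \eqref{Eq: [L,-]=delta}. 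Strict unitality of $[\underline{\star}]$ is not needed at this stage.
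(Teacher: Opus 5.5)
Your proposal is correct and follows the paper's own proof. You push $\wt{\LL}$ forward along the dg Lie homomorphism $\CO_1$ of Lemma \ref{Lm: OCdR dgLaHomomApp}, apply Lemma \ref{Lm: dgDefo} (3) to the purely unary Maurer--Cartan element $\CO_1(\wt{\LL})$, and identify $-\CO_1(\wt{\LL})$ with $(\partial^1)^{\textup{int}}$ using $\alpha(a,k-1)\circ_i\LL(0,2)=(\delta_i)_*(\alpha(a,k-1))$ for $1\le i\le k-1$ together with the sign count $(-1)^{\dim L+|\alpha|}(-1)^{i-1}$. Your justification of that face identity supplies the step the paper leaves as ``one can check'': the fibre product over $\mathrm{id}_L$ is trivial, constant paths are absorbed by $*$, and the faces $i=0,k$ are absent because $\circ_i^{\Omega_\star}$ only allows $1\le i\le k_1$.
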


\begin{proof}
 By Lemma \ref{Lm: E=0 MC elt}, $\wt{\LL}\in \Cfr{-1}$ is a Maurer-Cartan element. By Lemma \ref{Lm: OCdR dgLaHomomApp}, the element $\CO_1(\wt{\LL}) \in \CC^{-1}((\CBa)^\dR, (\CBa)^\dR)$ is a Maurer-Cartan element. By general deformation theory (Lemma \ref{Lm: dgDefo} (3)), the chain complex $(\CBa, \partial^\dR - \CO_1(\wt{\LL}), \bullet)$ is a dg associative algebra, since the only non-zero-arity component of $\CO_1\colon (\CFr)^\dR \to \CC^*\big((\CBa)^\dR,(\CBa)^\dR\big)$ is the unary one. 
 Therefore we just need to identify $\{\CO_1(\wt{\LL})\}\in \Hom(\CBa,\CBa)$ with $(\partial^1)^{\textup{int}}$.
 
 Given $\alpha\in \CBa$, by definition \eqref{Eq: CO1 openclosed} and that $\wt{\LL} = (-1)^{\dim L + 1} \LL$,  we have $\big\{\CO_1(\wt{\LL})\big\}(\alpha) = (-1)^{|\alpha|+1}\alpha\circ \wt{\LL} = (-1)^{\dim L + |\alpha|} \alpha \circ \LL$. By \eqref{Eq: CO1 Expand}, for each $k\in \Z_{\geq 0}$ and $a\in H_1(L;\Z)$:
 \begin{align*}
 	\big(\big\{\CO_1(\wt{\LL})\big\}\alpha \big)(a,k) &= (-1)^{\dim L + |\alpha|}(\alpha\circ \LL)(a,k) \\&= (-1)^{\dim L + |\alpha|}\sum_{i=1}^{k-1} (-1)^{i-1}  \alpha (a,k-1)\circ_i \LL(0,2).
 \end{align*}
 On the other hand, 
 \begin{align*}
 	\big((\partial^1)^{\textup{int}}\alpha\big)(a,k)=(-1)^{\dim L +|\alpha|} \sum_{i=1}^{k-1} (-1)^{i}(\delta_i)_*(\alpha (a,k-1)).
 \end{align*}

Following the definition and the orientation conventions, one can check that $\alpha(a,k-1)\circ_i \LL(0,2) = (\delta_i)_*(\alpha(a,k-1)).$
Thus we have identified $-\CO_1(\wt{\LL})$ with $(\partial^1)^{\textup{int}}$.
\end{proof}

\begin{definition}
	Define an element \[ \star^1\in \Cba{-1}(0) = \prod_{k\in \Z_{\geq 0}} C^\dR_{k-1}(\Omega_\star^{k+1}(0))\] as follows: 
	\begin{itemize}
		\item For $k=1$, consider the map $\pt\xrightarrow{\iota} \Omega^2_\star(0)$ mapping to $(\underline{\star},\underline{\star})$ (recall that $\star \in L$ is the chosen basepoint, and $\underline{\star}$ means the constant path at $\star$). The de Rham chain $[(\pt\xrightarrow{\iota} \Omega^2_\star(0); 1\in \mathscr{A}^0_c(\pt))]$ defines a closed cycle in $C^\dR_{0}(\Omega^2_\star(0))$, which we set to be the $k=1$ component of $\star^1$;
		\item For any other $k\neq 1$, set the component $\star^1(0,k)=0$.
	\end{itemize}
	Define \begin{align}\label{Eq: BasedLoopMCApp} \wt{\star^1}:= (-1)^{(\dim L + 1)} \star^1 \in \Cba{-1}(0). \end{align}
\end{definition}

\begin{lemma}\label{Lm: OpenStrApp Step2}
	$\wt{\star^1}=(-1)^{(\dim L  +1)}\star^1$ is a Maurer-Cartan element (bounding chain) in $(\CBa)^{\textup{int}}$, i.e. 
	\begin{align*}
		\partial^{\textup{int}}\wt{\star^1} - \wt{\star^1} \bullet \wt{\star^1} = 0.
	\end{align*}
\end{lemma}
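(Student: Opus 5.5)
Since $\star^1$ is concentrated in the single component $(a,k)=(0,1)$, I will compute each of the three terms $\partial^0\wt{\star^1}$, $(\partial^1)^{\textup{int}}\wt{\star^1}$ and $\wt{\star^1}\bullet\wt{\star^1}$ componentwise and check that the total vanishes.

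\textbf{The differential terms.} The only nonzero component of $\star^1$ is the de Rham chain $[(\pt\xrightarrow{\iota}\Omega^2_\star(0);1)]$. Its form is the constant function $1$ on a point, so $d1=0$ and $\partial^0\star^1=\partial^\dR\star^1=0$. For the $(\partial^1)^{\textup{int}}$ term, formula \eqref{Eq: differentialInt} gives, in component $(0,k)$, a sum over $1\le i\le k-1$ of $(\delta_i)_*(\star^1(0,k-1))$. This is nonzero only when $k-1=1$, i.e.\ $k=2$, and then the sum runs only over $i=1$. So
\[
\big((\partial^1)^{\textup{int}}\wt{\star^1}\big)(0,2)=(-1)^{\dim L+|\wt{\star^1}|}(-1)^{1}(\delta_1)_*\wt{\star^1}(0,1),
\]
with $|\wt{\star^1}|=-1$. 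Here $\delta_1(\underline\star,\underline\star)=(\underline\star,\underline\star,\underline\star)$, so this term is $\pm(-1)^{\dim L+1}$ times the point chain at $(\underline\star,\underline\star,\underline\star)\in\Omega^3_\star(0)$.

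\textbf{The product term.} By \eqref{Eq: PontryaginProd}, $(\wt{\star^1}\bullet\wt{\star^1})(0,2)=(-1)^{k'|\wt{\star^1}|}\,\wt{\star^1}(0,1)\bullet\wt{\star^1}(0,1)$ with $k'=1$, giving the sign $(-1)^{-1}=-1$. The de Rham-level product \eqref{Eq: PartialPontryagin} contributes no further sign, since the form degrees are $0$. Geometrically, $(\underline\star,\underline\star)*(\underline\star,\underline\star)=(\underline\star,\underline\star*\underline\star,\underline\star)=(\underline\star,\underline\star,\underline\star)$, so this is again the point chain at $(\underline\star,\underline\star,\underline\star)$. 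The coefficient is $-(-1)^{2(\dim L+1)}=-1$. All other components of the product vanish.

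\textbf{Sign bookkeeping.} It remains to verify that the coefficient of the point chain from $(\partial^1)^{\textup{int}}$ equals $-1$, so that it matches the product term. That coefficient is $(-1)^{\dim L-1}\cdot(-1)\cdot(-1)^{\dim L+1}=-1$, so indeed $\partial^{\textup{int}}\wt{\star^1}-\wt{\star^1}\bullet\wt{\star^1}=-1\cdot[\text{pt}]-(-1)\cdot[\text{pt}]=0$.

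\textbf{Main obstacle.} The only real difficulty is making sure the degree-shift conventions (Notation \ref{Ntn: DegConvert}) and the definitions of $\delta_i$ and $*$ produce exactly these signs, without hidden orientation signs. Zero-dimensional domains with degree-$0$ forms carry no orientation subtleties, so I expect the signs to work out as above. If a discrepancy appears, it would come from the normalization $(-1)^{\dim L+1}$ in \eqref{Eq: BasedLoopMCApp}; I would check it against the analogous choice for $\wt{\LL}$ in Lemma \ref{Lm: E=0 MC elt}.
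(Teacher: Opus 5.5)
Your proposal is correct and follows essentially the paper's proof. Both arguments observe that $\partial^\dR\star^1=0$, that the only nonzero components of $(\partial^1)^{\textup{int}}\star^1$ and of the Pontryagin square lie in $(0,2)$, and that $(\delta_1)_*$ of the point chain and the product of two point chains are both the point chain at $(\underline\star,\underline\star,\underline\star)\in\Omega^3_\star(0)$; your signs agree with the paper's. The only difference is cosmetic: the paper first shows $(-1)^{\dim L}(\partial^1)^{\textup{int}}\star^1+\star^1\bullet\star^1=0$ and then rescales by $(-1)^{\dim L+1}$, whereas you work with $\wt{\star^1}$ directly.
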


\begin{proof}
	We have $\partial^\dR \star^1=0$, and $\big((\partial^1)^{\textup{int}}\star^1\big) (a,k) = 0$ unless $a=0$ and $k=2$, in which case
	\begin{align*} 
		\big((\partial^1)^{\textup{int}}\star^1\big) (0,2) = (-1)^{\dim L} (\delta_1)_*[(\pt \xrightarrow{\iota}\Omega^2_\star(0);1) ].
	\end{align*}
	On the other hand, we also have $\star^1 \bullet \star^1(a,k)=0$ unless $a=0$ and $k=2$, in which case
	\[
		(\star^1 \bullet \star^1)(0,2) = -[(\pt \xrightarrow{\iota}\Omega^2_\star(0);1) ] \bullet [(\pt \xrightarrow{\iota}\Omega^2_\star(0);1) ].
	\]
	Now both $(\delta_1)_*[(\pt \xrightarrow{\iota}\Omega^2_\star(0);1) ]$ and $[(\pt \xrightarrow{\iota}\Omega^2_\star(0);1) ]\bullet [(\pt \xrightarrow{\iota}\Omega^2_\star(0);1) ]$ are equal to 
	\[
		[(\pt \xrightarrow{\iota} \Omega^3_\star(0);1)]\in C_0^\dR(\Omega^3_\star(0)), \quad \iota(\pt) := (\underline{\star},\underline{\star},\underline{\star}).
	\]
	Therefore \[ (-1)^{\dim L}(\partial^1)^{\textup{int}}\star^1 + (\star^1\bullet \star^1) = 0. \]
	Thus with $\wt{\star^1}:= (-1)^{\dim L + 1}\star^1$, we have \[ (\partial^1)^{\textup{int}} \wt{\star^1} - \wt{\star^1} \bullet \wt{\star^1} =0.  \]
\end{proof}

\begin{proof}
	[Proof of Lemma \ref{Lm: StructuresOnOpenStatesApp}]
	It remains to identify the deformation provided by  $\wt{\star^1}$ with $\partial = \partial^0+\partial^1$.
	That $[\underline{\star}]$ is the identity is straightforward from definition.
	
	We claim that for any $\alpha \in \CBa$, $a\in H_1(L;\Z)$, $k\in \Z_{\geq 0}$,
	\begin{align*}
		\star^1(0,1)\bullet \alpha (a,k-1) = (\delta_0)_*(\alpha(a,k-1)), \quad \alpha(a,k-1) \bullet \star^1(0,1) = (\delta_k)_*(\alpha(a,k-1)).
	\end{align*}
	Say $\alpha(a,k-1) = [(U\xrightarrow{\varphi} \Omega^k_\star(a);\omega \in\mathscr{A}_c^*(U))]$. Then from the definitions, both sides of the first equation above are equal to $[(U\xrightarrow{ \delta \circ\varphi}\Omega^{k+1}_\star(a); \omega \in \mathscr{A}_c^*(U))]$ where $\Omega_\star^k(a)\xrightarrow{\delta} \Omega^{k+1}_\star(a)$ is given by \[ (c_0,\dots, c_{k-1}) \in \Omega_\star^k(a) \mapsto  (\underline{\star}, c_0,\dots, c_{k-1}) \in \Omega^{k+1}_\star(a). \]
	The second equation is similar.
	
	These equations then translate to (using \eqref{Eq: PontryaginProd} and \eqref{Eq: BasedLoopMCApp}) \[-(\wt{\star^1}\bullet \alpha)(a,k) = (-1)^{|\alpha|+\dim L}  (\delta_0)_* (\alpha(a,k-1))  \] and \[ (-1)^{|\alpha|} (\alpha\bullet\wt{\star^1})(a,k) = (-1)^{|\alpha|+\dim L+k} (\delta_k)_* (\alpha(a,k-1)). \]

	It follows from general deformation theory (Lemma \ref{Lm: dgDefo} (3)) that 
	\[
		\alpha \mapsto \partial^{\textup{int}}\alpha - (\wt{\star^1}\bullet \alpha) + (-1)^{|\alpha|} (\alpha\bullet \wt{\star^1})
	\]
	is a differential, and together with $\bullet$, they make $C_*^\Omega$ into a dg associative algebra. But $\partial = \partial^{\textup{int}} - (\wt{\star^1}\bullet -) + (-1)^{|\alpha|} (-\bullet \wt{\star^1}).$ Therefore $(\CBa,\partial,\bullet)$ is a dg associative algebra.
\end{proof}

\subsubsection{Closed-open string map}
In this section we finish the proof of:
\begin{lemma}[Lemma \ref{Lm: dgLieMorphism}]\label{Lm: dgLieMorphismApp}
	$\CO$ is a homomorphism of dg Lie algebra.
	Moreover, $\CO$ respects the decompositions of $\CFr$ and $\CC^*(\CBa,\CBa)$ into $a\in H_1(L;\Z)$.
\end{lemma}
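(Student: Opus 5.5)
The plan is to reduce the remaining content of the statement, namely compatibility with the differential $\partial=\partial^0+\partial^1$ on both sides, to the de Rham chain statements already proved, using the tautological Maurer--Cartan elements $\wt{\LL}\in\CFr$ and $\wt{\star^1}\in\CBa$. Bracket preservation is Lemma \ref{Lm: COpreserveLieApp}. Compatibility with the $H_1(L;\Z)$-decomposition is immediate, since $\mf{o}$ preserves $a$ and $\circ_i^{\Omega_\star}$ adds classes. So what remains to check is $\delta(\CO(x))=\CO(\partial x)$ for all $x\in\CFr$, which I will do arity by arity.

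\textbf{Unary and binary parts.} I will use Lemma \ref{Lm: OCdR dgLaHomomApp}, which gives $\delta^{\dR}\CO_1(x)=\CO_1(\partial^\dR x)$ in the Hochschild complex of $(\CBa)^\dR$. Two deformations have to be accounted for.

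First, on the closed side, $\partial=\partial^\dR-[\wt{\LL},-]$ by \eqref{Eq: [L,-]=delta}. Since $\CO_1$ preserves brackets, $\CO_1(\partial x)=\CO_1(\partial^\dR x)-[\CO_1(\wt{\LL}),\CO_1(x)]$.

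Second, on the open side, $\partial=\partial^\dR-\CO_1(\wt{\LL})-(\wt{\star^1}\bullet-)\pm(-\bullet\wt{\star^1})$, as established in the proofs of Lemmas \ref{Lm: OpenStrApp Step1} and \ref{Lm: StructuresOnOpenStatesApp}. By Lemma \ref{Lm: dgDefo}(4), the Hochschild differential of the deformed algebra is $\delta^\dR-[\Theta,-]$, where $\Theta$ is the cochain with unary part $\CO_1(\wt{\LL})$ and $0$-ary part $\wt{\star^1}$ (this $0$-ary element produces the inner-derivation terms). Comparing the two formulas, the unary and binary parts of the identity reduce to showing that $[\wt{\star^1},\CO(x)]$ vanishes, up to the $0$-ary contributions treated below, in arities $1$ and $2$.

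The arity-$2$ part of $[\wt{\star^1},\CO_1(x)]$ is zero because $\CO_1(x)$ is a derivation of $\bullet$. The arity-$1$ part is $\alpha\mapsto\CO_1(x)$ applied to the inner derivation by $\wt{\star^1}$, plus its commutator counterpart. This reduces to the identities
\[
(\star^1\bullet\alpha)\circ_i x=\star^1\bullet(\alpha\circ_{i-1}x),\qquad (\alpha\bullet\star^1)\circ_i x=(\alpha\circ_i x)\bullet\star^1,
\]
together with the observation that $\star^1\circ_i x$ is empty, since $\star^1$ has no interior marked point ($k=1$ and the index ranges over $1\le i\le k$). Both identities are instances of Lemma \ref{Lm: dRchainOpenClosedApp}(3).

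\textbf{$0$-ary part.} Here I need $\CO_0(\partial x)=\partial\CO_0(x)-\{\CO_1(x)\}$ applied to nothing, i.e. $(\delta\CO(x))_0=\partial\CO_0(x)$, since the $\delta^1$ term contributes only in arity $\ge1$. Componentwise, $\partial^0$ commutes with $\mf{o}$ up to the sign in Lemma \ref{Lm: AnomalyApp}(2), and this sign is absorbed by the $(-1)^{|x|}$ in \eqref{Eq: CO0 OpenClosed}. For $\partial^1$, Lemma \ref{Lm: AnomalyApp}(3) gives $\mf{o}(\delta_i)_*=(\delta_i)_*\mf{o}$ for every $i=0,\dots,k$, including the boundary indices $i=0,k$. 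Since $\ev_0\circ\delta_0=\ev_0$, the boundary faces on the closed side map to exactly the terms $\wt{\star^1}\bullet$ and $\bullet\wt{\star^1}$ on the open side. It then remains to match the sign $(-1)^{\dim L+|x|}$ in \eqref{Eq: differentialExplicit} against the open-side sign $(-1)^{\dim L+|\mf{o}(x)|}$, using $|\mf{o}(x)|=|x|-1$ together with the prefactor $(-1)^{|x|}$.

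\textbf{Main obstacle.} I expect the sign bookkeeping in the arity-$1$ check to be the hardest step: the inner-derivation terms from $\wt{\star^1}$ must cancel against the terms shifted by the reindexing $k'\mapsto k'+1$ in \eqref{Eq: CO1 Expand}. I would first verify it on components $\alpha(a,k')$ with explicit de Rham representatives, as in the proof of Lemma \ref{Lm: dRchainOpenClosedApp}.
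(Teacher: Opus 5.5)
Your skeleton agrees with the paper's. You take bracket preservation from Lemma \ref{Lm: COpreserveLieApp}. You prove the $0$-ary identity from Lemma \ref{Lm: AnomalyApp}(2),(3), and your sign bookkeeping there is right. The binary part comes from the derivation property. For the unary part you first pass to $\partial^{\textup{int}}$ (this is Lemma \ref{Lm: COint preserves}) and then account for the twist by $\wt{\star^1}$.

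The gap is in that last step. You assert that $\star^1\circ_i x$ is empty. But $\star^1(0,1)$ is represented by $(\underline{\star},\underline{\star})\in\Omega^2_\star(0)$, which has exactly one interior marked point, $i=1$ (allowed, since $1\le i\le k=1$), and it lies at $\star$. So $\star^1(0,1)\circ_1 x$ is supported on $\pt\fiberprod{1}{0}U=(\ev_0\circ\varphi)^{-1}(\star)$. Since concatenation with $\underline{\star}$ is trivial, it equals $\pm\mf{o}(x)$.

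Consequently, consider the commutator of $\CO_1(x)$ with $\alpha\mapsto\wt{\star^1}\bullet\alpha-(-1)^{|\alpha|}\alpha\bullet\wt{\star^1}$. Only some terms cancel via Lemma \ref{Lm: dRchainOpenClosedApp}(3): those with $i\ge 2$ in $(\wt{\star^1}\bullet\alpha)\circ_i x$ and those with $i\le k'$ in $(\alpha\bullet\wt{\star^1})\circ_i x$. The two boundary terms $(\wt{\star^1}\bullet\alpha)\circ_1 x$ and $(\alpha\bullet\wt{\star^1})\circ_{k'+1}x$ survive, and they equal $\pm\mf{o}(x)\bullet\alpha$ and $\pm\alpha\bullet\mf{o}(x)$.

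These surviving terms are needed. The arity-$1$ component of $\delta(\CO(x))$ contains
$\delta^1(\CO_0(x))(\alpha)=(-1)^{|x|-1}\big((-1)^{|\alpha|(|x|-1)}\alpha\bullet\CO_0(x)-\CO_0(x)\bullet\alpha\big)$,
and your proposal never accounts for it; your $0$-ary paragraph treats only the arity-$0$ equation. If the commutator vanished as you claim, you would be proving $\CO_1(\partial x)=\delta^0\CO_1(x)$, which is false in general. This term is precisely what later yields $\mm_1^2(\alpha)=\mm_0\bullet\alpha-\alpha\bullet\mm_0$ in Lemma \ref{Lm: MC Ainfty}. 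Geometrically, it comes from the endpoints $s\in\{0,1\}$ of the concatenation parameter in Section \ref{Subsc: PerturbedHolom}.

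The missing ingredient is a signed identification of the survivors:
\begin{itemize}
\item $(\star^1(0,1)\bullet\alpha)\circ_1 x$ with $\mf{o}(x)\bullet\alpha$;
\item $(\alpha\bullet\star^1(0,1))\circ_{k'+1}x$ with $\alpha\bullet\mf{o}(x)$.
\end{itemize}
This is Lemma \ref{Lm: dRchainStarAnon}(2),(4), and it requires the orientation comparison $V\fiberprod{\star}{\ev_0\circ\varphi}U\cong V\times(\ev_0\circ\varphi)^{-1}(\star)$. With it, the surviving commutator terms match $\delta^1(\CO_0(x))$ exactly.

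A smaller point: on Hochschild cochains, the twist by $\wt{\star^1}$ is given by bracketing with the unary inner derivation $\alpha\mapsto\wt{\star^1}\bullet\alpha-(-1)^{|\alpha|}\alpha\bullet\wt{\star^1}$. It is not given by bracketing with a $0$-ary cochain $\wt{\star^1}$: that would insert $\wt{\star^1}$ into the inputs rather than produce $\wt{\star^1}\bullet$ terms. Also, Lemma \ref{Lm: dgDefo}(4) is stated only for unary $\Phi$.
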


\begin{lemma}\label{Lm: COint preserves}
	$\CO_1\colon \CFr\to \CC^*((\CBa)^{\textup{int}}, (\CBa)^{\textup{int}})$ is a homomorphism of dg Lie algebras.
\end{lemma}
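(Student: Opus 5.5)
The plan is to obtain this by general deformation theory, applied to the de Rham homomorphism of Lemma \ref{Lm: OCdR dgLaHomomApp}, rather than by checking the identities term by term. A dg Lie homomorphism $f\colon (C,\partial,[,])\to (C',\partial',[,])$ carries a Maurer--Cartan element $x$ to a Maurer--Cartan element $f(x)$. It also intertwines the twisted differentials $\partial^x=\partial-[x,-]$ and $(\partial')^{f(x)}=\partial'-[f(x),-]$, because
\[
f(\partial y-[x,y]) = \partial' f(y)-[f(x),f(y)].
\]
Both brackets are unchanged by the twisting. So $f$ remains a dg Lie homomorphism between the twisted algebras. I will apply this with $f=\CO_1\colon (\CFr)^\dR\to \CC^*\big((\CBa)^\dR,(\CBa)^\dR\big)$ and $x=\wt{\LL}$.

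The steps, in order, are as follows.
\begin{enumerate}
\item By Lemma \ref{Lm: E=0 MC elt}, $\wt{\LL}$ is Maurer--Cartan in $(\CFr)^\dR$. By the proof of Lemma \ref{Lm: StructuresOnClosedStatesApp}, in particular \eqref{Eq: [L,-]=delta}, twisting $\partial^\dR$ by $\wt{\LL}$ gives exactly $\partial^\dR-[\wt{\LL},-]=\partial$. Hence the twisted source is $(\CFr,\partial,[,])$.
\item By Lemma \ref{Lm: OCdR dgLaHomomApp}, $\Phi:=\CO_1(\wt{\LL})$ is Maurer--Cartan in $\CC^*\big((\CBa)^\dR,(\CBa)^\dR\big)$, and its only nonzero component is the unary one. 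By Lemma \ref{Lm: dgDefo} (3), the deformed algebra $\big((\CBa)^\dR\big)^\Phi$ has differential $\partial^\dR-\Phi_1$. The proof of Lemma \ref{Lm: OpenStrApp Step1} identifies $-\Phi_1$ with $(\partial^1)^{\textup{int}}$, so this deformed algebra is precisely $(\CBa)^{\textup{int}}$.
\item By Lemma \ref{Lm: dgDefo} (4), the twisted Hochschild differential $\delta-[\Phi,-]$ on $\CC^*\big((\CBa)^\dR,(\CBa)^\dR\big)$ coincides with the Hochschild differential of $(\CBa)^{\textup{int}}$. The underlying graded spaces agree, and so do the Gerstenhaber brackets, since the bracket depends only on the graded space and not on $\partial$ or $\bullet$.
\item Combining these with the general principle above, $\CO_1$ is a chain map from $(\CFr,\partial)$ to $\big(\CC^*((\CBa)^{\textup{int}},(\CBa)^{\textup{int}}),\delta^{\textup{int}}\big)$. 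Bracket preservation is independent of the differentials and was already established in Lemma \ref{Lm: OCdR dgLaHomomApp}, via the proof of Lemma \ref{Lm: COpreserveLieApp}.
\end{enumerate}

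The main point requiring care is step 3, that is, the identification in Lemma \ref{Lm: dgDefo} (4) with the paper's unconventional signs. One has to confirm that twisting by a purely unary $\Phi$ changes the Hochschild differential only through the $\delta^0$-part, namely the inner derivation by $\partial^\dR-\Phi_1$. The $\delta^1$-part depends only on $\bullet$, and the term $[\Phi,\Psi]$ reproduces exactly the extra terms of $\delta^0$ with the sign $-(-1)^{\deg\Psi_\ell}$. I would check this directly on an arity-$\ell$ component using the sign $\dagger$ in the definition of the pre-Lie product with $\ell''=1$ and $|\Phi|=-1$. This is a routine sign comparison, and the rest of the argument is formal.
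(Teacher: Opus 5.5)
Your proposal is correct and follows essentially the same route as the paper. The paper likewise twists the de Rham-level homomorphism $\CO_1$ of Lemma~\ref{Lm: OCdR dgLaHomomApp} by the Maurer--Cartan element $\wt{\LL}$, identifies the twisted source with $(\CFr,\partial)$ and the twisted target with $\CC^*((\CBa)^{\textup{int}},(\CBa)^{\textup{int}})$ via Lemma~\ref{Lm: dgDefo}~(4), and concludes from there. Your arity-$\ell$ sign check for a purely unary $\Phi$ with $|\Phi|=-1$, which the paper leaves implicit in citing Lemma~\ref{Lm: dgDefo}~(4), does work out with the paper's sign $\dagger$.
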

As before, $\CO_1$ means the map with only the unary component being non-zero and is given by $\CO_1$ as in \eqref{Eq: CO1 openclosed}.
\begin{proof}
	This follows from the facts that the dg Lie algebra $\CFr$ is given by deforming $(\CFr)^\dR$ using the Maurer-Cartan element $\wt{\LL}$, that the dg associative algebra $(\CBa)^{\textup{int}}$ is given by deforming $(\CBa)^\dR$ using $\CO_1(\wt{\LL})$, that $\CO_1\colon (\CFr)^\dR \to \CC^*((\CBa)^\dR,(\CBa)^\dR)$ is a homomorphism of dg Lie algebras (Lemma \ref{Lm: OCdR dgLaHomomApp}), and the general deformation theory fact that deforming the dg Lie algebra $\CC^*((\CBa)^\dR,(\CBa)^\dR)$ using the Maurer-Cartan element $\CO_1(\wt{\LL})$ is the same as taking Hochschild cochains of the dg associative algebra obtained by deforming $(\CBa)^\dR$ using $\CO_1(\wt{\LL})$ (see Lemma \ref{Lm: dgDefo} (4)).
\end{proof}

\begin{lemma}\label{Lm: dRchainStarAnon}
Suppose $\alpha \in C_*^\dR(\Omega_\star^{k'+1}(a'))$ and  $x\in C_*^\dR(\mathscr{L}^{k''+1}(a''))$. 
	\begin{enumerate}
		\item  For $1\leq i \leq k'$, i.e. $2\leq i+1 \leq k'+1$,\begin{align*}
			\star^1(0,1) \bullet\big( \alpha \circ_i x\big) &= \big(\star^1(0,1)\bullet \alpha\big) \circ_{i+1} x;
		\end{align*}
		\item For $i+1=1$, \[ (-1)^{\deg x + ( \deg x - \dim L)(\deg \alpha)} \mathfrak{o}(x) \bullet \alpha =- \big(\star^1(0,1)\bullet \alpha\big) \circ_1 x; \]
		\item For $1\leq i \leq k'$, \[			\big(\alpha \circ_i x\big) \bullet\star^1(0,1) = \big( \alpha \bullet \star^1(0,1)\big) \circ_i x; \]
		\item For $i=k'+1$, \[ (-1)^{\deg x}\alpha \bullet \mathfrak{o}(x) = -\big(\alpha \bullet \star^1(0,1)\big)\circ_{k'+1} x. \]
	\end{enumerate}
\end{lemma}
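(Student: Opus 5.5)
All four identities are checked directly on representatives, as in Lemma \ref{Lm: dRchainOpenClosedApp} (3) and Lemma \ref{Lm: AnomalyApp} (1). Write $\alpha=[(V\xrightarrow{\psi}\Omega_\star^{k'+1}(a');\eta)]$ and $x=[(U\xrightarrow{\varphi}\ms{L}^{k''+1}(a'');\omega)]$. The chain $\star^1(0,1)$ is $[(\pt\xrightarrow{\iota}\Omega^2_\star(0);1)]$, which has degree $0$ and form-degree $0$. For each identity I would do three things:
\begin{enumerate}
\item identify the underlying domains, which are oriented manifolds built from $V$, $U$ and $\pt$;
\item check that the two maps to $\Omega_\star^{k'+k''+1}(a'+a'')$ agree pointwise;
\item compare the signs coming from \eqref{Eq: PartialPontryagin}, \eqref{Eq: PartialOCprod} and \eqref{Eq: AnomalyMap}, together with orientation discrepancies from Remarks \ref{Rmk: Ori} and Remark \ref{Rmk: SignOri}.
\end{enumerate}

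\textbf{Identities (1) and (3).} These are special cases of the compatibility of $\circ^\Omega$ with $\bullet$ (Lemma \ref{Lm: dRchainOpenClosedApp} (3)), taking one factor to be $\star^1(0,1)$. For (1), apply the second identity there with $\alpha\mapsto\star^1(0,1)$, $\beta\mapsto\alpha$, $k_1=1$ and index $i+1$. For (3), apply the first identity with $\beta=\star^1(0,1)$; here $d_2=\deg\star^1(0,1)=0$, so the sign $(-1)^{d_2d_3}$ is trivial. No further work is needed beyond checking that the index shifts match: prepending $\star^1$ moves marked point $i$ to $i+1$, while appending it leaves the indices unchanged.

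\textbf{Identities (2) and (4).} These are the real content: the new marked point $0$ or $k'+1$ introduced by $\star^1$ is exactly where the free loop gets glued in.

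For (2), consider $(\star^1(0,1)\bullet\alpha)\circ_1 x$. Its domain is $(\pt\times V)\fiberprod{1}{0}U$, where the constraint is $\ev_1(\underline\star,\psi(v))=\star=\ev_0(\varphi(u))$. This domain is therefore $V\times(\ev_0\circ\varphi)^{-1}(\star)$.

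The open-closed concatenation $\concOmega_{k'+1,1,k''}$ sends $((\underline\star,c_0,\dots,c_{k'}),(c'_0,\dots,c'_{k''}))$ to $(\underline\star*c'_0,c'_1,\dots,c'_{k''}*c_0,c_1,\dots)$. This equals $(c'_0,\dots,c'_{k''})*(c_0,\dots,c_{k'})$, the Pontryagin concatenation of the based loop from $x$ with that from $\alpha$. Hence the map matches that of $\mf{o}(x)\bullet\alpha$, whose domain is $(\ev_0\circ\varphi)^{-1}(\star)\times V$.

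The remaining work is signs. The swap $V\times F\cong F\times V$, with $F=(\ev_0\circ\varphi)^{-1}(\star)$ of dimension $\dim U-\dim L$, contributes $(-1)^{\dim V(\dim U-\dim L)}$. Reordering $\eta\times\omega$ versus $\omega\times\eta$ contributes $(-1)^{|\eta||\omega|}$. We then collect the prefactors $(-1)^{\deg x+1}$ from $\mf{o}$, $(-1)^{(\dim F-|\omega|)|\eta|}$ from $\bullet$, and $(-1)^{\deg(\star^1\bullet\alpha)|\omega|}$ from $\circ^\Omega$. One must also verify that the orientation of the fibre product $V\fiberprod{}{}U$ over $\ev_0=\star$ agrees with the product orientation via $\ker(d(\ev_0\circ\varphi))$, using the convention $TU\cong TL\oplus\ker$. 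Reducing mod $2$ with $\deg x=\dim U-|\omega|$ and $\deg\alpha=\dim V-|\eta|$ should yield exactly $\deg x+(\deg x-\dim L)\deg\alpha+1$.

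Identity (4) is analogous: the domain is $(V\times\pt)\fiberprod{k'+1}{0}U\cong V\times F$ with no reordering needed, which explains why the sign there involves no $\deg\alpha$ term.

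\textbf{Main obstacle.} The main obstacle is the sign bookkeeping in (2), specifically the orientation comparison between the fibre product over the point $\star$ inside $U$ and $(\ev_0\circ\varphi)^{-1}(\star)$. I would handle it first by computing in a local model where $U=L\times F$ with $\varphi$ projecting to $L$. Then I would compute every exponent explicitly and reduce mod $2$, in the same style as the proof of Lemma \ref{Lm: dRchainOpenClosedApp} (3).
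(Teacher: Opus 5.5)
Your proposal is correct. For (2) and (4) it is the paper's argument. You identify $(\star^1(0,1)\bullet\alpha)\circ_1 x$ (resp.\ $(\alpha\bullet\star^1(0,1))\circ_{k'+1}x$) as a chain on $V\times(\ev_0\circ\varphi)^{-1}(\star)$ and check that the two concatenations agree pointwise. You then compare signs via the swap $(-1)^{\dim V(\dim U-\dim L)}$ and the form reordering $(-1)^{|\eta||\omega|}$.

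For (1) and (3) you take a slightly different route. The paper recomputes both sides on representatives, doing (1) explicitly and calling (3) analogous. You instead specialize Lemma \ref{Lm: dRchainOpenClosedApp} (3) to a factor $\star^1(0,1)$ of degree $0$. Its second identity with $k_1=1$ and index $i+1$ gives (1), and its first identity with $\beta=\star^1(0,1)$, so $d_2=0$, gives (3). This is legitimate and shorter, because that lemma already contains the orientation and Koszul-sign comparison. The paper's direct computation only buys self-containedness.

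Your target exponent $\deg x+(\deg x-\dim L)\deg\alpha+1$ in (2) is the right one. The paper's written proof of (2) drops the $+1$ coming from $\mathfrak{o}(x)=(-1)^{\deg x+1}[\cdots]$, so its last displayed line is off by a sign from the lemma as stated. Your bookkeeping, with all three prefactors, reproduces the statement's minus sign. In (4) your prefactors leave exactly $(-1)^{\deg x+1}$: the two $(-1)^{\deg\alpha|\omega|}$ factors from $\bullet$ and $\circ^{\Omega_\star}$ cancel. That matches the stated identity, consistent with your remark that no $\deg\alpha$ term appears there.
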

\begin{proof}
 Let $\alpha:= [(V\xrightarrow{\psi} \Omega_\star^{k'+1}(a');\eta)]$ and $x:= [(U\xrightarrow{\varphi}\mathscr{L}^{k''+1}(a'');\omega)]$.
Then for $1\leq i\leq k'$,
  \begin{align*}
 	\star^1(0,1)\bullet \big(\alpha\circ_i x\big) &= (-1)^{(\deg \alpha )|\omega|} \star^1(0,1) \bullet [( V\fiberprod{i}{0}U\xrightarrow{\psi\circ_i \varphi} \Omega_\star^{k'+k''}(a); \eta\times \omega )]\\
 	&= (-1)^{(\deg\alpha)|\omega|} [( V\fiberprod{i}{0} U \xrightarrow{ \star^1 \bullet(\psi\circ_i\varphi) } \Omega_\star^{k'+k''+1}(a) ; \eta\times \omega)].
 \end{align*}
 where the map $V\fiberprod{i}{0} U \xrightarrow{\star^1 \bullet(\psi\circ_i\varphi)} \Omega_\star^{k'+k''+1}(a)$ is the composition 
 \[
 \pt \times (V\fiberprod{i}{0}U)\xrightarrow{(\underline{\star},\underline{\star})\times (\psi\circ_i\varphi)}\Omega^2_\star(0) \times \Omega^{k'+k''}_\star(a) \xrightarrow{*}\Omega^{k'+k''+1}_\star(a).
 \]
 On the other hand, 
 \begin{align*}
 	&\big(\star^1(0,1)\bullet \alpha\big) \circ_{i+1} x\\ =& [(V \xrightarrow{\star^1\bullet \psi} \Omega^{k'+2}_\star(a'); \eta )] \circ_{i+1} x\\
 	=& (-1)^{(\deg \alpha)|\omega|} [ (V\fiberprod{ \ev_{i+1}\circ(\star^1\bullet \psi)}{\ev_0\circ \varphi}U \xrightarrow{(\star^1\bullet\psi) \circ_{i+1} \varphi} \Omega_\star^{k'+k''+1}(a); \eta\times \omega) ] 
 \end{align*}
 where the map $V\xrightarrow{\star^1\bullet \psi} \Omega_\star^{k'+2}(a')$ is the composition 
 \begin{align*}
 	\pt\times V\xrightarrow{(\underline{\star},\underline{\star}) \times \psi} \Omega^2_\star(0)\times \Omega^{k'+1}_\star(a) \xrightarrow{ *} \Omega_\star^{k'+2}(a).
 \end{align*}
Then (1) follows from the fact that the two maps $V\xrightarrow{\ev_i\circ \psi} L$ and $V\xrightarrow{\ev_{i+1}\circ (\star^1\bullet \psi)}L$ are identical, and that the two maps \[ V\fiberprod{\ev_i\circ \psi}{\ev_0\circ\varphi} U \xrightarrow{\psi\circ_i\varphi} \Omega_\star^{k'+k''}(a) \xrightarrow{\star^1\bullet-} \Omega^{k'+k''+1}_\star(a) \]
	and 
	\[
		V\fiberprod{\ev_{i+1}\circ(\star^1\bullet \psi)}{\ev_0\circ \varphi}U \xrightarrow{(\star^1\bullet\psi)\times \varphi} \Omega_\star^{k'+2}(a')\fiberprod{\ev_{i+1}}{\ev_0} \mathscr{L}^{k''+1}(a'')  \xrightarrow{\circ_{i+1}} \Omega_\star^{k'+k''+1}(a)
	\]
	are identical.
	
	For (2), 
	\begin{align*}
		\mathfrak{o}(x) \bullet \alpha &= (-1)^{\deg x + (\deg x - \dim L)|\eta|}[((\ev_0\circ \varphi)^{-1}(\star) \times V \xrightarrow{ \varphi\bullet \psi} \Omega_\star^{k'+k''+1}(a);\omega\times \eta)]
	\end{align*}
	Notice that 
	\begin{align*}
		\big(\star^1(0,1)\bullet \alpha\big) \circ_{1} x &= (-1)^{(\deg\alpha)|\omega|}[ (V\fiberprod{ \ev_{1}\circ(\star^1\bullet \psi)}{\ev_0\circ \varphi}U \xrightarrow{(\star^1\bullet\psi) \circ_{i+1} \varphi} \Omega_\star^{k'+k''+1}(a); \eta\times \omega) ] \\
		&=(-1)^{(\deg \alpha)|\omega|}[(V\fiberprod{\star}{\ev_0\circ \varphi}U \xrightarrow{(\star^1\bullet \psi) \circ_{i+1} \varphi} \Omega_\star^{k'+k''+1}(a);\eta\times \omega)]
	\end{align*}
	where $V\xrightarrow{\star} L$ denotes the constant map to $\star \in L$. Therefore $V\fiberprod{\star}{\ev_0\circ \varphi}U \cong  V\times (\ev_0\circ \varphi)^{-1}(\star)$, and therefore the difference in orientations of the two de Rham chains is $(-1)^{(\dim V)(\dim U-\dim L)}$. Also, the de Rham forms differ by $\eta\times \omega = (-1)^{|\eta||\omega|}\omega\times \eta$. Therefore 
	\begin{align*}
		\mathfrak{o}(x) \bullet \alpha &= (-1)^{(\dim V)(\dim U - \dim L)+|\eta||\omega|+ \deg x+(\deg x-\dim L)|\eta|+(\deg \alpha) |\omega|} \big(\star^1(0,1)\bullet \alpha\big) \circ_{1} x \\
		&=(-1)^{\deg x + ( \deg x - \dim L)(\deg \alpha)}\big(\star^1(0,1)\bullet \alpha\big) \circ_{1} x.
	\end{align*}
	This gives (2). The verifications of (3) and (4) are analogous.
\end{proof}

\bigskip

\begin{proof}
	[Proof of Lemma \ref{Lm: dgLieMorphismApp}]
We have already checked that the Lie bracket is preserved in Lemma \ref{Lm: COpreserveLieApp}. To check that the differential is preserved, we again check that $\delta(\CO(x)) = \CO(\partial x)$ arity-by-arity.

\begin{itemize}
	\item 	\emph{For the 0-ary part}: We need to show that \[\big(\delta(\CO(x))\big)_0 = \CO_0(\partial x),\]
	that is, 
	\[
		\partial \big(\CO_0(x)\big)= \CO_0(\partial x).
	\]
	Recalling that $\partial = \partial^0+\partial^1$ (see \eqref{Eq: differential}) and $\CO_0(x)(a,k)= (-1)^{|x|}\mathfrak{o}(x(a,k))$ (see \eqref{Eq: CO0 OpenClosed}), we have that for $a\in H_1(L;\Z)$ and $k\in \Z_{\geq 0}$,
	\begin{align*}
		\partial\big(\CO_0(x)\big)(a,k) =  (-1)^{|x|}\big( \partial^\dR( \mathfrak{o}(x(a,k)))\big)+ \partial^1(\CO_0(x))(a,k) 
	\end{align*}
	and 
	\begin{align*}
		\CO_0(\partial x)(a,k) =(-1)^{|x|-1}\big(\mathfrak{o}(\partial^\dR x (a,k))\big) + \CO_0(\partial^1 x)(a,k).
	\end{align*}
	First, we have $\partial^\dR(\mathfrak{o}(x(a,k))) = - \mathfrak{o}(\partial^\dR x(a,k))$ by Lemma \ref{Lm: AnomalyApp} (2). 
	On the other hand, we have, for each $i=0,\dots, k$, \[\mathfrak{o}\big((\delta_i)_* (x(a,k-1))\big)  = (\delta_i)_* \big( \mathfrak{o}(x(a,k-1))\big)
	\]by Lemma \ref{Lm: AnomalyApp} (3). Therefore 
	\begin{align*}
		\partial^1( \CO_0(x))(a,k) &= (-1)^{\dim L + |x|-1}\sum_{i=0}^k(-1)^i (\delta_i)_*(\CO_0(x)(a,k-1))\\
		&= (-1)^{\dim L -1}\sum_{i=0}^k (-1)^i (\delta_i)_* (\mathfrak{o}(x(a,k-1)))
	\end{align*}
	and 
	\begin{align*}
		\CO_0(\partial^1 x)(a,k)&= (-1)^{|x|-1}\mathfrak{o}(\partial^1 x(a,k))\\
		 &=  (-1)^{\dim L -1}\sum_{i=0}^k (-1)^i \mathfrak{o}((\delta_i)_* (x(a,k-1)))
	\end{align*}
	are the same.
\bigskip

	\item	\emph{For the unary part}: we need \[\big\{\delta(\CO(x))\big\}_1(\alpha) = \big\{\CO_1(\partial x)\big\}(\alpha)\] for $\alpha\in \CBa$.
	That is, 
	\begin{align*}
		  \big\{ \CO_1(\partial x)\big\} (\alpha)= \partial &\left( \big\{ \CO_1(x)\big\} \alpha \right) - (-1)^{|x|} \big\{ \CO_1(x)\big\} (\partial \alpha) \\ 
		  &+ (-1)^{|x|-1} \left( (-1)^{|\alpha|(|x|-1)} \alpha\bullet \CO_0(x) - \CO_0(x)\bullet \alpha \right) .
	\end{align*}	
	It follows from Lemma \ref{Lm: COint preserves} that 
	\begin{align*}
		\big\{(\CO_1)(\partial x)\big\}(\alpha) = \partial^\textup{int} \left( \big\{\CO_1(x)\big\}\alpha\right)  -(-1)^{|x|} \big\{ \CO_1(x)\big\}(\partial^\textup{int} \alpha).
	\end{align*}
	Also, from (the proof of) Lemma \ref{Lm: StructuresOnOpenStatesApp} we have \[ (\partial - \partial^\textup{int})(\alpha) = - (\wt{\star^1}\bullet \alpha) + (-1)^{|\alpha|}(\alpha \bullet \wt{\star^1}). \] 	Therefore it suffices to show that 
	\begin{align*}
		0= &-\wt{\star^1}  \bullet \left( \big\{ \CO_1(x) \big\}\alpha \right) + (-1)^{|x|+|\alpha|} \left( \big\{ \CO_1(x) \big\}\alpha \right) \bullet  \wt{\star^1}\\
		&- (-1)^{|x|} \big\{ \CO_1(x)\big\} \big(-(\wt{\star^1}\bullet \alpha ) + (-1)^{|\alpha|} (\alpha\bullet \wt{\star^1})\big)\\
		& + (-1)^{|x|-1} \left( (-1)^{|\alpha|(|x|-1)} \alpha\bullet \CO_0(x) - \CO_0(x)\bullet \alpha \right).
	\end{align*}
	Unwinding the definitions, it suffices to show that 
	\begin{align*}
		&-\mathfrak{o}(x)\bullet \alpha + (-1)^{|\alpha|(|x|-1)} \alpha\bullet \mathfrak{o}(x) \\
		=& (-1)^{|\alpha||x|}\left( \wt{\star^1}\bullet (\alpha \circ x) -  (-1)^{|\alpha|+|x|}(\alpha\circ x) \bullet \wt{\star^1} - (\wt{\star^1}\bullet\alpha)\circ x+(-1)^{|\alpha|} (\alpha\bullet \wt{\star^1})\circ x\right).
	\end{align*}

	At $a\in H_1(L;\Z)$ and $k\in\Z_{\geq 0}$, we have 
	\begin{align*}
		\big(\wt{\star^1}\bullet (\alpha\circ x)\big)(a,k) &= \sum_{\substack{k'+k''= k\\ 1\leq i \leq k' \\ a'+a''= a}} (-1)^{\ddagger_1} \wt{\star^1}(0,1) \bullet \big( \alpha(a',k')\circ_i x(a'',k'')\big);\\
		\big((\alpha\circ x)\bullet \wt{\star^1}\big)(a,k)
		&= \sum_{\substack{k'+k'' = k\\1\leq i \leq k'\\ a'+a''=a}}(-1)^{\ddagger_2} \big(\alpha(a',k') \circ_i x(a'',k'') \big)\bullet \wt{\star^1}(0,1);\\
		\big((\wt{\star^1}\bullet \alpha)\circ x\big)(a,k)&= \sum_{\substack{(k'+1)+k''=k+1 \\ 1\leq i \leq k'+1 \\ a'+a''=a}}(-1)^{\ddagger_3} \big(\wt{\star^1}(0,1)\bullet \alpha(a',k')\big)\circ_i x(a'',k'');\\
		\big((\alpha\bullet \wt{\star^1}) \circ x\big)(a,k)&= \sum_{\substack{(k'+1)+k''=k+1 \\ 1\leq i \leq k'+1 \\ a'+a''=a}}(-1)^{\ddagger_4} \big( \alpha(a',k')\bullet \wt{\star^1}(0,1) \big) \circ_i x(a'',k'');\\
		\mathfrak{o}(x)\bullet \alpha (a,k) &= \sum_{\substack{k'+k'' = k \\ a'+a'' = a}} (-1)^{\ddagger_5} \mathfrak{o}(x(a'',k'')) \bullet \alpha(a',k');\\
		\alpha\bullet \mathfrak{o}(x)(a,k)&= \sum_{\substack{k'+k''= k\\ a'+a'' = a}} (-1)^{\ddagger_6} \alpha(a',k')\bullet \mathfrak{o}(x)(a'',k'').
	\end{align*}
	where 
	\begin{align*}
		\ddagger_1 &=|\alpha|+|x| + (i-1)(k''-1)+k'(|x|+1+k'');\\
		\ddagger_2 &= (k'+k''-1)+(i-1)(k''-1)+k'(|x|+1+k'');\\
		\ddagger_3 &= (i-1)(k''-1) + (k'+1) (|x|+1+k'') + |\alpha|;\\
		\ddagger_4 &= (i-1)(k''-1) + (k'+1)(|x|+1+k'') + k'; \\
		\ddagger_5 &= k''|\alpha|;\\ 
		\ddagger_6 &= k'(|x|-1).
	\end{align*}
By parts (1) and (3) of Lemma \ref{Lm: dRchainStarAnon} we have 
\begin{align*}
	&(-1)^{|\alpha||x|}\big( \wt{\star^1} \bullet (\alpha\circ x)-(\wt{\star^1}\bullet \alpha)\circ x \big)(a,k)\\
	=& (-1)^{|\alpha||x|} \left(-\sum(-1)^{(k'+1)(|x|+1+k'')+|\alpha|} (\wt{\star^1}(0,1)\bullet \alpha(a',k')) \circ_1 x(a'',k'')\right)
\end{align*}
and 
\begin{align*}
	&(-1)^{|\alpha||x|}\big( -  (-1)^{|\alpha|+|x|}(\alpha\circ x) \bullet \wt{\star^1} +(-1)^{|\alpha|} (\alpha\bullet \wt{\star^1})\circ x \big)(a,k)\\
	=& (-1)^{|\alpha|(|x|+1)} \sum (-1)^{k'k'' + (k'+1)(|x|+1+k'')} \big(\alpha(a',k')\bullet \wt{\star^1}(0,1)\big)\circ_{k'+1}x(a'',k'').
\end{align*}
The desired equality then follows from  parts (2) and (4) of Lemma \ref{Lm: dRchainStarAnon}.

	\bigskip

 \item \emph{For the binary part}, we need 
	\[\big\{\delta(\CO(x))\big\}_2(\alpha_1,\alpha_2)=\big\{\CO_2(\partial x)\big\}(\alpha_1,\alpha_2) \] for $\alpha_1,\alpha_2\in \CBa$. This follows from the proof of Lemma \ref{Lm: OCdR dgLaHomomApp}.
\end{itemize}
\end{proof}

\bigskip

\section{Kuranishi structures and virtual fundamental chains}
\label{Sec: Kuranishi}

In this appendix, we explain the proof of Theorem \ref{Thm: VFCmain} using the theory of Kuranishi structures and virtual techniques, following \cite{FOOO20book,Irie2}.
The structure of the proof is completely analogous to that in \cite{Irie2} (and much simpler since in working with the model of the free loop space from \cite{WangThesis}, we only need to work with finite-dimensional spaces).

\subsection{Kuranishi structures on the moduli spaces}

To construct the necessary virtual fundamental chains of the moduli spaces and verify their compatibilities at the chain level in general requires using virtual techniques. We will use the theory of Kuranishi structures in \cite{FOOO1,FOOO2,FOOO20book}. 

We briefly recall the basic definitions in the theory of Kuranishi structures.
We follow Section 10 of \cite{Irie2}. 
We remark that our proof is largely independent of the details of the constructions of Kuranishi structures, and only relies on certain expected properties of the Kuranishi structures. For example, one could follow the global Kuranishi chart approach in e.g. \cite{AMS1,Rabah,HirschiHugtenburg}.
Also, the main extra ingredient relative to \cite{Irie2} is that extra care needs to be taken for energy-zero moduli spaces, which can be covered with one Kuranishi chart.
For these reasons, and to avoid lengthy discussions of chart transitions, we introduce various required notions on one Kuranishi chart, and refer to \cite{FOOO20book} for more details on chart transitions.

\begin{remark}
	The notion of Kuranishi structures we use differs from the notion of Kuranishi charts in e.g. \cite{FOOO2} in that we use smooth manifolds and vector bundles instead of orbifolds and orbibundles, because all of our moduli spaces contain at least one marked point and has no sphere bubblings (see Remark 10.1 in \cite{Irie2}). 
\end{remark}

\subsubsection{Kuranishi charts and Kuranishi spaces}

\begin{definition}[\cite{FOOO20book}, Definition 3.1]\label{Defn: KuranishiChart}Let $X$ be a separable, metrizable topological space.
	A \emph{Kuranishi chart} on $X$ is a tuple $\mathscr{U}:= (U,\mathscr{E}, s,\psi)$ such that 
	\begin{enumerate}
		\item $U$ is a smooth manifold;
		\item $\mathscr{E}\to U$ is a smooth vector bundle;
		\item $s\colon U\to \mathscr{E}$ is a section of the bundle $\mathscr{E}$;
		\item $\psi\colon s^{-1}(0) \to X$ is a homeomorphism onto an open set in $X$.
	\end{enumerate}
	The (virtual) \emph{dimension} of $\mathscr{U}$ is by definition $\vdim\mathscr{U}:=\dim U -\rank   \mathscr{E}$.
	An \emph{orientation} on $\mathscr{U}$ is a pair of orientations on $U$ and $\mathscr{E}$.
\end{definition}

There is a notion of \emph{oriented-preserving embeddings} of  Kuranishi charts (\cite{FOOO20book}, Definition 3.2, 3.4) and of \emph{coordinate changes }of Kuranishi charts (\emph{op. cit.}, Definition 3.6). A \emph{Kuranishi structure} $\wh{\mathscr{U}}$ (\emph{op. cit.}, Definition 3.9) on a space $X$ is a collection of Kuranishi charts $\mathscr{U}_p=(U_p,\mathscr{E}_p,s_p,\psi_p)$ at each $p\in X$ together with coordinate changes between overlapping charts satisfying certain consistency. The pair $(X,\wh{\mathscr{U}})$ is called a \emph{Kuranishi space} (\emph{op. cit.}, Definition 3.11).

\begin{definition}[\cite{FOOO20book}, Definition 3.40]
	Let $\mathscr{U}=(U,\mathscr{E},s,\psi)$ be a Kuranishi chart and $M$ be a $C^\infty$-manifold.
	A \emph{strongly continuous map} from $\mathscr{U}$ to $M$ is a continuous map $f\colon U\to M$. It is called \emph{strongly smooth} if $f\colon U\to M$ is smooth.  It is called \emph{weakly submersive} if $f$ is a submersion.
	
	A strongly continuous (\textit{resp.} strongly smooth, weakly submersive) map $\wh{f}$ from a Kuranishi space $(X,\wh{\mathscr{U}})$ to $M$  assigns a continuous (\textit{resp.} smooth, submersive) map $f_p\colon U_p\to Y$ to each $p\in X$, satisfying compatibility with chart transition maps.
\end{definition}

To work with moduli spaces of pseudo-holomorphic discs with Lagrangian boundary conditions, we need to introduce Kuranishi spaces with boundaries and corners, as well as the notions of \emph{admissible Kuranishi charts} and \emph{admissible Kuranishi spaces}, where admissibility roughly means that the coordinate changes satisfy exponential decay estimates near the boundaries. Notions like embeddings and coordinate changes also have ``admissible'' versions. See \cite{FOOO20book}, Section 25.

Maps between admissible Kuranishi spaces with corners are required to respect the corner stratifications. There are notions of \emph{corner-stratified smooth maps} and \emph{corner-stratified wewak submersions}. See \cite{FOOO20book}, Section 26.

An \emph{isomorphism} of admissible Kuranishi spaces $(X_1,\wh{\mathscr{U}_1})$ and   $(X_2,\wh{\mathscr{U}_2})$ is a pair of admissible embeddings $(X_1,\wh{\mathscr{U}_1})\to (X_2,\wh{\mathscr{U}_2})$ and $(X_2,\wh{\mathscr{U}_2})\to (X_1,\wh{\mathscr{U}_1})$ whose compositions are the identity embeddings (\cite{FOOO20book}, Definition 4.24).

\subsubsection{Existence of Kuranishi structures}
\label{Subsc: Kuranishi}

The theorem we will use is Theorem 7.20 of \cite{Irie2}. We will not reproduce the entire statement of the theorem here due to its length, but we will recall the main points and properties below.

Let $L$ be a closed, connected, oriented, and spin manifold of dimension $n$, together with a Lagrangian embedding into $\C^n$ equipped with the standard symplectic structure.
Take the standard complex structure $J$ on $\C^n$.
Then there exists $\varepsilon>0$ such that $2\varepsilon$ is less than the minimal energy of non-constant $J$-holomorphic discs with boundaries on $L$.

Take a Hamiltonian $H\in C^\infty_c(\C^n \times [0,1]_t)$ satisfying the displaceability Assumption \ref{Asmp: Disp Hamil}.
Also recall that $\norm{H}$ is the Hofer norm \eqref{Eq: HoferNorm} of $H$.
Let $U\in \Z_{> 0}$ be such that $\varepsilon(U-1) \geq 2\norm{H}$.

We consider the moduli spaces $\ms{M}_{k+1}(a), \ms{N}^{\geq 0}_{k+1}(a), \ms{N}^{0}_{k+1}(a)$	 for $k\in \Z_{\geq 0}$ and $\beta \in H_1(L;\Z)$. These are defined in detail in section 7.2.1 and 7.2.2 of \cite{Irie2}, and are Gromov compatifications/bordifications of the uncompactified moduli spaces in section \ref{Sec: SketchPf}.

Define \[ \mathfrak{G}_0:= \{ \beta \in H_1(L;\Z)\mid \ms{M}_{k+1}(\beta) \neq \emptyset \textup{ for some }k\}, \] and let  $\mathfrak{G}\subset H_1(L;\Z)$ be the submonoid generated by $\mathfrak{G}_0$. Then by Gromov compactness, $\mathfrak{G}$ satisfies the condition in Definition \ref{Defn: monoid}, i.e. it is a monoid of curve classes (see e.g. (3.1.8) in \cite{FOOO1}).
Similarly, define 
\[
\mathfrak{N}_0:= \{\eta\in H_1(L;\Z) \mid \ms{N}^{\geq 0}_{k+1}(\eta)\neq \emptyset\textup{ for some }k\},
\]
and let $\mathfrak{N}\subset H_1(L;\Z)$ be the $\mathfrak{G}$-module generated by $\mathfrak{N}_0$. Then $\mathfrak{N}$ satisfies the condition in Definition \ref{Defn: module}, i.e. it is a module of $H$-perturbed curve classes over $\mathfrak{G}$.

\begin{theorem}
	[\cite{Irie2}, Theorem 7.20]\label{Thm: IrieKuranishi}
	For each $k\in \Z_{\geq 0}$, $m\in \Z_{\geq 0}$, and $P\in \{\{m\}, [m,m+1]\}$, there exist the following data:
	\begin{enumerate}
		\item Compact, oriented, admissible Kuranishi spaces \begin{align*}
			\ms{M}_{k+1}(\beta:P),\quad &\textup{ where }\beta \in \mathfrak{G}, \,\,E(\beta) < \varepsilon(m+1-k) ;\\
			 \ms{N}_{k+1}^0(\eta: P), \quad &\textup{ where }\eta \in \mathfrak{N}, \,\, E(\eta) < \varepsilon(m-1-k);\\
			 \ms{N}^{\geq 0}_{k+1}(\eta: P), \quad &\textup{ where }\eta \in \mathfrak{N}, \,\, E(\eta) < \varepsilon(m-k-U),
		\end{align*}
		whose underlying topological spaces are \[ P \times \ms{M}_{k+1}(\beta), \quad P\times \ms{N}^0_{k+1}(\eta), \quad P\times \ms{N}^{\geq 0}_{k+1}(\eta)\]respectively. The (virtual) dimensions of these Kuranishi spaces are \begin{align*}
			 \dim \ms{M}_{k+1}(\beta:P) &= \mu(\beta) + n + k - 2 + \dim P; \\
		\dim \ms{N}^0_{k+1}(\eta: P) &= \mu(\eta) + n + k + \dim P;\\
		\dim  \ms{N}^{\geq 0}_{k+1}(\eta:P) &= \mu(\eta) + n + k + 1 + \dim P.
		\end{align*}		
		\item Corner stratified strongly smooth maps (\cite{FOOO20book}, Definition 26.6 (1)):\begin{align*}
			\ev^{\ms{M},P}&\colon \ms{M}_{k+1}(\beta:P)\to P\times L^{k+1};\\
			\ev^{\ms{N}^0,P}&\colon \ms{N}^0_{k+1}(\eta:P)\to P\times L^{k+1};\\
			\ev^{\ms{N}^{\geq 0},P}&\colon \ms{N}^{\geq 0}_{k+1}(\eta:P)\to P\times L^{k+1},
		\end{align*}
		such that their underlying maps are \[ \id_P\times \ev^\ms{M},\quad \id_P \times \ev^{\ms{N}^0},\quad \id_P\times \ev^{\ms{N}^{\geq 0}} \]respectively. We require that the maps $(\id_P\times \pr_0)\circ \ev$ are corner stratified weak submersions (\cite{FOOO20book}, Definition 26.6 (2)), for each of these evaluation maps, where $\pr_0\colon L^{\times (k+1)}\to L$ is the projection to the first factor.
		\item An isomorphism of admissible Kuranishi structures \begin{align}\label{Eq: EnergyZeroKura}
			\ms{M}_{k+1}(0:P)\cong P\times L \times D^{k-2} 
		\end{align}for each $k\geq 2$, so that $\ev^{\ms{M},P}\colon \ms{M}_{k+1}(0:P)\to P\times L^{\times(k+1)}$ coincides with $\pr_P\times (\pr_L)^{k+1}$. Here $D^{k-2}$ is identified with the Stasheff cell (\cite{FukayaOh}). 
		\item Various boundary and corner compatibility conditions, spelled out in parts (iv)-(vi) in Theorem 7.20 of \cite{Irie2}.
	\end{enumerate}
\end{theorem}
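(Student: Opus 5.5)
The plan is to import the construction of \cite{Irie2} essentially verbatim, noting that the only differences are bookkeeping: here only the Kuranishi spaces and the evaluation maps to $P\times L^{k+1}$ are needed, and there is no need to pass to chains on an infinite-dimensional loop space. Concretely, I would construct the Kuranishi structures by a double induction. The outer induction is on the parameter $m$, with the interval $P=[m,m+1]$ serving as a cobordism between the structures at $\{m\}$ and $\{m+1\}$. The inner induction is on the pair $(E(\beta),k)$ with respect to the partial order used in \cite{FOOO1}. The energy cut-offs $E(\beta)<\varepsilon(m+1-k)$, $E(\eta)<\varepsilon(m-1-k)$ and $E(\eta)<\varepsilon(m-k-U)$ are chosen so that every boundary stratum of a given moduli space is a fibre product of moduli spaces already constructed at the same $m$. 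This is what makes the induction well-founded.

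At a single stage I would proceed as follows.
\begin{enumerate}
\item Take obstruction spaces at each point of $\ms{M}_{k+1}(\beta)$, $\ms{N}^0_{k+1}(\eta)$ and $\ms{N}^{\geq 0}_{k+1}(\eta)$ in the standard way of \cite{FOOO20book}. These consist of finite-dimensional spaces of smooth sections supported away from the nodes and the marked points, which span the cokernel of the linearised (perturbed) Cauchy--Riemann operator.
\item Enlarge the obstruction spaces further so that the linearisation of $\ev_0$, restricted to the kernel, is surjective onto $T_{\ev_0}L$. This gives the weak submersivity of $(\id_P\times \pr_0)\circ\ev$.
\item Glue these charts with the admissible (exponential-decay) gluing analysis of \cite{FOOO20book}, Section 25, to obtain admissible corner structures. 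The virtual dimensions then follow from the Riemann--Roch formula for discs with totally real boundary, namely $\mu+n$, corrected for the marked points, the automorphisms, the parameter $r$ (which contributes $+1$ for $\ms{N}^{\geq 0}$), and $\dim P$.
\item Obtain orientations from the spin structure on $L$, exactly as in \cite{FOOO1}, Chapter 8.
\end{enumerate}
Along the way I would also verify the gappedness assertions for $\mathfrak{G}$ and $\mathfrak{N}$. These follow from Gromov compactness, from the energy identity $E(u)\ge -\norm{H}$ for perturbed discs, and from the choice of $\varepsilon$.

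For the energy-zero pieces in item (3), all discs are constant and the linearised operator is surjective with trivial cokernel. So I would take the tautological chart with zero obstruction bundle, $P\times L\times D^{k-2}$, and identify the marked-point configuration space with the Stasheff cell as in \cite{FukayaOh}. The point to arrange is that, for $\beta\neq 0$, the structures chosen in the induction restrict on the boundary strata involving a $\beta=0$ factor to the fibre product with these tautological charts. I would enforce this by choosing the obstruction spaces for nonzero classes to be pulled back along the forgetful maps of marked points. I would check the same kind of compatibility for $\ms{N}^0(\eta)$ at $\eta=0$, where the only curve is the constant disc.

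The main obstacle is achieving the corner compatibilities in (4) simultaneously with the weak submersivity in (2). Each boundary chart is forced to be a fibre product of previously chosen charts, and the fibre product of weakly submersive structures is only weakly submersive in the $\ev_0$ direction of the first factor. My first attempt would be to follow \cite{Irie2}, Section 7: first extend the obstruction bundles from the boundary into the interior using collars (admissibility guarantees that such extensions exist), and only then add extra obstruction directions supported away from the boundary to restore submersivity of $\ev_0$. Interpolating in $P=[m,m+1]$ is handled by the same extension argument one dimension higher.
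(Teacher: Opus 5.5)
Your proposal is correct and takes the same route as the paper, which gives no independent proof of this statement and simply imports it from Theorem 7.20 of \cite{Irie2}. Your outline is a faithful sketch of that construction: induction under the energy cut-offs, enlarging obstruction spaces so that $\ev_0$ is weakly submersive, collar extension for the corner compatibilities, and the unobstructed chart $P\times L\times D^{k-2}$ for $\beta=0$. One small wording point: the cut-offs make the finite family of moduli spaces closed under taking boundary factors, whereas the well-foundedness of the inner induction comes from the gappedness of the energies.
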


Furthermore, we need maps from these moduli spaces to $\mathscr{L}^{k+1}L$. In \cite{Irie2}, this is done in sections 7.3 and 7.4. Our situation is much simpler because our $\mathscr{L}^{k+1} L$ is simpler than the infinite-dimensional ones in \cite{Irie2}.
The following proposition is an analogue of Proposition 7.26 in \cite{Irie2}.
For example, we can obtain \emph{strongly smooth} maps in the statement, due to the simpler definition of $\mathscr{L}^{k+1} L$.
\begin{proposition}\label{Prop: KuraStrSmMap}
	For every $k\in \Z_{\geq 0}$, $m\in \Z_{\geq 0}$, and $P\in \{\{m\}, [m,m+1]\}$, one can define strongly smooth maps 
	\begin{align*}
			\ev^\mathscr{M}\colon \mathscr{M}_{k+1}(\beta:P) \to P\times \mathscr{L}^{k+1}(\beta),\quad &\textup{ where }\beta \in \mathfrak{G}, \,\,E(\beta) < \varepsilon(m+1-k) ;\\
			 \ev^{\mathscr{N}^0}\colon \mathscr{N}_{k+1}^0(\eta: P) \to P\times \mathscr{L}^{k+1}(\eta), \quad &\textup{ where }\eta \in \mathfrak{N}, \,\, E(\eta) < \varepsilon(m-1-k);\\
			 \ev^{\mathscr{N}^{\geq 0}}\colon \mathscr{N}^{\geq 0}_{k+1}(\eta: P) \to P\times \mathscr{L}^{k+1}(\eta), \quad &\textup{ where }\eta \in \mathfrak{N}, \,\, E(\eta) < \varepsilon(m-k-U),
	\end{align*}
	so that the diagrams analogous to those in Proposition 7.26 of \cite{Irie2} commute.
	Moreover, when $\beta=0$, under the identification \eqref{Eq: EnergyZeroKura}, the map $\ev^{\ms{M}}$ is given by \begin{align}\label{Eq: EvEnergyZero}
		\ev^\ms{M}\colon \ms{M}_{k+1}(0:P)\cong P\times L \times D^{k-2} &\to P\times \ms{L}^{k+1}(\beta)\\
		(s, y, z) &\mapsto(s, ([\underbrace{\underline{y}],\dots, [\underline{y}]}_{(k+1) \textup{ times}}))\nonumber
	\end{align} 
	where $[\underline{y}]\in \Pi_1 L$ denotes the constant path at $y\in L$ (see Definition \ref{Defn: PathFundGroupoid}).
\end{proposition}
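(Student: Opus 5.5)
The maps will be defined by lifting the boundary evaluation maps of Theorem \ref{Thm: IrieKuranishi} (2) along the covering maps of Lemma \ref{Lm: WangLoopSmooth}, chart by chart. For a Kuranishi chart $(U_p,\mathscr{E}_p,s_p,\psi_p)$ of $\ms{M}_{k+1}(\beta:P)$, Theorem \ref{Thm: IrieKuranishi} provides a smooth map $\ev_p\colon U_p\to P\times L^{k+1}$ extending $(z_0,\dots,z_k)\mapsto (u(z_0),\dots,u(z_k))$ on $s_p^{-1}(0)$. On the zero set, the disc boundary $u|_{\partial\D^2}$ together with the arcs $[z_i,z_{i+1}]$ gives a canonical element
\[
\big((u(z_i),u(z_{i+1}),[u|_{[z_i,z_{i+1}]}])\big)_{0\le i\le k}\in \ms{L}^{k+1}(\beta),
\]
which lies over $\ev_p$ under $\ev\colon\ms{L}^{k+1}L\to L^{k+1}$. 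The plan is to extend this to all of $U_p$ by choosing the lift of $\ev_p$. After shrinking $U_p$ to a neighbourhood of $\psi_p^{-1}(p)$ that deformation retracts onto a contractible neighbourhood (charts can be chosen with $U_p$ a small ball-like neighbourhood), the covering map admits a unique continuous, hence smooth, lift of $\ev_p$ agreeing with the canonical element at $\psi_p^{-1}(p)$. Because the homotopy classes $[u|_{[z_i,z_{i+1}]}]$ vary locally constantly along the zero set, this lift agrees with the canonical element on the whole of $s_p^{-1}(0)$.

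Smoothness is automatic, since $\ms{L}^{k+1}L$ carries the pulled-back smooth structure. Strong smoothness therefore follows from strong smoothness of $\ev^{\ms{M},P}$. Compatibility with coordinate changes also follows from uniqueness of lifts: the coordinate change embeddings commute with the $L^{k+1}$-valued evaluation maps, and two lifts of the same map that agree at one point of a connected domain agree everywhere. The same argument applies verbatim to $\ms{N}^0_{k+1}(\eta:P)$ and $\ms{N}^{\ge0}_{k+1}(\eta:P)$. For these the class of the boundary loop is $\eta$, so the images land in $\ms{L}^{k+1}(\eta)$. The component $\ev_0$ of each map is the weak submersion of Theorem \ref{Thm: IrieKuranishi} (2), so plot condition (2) of Definition \ref{Defn: DiffSpaceOnLoops} holds.

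The commuting diagrams analogous to Proposition 7.26 of \cite{Irie2} are checked on boundary strata using the corner compatibility of Theorem \ref{Thm: IrieKuranishi} (4). On a stratum $\ms{M}_{k_1+1}(\beta_1)\fiberprod{\ev_i}{\ev_0}\ms{M}_{k_2+1}(\beta_2)$, the boundary of the glued disc traverses the first boundary up to $z_{i-1}$, then the second boundary loop, then returns. Read through the canonical path elements, this is exactly $\conc_{k_1,i,k_2}$, including the case $k_2=0$. Both routes around the diagram are lifts of the same $L^{k+1}$-valued map that agree on zero sets, so they agree by uniqueness of lifts. The $P$-direction, meaning the strata at $\partial P$, is handled identically.

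For $\beta=0$, the evaluation map to $L^{k+1}$ is $(s,y,z)\mapsto (s,y,\dots,y)$ by Theorem \ref{Thm: IrieKuranishi} (3). The constant tuple $([\underline y],\dots,[\underline y])$ is a global lift of this map, and it agrees with the canonical element, because constant discs have constant boundary. By uniqueness of lifts on the connected space $P\times L\times D^{k-2}$, this gives \eqref{Eq: EvEnergyZero}.

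The main obstacle is the first step. One must arrange that the Kuranishi neighbourhoods $U_p$ are small enough for the lift to be well defined and still match the canonical element on the entire zero set, without destroying the chart-transition and corner structure supplied by \cite{Irie2}. I would handle this by shrinking along the standard shrinking procedures of \cite{FOOO20book}, and by noting that a lift of $\ev_p$ near the zero set is all that is needed.
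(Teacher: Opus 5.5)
There is a gap at the step where you conclude that the lift agrees with the canonical element on all of $s_p^{-1}(0)$. Local constancy of the boundary-arc classes, together with agreement at $\psi_p^{-1}(p)$, only gives agreement on the connected component of $s_p^{-1}(0)$ through that point. A contractible $U_p$ can have a disconnected zero set. Since Kuranishi maps are only smooth, their zero sets need not even be locally connected, so no shrinking to a ball cures this. On another component, the lift is obtained by transport along a path in $U_p$ through points which, in your abstract setup, carry no boundary arcs, so nothing forces agreement.

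The same defect recurs in your compatibility arguments. Coordinate-change domains $U_{pq}$ and the domains of the boundary diagrams need not be connected, and two lifts that agree on the zero set may differ on components that miss it. The repair is to use continuity instead of connectivity. Write $\pi$ for the covering $P\times\ms{L}^{k+1}L\to P\times L^{k+1}$. Take $U_p$ so small that $\ev_p(U_p)$ lies in an evenly covered open set $W$, and the canonical element (continuous on $X$ in the Gromov topology) sends all of $\psi_p(s_p^{-1}(0))$ into a single sheet $W_\alpha$. Define the lift as $(\pi|_{W_\alpha})^{-1}\circ\ev_p$, and discard components of coordinate-change and boundary domains that miss the zero set. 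You would then still owe compatibility of all these shrinkings with the corner identifications (iv)--(vi) of Theorem 7.20 of \cite{Irie2}, which is exactly the work you deferred.

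The paper avoids all of this because it never lifts. By Lemma 7.22 of \cite{Irie2}, $U_p$ embeds into a disjoint union of fibre products of $P$, copies of $L$, and spaces $\ms{M}\ms{M}_{k_v+1}(B'(v))$. These consist of smooth maps $(\D,\partial\D)\to(\C^n,L)$ that are holomorphic only near $\partial\D$. So every point of the chart, not just of the zero set, has boundary arcs. The paper defines $\ev^{\ms{M}}_p$ on all of $U_p$ by the classes $\gamma^{u^v}_j$, concatenated along the tree. As in your argument, the covering map is used only to deduce smoothness from that of $\ev^{\ms{M},P}$. Compatibility with coordinate changes and boundary fibre products then holds by construction (cf. Lemma 7.28 of \cite{Irie2}), and the $\beta=0$ formula is immediate.

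Your repaired argument has the merit of being independent of the chart model: it uses only the $L^{k+1}$-valued evaluation and continuity of the boundary-arc lift on $X$. The paper's argument gives a canonical map on the whole chart with no basepoints and no shrinking.
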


\begin{proof}
	We give the proof for $\ms{M}_{k+1}(\beta:P)$ only since the others are analogous.

	We need the explicit description of the Kuranishi charts $\ms{M}_{k+1}(\beta:P)$ from Lemma 7.22 of \cite{Irie2}. 
	We will borrow terminologies from \cite{Irie2}, section 7.2.2, on e.g. decorated rooted ribbon trees.
	Let $\ms{M}\ms{M}_{k+1}(\beta)$ be the set of tuples $(u,z_0,z_1,\dots, z_k)$ where $u\colon (\mathbb{D},\partial \mathbb{D})\to (\C^n,L)$ is a $C^\infty$-map such that $\bar\partial u = 0$ on a neighborhood of $\partial \mathbb{D}$ and $[u]=\beta$, and $z_0,\dots, z_k\in \partial \mathbb{D}$ are distinct points aligned in anti-clockwise order. Let $p\in \ms{M}_{k+1}(\beta:P)$. Then there is a Kuranishi chart  $\mathscr{U}_p=(U_p,\mathscr{E}_p,s_p,\psi_p)$ at $p$ under the Kuranishi structure in Theorem \ref{Thm: IrieKuranishi}, where for a decorated rooted ribbon tree $(T,B)\in \mathcal{G}(k+1 : \beta)$  such that 
	\begin{align*}
		p\in P\times \left(\prod_{e\in C_{1,\textup{int}}(T)} L\right) \fiberprod{\Delta}{\ev_{\textup{int}}} \left( \prod_{v\in C_{0,\textup{int}}(T)} \mathring{\ms{M}}_{k_v+1}(B(v))\right)
	\end{align*}
	then $U_p$ can be embedded into 
	\begin{align*}
		\bigsqcup_{(T',B')} P\times \left(\prod_{e\in C_{1,\textup{int}}(T')} L\right) \fiberprod{\Delta}{\ev_{\textup{int}}} \left( \prod_{v\in C_{0,\textup{int}}(T')} \ms{M}\ms{M}_{k_v+1}(B'(v))\right)
	\end{align*}
	where $(T',B')$ runs over all reductions of $(T,B)$. 
	
	Then we define a map 
		\[\ev_p^\mathscr{M}\colon U_p\to P\times \mathscr{L}^{k+1}(\beta)\]
	as follows. Each $x\in U_p$ is identified with an element \[
		x=\left(\pi, (u^v,z_0^v,\dots, z_{k_v}^v)_{v}\right) \in P\times \left(\prod_{e\in C_{1,\textup{int}}(T')} L\right) \fiberprod{\Delta}{\ev_{\textup{int}}} \left( \prod_{v\in C_{0,\textup{int}}(T')} \ms{M}\ms{M}_{k_v+1}(B'(v))\right),
	\]
	where $(T',B')$ is a reduction of $(T,B)$.
	For each $v\in C_{0,\textup{int}}(T')$, we define $\ev(u^v, z_0^v,\dots, z_{k_v}^v)\in \mathscr{L}^{k_v+1}(B'(v))$ as follows. For each $j=0,\dots, k_v$, let \[\gamma_j^{u^v}=\left(\ev_j(u), \ev_{j+1}(u), \left[u|_{[z_j^v,z_{j+1}^v]}\right]\right)\in \Pi_1L\] in the fundamental groupoid, where $j+1$ is taken modulo $k_v+1$,  $u|_{[z_j^v,z_{j+1}^v]}\in \mathcal{P}_{\ev_j(u),\ev_{j+1}(u)}$ is the path in $L$ given by restricting $u$ to the arc from $z_j^v$ to $z_{j+1}^v$ in $\partial \mathbb{D}$ and $\left[u|_{[z_j^v,z_{j+1}^v]}\right]$ is its homotopy class.
	Then define \[\ev(u^v,z_0^v,\dots, z^v_{k_v}):= \left(\gamma_0^{u^v},\dots \gamma_{k_v}^{u^v}\right)\in \mathscr{L}^{k_v+1}(B'(v)).\]
	Finally, define \begin{align*}
		\left(\prod_{e\in C_{1,\textup{int}}(T)}P\times L\right)\fiberprod{\Delta}{\ev_{\textup{int}}} \left( \prod_{v\in C_{0,\textup{int}}(T)} P\times \mathscr{L}^{k_v+1}(B(v))\right) \to P\times \mathscr{L}^{k+1}(\beta),
	\end{align*}
	where the fibre product is taken over $\prod_{e\in C_{1,\textup{int}}(T)}(P\times L)^{\times 2}$,
	by concatenating paths using \eqref{Eq: Concat}. 
	We then compose  it with 
	\begin{align*}
				U_p&\to \left(\prod_{e\in C_{1,\textup{int}}(T)}P\times L\right)\fiberprod{\Delta}{\ev_{\textup{int}}} \left( \prod_{v\in C_{0,\textup{int}}(T)} P\times \mathscr{L}^{k_v+1}(B(v))\right);\\
				x&=\left(\pi, (u^v,z_0^v,\dots, z_{k_v}^v)_{v}\right) \mapsto \left(\pi,  \gamma_j^{u^v}\right)_v
	\end{align*}
	to get the desired map $\ev_p^\mathscr{M}\colon U_p\to P\times \mathscr{L}^{k+1}(\beta)$.
	The family of maps $\left( \ev_p^\mathscr{M}\right)_{p\in \mathscr{M}_{k+1}(\beta:P)}$ is compatible with coordinate changes by the construction of the Kuranishi structure (see e.g. Lemma 7.28 of \cite{Irie2}, although in our case this is a lot easier since we don't have to deal with reparametrizations of loops and paths).
	Moreover, the smoothness of $\ev_p^\mathscr{M}$ follows from the fact that the smooth structure on $\mathscr{L}^{k+1}$ is given  so that the evaluation map
	\[
		\ev_0\times \dots \times \ev_k\colon \mathscr{L}^{k+1}\to L^{\times (k+1)}
	\]
	is a smooth covering map, and that the composition \[
		U_p \xrightarrow{\ev^{\mathscr{M}}}  P \times \mathscr{L}^{k+1} \xrightarrow{\id_P\times (\ev_0\times \dots \times \ev_k)} P\times L^{\times (k+1)}
	\]
	is equal to $\ev^{\mathscr{M}, P}$ in part (2) of Theorem \ref{Thm: IrieKuranishi}, which is smooth.
	The commutativity of the diagrams are analogous to that in \cite{Irie2}.
	The statement about energy-zero moduli spaces also follow directly from the description.
\end{proof}

\subsection{CF perturbations}
Again we explain the notion of CF perturbation in one chart.
Consider a Kuranishi chart $\mathscr{U} = (U,\mathscr{E},s,\psi)$ on $X$ as in Definition \ref{Defn: KuranishiChart}. 

\begin{definition}[\cite{FOOO20book}, Definition 7.16; also see section 8.1.1 of \cite{Irie2}]\label{Defn: CF-perturb} 
	A (representative of) a \emph{CF-perturbation} $\mf{S}=(\mf{S}^\varepsilon)_{0<\varepsilon\leq 1}$ of $\ms{U}$ is the data of $(\mf{V}_\mf{r}, \ms{S}_\mf{r})_{\mf{r}\in\mf{R}}$ where 
	\begin{itemize}
		\item $\mf{V}_\mf{r}=(V_\mf{r}, E_\mf{r}, \phi_\mf{r}, \wh{\phi}_\mf{r})$ is a manifold chart of $(U,\ms{E})$ such that $(\phi_\mf{r}(V_\mf{r}))_{\mf{r}\in\mf{R}}$ covers $U$. Let $s_\mf{r}\colon V_\mf{r}\to E_\mf{r}$ be the pullback of $s$ by $\phi_\mf{r}$;
		\item $\ms{S}_\mf{r}=(W_\mf{r}, \eta_\mf{r}, \{\mf{s}_\mf{r}^\varepsilon\}_\varepsilon)$ is a CF-perturbation of $\ms{U}$ on $\mf{V}_\mf{r}$:
		\begin{enumerate}
			\item $W_\mf{r}$ is an open neighborhood of 0 in a finite-dimensional oriented real vector space;
			\item $\mf{s}_\mf{r}^\varepsilon\colon V_\mf{r}\times W_\mf{r}\to E_\mf{r}$   is a family of maps,  depending smoothly on $\varepsilon$, such that $\mf{s}^\varepsilon_\mf{r}$ is transversal to $0$ for each $\varepsilon\in (0,1]$, and \begin{align*}
				\lim_{\varepsilon\to 0} \mf{s}^\varepsilon_\mf{r}(y,\xi) = s_\mf{r}(y)
			\end{align*} 
			in compact $C^1$-topology on $V_\mf{r}\times W_\mf{r}$;
			\item $\eta_\mf{r}\in \ms{A}_c^{\dim W_\mf{r}}(W_\mf{r})$ is a differential form such that $\int_{W_\mf{r}} \eta_\mf{r} = 1$.
		\end{enumerate}
	\end{itemize}
\end{definition}

For each $\varepsilon\in (0,1]$, we write
\begin{align*}
	\ms{S}_\mf{r}^\varepsilon = (W_\mf{r}, \eta_\mf{r}, \mf{s}_\mf{r}^\varepsilon),\quad 
	\mf{S}^\varepsilon=\{\mf{V}_\mf{r}, \ms{S}^\varepsilon_\mf{r}\}_{\mf{r}\in\mf{R}}.
\end{align*}

\begin{definition}[\cite{FOOO20book}, Definition 7.9, Definition-Lemma 7.26]
	Let $f\colon U\to M$ be a smooth submersion to a smooth manifold $M$.
	We say $f$ is \emph{strongly submersive} with respect to $\mf{S}$ if for each $\mf{r}\in \mf{R}$, the map \begin{align*}
		f\circ \phi_\mf{r}\circ \textup{pr}_{V_\mf{r}}\colon (\mf{s}_\mf{r}^\varepsilon)^{-1}(0) \to L
	\end{align*}
	is a submersion for every $\varepsilon\in (0,1]$.
\end{definition}

There are definitions for an admissible CF-perturbation  of an admissible Kuranishi chart and for a map $f\colon U\to M$ to be a stratified strong submersion with respect to $\mf{S}$.

\bigskip
Let $X$ be one of the moduli spaces in (1) of  Theorem \ref{Thm: IrieKuranishi}. Assign $\tau(X)\in (1/2,1)$ as in Remark 7.23 of \cite{Irie2} (as well as section 3 in \cite{IrieErratum}).
The following plays the role of Theorem 7.33 (and Remark 7.34) of \cite{Irie2}.

\begin{theorem}\label{Thm: ExistCF} 
	Let $X$ be one of the moduli spaces in (1) of  Theorem \ref{Thm: IrieKuranishi} where $P=\{m\}$, with Kuranishi structure $(X,\wh{\mathscr{U}})$, together with the corner-stratified admissible strongly smooth maps $\wh{\ev} \colon (X,\wh{\ms{U}})\to L^{k+1}$, such that $\pr_0\circ \wh{\ev}\colon (X,\mathscr{U})\to L$ is a stratified weak submersion.  Then, upon shrinking $\tau(X)$ to $\tau':=\tau'(X)\in (0,\tau(X))$, there exist the following data:
	\begin{itemize}
		\item A $\tau'$-collared Kuranishi structure $\wh{\mathscr{U}^+}$ on $X$, which is a thickening of $\wh{\mathscr{U}}$ (see section 5.2 of \cite{FOOO20book});
		\item An isomorphism of $\tau'$-collared Kuranishi structures $\wh{\ms{U}^+}|_{\wh{S}_\ell (X)}\cong \wh{\ms{U}_\ell^+}$ for every $\ell \in \Z_{\geq 1}$;
		\item A $\tau'$-collared CF-perturbation $\wh{\mf{S}^+}$ of $(X,\wh{\ms{U}^+})$ such that $\wh{\mf{S}^+}|_{\wh{S}_\ell(X)}$ coincides with $\wh{\mf{S}^+_\ell}$ via the isomorphism of Kuranishi spaces $\wh{\ms{U}^+}|_{\wh{S}_\ell(X)}\cong \wh{\ms{U}_\ell^+}$;
		\item A $\tau'$-collared admissible map $\wh{\ev^+}\colon (X,\wh{\ms{U}^+})\to \ms{L}^{k+1}$ such that: 
		\begin{itemize}
			\item $\wh{\ev^+}$ coincides with $\wh{\ev}$ under the KK-embedding $\wh{\ms{U}}\to \wh{\ms{U}^+}$;
			\item $\pr_0\circ \wh{\ev^+}\colon (X,\wh{\ms{U}^+})\to L$ is a stratified strong submersion with respect to $\wh{\mf{S}^+}$;
		\end{itemize}
			\item For $\beta=0$ and $k>2$, the structures associated to $X=\ms{M}_{k+1}(0:P)$ are isomorphic to $P\times L \times (D^{k-2})^{\boxplus \tau'}$ where $D^{k-2}$ is the Stasheff cell, and  the CF perturbation is trivial: that is, in the notations of Definition \ref{Defn: CF-perturb}, the Kuranishi structure on $\ms{M}_{k+1}(0:P)$ is given by a single Kuranishi chart, covered by a set of manifold charts $\{\mf{V}_\mf{r}=(V_\mf{r}, E_\mf{r}, \phi_\mf{r}, \wh{\phi}_\mf{r})\}_{\mf{r}\in\mf{R}}$ with $E_\mf{r}=\{0\}$, and the CF-perturbation $\ms{S}_\mf{r}=(W_\mf{r},\eta_\mf{r},\{\mf{s}_\mf{r}^\varepsilon\}_\varepsilon)$ is given by $W_\mf{r}=\{0\}$,  $\eta=1\in \ms{A}_c^0(W)$, and $\mf{s}_\mf{r}^\varepsilon=0$ for all $\varepsilon\in(0,1]$.
	\end{itemize}
\end{theorem}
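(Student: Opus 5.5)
The plan is to follow the proof of Theorem 7.33 (and Remark 7.34) of \cite{Irie2}, i.e.\ an induction on the stratification/energy filtration of the moduli spaces. The essential difference is that the target $\ms{L}^{k+1}$ here is a finite-dimensional manifold covering $L^{\times(k+1)}$, so every statement about maps to loop spaces reduces to a statement about maps to $L^{\times (k+1)}$. Concretely, I would order the moduli spaces $X$ of Theorem \ref{Thm: IrieKuranishi} with $P=\{m\}$ by the pair (energy $E$, number of marked points $k$), lexicographically. Their codimension-$\ell$ normalized corners $\wh{S}_\ell(X)$ are then fibre products of spaces strictly earlier in the order: disc bubbling for $\ms{M}$, and bubbling or $r=0$ for $\ms{N}$, by part (4) of Theorem \ref{Thm: IrieKuranishi}. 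Hence, by induction, we may assume the data $\wh{\ms{U}^+_\ell}$, $\wh{\mf{S}^+_\ell}$ and $\wh{\ev^+_\ell}$ have already been constructed on all of these corners. They are compatible on overlaps because the inductive hypothesis includes the corner compatibility. Moreover, $\pr_0\circ\wh{\ev}$ is stratified strongly submersive on the corners, since fibre products of strongly submersive maps along $\ev_0$ remain strongly submersive.

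The main step is an extension from the boundary collar to the interior. First, I would invoke the relative existence theorem for thickenings and CF-perturbations in \cite{FOOO20book} (Theorems 17.78 and 7.50, in the admissible and collared form of section 25 and Proposition 17.81). Given a good coordinate system compatible with the already-fixed collared data on $\wh{S}_\ell(X)$, this produces a $\tau'$-collared thickening $\wh{\ms{U}^+}$ and a CF-perturbation $\wh{\mf{S}^+}$ extending the boundary data. Shrinking $\tau$ to $\tau'$ is what lets us glue in the collar $[0,\tau')\times\wh{S}_\ell(X)$. Strong submersivity of $\pr_0\circ\wh{\ev^+}$ is arranged by the standard trick used in \cite{FOOO20book}, Lemma 9.10. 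Since $\pr_0\circ\wh{\ev}$ is already a weak submersion on each chart $U_p$, we can choose the perturbations $\mf{s}^\varepsilon_{\mf{r}}$ transverse to $0$ after restricting to fibres of $\pr_0\circ\wh{\ev}$, using the parametrized transversality theorem in those fibres. The zero sets then submerse onto $L$.

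The map $\wh{\ev^+}$ to $\ms{L}^{k+1}$ is obtained by pulling back. The thickening comes with KK-embeddings and projections of charts, and Proposition \ref{Prop: KuraStrSmMap} gives strongly smooth $\ev$ on the original charts. I would extend $\ev$ to the thickened charts by composing with the chart projections $U^+_p\to U_p$, which exist after shrinking, and then lifting through the covering $\ms{L}^{k+1}\to L^{\times(k+1)}$. The lift is unique once it is fixed on $U_p$, because the thickened charts deformation retract to the images of the $U_p$. Compatibility with $\wh{\ev}$ under the KK-embedding and with the corner data is then automatic, since the lift is determined by the underlying map to $L^{\times(k+1)}$. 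For the energy-zero spaces $\ms{M}_{k+1}(0:P)$ with $k>2$, I would not perturb at all. By \eqref{Eq: EnergyZeroKura}, the single chart $P\times L\times D^{k-2}$ with obstruction bundle $\{0\}$ is already transversal. Its boundary strata are again energy-zero spaces of the same form, so we can take $E_\mf{r}=0$, $W_\mf{r}=\{0\}$ and $\mf{s}^\varepsilon_\mf{r}=0$ consistently throughout the induction. Here $\pr_0\circ\ev$ is the projection to $L$, which is a submersion.

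I expect the main obstacle to be making the collared extension of the CF-perturbation simultaneously compatible with all corner strata, while keeping the trivial perturbation on energy-zero pieces. The difficulty is that a nonzero-energy space can have corners that are fibre products involving energy-zero factors. First, I would check that trivial perturbations on those factors, taken in product with the inductively chosen perturbations of the other factors, still give transversality and strong submersion for the fibre product. Transversality holds because the energy-zero factor is unobstructed and its $\ev_0$ is a submersion. Once that is in place, the remaining extension is exactly as in \cite{Irie2}, with the erratum \cite{IrieErratum} governing the choice of $\tau'$.
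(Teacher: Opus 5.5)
Your proposal is correct and follows essentially the paper's argument. You invoke the relative existence results for collared thickenings and CF-perturbations (Propositions 17.78 and 17.81 of \cite{FOOO20book}), and for $\ms{M}_{k+1}(0:P)$ you observe that the obstruction bundle is trivial and $\pr_L$ is already a submersion, so no thickening or perturbation is needed; your check that trivial perturbations on energy-zero factors stay compatible at corners of positive-energy spaces is more careful than the paper's one-line remark. The one deviation, your lift through the covering $\ms{L}^{k+1}\to L^{\times(k+1)}$, is unnecessary: since $\ms{L}^{k+1}$ is a finite-dimensional manifold, the paper extends the $\ms{L}^{k+1}$-valued map of Proposition \ref{Prop: KuraStrSmMap} to a collared map via Lemma-Definition 17.38 and Lemma 17.40(3) of \cite{FOOO20book} and feeds it directly into those propositions, which also avoids your unproved claim that the thickened charts retract onto the original ones.
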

As in the  Remark 7.34 of \cite{Irie2}, there is an analogous statement for the cases where $P=[m,m+1]$ and \[ \wh{\ev}\colon (X,\wh{\mathscr{U}})\to [a,b]^{\boxplus \tau}\times \mathscr{L}^{k+1}  \]which we do not spell out.

\begin{proof}
	By Proposition \ref{Prop: KuraStrSmMap}, we have a strongly smooth map $X\to \mathscr{L}^{k+1}(\beta)$. 
	By Lemma-Definition 17.38 and Lemma 17.40 (3) of \cite{FOOO20book}, this map extends to a 1-collared strongly smooth map $X^{\boxplus 1}\to \ms{L}^{k+1}(\beta)$. 
	Then successively apply Proposition 17.78 and Proposition 17.81 of \cite{FOOO20book} to obtain Kuranishi structures $\wh{\ms{U}^+}$ and CF perturbations $\wh{\mf{S}^+}$.  
	In the case $\beta=0$, the moduli spaces $\ms{M}_{k+1}(0:\{m\})$ are not the boundaries of other moduli spaces. We do not need to change the Kuranishi structure for the thickening because the obstruction bundle is trivial, and we can choose the CF perturbations to be trivial because the identity map $L\to L$ is already a submersion.
\end{proof}

\subsection{Pushforward of differential forms}
\subsubsection{Chain model of \texorpdfstring{$[-1,1] \times \mathscr{L}^{k+1}$}{[-1,1] x \mathscr{L}^{k+1}}}
For each $k\in\Z_{\geq 0}$ and $a\in H_1(L;\Z)$, define a chain complex $\ol{C}_*^\dR(\mathscr{L}^{k+1}(a))$ in exactly the same way as section 4.4 of \cite{Irie2}. 
Roughly speaking, an element in $\ol{C}_*^\dR(\mathscr{L}^{k+1}(a))$ is represented by a quintuple $(U,\varphi,\tau_+,\tau_-,\omega)$ where 
\begin{itemize}
	\item $U\in \mathscr{U}$ is an oriented submanifold of $\R^N$ for some $N$.
	\item $\varphi:=(\varphi_\R,\varphi_\mathscr{L})\colon U\to \R\times \mathscr{L}^{k+1}(a)$, such that $\varphi_\R$ and $\varphi_{\mathscr{L}}$ are $C^\infty$-smooth (notice that unlike in \cite{Irie2}, here $\mathscr{L}^{k+1}(a)$ is an ordinary finite-dimensional $C^\infty$-manifold and the notion of $C^\infty$-smooth is just the usual notion), and \[ U\to \R\times L;\quad u\mapsto (\varphi_\R(u), \ev_0\circ \varphi_\mathscr{L}(u))\] is a submersion. For each interval $I\subset \R$, we denote $U_I:= (\varphi_\R)^{-1}(I)$. 
	\item $\tau_+\colon U_{\geq 1}\to \R_{\geq 1}\times U_1$ is a diffeomorphism such that \[ \varphi|_{U_{\geq 1}} = (\iota_{\geq 1} \times \varphi_{\mathscr{L}}|_{U_1}) \circ \tau_+ \]where $\iota_{\geq 1}\colon \R_{\geq 1}\hookrightarrow \R$ is the inclusion map.
	\item $\tau_-\colon U_{\leq -1}\to \R_{\leq -1}\times U_{-1}$ is a diffeomorphism such that \[ \varphi|_{U_{\leq -1}} = (\iota_{\leq -1} \times \varphi_{\mathscr{L}}|_{U_{-1}}) \circ \tau_- \]where $\iota_{\leq -1}\colon \R_{\leq -1}\hookrightarrow \R$ is the inclusion map.
	\item $\omega\in \mathscr{A}^{\dim U -*+1}(U)$ such that $\omega|_{U_{[-1,1]}}$ is compactly supported, and \[ \omega|_{U_{\geq 1}} = (\tau_+)^*(1\times \omega|_{U_1}),\quad \omega|_{U_{\leq -1}} = (\tau_-)^*(1\times \omega|_{U_{-1}}). \]
	Denote the vector space of all such differential forms on $U$ as $\ms{A}^*(U,\varphi,\tau_+,\tau_-).$
\end{itemize}
Two such elements are identified if there is a submersion, respecting the ends of $\R$, between the domains of the de Rham chains, such that the differential forms pushforward from one to the other. See Section 4.4 of \cite{Irie2}.

There are naturally defined chain maps
\begin{align*}
	i\colon C_*^\dR(\mathscr{L}^{k+1}(a))\to \ol{C}_*^\dR(\mathscr{L}^{k+1}(a))
\end{align*}
\begin{align*}
	e_+\colon \ol{C}_*^\dR(\mathscr{L}^{k+1}(a))\to C_*^\dR(\mathscr{L}^{k+1}(a)),\quad e_-\colon \ol{C}_*^\dR(\mathscr{L}^{k+1}(a))\to C_*^\dR(\mathscr{L}^{k+1}(a))
\end{align*}
satisfying that
\begin{itemize}
	\item $e_+\circ i = e_-\circ i = \id$;
	\item $(e_+,e_-)\colon \ol{C}_*^\dR(\mathscr{L}^{k+1}(a)) \to C_*^\dR(\mathscr{L}^{k+1}(a))\oplus C_*^\dR(\mathscr{L}^{k+1}(a))$ is surjective;
	\item $i\circ e_+$ and $i\circ e_-$ are chain homotopic to $\id$.
\end{itemize}

For each $k\in \Z_{\geq 1}$, $k'\in \Z_{\geq 0}$, $j=1,\dots, k$ and $a,a'\in H_1(L;\Z)$, the following fibre product operation is also defined in Section 4 of \cite{Irie2}:
\begin{align*}
	\circ_j\colon \ol{C}^\dR_{n+d}(\mathscr{L}^{k+1}(a))\otimes \ol{C}_{n+d'}^\dR(\mathscr{L}^{k'+1}(a')) \to \ol{C}^\dR_{n+d'+d'}(\mathscr{L}^{k+k'}(a+a'));\quad x\otimes y \mapsto x\circ_j y.
\end{align*}

We then consider 
\begin{align*}
	\CCFr:= \bigoplus_{a\in H_1(L;\Z)} \prod_{k\in \Z_{\geq 0}} \ol{C}^\dR_{*+n+\mu(a)+k-1} (\mathscr{L}^{k+1}(a))
\end{align*}
and its completion $\wh{\CCFr}$, and define a dg Lie algebra structure on it in the same way as $\CFr$.
Then there are naturally defined morphisms of dg Lie algebras
\begin{align*}
	i\colon \CFr\to \CCFr,\quad e_+\colon \CCFr\to \CFr, \quad e_-\colon \CCFr\to \CFr
\end{align*}
satisfying that 
\begin{itemize} 
	\item $e_+\circ i = e_-\circ i = \id_{\CFr}$;
	\item $(e_+,e_-)\colon \CCFr\to \CFr\oplus \CFr$ is surjective;
	\item $i\circ e_+$ and $i\circ e_-$ are chain homotopic to $\id_{\CCFr}$. One can take chain homotopies to respect the decompositions over $(a,k)\in H_1(L;\Z)\times \Z_{\geq 0}$.
\end{itemize}

\begin{definition}\label{Defn: IntervalE=0}
	Define \[\ol{\LL}\in \ol{\Cfr{-1}}(0) = \prod_{k\in \Z_{\geq 0}} \ol{C}^\dR_{n +k-2} (\ms{L}^{k+1}(0)) \]
	 as follows:
	\begin{itemize}
		\item For $k=2$, consider the map $\R\times L\xrightarrow{\varphi} \R\times \ms{L}^3(0)$ defined by $(r,y)\mapsto (r, (\underline{y},\underline{y},\underline{y}))$; then for any interval $i\subset \R$, $\varphi_\R^{-1}(I)\cong I\times L$, and define $\tau_+,\tau_-$ to be the obvious diffeomorphisms. Set $\ol{\LL}(0,2):= (-1)^{n+1}[(\R\times L\to \R\times \ms{L}^3(0);1\in \ms{A}^0(\R\times L, \varphi, \tau_+,\tau_-))] \in \ol{C}^\dR_{n}(\ms{L}^{3}(0))$;
		\item For all $k\neq 2$, set $\ol{\LL}(0,k)=0$.
	\end{itemize}
\end{definition}

Then it follows that
\begin{align}\label{Eq: [0,1]xL}
	e_+(\ol{\LL}) = \LL.	
\end{align}

\subsubsection{Strongly smooth maps from a K-space with a CF-perturbation gives a de Rham chain}

The following is an analogue of Theorem 7.9 in \cite{Irie2} (where the meaning of $\mathscr{L}^{k+1}$ differs: in our case, $\mathscr{L}^{k+1}$ is a finite-dimensional smooth manifold whereas in \cite{Irie2} it is an infinite-dimensional differentiable space).
For the definition of a map $f\colon (X,\wh{U})\to \ms{L}^{k+1}$ to be admissible, see \cite{Irie2} Definition 7.7 with the modifications of the meaning of $\ms{L}^{k+1}$ taken into account.

\begin{theorem}[\cite{Irie2} Theorem 7.9]\label{Thm: CFgivesDR}
	Let 
	\begin{enumerate}
		\item $(X,\wh{\mathscr{U}})$ be a compact, oriented, admissible Kuranishi space of dimension $d$;
		\item $\wh{f}\colon (X,\wh{\mathscr{U}})\to \mathscr{L}^{k+1}$ be an admissible map;
		\item $\wh{\omega}$ be an admissible differential form on $(X,\wh{\mathscr{U}})$;
		\item  $\wh{\mathfrak{S}}$ be an admissible CF-perturbation of $(X,\wh{\mathscr{U}})$ such that $\wh{\mathfrak{S}}$ is transversal to 0, and such that $\ev_0\circ \wh{f}\colon (X,\wh{\mathscr{U}})\to L$ is a stratified strong submersion with respect to $\wh{\mathfrak{S}}$.
	\end{enumerate}
	Then one can define \[ \wh{f}_*(X,\wh{\mathscr{U}}, \wh{\omega}, \wh{\mathfrak{S}}^\varepsilon) \in C^\dR_{d-|\wh{\omega}|}(\mathscr{L}^{k+1}) \]for sufficiently small $\varepsilon>0$, so that Stoke's formula (analogous to Theorem 7.11 of \cite{Irie2}) and the fibre product formula (analogous to Theorem 7.12 of \cite{Irie2}) hold.
\end{theorem}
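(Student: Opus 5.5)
The plan is to follow the proof of Theorem 7.9 in \cite{Irie2}, which simplifies because $\mathscr{L}^{k+1}$ is now an honest finite-dimensional manifold. In each Kuranishi chart $\mathscr{U}_p=(U_p,\mathscr{E}_p,s_p,\psi_p)$ we will use the manifold charts $\mathfrak{V}_\mathfrak{r}$ and the local CF-perturbations $\mathscr{S}^\varepsilon_\mathfrak{r}=(W_\mathfrak{r},\eta_\mathfrak{r},\mathfrak{s}^\varepsilon_\mathfrak{r})$ of Definition \ref{Defn: CF-perturb}. Transversality of $\mathfrak{s}^\varepsilon_\mathfrak{r}$ to $0$ makes $Z_\mathfrak{r}^\varepsilon:=(\mathfrak{s}^\varepsilon_\mathfrak{r})^{-1}(0)\subset V_\mathfrak{r}\times W_\mathfrak{r}$ a smooth manifold of dimension $d+\dim W_\mathfrak{r}$. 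It is oriented from the orientations of $U_p$, $\mathscr{E}_p$ and $W_\mathfrak{r}$, and we embed it in some $\R^N$ so that it lies in $\mathscr{U}$. Fix a smooth partition of unity $\{\chi_\mathfrak{r}\}$ subordinate to the charts, compatible with coordinate changes as in \cite{FOOO20book}, Section 7.

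The local de Rham chain will be
\[
\big(Z^\varepsilon_\mathfrak{r},\ f\circ\phi_\mathfrak{r}\circ \mathrm{pr}_{V_\mathfrak{r}},\ \chi_\mathfrak{r}\,\omega\times \eta_\mathfrak{r}\big|_{Z^\varepsilon_\mathfrak{r}}\big),
\]
and we define $\wh{f}_*(X,\wh{\mathscr{U}},\wh{\omega},\wh{\mathfrak{S}}^\varepsilon)$ as the sum of these over $\mathfrak{r}$ and over a finite good coordinate system. Several checks are needed.
\begin{enumerate}
\item \emph{Plot condition.} The map is a plot in the sense of Definition \ref{Defn: DiffSpaceOnLoops}: it is smooth because $f$ is strongly smooth, and $\ev_0\circ f\circ\phi_\mathfrak{r}\circ\mathrm{pr}_{V_\mathfrak{r}}$ is a submersion on $Z^\varepsilon_\mathfrak{r}$ precisely by the assumption that $\ev_0\circ\wh f$ is a stratified strong submersion with respect to $\wh{\mathfrak S}$.
\item \emph{Degree.} The form has compact support because $X$ is compact and $\chi_\mathfrak{r}$ is compactly supported. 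The degree is $(d+\dim W)-(|\omega|+\dim W)=d-|\wh\omega|$.
\item \emph{Independence of choices.} The class is independent of the representative of the CF-perturbation and of the chart presentations, by the relation \eqref{Eq: dRChainRelation}. Under refinement of manifold charts or of $W_\mathfrak{r}$ (projections $W'\to W$ with $\int\eta=1$ on fibres), the zero sets are related by submersions and the forms by fibre integration, so the chains agree in $C^\dR_*$. This is the analogue of the well-definedness of pushforward in \cite{FOOO20book}, Section 7, but in the de Rham chain complex rather than in forms on a target manifold.
\end{enumerate}

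Stokes' formula comes from applying $d$ to $\chi_\mathfrak{r}\omega\times\eta_\mathfrak{r}$ and using the sign convention \eqref{Eq: DefnPartialDR}. The terms with $d\chi_\mathfrak{r}$ cancel after summing over $\mathfrak r$. The boundary contributions come from the corners of $Z^\varepsilon_\mathfrak{r}$ restricted to the normalized boundary $\partial X$; because everything is admissible and collared, these are identified with the pushforward along $\partial X$. The fibre product formula is proved chart by chart. The zero set of the product perturbation on $X_1\fiberprod{}{}X_2$ equals the fibre product $Z_1^\varepsilon\fiberprod{\ev_i}{\ev_0}Z_2^\varepsilon$, and this fibre product is transverse because of the strong submersivity of $\ev_0$. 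It then matches \eqref{Eq: PartialLoopPre} (or $\circ^{\Omega_\star}$) once the orientation conventions of Section \ref{Subsc: Ori} are taken into account.

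The main obstacle is making the partition-of-unity sum compatible with chart coordinate changes and collars uniformly for small $\varepsilon$, and tracking the corner/orientation signs in Stokes. For this I would cite the corresponding constructions in \cite{FOOO20book} (Sections 7, 9 and 17) and \cite{Irie2}, Section 7. I expect them to go through verbatim, since the only new input is that the target is finite-dimensional, which only simplifies matters.
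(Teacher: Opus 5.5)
Your architecture is the one the paper recalls from \cite{Irie2} \S8.1.2: local zero sets $Z^\varepsilon_{\mathfrak r}$ of the CF-perturbation, a partition of unity, forms $\chi_{\mathfrak r}\omega\times\eta_{\mathfrak r}$, and chart compatibility deferred to \cite{FOOO20book} and \cite{Irie2}. The gap is that your local chain is not an element of $C^\dR_*(\mathscr{L}^{k+1})$ once $X$ has boundary or corners, which is the situation the theorem and its Stokes formula are designed for. In Definition \ref{Defn: dRchainCplx}, the domain of a de Rham chain is a manifold without boundary (by the standing convention) and the form must be compactly supported on it. Over a corner chart $V_{\mathfrak r}\subset(\R_{\geq 0})^D$, your $Z^\varepsilon_{\mathfrak r}$ is a manifold with corners. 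The form $\chi_{\mathfrak r}\omega\times\eta_{\mathfrak r}$ need not vanish near $\partial Z^\varepsilon_{\mathfrak r}$, so it is not compactly supported on the interior. Your item 2 (``compact support because $X$ is compact'') does not address this.

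The paper handles it with the retraction $\mathscr R\colon\R^D\to(\R_{\geq 0})^D$. All the data ($V_{\mathfrak r}$, $E_{\mathfrak r}$, $\mathfrak s^\varepsilon_{\mathfrak r}$, $f$, $\chi_{\mathfrak r}\omega$) are pulled back to the boundaryless $\overline V_{\mathfrak r}=\mathscr R^{-1}(V_{\mathfrak r})$; these pullbacks are smooth only because the structures are collared. The form is then multiplied by $\kappa(t_1)\cdots\kappa(t_D)$, together with the sign $(-1)^{\dim W_{\mathfrak r}(\rank\,\mathscr E+|\omega|)}$, which your formula also omits; see \eqref{Eq: CFpushforward}.

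This step is not cosmetic: it is the mechanism behind Stokes' formula. The differential \eqref{Eq: DefnPartialDR} only applies $d$ to the form, and the chain model has no separate boundary term. So your sentence that ``the boundary contributions come from the corners of $Z^\varepsilon_{\mathfrak r}$'' has nothing to attach to.

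In the paper's construction, the boundary term comes from the summands $\kappa'(t_i)\,dt_i\wedge\cdots$ of $d\big(\kappa(t_1)\cdots\kappa(t_D)\,\overline{\chi_{\mathfrak r}\omega}\big)$. These are supported in $\{-1<t_i<0\}$, where all data are pulled back from the face $\{t_i=0\}$. Integrating over the $t_i$-fibre via the relation \eqref{Eq: dRChainRelation}, with $\int\kappa'=1$, identifies them with the pushforward over $\partial X$. The same $t_i$-independence shows the chain does not depend on the choice of $\kappa$.

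The extended region is also where the \emph{stratified} transversality and strong-submersivity hypotheses are actually used. There, the zero set and $\ev_0\circ\overline f_{\mathfrak r}$ factor through the faces, so transversality and the plot condition on the extended domain reduce to those hypotheses on each face. You need to insert this extension-and-cutoff step, or an equivalent cutoff inside the collar, before your well-definedness and Stokes arguments make sense.
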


We also need a version of this for admissible K-spaces over an interval. For the statement see Theorem 7.14 of \cite{Irie2}.
For the proofs see section 8 of \cite{Irie2}.

For later purposes, we recall part of the explicit construction from section 8.1.2 of \cite{Irie2}.
Suppose $X$ can be covered by a single Kuranishi chart with boundary. Given the data of
\begin{enumerate}
	\item $\ms{U}=(U,\ms{E},s,\psi)$ an admissible Kuranishi chart on $X$;
	\item $f\colon U\to \ms{L}^{k+1}$ an admissible map;
	\item $\omega\in \ms{A}_c^*(U)$ a admissible differential form;
	\item $\mf{S}=(\mf{S}^\varepsilon)_{0<\varepsilon\leq 1}$ an admissible CF-perturbation of $\textup{supp }\omega$ such that $\ev_0\circ f\colon U\to L$ is a stratified strong submersion with respect to $\mf{S}$,
\end{enumerate}
and for each $\varepsilon\in (0,1]$, 
the de Rham chain $f_*(\ms{U},\omega,\mf{S}^\varepsilon)$ is defined as follows.
Let $(\mf{V}_\mf{r}, \ms{S}_\mf{r})_{\mf{r}\in\mf{R}}$ be a representative of the CF-perturbation $\mf{S}$, and $(\chi_\mf{r})_{\mf{r}\in\mf{R}}$ be a partition of unity subordinate to $(\phi_\mf{r}(V_\mf{r}))_{\mf{r}\in\mf{R}}$. 
We first define $f_*(\ms{U},\chi_\mf{r}\omega, \mf{V}_\mf{r}, \ms{S}_\mf{r}^\varepsilon)$ as follows. Let $D:=\dim U$. For a manifold chart $V_\mf{r}$ in $\mf{V}_\mf{r}$, given as an open neighborhood of $(t_1,\dots, t_D)\in (\R_{\geq 0})^D$, define the \emph{retraction map} (terminology from Definition 17.7 of \cite{FOOO20book})
\begin{align*}
	\ms{R}\colon \R^D\to (\R_{\geq 0})^D;\quad (t_1,\dots, t_D)\mapsto (t_1',\dots, t_D'),\quad \textup{where } t_i':= \begin{cases}
		t_i & t_i\geq 0\\
		0, & t_i<0
	\end{cases}.
\end{align*}
Take a cutoff function $\kappa\in C^\infty(\R,[0,1])$ such that $\kappa\equiv 1$ on a neighborhood of $\R_{\geq 0}$ and $\kappa \equiv 0$ on a neighborhood of $\R_{\leq -1}$. Define 
\begin{enumerate}
	\item $\ol{V}_{\mf{r}}:= \ms{R}^{-1}(V_\mf{r})$, $\ol{E}_\mf{r}:=\ms{R}^*E_\mf{r}$;
	\item $\ol{\mf{s}}_\mf{r}^\varepsilon:= (\ms{R}|_{\ol{V}_\mf{r}}\times \id_{W_\mf{r}})^*(\mf{s}_\mf{r}^\varepsilon)$;
	\item $\ol{f}_\mf{r}:= f\circ \phi_\mf{r}\circ \ms{R}|_{\ol{V}_\mf{r}}$;
	\item $\ol{\chi_\mf{r}\omega}(t_1,\dots, t_D):= \kappa(t_1)\cdots \kappa(t_D)\cdot  (\phi_\mf{r}\circ \ms{R}|_{\ol{V}_\mf{r}})^*(\chi_\mf{r}\omega)$.
\end{enumerate}
Then define 
\begin{align}\label{Eq: CFpushforward}
	f_*(\ms{U}, \chi_\mf{r}\omega, \mf{V}_\mf{r}, \ms{S}_\mf{r}^\varepsilon) := (-1)^\dagger\big((\ol{\mf{s}}_\mf{r}^\varepsilon)^{-1}(0), \ol{f}_\mf{r}\circ \pr_{\ol{V}_\mf{r}}, \pr_{\ol{V}_\mf{r}}^*(\ol{\chi_{\mf{r}}\omega}) \wedge \pr^*_{W_\mf{r}}(\eta_\mf{r})\big),
\end{align}
where 
\begin{align*}
	\dagger := \dim W_\mf{r} \cdot (\rank \,\ms{E} + |\omega|).
\end{align*}

\subsection{Construction of low-energy approximate solutions}

The following is analogous to Theorem 6.1 of \cite{Irie2} (and Theorem $6.1^+$ in \cite{IrieErratum}).
Recall that $\varepsilon>0$ is chosen such that $2\varepsilon$ is less than the minimal energy of non-constant $J$-holomorphic discs with boundaries on $L$.
For each $m\in \Z$, define the filtration
\begin{align*}
	F^m \CFr:= \bigoplus_{\substack{a\in H_1(L;\Z)\\ k\in\Z_{\geq 0} \\ E(a)\geq \varepsilon(m+1-k)}} C^\mathscr{L}(a,k),
\end{align*}
and similarly for $\CCFr$.  We abbreviate $\CFr$ and $\CCFr$ by $C$ and $\ol{C}$.
Also, for each of the moduli spaces $X$ in Theorem \ref{Thm: IrieKuranishi}, denote by $\bar X:=X^{\boxplus 1/2}$; Theorem \ref{Thm: ExistCF} provides $\bar X$ with an admissible CF-perturbation and an admissible strongly smooth map to $\ms{L}^{k+1} L$.

\begin{theorem}\label{Thm: ApproxSoln}
	There exists integers $I,U\geq 2$ and a sequence $(\MM_i,\NN^{\geq 0}_i,\NN^0_i, \ol{\MM_i},\ol{\NN^{\geq 0}_i},\ol{\NN^0_i})_{i\geq I}$ for every $i\geq I$, where 
	\begin{align*}
		\MM_i \in F^1C_{-1},\,\, \ol{\MM_i}\in F^1 \ol{C}_{-1},\,\, \NN^{\geq 0}_i\in F^{-U} C_2,\,\, \ol{\NN^{\geq 0}_i} \in F^{-U}\ol{C}_2,\,\, \NN^0_i \in F^{-1} C_1,\,\, \ol{\NN^0_i}\in F^{-1}\ol{C}_1,
	\end{align*}
	such that 
	\begin{align*}
		\MM_i = e_-(\ol{\MM_i}),\quad \NN^{\geq 0}_i=e_-(\ol{\NN^{\geq 0}_i}),\quad \NN^0_i =e_-(\ol{\NN^0_i}),
	\end{align*}
	\begin{align*}
		\partial \ol{\MM_i}- \frac{1}{2}[\ol{\MM_i},\ol{\MM_i}] \in F^i\ol{C}_{-2},\quad \partial \ol{\NN^{\geq 0}_i} - [\ol{\MM_i},\ol{\NN^{\geq 0}_i}] -\ol{\NN^0_i}\in F^{i-U-1}\ol{C}_1 \quad \partial \ol{\NN^0_i} - [\ol{\MM_i},\ol{\NN^0_i}] \in F^{i-2} \ol{C}_0,
	\end{align*}
	\begin{align*}
		\MM_{i+1}-e_+(\ol{\MM_i}) \in F^iC_{-1},\quad \NN^{\geq 0}_{i+1} - e_+(\ol{\NN^{\geq 0}_i}) \in F^{i-U-1}C_2,\quad \NN^0_{i+1} - e_+(\ol{\NN^0_i}) \in F^{i-2}C_1
	\end{align*}
	and such that
	\begin{enumerate}
		\item $\MM_i(a,k)\neq 0$ only if  \emph{(i)} $E(a)\geq 2\varepsilon$, or \emph{(ii)} $a = 0$, $k\geq 2$. Moreover, in case $a = 0$, $\{ \MM_i(0,k)\}_{k\in\Z_{\geq 0}}=\wt{\LL}\in\Cfr{-1}(0)$ (see the definition of $\LL$ in Definition \ref{Defn: SpecialEltLL}) 
			and, $\{\ol{\MM_i}(0,k)\}_{k\in \Z_{\geq 0}} = \ol{\LL}\in \ol{\Cfr{-1}}(0)$   (see Definition \ref{Defn: IntervalE=0});
		\item $\NN^0_i(a,k)\neq 0$ only if \emph{(i)} $E(a)\geq 2\varepsilon$, or \emph{(ii)} $a=0$. Moreover, $\NN^0_i(0)$ is a cycle which is homologous to $\LL^0$ (see Definition \ref{Defn: LL^0});
		\item Define the following subsets of $H_1(L;\Z)$:\begin{align*}
			A_\MM&:= \{a\in H_1(L;\Z)\mid \ol{\MM_i}(a,k)\neq 0 \textup{ for some }(i,k)\};\\
			A_\MM^+&:= \{a_1+\dots + a_m\in H_1(L;\Z) \mid m\geq 1 , a_1,\dots, a_m \in A_\MM\};\\
			A_{\NN}^+&:= \{ a\in H_1(L;\Z) \mid (\ol{\NN^{\geq 0}_i}(a,k),\ol{\NN^0_i}(a,k))\neq (0,0) \textup{ for some } (i,k)\};\\
			A_{\NN}^+&:=\{a_1+\dots + a_m\in H_1(L;\Z)\mid m\geq 1, a_1\in A_{\NN}, a_2,\dots, a_m\in A_\MM\}.
		\end{align*}
		Then for any $\lambda>0$, \[A_\MM^+(\lambda):= \{a\in A_\MM^+\mid E(a)<\lambda\}\textup{ and }A^+_{\NN}(E):=\{ a\in A^+_{\NN}\mid E(a)<\lambda\}\] are finite sets.
	\end{enumerate}
\end{theorem}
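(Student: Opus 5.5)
The plan follows Irie's proof of Theorem 6.1 (and $6.1^+$ in the erratum) in \cite{Irie2}: pushing forward virtual fundamental chains of the moduli spaces in Theorem \ref{Thm: IrieKuranishi}, with the CF-perturbations of Theorem \ref{Thm: ExistCF}. Fix $I,U$ with $\varepsilon(U-1)\ge 2\norm{H}$, and $I$ large enough that all moduli spaces entering at level $i$ satisfy the energy bounds of Theorem \ref{Thm: IrieKuranishi}. For $P=\{i\}$, define the $(a,k)$-component of $\MM_i$ by Theorem \ref{Thm: CFgivesDR}:
\[
\MM_i(a,k):=\pm\,\ev^{\ms{M}}_*\big(\bar{\ms{M}}_{k+1}(a:\{i\}),1,\wh{\mf{S}^+}^{\varepsilon_i}\big)\in C^\dR_{n+\mu(a)+k-2}(\ms{L}^{k+1}(a)).
\]
This applies whenever $E(a)<\varepsilon(i+1-k)$, and we set the component to zero otherwise. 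The sign is chosen to match the grading conventions, and $\varepsilon_i$ is a sufficiently small perturbation parameter. Define $\NN^{\geq 0}_i$ and $\NN^0_i$ the same way from $\ms{N}^{\geq 0}_{k+1}$ and $\ms{N}^0_{k+1}$. The barred versions come from $P=[i,i+1]$ using the interval version (Theorem 7.14 of \cite{Irie2}). The relations $\MM_i=e_-(\ol{\MM_i})$ and $\MM_{i+1}-e_+(\ol{\MM_i})\in F^iC$, and their $\NN$-analogues, then follow because restricting the interval-parametrized Kuranishi structures and perturbations to the ends recovers the data for $\{i\}$ and $\{i+1\}$. The differences arise only from components cut off by the energy truncations, and these lie in the stated filtration levels.

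The approximate Maurer--Cartan and intertwining equations come from the Stokes formula and the fibre product formula (analogues of Theorems 7.11 and 7.12 of \cite{Irie2}). These are applied to the codimension-one boundary descriptions in Theorem \ref{Thm: IrieKuranishi}(4).
\begin{itemize}
\item The boundary of $\ms{M}_{k+1}(\beta)$ is a union of fibre products over $\ev_j$ and $\ev_0$. Under $\ev$ these become $\circ_j^{\ms{L}}$, which gives the term $\tfrac12[\MM,\MM]$.
\item For $\ms{N}^{\geq 0}$, the extra boundary at $r=0$ contributes $\NN^0$.
\item The boundary at $r=\infty$ is empty, since $H$ is displacing.
\item Bubbling at the boundary of $\ms{N}^{\geq 0}$ contributes $[\MM,\NN^{\geq 0}]$.
\end{itemize}
All defects come from truncated energy ranges and land in $F^i$, $F^{i-U-1}$ and $F^{i-2}$ respectively; this is bookkeeping identical to \cite{Irie2}. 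Property (3), the finiteness of $A^+_\MM(\lambda)$ and $A^+_\NN(\lambda)$, follows from Gromov compactness as in the construction of $\mf{G}$ and $\mf{N}$.

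The main new point, which I expect to be the main obstacle, is the energy-zero part. By \eqref{Eq: EnergyZeroKura}, Theorem \ref{Thm: ExistCF} and \eqref{Eq: EvEnergyZero}, $\ms{M}_{k+1}(0)\cong L\times D^{k-2}$ with trivial obstruction bundle and trivial perturbation, mapping to constant loops. For $k>2$ the map factors through the projection to $L$, which has positive-dimensional fibres $D^{k-2}$. Since the form is $1$, the fibre integral vanishes by the relation \eqref{Eq: dRChainRelation}, so the component is zero. For $k=2$, $D^0$ is a point, and the chain is exactly $\pm\LL(0,2)$. Checking the sign against \eqref{Eq: CFpushforward} and the orientation conventions gives $\wt{\LL}$, and the same argument over $\R$ gives $\ol{\LL}$. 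One must also verify that $k=0,1$ are empty and that nonzero $a$ requires $E(a)\ge 2\varepsilon$, which holds by the choice of $\varepsilon$.

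For property (2), $\NN^0_i(0)$ is the pushforward of $\ms{N}^0_1(0)$: unperturbed curves in class $0$ with $r=0$. This space is the space of constant discs, namely $L$ itself with a trivial chart. On the $k=0$ component it gives $\pm[(L\to\ms{L}^1(0);1)]=\LL^0$, up to the higher-$k$ components, and these are handled the same way by fibre integration over Stasheff-type factors. Even if these components are only controlled up to homotopy, the cycle condition follows from the intertwining equation at energy zero, because $\partial$ and $[\wt{\LL},-]$ together form the full differential. That $\NN^0_i(0)$ is homologous to $\LL^0$ then follows by comparing the $k=0$ components under $\pr_0$, which is a quasi-isomorphism by Theorem \ref{Thm: cosimpHlgy}.
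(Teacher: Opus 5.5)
Your construction is the paper's. The chains are signed pushforwards of the CF-perturbed moduli spaces. The approximate relations, the $e_\pm$ end-compatibilities and property (3) are taken from Irie's \S7.6 bookkeeping in \cite{Irie2}. The energy-zero part of $\MM_i,\ol{\MM_i}$ is handled exactly as you do: empty for $k<2$; $\wt{\LL}$ (resp.\ $\ol{\LL}$) for $k=2$; degenerate for $k>2$, because the evaluation factors through $\pr_L$ and $(\pr_L)_!1=0$. The paper itself says nothing about property (2) beyond deferring it to Irie's Theorem 6.1.

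The step that fails is your fallback argument for (2).

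\emph{First}, the relation $\partial\ol{\NN^0_i}-[\ol{\MM_i},\ol{\NN^0_i}]\in F^{i-2}\ol{C}_0$ does not make $\NN^0_i(0)$ an honest cycle for the full differential $\partial^\dR+\partial^1$. At $a=0$ the level $F^{i-2}$ contains every component with $k\geq i-1$. But $\NN^0_i(0)$ is truncated at $k\leq i-2$, so $\partial^1$ of its top component lands exactly in the uncontrolled range.

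\emph{Second}, the quasi-isomorphism $\pr_0$ of Theorem \ref{Thm: cosimpHlgy} is the projection of the cosimplicial complex built from $\ms{L}(a)\times\Delta^k$. It is not the projection of $\CFr(a)$ onto its $k=0$ factor $C^\dR(\ms{L}^1(a))$. That projection is a chain map but not a quasi-isomorphism, since $\ms{L}^1(a)$ is only a covering of $L$. So agreement of $k=0$ components says nothing about the class in $H_n(\ms{L}(0))$.

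Hence (2) must come from your primary route: show that the $k\geq 1$ components of $\NN^0_i(0)$ vanish, so that $\NN^0_i(0)=\LL^0$ on the nose. This is a cycle, since $\delta_0$ and $\delta_1$ agree on constant paths. That route needs an input neither you nor the paper states. You need an analogue of Theorem \ref{Thm: IrieKuranishi}(3) and of the last clause of Theorem \ref{Thm: ExistCF} for $\ms{N}^0_{k+1}(0:P)$. Concretely, the space of constant discs must carry a trivial obstruction bundle and a trivial CF-perturbation. These must be chosen compatibly with its role as the $r=0$ boundary of $\ms{N}^{\geq 0}_{k+1}(0:P)$. This is plausible: constant discs at $r=0$ are transversally cut out, and the boundary strata are fibre products with the trivially perturbed $\ms{M}_{k_2+1}(0)$. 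You should arrange it explicitly rather than fall back on the homological comparison.
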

\begin{remark}
	One difference between our version of Theorem \ref{Thm: ApproxSoln} with Theorem 6.1$^+$ of \cite{IrieErratum} is that we require the zero-energy elements $\MM_i(a,k)$ and $\ol{\MM_i}(a,k)$ to match up with $\wt{\LL}$ and $\ol{\LL}$ on the chain-level exactly. The proof will be exactly the same as that for Theorem 6.1 in \cite{Irie2} and Theorem $6.1^+$ of \cite{IrieErratum} other than this point, so in the proof sketch below we will only be focused on verifying this.
\end{remark}

\begin{proof}[Proof sketch]
	Using Theorem \ref{Thm: CFgivesDR}, we get
	\begin{align*}
		\MM_m &:= \sum_{E(\beta)< \varepsilon(m+1-k)} (-1)^{n+1}\ev_* (\bar{\ms{M}}_{k+1}(\beta,\{m\})),\\
		\ol{\MM_m}&:= \sum_{E(\beta)<\varepsilon(m+1-k)} (-1)^{k+1} \ev_*(\bar{\ms{M}}_{k+1}(\beta,[m,m+1])),\\
		\NN^{\geq 0}_m&:= \sum_{E(\beta) < \varepsilon(m-U-k)} (-1)^{n+k+1} \ev_*(\bar{\ms{N}}_{k+1}^{\geq 0}(\beta, \{m\})),\\
		\ol{\NN^{\geq 0}_m}&:= \sum_{E(\beta) < \varepsilon(m-U-k)} \ev_*(\bar{\ms{N}}_{k+1}^{\geq 0}(\beta, [m,m+1])), \\
		\NN^{0}_m &:= \sum_{E(\beta) < \varepsilon(m-1-k)} (-1)^{n+k+1} \ev_* (\bar {\ms{N}}_{k+1}^0 (\beta,\{m\})), \\
		\ol{\NN^{0}_m} &:= \sum_{E(\beta) < \varepsilon(m-1-k)} -\ev_*(\bar{\ms{N}}_{k+1}^0 (\beta, [m,m+1])). 
	\end{align*}
	See Remarks 7.35 and 7.36 of \cite{Irie2} for notations (also see 5(v) in \cite{IrieErratum} for signs). In particular, the differential forms taken to be $1$.
	The properties in the theorem (other than the energy-zero chains) are proved using the same argument as in section 7.6 in \cite{Irie2}. In case $\beta=0$,  by Theorem \ref{Thm: ExistCF} we we have $\bar{\ms{M}}_{k+1}(0:P)\cong P\times L\times (D^{k-2})^{\boxplus \tau'}$ with trivial obstruction bundles and trivial CF-perturbations.
	By the explicit description of the de Rham chain $\ev_*(\bar{\ms{M}}_{k+1}(0:P))$ discussed after Theorem \ref{Thm: CFgivesDR}, we have the following cases:

	\begin{itemize}
		\item In case $k<2$, $\bar{\ms{M}}_{k+1}(0:P)=\emptyset$, so $\ev_*(\bar{\ms{M}}_{k+1}(0:P))=0$;
		\item In case $k=2$, $\bar{\ms{M}}_{3}(0:P)\cong P\times L$. In the case $P=\{m\}$, the evaluation map $\bar{\ms{M}}_3(0:\{m\})\xrightarrow{\ev^\ms{M}} \ms{L}^3(0)$ is given by \begin{align*}	
				\bar{\ms{M}}_3(0:\{m\})\cong L&\xrightarrow{\iota_3} \ms{L}^3(0); \\ y &\mapsto ([\underline{y}],[\underline{y}],[\underline{y}])
			\end{align*}
			 by \eqref{Eq: EvEnergyZero}. 
		By the formula \eqref{Eq: CFpushforward} and Definition \ref{Defn: SpecialEltLL}, $\ev_*(\bar{\ms{M}}_{3}(0,\{m\}))= \LL$;
		\item In case $k>2$, $\bar{\ms{M}}_{k+1}(0:P)\cong P\times L \times (D^{k-2})^{\boxplus\tau'}$ and in the case $P=\{m\}$, the evaluation map $\bar{\ms{M}}_{k+1}(0:\{m\})\xrightarrow{\ev^\ms{M}} \ms{L}^{k+1}(0)$  is given by \begin{align*}
			 \bar{\ms{M}}_{k+1}(0:\{m\}) \cong L\times (D^{k-2})^{\boxplus \tau'} \xrightarrow{\pr_L} L &\xrightarrow{\iota_{k+1}} \ms{L}^{k+1}(0);\\
			 y &\mapsto ([\underline{y}],[\underline{y}],\dots,[\underline{y}])  		
			 \end{align*} 
			 by \eqref{Eq: EvEnergyZero}. The factorization of $\ev^\ms{M}=\iota_{k+1}\circ \pr_L$ shows that $\ev_*(\bar{\ms{M}}_{k+1}(0,\{m\}))$ is a degenerate de Rham chain: more precisely,  by \eqref{Eq: dRChainRelation} in Definition \ref{Defn: dRchainCplx},
			 \begin{align*}
			 	\ev^\ms{M}_*(\bar{\ms{M}}_{k+1}(0:\{m\})) = [(L\xrightarrow{\iota_{k+1}} \ms{L}^{k+1}(0); (\pr_L)_! 1=0)]=0.
			 \end{align*}
	\end{itemize}
	The discussion with the case $P=[m,m+1]$ is similar.
\end{proof}

\subsection{Taking limits of approximate solutions}
\label{Subsc: VFC}
We now take limits of the low-energy approximations in Theorem \ref{Thm: ApproxSoln} to prove Theorem \ref{Thm: VFC}, which is analogous to Theorem 5.1 in \cite{Irie2}.

\begin{theorem}\label{Thm: VFC}
Under the setup in Theorem \ref{Thm: VFCmain}, 
there exists the following data:
\begin{itemize}
	\item  A constant $\hbar>0$,
	\item A monoid of curve classes $\mathfrak{G}\subset H_1(L;\Z)$ (Definition \ref{Defn: monoid})  and a module $\mathfrak{N}\subset H_1(L;\Z)$ of $H$-perturbed curve classes over $\mathfrak{G}$ (Definition \ref{Defn: module});
	\item For each $\beta\in\mathfrak{G}$ a chain $\MM(\beta)\in \Cfr{-1}(\beta)$, and for each $\eta\in \mathfrak{N}$ a chain $\NN^{\geq 0}(\eta)\in \Cfr{2}(\eta)$ and a chain $\NN^0(\eta)\in \Cfr{1}(\eta)$;
\end{itemize}
such that 
\begin{enumerate}
	\item The element \[\MM := \sum_{\beta\in\mathfrak{G}} \MM(\beta) \in \wh{\Cfr{-1}}_\mathfrak{G}, \]where $\MM(\beta)\in \Cfr{-1}(\beta)$,
	satisfies the Maurer-Cartan equation  \[ \partial^\dR \MM + \frac{1}{2} \big[\MM,\MM\big]  = 0. \]
	\item The elements \[\NN^{\geq 0} := \sum_{\eta\in\mathfrak{N}} \NN^{\geq 0}(\eta) \in \wh{\Cfr{2}}_\mathfrak{N},\quad 
	\NN^{ 0} := \sum_{\eta\in\mathfrak{N}} \NN^{ 0}(\eta)\in \wh{\Cfr{1}}_\mathfrak{N}, \]where $\NN^{\geq 0}(\eta)\in \Cfr{2}(\eta)$, $\NN^0(\eta) \in \Cfr{1}(\eta)$,
	satisfy \[ \partial^\dR  \NN^{\geq 0}- \big[\MM, \NN^{\geq 0}\big] = \NN^{0}.   \]
	\item Let $\MM(\beta,k)$ be the component of $\MM(\beta)\in \Cfr{-1}(\beta)$ in $C^\mathscr{L}(\beta, k)_{-1}$. Then $\MM(\beta, k)\neq 0$ only if \emph{(i)} $E(\beta)\geq \hbar$, or \emph{(ii)} $\beta = 0$, $k\geq 2$. Moreover, in case $\beta = 0$, $\MM(0)=\LL$ (see the definition of $\LL$ in Definition \ref{Defn: SpecialEltLL}).
	\item Let $\NN^0(\eta,k)$ be the component of $\NN^0(\eta)\in \Cfr{1}(\eta)$ in $C^\mathscr{L}(\eta, k)_1$. Then $\NN^0(\eta, k)\neq 0$ only if \emph{(i)} $E(\eta)\geq \hbar$, or \emph{(ii)} $\eta = 0$. Moreover, in case $\eta = 0$, $\NN^0(0)$ is a cycle homologous to $\LL^0$ (see the definition of $\LL^0$ in Definition \ref{Defn: LL^0}).
\end{enumerate}
 \end{theorem}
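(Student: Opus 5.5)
The plan follows the limiting argument of section 5 of \cite{Irie2} (with the corrections of \cite{IrieErratum}), applied to the approximate solutions $(\MM_i,\NN^{\geq 0}_i,\NN^0_i,\ol{\MM_i},\ol{\NN^{\geq 0}_i},\ol{\NN^0_i})_{i\geq I}$ of Theorem \ref{Thm: ApproxSoln}. The monoid $\mf{G}$ will be generated by $A_\MM$, and $\mf{N}$ will be the $\mf{G}$-module generated by the classes in $A_\NN$. The finiteness statement (3) of Theorem \ref{Thm: ApproxSoln}, together with Gromov compactness, gives the discreteness and finiteness conditions of Definitions \ref{Defn: monoid} and \ref{Defn: module}. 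We take $\hbar:=2\varepsilon$.

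The core is an inductive correction, carried out in the filtration $F^m$ of $\ol{C}$ and $C$. Suppose we have a true Maurer--Cartan element modulo $F^i$ that agrees with $\MM_i$ up to $F^{i-1}$. We use the interval chain $\ol{\MM_i}$, whose restrictions satisfy $e_-(\ol{\MM_i})=\MM_i$ and $e_+(\ol{\MM_i})\equiv \MM_{i+1}$ modulo $F^i$, as a gauge equivalence (homotopy of Maurer--Cartan elements) between the two ends. We then transport $\MM_{i+1}$ back along it, obtaining a modified element that solves the equation modulo $F^{i+1}$ and agrees with the previous one modulo $F^i$. The required properties of $i,e_\pm$ are: they are dg Lie morphisms; $(e_+,e_-)$ is surjective; and $i\circ e_\pm\simeq \id$ by homotopies respecting the $(a,k)$-decomposition. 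With these, the standard lemma of \cite{Irie2} (section 5: a homotopy-lifting argument for filtered dg Lie algebras) applies verbatim. The same scheme, run in the module over the corrected $\MM$, produces $\NN^{\geq 0}$ and $\NN^0$ satisfying equation (2). We then take limits in the completions; convergence in $\wh{\Cfr{*}}_\mf{G}$ and $\wh{\Cfr{*}}_\mf{N}$ is exactly what the filtration bounds $F^{i}$, $F^{i-U-1}$, $F^{i-2}$ provide. Note that $F^m$ mixes the energy $E(a)$ with the arity $k$. So the limit is first taken in the product over $k$ and then reorganized by the classes $a$ using finiteness of $A^+_\MM(\lambda)$ and $A^+_\NN(\lambda)$.

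The main obstacle, and the only genuinely new point relative to \cite{Irie2}, is preserving the energy-zero part exactly: $\MM(0)=\LL$ (up to the sign convention $\wt{\LL}$), with no other $\beta=0$ contributions. For this I would arrange every gauge transformation and correction term to lie in positive energy. Theorem \ref{Thm: ApproxSoln}(1) says the energy-zero components of $\MM_i$ and $\ol{\MM_i}$ are the fixed chains $\wt{\LL}$ and $\ol{\LL}$, independent of $i$, and by \eqref{Eq: [0,1]xL} $e_\pm(\ol{\LL})=\LL$. Hence the differences $\MM_{i+1}-e_+(\ol{\MM_i})$ have vanishing $a=0$ component. Every correction is built from these differences via brackets and homotopies that preserve the $H_1$-grading, and brackets add energies. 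So all corrections stay in $\bigoplus_{E(a)\geq 2\varepsilon}$, which makes the energy-zero piece constant along the induction. The same reasoning applied to $\NN^0_i$ gives (4): its $\eta=0$ part changes only by boundaries coming from positive-energy corrections paired with energy-zero data. I would check carefully that these boundaries are $\partial$-exact in the $a=0$ summand, so that $\NN^0(0)$ remains a cycle homologous to $\LL^0$.

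A final bookkeeping step reconciles the differentials. In this theorem the equations are stated with $\partial^\dR$ and with $\MM(0)=\LL$ included. By Lemma \ref{Lm: StructuresOnClosedStatesApp} (specifically \eqref{Eq: [L,-]=delta}), $-[\wt{\LL},-]=\partial^1$. Splitting off the energy-zero part of $\MM$ therefore converts the statement here into Theorem \ref{Thm: VFCmain} with $\partial=\partial^0+\partial^1$ and $\MM\in\wh{\Cfr{-1}}_{\mf{G}^+}$. I would verify these sign conventions last, since they are where errors are most likely but not where the structure of the argument lies.
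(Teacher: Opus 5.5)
Your proposal follows the paper's argument. Both defer to Irie's limiting (homotopy-lifting) construction. Both observe that the energy-zero parts of $\MM_i$ and $\ol{\MM_i}$ are exactly $\wt{\LL}$ and $\ol{\LL}$, and that $e_+(\ol{\LL})$ equals the energy-zero part of $\MM_{i+1}$. Hence the differences $\MM_{i+1,j}-e_+(\ol{\MM_{i,j}})$ vanish at $a=0$, the corrections can be taken with zero energy-zero component, and $\MM(0)$ is preserved on the nose.

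One small correction on (4). Both $\MM$ and $\NN^0$ are $\mf{G}$-supported, so no bracket of a positive-energy correction with anything can land in the $\eta=0$ component of $\NN^0$. The reason $[\NN^0(0)]$ is unchanged is different. The $\eta=0$ corrections are lifted through $(e_+,e_-)$, and $e_+$ and $e_-$ induce the same map on homology. This is part of the Irie argument that the paper likewise only cites.
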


\begin{proof}
	The proof is the same as section 6 of \cite{Irie2} (also see \cite{IrieErratum}), but we need to keep track of the energy-zero parts. For simplicity we just focus on $\MM(0,k)$, by following the construction in \cite{Irie2}. 
	The construction in \cite{Irie2} Lemma 6.4 has the following steps (with changes in notation):
	\begin{enumerate}
		\item Define $\MM_{i,0}=\MM_i$ and $\ol{\MM_{i,0}}=\ol{\MM_i}$, so that $\{\MM_{i,0}(0,k)\}_{k\in\Z_{\geq 0}}= \wt{\LL}$, $\{\ol{\MM_{i,0}}(0,k)\}_{k\in\Z_{\geq 0}}= \ol{\LL}$;
		\item Inductively define $\Delta^i_\MM:= \MM_{i+1,j}-e_+(\ol{\MM_{i,j}})$;  by \eqref{Eq: [0,1]xL},  $e_+\ol{\LL} = \wt{\LL}$, so that $\Delta^i_\MM(0,k)=0$ for all $k\in \Z_{\geq 0}$;
		\item Pick $\ol{\Delta}_\MM^i\in F^{i+j} \ol{C}_{-1}$ such that \[e_+(\ol{\Delta}_\MM^i)-\Delta^i_\MM\in F^{i+j+1}C_{-1} \,\, \textup{ and } \,\, \partial \ol{\Delta}_\MM^i + \left(\partial \ol{\MM_{i,j}} - \frac{1}{2} \left[\ol{\MM_{i,j}},\ol{\MM_{i,j}}\right]\right) \in F^{i+j+1}\ol{C}_{-2}.\] Since $\Delta^i_\MM(0,k)=0$ for all $k\in \Z_{\geq 0}$, we may take $\ol{\Delta}_\MM^i(0,k)=0$ for all $k\in\Z_{\geq 0}$;
		\item Set \[ \ol{\MM_{i,j+1}}:= \ol{\MM_{i,j}}+\ol{\Delta}_\MM^i,\quad \MM_{i,j+1}:= e_-(\ol{\MM_{i,j+1}}). \]Thus $\{\ol{\MM_{i,j+1}}(0,k)\}_{k\in\Z_{\geq 0}} = \ol{\LL}$ and $\{\MM_{i,j+1}(0,k)\}_{k\in\Z_{\geq 0}} = \wt{\LL}$;
		\item Finally, define $\MM:= \lim_{j\to\infty} \MM_{i,j}$  for a fixed $i\geq I$. It then follows that $\{\MM(0,k)\}_{k\in \Z_{\geq 0}} = \wt{\LL}.$ 
	\end{enumerate}
\end{proof}

\subsection{Proof of Theorem \ref{Thm: VFCmain}}

\begin{lemma}
	The element \[\MM^0:=\sum_{k\geq 2} \MM(0,k) \in \wh{\Cfr{-1}}_\mathfrak{G}\] satisfies the Maurer-Cartan equation \[ \partial^\dR \MM^0 - \frac{1}{2}[\MM^0,\MM^0] = 0, \] and \[ -[\MM^0, x]=\partial^1 x\quad \textup{ for any }x\in \CFr\] 
	where $\partial^1$ is defined in \eqref{Eq: differentialExplicit}.
\end{lemma}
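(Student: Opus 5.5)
The plan is to reduce both claims to facts already established about $\wt{\LL}$. Theorem \ref{Thm: VFC} (3), together with step (5) of its proof, identifies the energy-zero part of the Maurer--Cartan element. By construction, $\{\MM(0,k)\}_{k\in\Z_{\geq 0}}=\wt{\LL}$, where $\wt{\LL}=(-1)^{\dim L+1}\LL$ is concentrated in the single component $(a,k)=(0,2)$. (The statement of Theorem \ref{Thm: VFC} (3) writes $\LL$, but the construction carries the sign of $\wt{\LL}$ through from Theorem \ref{Thm: ApproxSoln} (1); I will use the version coming from the construction.) So the first step is simply to record that
\[
\MM^0=\sum_{k\geq 2}\MM(0,k)=\wt{\LL},
\]
using that $\MM(0,k)=0$ for $k<2$ by Theorem \ref{Thm: VFC} (3). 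This is also consistent with the explicit computation in the proof of Theorem \ref{Thm: ApproxSoln}: the $k=2$ pushforward gives $\LL$ up to the global sign $(-1)^{n+1}$, and the $k>2$ pushforwards vanish as degenerate de Rham chains, because $\ev$ factors through $\pr_L$ and $(\pr_L)_!1=0$.

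Once this identification is made, the Maurer--Cartan equation is exactly Lemma \ref{Lm: E=0 MC elt}. Recall the argument there: $\partial^\dR\LL=0$ because the only nonzero component has differential form $1$. Moreover $\frac12[\LL,\LL]=\LL\circ\LL$ has a single component, namely $\LL(0,2)\circ_1\LL(0,2)-\LL(0,2)\circ_2\LL(0,2)$. These two terms cancel, since both are the diagonal pushforward to the quadruple of constant paths. Rescaling by the sign $(-1)^{\dim L+1}$ preserves the equation, because $\partial^\dR$ kills the element and the bracket term is already $0$.

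The second claim, $-[\MM^0,x]=\partial^1x$, is then precisely equation \eqref{Eq: [L,-]=delta}, proved in Lemma \ref{Lm: StructuresOnClosedStatesApp}. Concretely, I will expand $[\LL,x](a,k)$ via \eqref{Eq: LoopPreBracket}. Arity forces $k''=2$ in $x\circ\LL$, and $k'=2$ in $\LL\circ x$. I then use the identifications
\[
\LL(0,2)\circ_1 x=(\delta_k)_*x,\qquad \LL(0,2)\circ_2 x=(\delta_0)_*x,\qquad x\circ_i\LL(0,2)=(\delta_i)_*x,
\]
and finally compare signs with \eqref{Eq: differentialExplicit}.

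I expect the main obstacle to be bookkeeping rather than anything conceptual. Specifically, I need to confirm that the sign of $\MM(0,2)$ produced by the construction, i.e. the normalization $(-1)^{n+1}\ev_*$ in the definition of $\MM_m$ in the proof of Theorem \ref{Thm: ApproxSoln}, really matches $\wt{\LL}$ and not $\LL$, because the sign in $-[\MM^0,-]=\partial^1$ depends on it. I would check this directly against \eqref{Eq: CFpushforward}. There, the trivial CF-perturbation has $W=\{0\}$, so $\dagger=0$, and the chart is $L$ with its given orientation. I would also verify the orientation conventions of Section \ref{Subsc: Ori} for the identifications $\LL(0,2)\circ_i x=(\delta_\bullet)_*x$, the same check asserted in Lemma \ref{Lm: StructuresOnClosedStatesApp}.
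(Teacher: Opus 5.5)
Your proposal is correct and follows the paper's proof. You identify $\MM^0=\wt{\LL}$ from Theorem \ref{Thm: VFC} (3) and its construction, then invoke Lemma \ref{Lm: E=0 MC elt} for the Maurer--Cartan equation and \eqref{Eq: [L,-]=delta} for $-[\MM^0,-]=\partial^1$. Your observation that the statement of Theorem \ref{Thm: VFC} (3) should read $\wt{\LL}$ rather than $\LL$ is also right, and it matches how the paper's own proof of this lemma uses it.
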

\begin{proof}
	By Theorem \ref{Thm: VFC} (3), $\MM^0 = \wt{\LL}$. 
	That $\MM^0$ satisfies Maurer-Cartan equation then follows from Lemma \ref{Lm: E=0 MC elt} (this also follows from conditions (1) and (3) of Theorem \ref{Thm: VFC} and energy considerations, analogous to the paragraph after Theorem 5.1 of \cite{Irie2}). 
	That $-[\MM^0, -]= \partial^1$ follows from	\eqref{Eq: [L,-]=delta}.
\end{proof}

Recall our notation $\mathfrak{G}^+:= \mathfrak{G}\setminus\{0\}$.
\begin{corollary}
	The element \[ \MM^+:= \sum_{\beta\in\mathfrak{G}^+} \MM(\beta) \in \wh{\Cfr{-1}}_{\mathfrak{G} ^+}\]satisfies the Maurer-Cartan equation \[ \partial \MM^+ + \frac{1}{2}[\MM^+,\MM^+] = 0, \]where $\partial = \partial^0+\partial^1$ is the differential in \eqref{Eq: differential}.
	Also \[ \partial \NN^{\geq 0} - [ \MM^+,  \NN^{\geq 0}] = \NN^0. \]
\end{corollary}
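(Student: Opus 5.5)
The plan is to split $\MM=\MM^0+\MM^+$ and expand the two identities of Theorem \ref{Thm: VFC} component by component in $H_1(L;\Z)$. This uses only the preceding lemma, bilinearity and graded symmetry of the bracket, and the fact that $\partial=\partial^0+\partial^1$ with $\partial^0=\partial^\dR$ and $\partial^1=-[\MM^0,-]$.

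For the Maurer--Cartan equation, I substitute $\MM=\MM^0+\MM^+$ into the equation of Theorem \ref{Thm: VFC} (1). The bracket expands as
\[
[\MM,\MM]=[\MM^0,\MM^0]+2[\MM^0,\MM^+]+[\MM^+,\MM^+],
\]
where the cross terms combine because both elements have degree $-1$, so $[x,y]=[y,x]$ by \eqref{Eq: dgLa antisym}. Since the bracket is additive on $H_1$-classes, the part of the equation in class $0$ involves only $\MM^0$. The part in classes $\beta\in\mf{G}^+$ involves only $\partial^\dR\MM^+$, $[\MM^0,\MM^+]$ and $[\MM^+,\MM^+]$; here I use that $\mf{G}^+$ consists of classes of positive energy, so a sum of two classes with one in $\mf{G}^+$ never lands in class $0$.

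The class-$0$ component is precisely the equation of the preceding lemma, so it imposes nothing new. On the positive-energy part, I replace the term $\pm[\MM^0,\MM^+]$ by $\mp\partial^1\MM^+$ using $-[\MM^0,x]=\partial^1x$. The remaining terms then combine into $\partial\MM^+ \pm\tfrac12[\MM^+,\MM^+]=0$.

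The main obstacle is sign bookkeeping. Theorem \ref{Thm: VFC} (1) is written as $\partial^\dR\MM+\tfrac12[\MM,\MM]=0$, while the lemma uses $\partial^\dR\MM^0-\tfrac12[\MM^0,\MM^0]=0$. These two conventions are compatible only if the class-$0$ equation forces $[\MM^0,\MM^0]=0$ and $\partial^\dR\MM^0=0$. Both facts hold by the proof of Lemma \ref{Lm: E=0 MC elt}, which gives $\LL\circ\LL=0$ and $\partial^\dR\LL=0$, so I will record them first. With the stated sign, the cross term $+[\MM^0,\MM^+]$ becomes $-\partial^1\MM^+$, which has the wrong sign, so I expect to fix a single consistent convention. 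Concretely, I will read Theorem \ref{Thm: VFC} with the sign that matches $-[\MM^0,-]=\partial^1$, as in Irie's convention where $\MM$ is Maurer--Cartan for $\partial-\tfrac12[\,,]$. The upshot should then be whichever of $\partial\MM^+\pm\frac12[\MM^+,\MM^+]=0$ matches Theorem \ref{Thm: VFCmain}, and I will check this carefully against \eqref{Eq: [L,-]=delta}.

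For the second identity, I expand
\[
\partial^\dR\NN^{\geq0}-[\MM^0,\NN^{\geq0}]-[\MM^+,\NN^{\geq0}]=\NN^0.
\]
By $-[\MM^0,-]=\partial^1$, the first two terms sum to $\partial\NN^{\geq0}$, which gives $\partial\NN^{\geq 0}-[\MM^+,\NN^{\geq0}]=\NN^0$ directly with no sign ambiguity. All these manipulations are legitimate in the completions because $\MM^0$ lies in class $0$ and the bracket preserves $\mf{N}$-gappedness, since $\mf{N}$ is a $\mf{G}$-module.
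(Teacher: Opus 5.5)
Your proposal is correct and is essentially the paper's argument: the paper simply defers to \cite{Irie2}, Lemma 5.3. That lemma is exactly this splitting $\MM=\MM^0+\MM^+$, combined with $-[\MM^0,-]=\partial^1$ and the vanishing of $\partial^\dR\MM^0$ and $[\MM^0,\MM^0]$.

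On the sign, you should not try to match the printed $+$. Use the convention compatible with $-[\MM^0,-]=\partial^1$ and with the second identity, namely $\partial^\dR\MM-\frac{1}{2}[\MM,\MM]=0$. Then your computation gives $\partial\MM^+-\frac{1}{2}[\MM^+,\MM^+]=0$. This is the paper's own definition of a Maurer--Cartan element and exactly what Lemma \ref{Lm: MC Ainfty} uses, so the $+$ in the corollary and in Theorems \ref{Thm: VFC}(1) and \ref{Thm: VFCmain}(1) is a typo.
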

\begin{proof}
	This is the same as \cite{Irie2}, Lemma 5.3.
\end{proof}

Theorem \ref{Thm: VFCmain} then follows by replacing $\MM$ with $\MM^+$.

\bigskip


\AtNextBibliography{\Small}
\printbibliography

\end{document}